\newcommand{\del}{\partial}
\newcommand{\bC}{{\mathbb C}}
\newcommand{\bG}{{\mathbb G}}
\newcommand{\bH}{{\mathbb H}}
\newcommand{\bK}{{\mathbb K}}
\newcommand{\bP}{{\mathbb P}}
\newcommand{\bQ}{{\mathbb Q}}
\newcommand{\bZ}{{\mathbb Z}}
\newcommand{\cA}{{\mathcal A}}
\newcommand{\cC}{{\mathcal C}}
\newcommand{\cF}{{\mathcal F}}
\newcommand{\cG}{{\mathcal G}}
\newcommand{\cH}{{\mathcal H}}
\newcommand{\cJ}{{\mathcal J}}
\newcommand{\cM}{{\mathcal M}}
\newcommand{\cN}{{\mathcal N}}
\newcommand{\cO}{{\mathcal O}}
\newcommand{\cR}{{\mathcal R}}
\newcommand{\cW}{{\mathcal W}}
\newcommand{\cX}{{\mathcal X}}
\newcommand{\cY}{{\mathcal Y}}
\newcommand{\oW}{\overline{W}}
\newcommand{\tC}{\widetilde{C}}
\newcommand{\tG}{\widetilde{G}}
\newcommand{\tcG}{\widetilde{\mathcal G}}
\newcommand{\tX}{\widetilde{X}}
\newcommand{\tcX}{\widetilde{\mathcal X}}
\newcommand{\ra}{\rightarrow}
\newcommand{\lra}{\longrightarrow}
\newcommand{\inj}{\hookrightarrow}
\newcommand{\surj}{-\hspace{-4pt}\rightarrow\hspace{-19pt}{\rightarrow}\hspace{3pt}}
\newcommand{\G}{\Gamma}
\newcommand{\D}{\Delta}
\newcommand{\T}{\Theta}
\newcommand{\tT}{\widetilde{\Theta}}
\newcommand{\Sg}{\mathfrak{S}}
\newcommand{\Pic}{\operatorname{Pic}}
\newcommand{\al}{{\alpha}}
\newcommand{\bb}{{\beta}} 
\newcommand{\dd}{{\delta}}
\newcommand{\ga}{{\gamma}}
\newcommand{\te}{{\theta}}
\theoremstyle{definition}
\newtheorem{proposition}{Proposition}[section]
\newtheorem{lemma}[proposition]{Lemma}
\newtheorem{claim}[proposition]{Claim}
\newtheorem{theorem}[proposition]{Theorem}
\newtheorem{corollary}[proposition]{Corollary}
\newtheorem{remark}[proposition]{Remark}
\newtheorem{notation}[proposition]{Notation}
\numberwithin{equation}{section}
\begin{document}

\title{The primitive cohomology of the theta divisor of an abelian fivefold}

\author{E. Izadi}

\address{Department of Mathematics, University of California San Diego, 9500 Gilman Drive \# 0112, La Jolla, CA 92093-0112, USA}

\email{eizadi@math.ucsd.edu}

\author{Cs. Tam\'as}
\address{Department of Mathematics and Statistics, Langara College, 100 West 49th Avenue, Vancouver, BC,
Canada V5Y 2Z6}
\email{ctamas@langara.bc.ca}

\author{J. Wang}

\address{Department of Mathematics, Boyd
Graduate Studies Research Center, University of Georgia, Athens, GA
30602-7403, USA}
\email{jiewang@math.uga.edu}

\dedicatory{Dedicated to Herb Clemens}

\thanks{The first author was partially supported by the National
Science Foundation and the National Security Agency. Any opinions, findings
and conclusions or recommendations expressed in this material are those
of the author(s) and do not necessarily reflect the views of the
NSF or NSA}

\subjclass[2010]{Primary 14C30 ; Secondary 14D06, 14K12, 14H40}

\begin{abstract}

  The primitive cohomology of the theta divisor of a principally
  polarized abelian variety of dimension $g$ is a Hodge structure of
  level $g-3$. The Hodge conjecture predicts that it is contained in
  the image, under the Abel-Jacobi map, of the cohomology of a family
  of curves in the theta divisor. In this paper we use the Prym map to
  show that this version of the Hodge conjecture is true for the theta
  divisor of a general abelian fivefold.

\end{abstract}

\maketitle

\tableofcontents

\section*{Introduction}

Let $A$ be a principally polarized abelian variety ({\em ppav}) of
dimension $g\geq 4$, let $\T$ be a symmetric theta divisor for $A$, and assume
that $\T$ is smooth. The cohomology group
\[
H^{g-1}(\T,\bZ)
\]
contains a natural sublattice of rank $g! - \frac{1}{g+1}{2g \choose g}$ (see \cite[p. 561]{I4})
\[
\bK:=Ker(H^{g-1}(\T, {\bZ}) \stackrel{j_*}{\longrightarrow}
H^{g+1}(A,{\bZ}))
\] 
which we call the primitive cohomology of $\T$. There is also a Hodge structure
$\bH \subset H^{g-1}(\T,\bZ)$ which fits in an exact sequence
\[
0\lra\bK\lra \bH \lra H^{g-3} (A,\bZ) \lra 0.
\]
By \cite[p. 562]{I4}, these Hodge structures are all of level $g-3$: For a
rational Hodge structure $V := (V_{\bQ} , V_{\bQ }\otimes\bC =\oplus_{
  p+q=n } V^{ p,q })$ of weight $n$, the level $l(V)$ of $V$ is
defined as the positive integer
\[
l(V) := max\{ |p-q| : V^{ p,q }\neq 0\}.
\]
Grothendieck's version of the Hodge conjecture states that if $H^{g-1} (\T, \bQ
)$ contains a Hodge substructure of level $g-3$, then it is
contained in the image, under Gysin push-forward, of the cohomology of a smooth (possibly reducible) variety of dimension $g-2$. After tensoring with $\bQ$ we have
\[
\bH_{\bQ}  := \bH \otimes \bQ = \bK_{\bQ} \oplus \theta \cdot H^{g-3} (A, \bQ)
\]
where $\theta := [\T]$ is the cohomology class of $\T$ and $\theta \cdot H^{g-3} (A, \bQ)$ is the image of $H^{g-3} (A, \bQ) \cong H^{g-3} (\T,\bQ)$ in $H^{g-1} (\T, \bQ)$. This is also a Hodge substructure of level $g-3$ and satisfies the Hodge conjecture since it is in the image, for instance, of the cohomology  of an intersection of a translate of $\T$ with $\T$. Therefore the Hodge conjecture for $\bH_{\bQ}$ is equivalent to the Hodge conjecture for $\bK_{\bQ}$.

An equivalent formulation of the Hodge conjecture is that $\bH_{\bQ}$ or $\bK_{\bQ}$ is contained in the image, under the Abel-Jacobi map, of the cohomology of some family of curves in $\T$ (see e.g. \cite[pp. 492-493]{I11} for a proof of this elementary fact). For $g=4$, it was proved in \cite{I4} that the family of Prym-embedded curves in $\T$ is a solution to this problem for $\bH_{\bQ}$.

Denote $\cA_g$ the coarse moduli space of principally polarized abelian varieties of dimension $g$.
Our main result is
\begin{theorem}\label{mainthm}

For $(A,\T)$ in a non-empty Zariski open subset of $\cA_5$, the general Hodge conjecture holds for the Hodge structure $\bH_\bQ \subset H^4(\T, \bQ)$
  and hence $\bK_\bQ \subset H^4(\T, \bQ)$.

\end{theorem}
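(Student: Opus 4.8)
The plan is to exhibit $(A,\T)$ as a Prym variety and to realize $\bK_\bQ$ (equivalently $\bH_\bQ$, since the general Hodge conjecture for the two is equivalent by the discussion above) inside the image of a cylinder homomorphism attached to the universal family of Prym-embedded curves contained in $\T$. The first step is the reduction to Pryms: the Prym map $\cP\colon\cR_6\to\cA_5$ from the moduli space of connected \'etale double covers of smooth genus-$6$ curves has equidimensional source and target and is dominant (indeed generically finite of degree $27$), so a general $(A,\T)\in\cA_5$ can be written as $\Prym(\tC\to C)$ with $g(C)=6$, $g(\tC)=11$, $\dim A=5$, and $\T$ smooth. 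Since the locus of $(A,\T)$ for which a given family of curves does the job is constructible, it is enough to produce such a family at the general point of $\cA_5$.

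\emph{The family of Prym-embedded curves.} Let $\f\colon\tC\to A$ be the Abel--Prym map; as $C$ is general, $\f$ is an embedding and $X:=\f(\tC)\subset A$ is a smooth curve of genus $11$ of class $\frac{2[\T]^{4}}{4!}$ --- twice the minimal class, a doubling forced by the fact that a general ppav of dimension $5$ is not a Jacobian and hence, by the Matsusaka--Ran criterion, carries no effective $1$-cycle of minimal class. The translates of $X$ contained in $\T$ are parametrized by $B:=\{\,x\in A : x+X\subseteq\T\,\}$, a translate of $\bigcap_{x'\in X}\T_{x'}$; a computation analogous to the Jacobian identity $\bigcap_{c\in C}\T_c\cong W_{g-2}$, carried out for the double-minimal class, gives $\dim B = g-3 = 2$, in agreement with the $g=4$ case of \cite{I4}. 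One thus obtains a family $\pi\colon\cC\to B$ of genus-$11$ curves over the surface $B$ --- the universal Prym-embedded curve in $\T$, with $\cC$ a threefold --- together with a morphism $f\colon\cC\to\T$ that is generically finite onto a threefold $Z\subset\T$. Gysin push-forward is the desired cylinder homomorphism (Abel--Jacobi map)
\[
\gamma = f_*\colon H^2(\cC,\bQ)\longrightarrow H^4(\T,\bQ),
\]
and the summand that matters is $H^1(B,R^1\pi_*\bQ)$, which is of weight $2$, hence of level $\le 2$, and which after the Tate twist built into $\gamma$ has the same Hodge types $(3,1),(2,2),(1,3)$ as $\bK_\bQ$.

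\emph{Surjectivity onto $\bK_\bQ$.} Carrying out the construction over an open subset of $\cR_6$ (equivalently, over a finite cover $\cW\to\cA_5$) turns $\gamma$ into a morphism of variations of Hodge structure, so that $\cV:=\im(\gamma)$ and $\cV\cap\bK_\bQ$ are sub-variations of $H^4(\T,\bQ)$ on $\cW$. Granting that $\bK_\bQ$ is an irreducible local system on $\cW$ --- the monodromy input, so that any nonzero sub-variation of it is all of it --- it then suffices to show $\cV\cap\bK_\bQ\ne 0$. For this I would specialize $\tC\to C$ to a cover over a special genus-$6$ curve (for instance a trigonal one, whose Prym is a Jacobian by Recillas's construction, making $\T$, the curve $X$, and the family $\cC$ explicit), verify there --- after replacing $\T$ by a resolution if it has acquired singularities --- that $\gamma$ does not vanish on the primitive part of the middle cohomology, and deduce by a limiting argument (semicontinuity of the rank of $\im(\gamma)$, or passage to limit mixed Hodge structures) that it still does not vanish at the general point. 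Combined with the irreducibility this yields $\bK_\bQ\subseteq\cV=\im(\gamma)$, which is precisely the general Hodge conjecture for $\bK_\bQ$, hence for $\bH_\bQ$.

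\emph{Main obstacle.} The crux is the non-vanishing of $\gamma$ on the primitive cohomology: one must control simultaneously the Hodge theory of the threefold $\cC$ --- in particular that the transcendental part of $H^1(B,R^1\pi_*\bQ)$ is large enough and is not killed by the push-forward --- and the precise position of $\bK_\bQ$ inside $H^4(\T,\bQ)$, and then locate a specialization explicit enough to compute with yet general enough to detect a primitive class; in practice I expect this to reduce to an infinitesimal, Petri-type computation with linear series on $C$ and its double cover $\tC$. A secondary but still substantial ingredient is establishing the irreducibility of the monodromy representation on $\bK_\bQ$ over $\cA_5$.
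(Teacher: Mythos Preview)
Your outline shares the paper's starting point (reduce to a general Prym of a genus-$6$ double cover and study an Abel--Jacobi map for a family of curves in $\Theta$) but diverges in both the family chosen and the logical structure, and the divergences are where the real content lies. The paper does \emph{not} use translates of the Abel--Prym curve; it builds a more elaborate family $F_r\to\tG^1_5$ out of the Brill--Noether locus $G^1_5(X)$ (Section~\ref{seccurves}), and this choice is not incidental. Your claim that the locus $B$ of Prym-embedded translates has dimension $g-3=2$ is asserted by analogy with $\bigcap_{c\in C}\Theta_c\cong W_{g-2}$, but that Jacobian identity gives dimension $g-2$, not $g-3$, so the analogy does not support the conclusion; you would need an actual computation here, and the fact that the authors abandon Prym-embedded curves after the $g=4$ case of \cite{I4} is a warning sign that either the dimension or the surjectivity fails.

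More seriously, your argument rests on two black boxes you explicitly flag: irreducibility of the monodromy of $\bK_\bQ$ over $\cA_5$, and non-vanishing of $\gamma$ on $\bK_\bQ$ at a specialization. Neither is established, and the paper uses neither. Instead of ``irreducible plus nonzero,'' the paper proves \emph{direct surjectivity} of the Abel--Jacobi map onto $\bH_\bQ$ modulo $\theta\cdot H^2(A,\bQ)$ (Theorem~\ref{thmrho2rho1}), by degenerating the double cover to a Wirtinger cover of a one-nodal genus-$6$ curve (so the Prym becomes the Jacobian of a genus-$5$ curve with singular $\Theta_0$), constructing a semistable model, computing the limit mixed Hodge structure on $H^4(\Theta_t)$ via Clemens--Schmid (Sections~\ref{secsstheta}--\ref{seccohtheta}), and then carrying out explicit cycle-class computations on the strata of the central fiber (Sections~\ref{secF'''}--\ref{secAJ1}) to show the image hits every graded piece. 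Your proposed trigonal (Recillas) specialization would land on exactly the same singular Jacobian theta divisor, so verifying non-vanishing there would in practice require machinery comparable to what the paper develops---at which point one may as well prove surjectivity outright and dispense with the monodromy hypothesis.
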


As the rational cohomology of $\Theta$ is the sum  of $\bK_{\bQ}$ and the rational cohomology of $A$, our result, together with the main result of \cite{hazama94}, implies

\begin{corollary}

  For $(A, \T)$ in the complement of countably many proper Zariski closed subsets of $\cA_5$, the general Hodge conjecture holds for $\T$.

\end{corollary}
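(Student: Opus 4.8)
I would deduce the Corollary from Theorem~\ref{mainthm}. Since $\T$ is a smooth ample divisor in $A$, the Lefschetz theorems give isomorphisms of $\bQ$-Hodge structures $H^{k}(\T,\bQ)\cong H^{k}(A,\bQ)$ for $k<g-1$; in the middle degree the Gysin map together with hard Lefschetz on $A$ (multiplication by $\theta$ is an isomorphism $H^{g-1}(A,\bQ)\cong H^{g+1}(A,\bQ)$) gives the orthogonal splitting $H^{g-1}(\T,\bQ)=j^{*}H^{g-1}(A,\bQ)\oplus\bK_\bQ$; and hard Lefschetz on $\T$ reduces the degrees $>g-1$ to the previous ones. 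So $H^{\bullet}(\T,\bQ)$ is, as a $\bQ$-Hodge structure, a direct sum of $\bK_\bQ$ with Tate twists of pieces of $H^{\bullet}(A,\bQ)$. The general Hodge conjecture is inherited by direct summands and is compatible with these particular Tate twists, each one being realised --- exactly as in the Introduction for $\theta\cdot H^{g-3}(A,\bQ)$ --- by intersecting with a translate of $\T$. Hence on the intersection of the open set of Theorem~\ref{mainthm} with the locus of \cite{hazama94} (the complement of countably many proper closed subsets) the general Hodge conjecture holds for every summand at once, hence for $H^{\bullet}(\T,\bQ)$, i.e.\ for $\T$; and the intersection is again the complement of countably many proper Zariski closed subsets of $\cA_5$.

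The real content is Theorem~\ref{mainthm}, and, as recalled in the Introduction, it suffices to treat $\bK_\bQ\subset H^{4}(\T,\bQ)$. I would first establish that $\bK_\bQ$ is an \emph{irreducible} $\bQ$-Hodge structure for $(A,\T)$ general, via the Prym map: by generic finiteness of $\cR_{6}\to\cA_{5}$ (degree $27$) a general $(A,\T)$ is $\Prym(\widetilde C/C)$ with $\widetilde C\to C$ a general connected \'etale double cover and $C$ of genus $6$, and, up to a Tate twist, $\bK_\bQ$ is an irreducible sub-Hodge-structure of the cohomology of a variety built functorially from $C$ (of ``$C^{(2)}$ type'', i.e.\ of $\mathrm{Sym}^{2}H^{1}(C)$ flavour) --- irreducible because for $C$ general the Mumford--Tate group of $H^{1}(C)$ is the full symplectic group, under which $\mathrm{Sym}^{2}$ of the standard representation is irreducible (it is the adjoint representation). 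Consequently, for \emph{any} family of curves $\cC\to B$ in $\T$ over a surface $B$ ($\dim B=g-3=2$), with tautological map $u\colon\cC\to\T$ and projection $p\colon\cC\to B$, the image of the cylinder homomorphism $u_{*}p^{*}\colon H^{2}(B,\bQ)\to H^{4}(\T,\bQ)$ is a Hodge substructure of $H^{4}(\T,\bQ)$, so it either contains $\bK_\bQ$ or meets it in $0$. The whole theorem then reduces to producing \emph{one} family of curves in $\T$ whose cylinder map is not identically zero on $\bK_\bQ$.

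The family I would take is the natural family of Prym-embedded curves in $\T$: generalizing the $g=4$ construction of \cite{I4}, for $A=\Prym(\widetilde C/C)$ one has such a family over the surface $C^{(2)}$, the point of this choice being that $H^{2}(C^{(2)},\bQ)$ contains the copy of $\mathrm{Sym}^{2}H^{1}(C)$ matching $\bK_\bQ$. For the non-vanishing I would degenerate. Let $C$ acquire a node detaching an elliptic tail, $C\rightsquigarrow C'\cup E$ with $C'$ of genus $5$, the double cover degenerating so as to be trivial over $E$; then $\Prym(\widetilde C/C)$ degenerates to $A'\times E$ with $A'=\Prym(\widetilde C'/C')$ an \emph{arbitrary} abelian fourfold (dominance of $\cR_{5}\to\cA_{4}$), the theta divisor degenerates to the reducible divisor $\T_{0}=(\T'\times E)\cup(A'\times\{0\})$, and the family of Prym-embedded curves degenerates, on the first component, to (the $g=4$ family of Prym-embedded curves in $\T'$) $\times\,E$. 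By the main theorem of \cite{I4} the $g=4$ cylinder map is onto $\bH'_\bQ$, in particular nonzero on $\bK'_\bQ$; crossing with $E$ and running a Clemens--Schmid / limit-mixed-Hodge-structure analysis of the degeneration $\T_{t}\rightsquigarrow\T_{0}$, one concludes that the limit of the $g=5$ cylinder map is nonzero on the summand of $\bK_\bQ$ that specializes into $\bK'_\bQ\otimes H^{1}(E)$. Semicontinuity of the rank of the cylinder map restricted to $\bK_\bQ$ then propagates non-vanishing to the general $(A,\T)$, and irreducibility of $\bK_\bQ$ promotes this to $\bK_\bQ\subset\operatorname{im}(u_{*}p^{*})$, proving Theorem~\ref{mainthm}.

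I expect the genuine obstacle to be the geometry behind the last paragraph. One must make the $g=5$ family of Prym-embedded curves in $\T$ precise and control it near the boundary --- equivalently, show that the curves ``($g=4$ Prym curves)$\,\times\,E$'' lying in the \emph{reducible} central divisor $\T_{0}$ actually deform to honest curves in the \emph{smooth} theta divisors $\T_{t}$, a deform-curves-in-Jacobians-to-non-Jacobians problem requiring the vanishing of a suitable obstruction group (an $H^{1}$ of a normal-sheaf type) --- and one must then carry out the Clemens--Schmid bookkeeping for the limit mixed Hodge structure of the $\T_{t}$ carefully enough to be certain that the piece $\bK'_\bQ\otimes H^{1}(E)$ of $\bK_\bQ$ is neither absorbed into vanishing cycles nor cancelled between the two components of $\T_{0}$. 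Everything else --- the reduction to $\bK_\bQ$, its irreducibility, dominance of the two Prym maps, the semicontinuity step, and the $g=4$ input of \cite{I4} --- is either formal Hodge theory or already in the literature.
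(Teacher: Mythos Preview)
Your first paragraph is a correct and complete proof of the Corollary, and it is exactly the paper's argument: the Corollary follows from Theorem~\ref{mainthm} for $\bK_\bQ$ together with Hazama's result \cite{hazama94} for the $H^\bullet(A,\bQ)$-summands.

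The remainder sketches a proof of Theorem~\ref{mainthm} itself along a route genuinely different from the paper's. You propose to (i) prove $\bK_\bQ$ is an irreducible Hodge structure, (ii) take a family of Prym-embedded curves over $C^{(2)}$, and (iii) show the cylinder map is nonzero on $\bK_\bQ$ by degenerating to a product $A'\times E$ and invoking the $g=4$ result of \cite{I4}. The paper does none of these. Its family is $F_r$ over the surface $\tG^1_5(X)$ of Section~\ref{seccurves} (not $X^{(2)}$); its degeneration is to a \emph{Wirtinger} cover, so that the Prym specializes to the Jacobian $JC$ of a general genus-$5$ curve with irreducible theta divisor (not to a product with reducible theta divisor); and rather than appeal to irreducibility, the paper computes the limit Abel--Jacobi map on every graded piece of the limit mixed Hodge structure (Sections~\ref{seccohtheta}--\ref{secAJ1}) and shows directly that its image contains $\bH_\bQ$ modulo $\theta\cdot H^2(A,\bQ)$. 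The $g=4$ theorem is not used as input.

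Your irreducibility argument has a genuine gap. You argue via $\mathrm{Sym}^2 H^1(C)$ and the Mumford--Tate group of the genus-$6$ base curve $C$, but the Hodge structure $\bK_\bQ$ is intrinsic to $(A,\T)$ and does not remember a Prym realization (there are $27$ of them); the relevant group is the Mumford--Tate group of $A$, which for $A$ general is $GSp_{10}$ acting on $H^1(A,\bQ)\cong\bQ^{10}$, not $Sp_{12}$ acting on $H^1(C,\bQ)$. The numerical coincidence $\dim\bK_\bQ=78=\dim\mathrm{Sym}^2\bQ^{12}$ is not an identification of Hodge structures, and you give no $Sp_{10}$-argument for irreducibility. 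Without irreducibility your reduction to a single nonzero cylinder map collapses, and you are back to having to control the full image --- which is exactly what the paper does, with its particular family over $\tG^1_5$ and its particular (Wirtinger) degeneration chosen precisely so that the limit family of curves, the limit theta divisor, and the Clemens--Schmid bookkeeping can actually be carried out.
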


A general ppav of dimension $5$ is the Prym variety of an \'etale
double cover of smooth curves $\tX\ra X$ with $X$ general of genus $6$.

We construct a
family of curves in $\T$ (see Section \ref{seccurves})
$$\xymatrix{F_r\ar[r]^-{\rho_2}\ar[d]_-{\rho_1}&\T\\
\tG^1_5&}$$
dependent on the choice of a general point $r\in \tX$. Here ${\tG^1_5}$ is an \'etale double cover of the variety $G^1_5(X)$ parametrizing pencils of degree $5$ on $X$ ($\cong W^1_5(X)$ if $X$ is not a plane quintic), which is a smooth irreducible surface for $X$ sufficiently general. The Abel-Jacobi map for this
family of curves is, by definition, 
$$\rho_{2*}\rho_1^* : H^{2}({\tG^1_5}) \ra
H^{4}(\T).$$ The image of the Abel-Jacobi map defines a Hodge
substructure of level $\leq 2$ of the cohomology of $\T$. Theorem \ref{mainthm} is a direct consequence of

\begin{theorem}\label{thmrho2rho1}
The Hodge structure $\bH_\bQ$ is the sum of $\theta\cdot H^2(A,\bQ)$ and the image of $\rho_{2*}\rho_1^*$.
\end{theorem}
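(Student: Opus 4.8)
The plan is to globalize the construction into a variation of Hodge structure over the moduli of incidence data $(\tX\to X,\ r\in\tX)$, reduce the theorem to one non-triviality statement about the Abel--Jacobi map, and settle that by specialization. Since the excerpt already gives $\bH_\bQ=\bK_\bQ\oplus\theta\cdot H^2(A,\bQ)$ with $\theta\cdot H^2(A,\bQ)\subseteq\bH_\bQ$, the assertion is equivalent to the two inclusions $\im(\rho_{2*}\rho_1^*)\subseteq\bH_\bQ$ and $\bK_\bQ\subseteq\theta\cdot H^2(A,\bQ)+\im(\rho_{2*}\rho_1^*)$. The first is formal: $\rho_1^*$ preserves Hodge type and, since the fibres of $\rho_1$ are curves, the Gysin map along the proper map $\rho_2\colon F_r\to\T$ raises degree by $2$ and Hodge type by $(1,1)$, so $\im(\rho_{2*}\rho_1^*)$, coming from $H^2(\tG^1_5)$, is a sub-Hodge structure of $H^4(\T,\bQ)$ of level $\le 2$. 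Writing $j\colon\T\hookrightarrow A$, hard Lefschetz and a Betti-number count give $H^4(\T,\bQ)=j^*H^4(A,\bQ)\oplus\bK_\bQ$, whence $H^4(\T,\bQ)/\bH_\bQ\cong H^4(A,\bQ)_{\mathrm{prim}}$; for $(A,\T)$ general the latter is an irreducible Hodge structure of level $4$ with no proper sub-Hodge structure, so every level-$\le 2$ sub-Hodge structure of $H^4(\T,\bQ)$ lies in $\bH_\bQ$, proving $\im(\rho_{2*}\rho_1^*)\subseteq\bH_\bQ$.

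For the second inclusion I would work in families. Let $\cB$ be a Zariski-open subset of the universal curve $\tX$ over a Zariski-open subset of $\cR_6$, chosen so that over each point $\tG^1_5$ is a smooth surface, $F_r$ a smooth threefold, and $\rho_1,\rho_2$ are defined in families; then $\cB$ is smooth, irreducible, and dominates $\cA_5$, the local systems $\bK_\bQ$, $\bH_\bQ$ and $H^2(\tG^1_5)$ carry variations of Hodge structure over $\cB$ (after shrinking), and $\cM:=\im(\rho_{2*}\rho_1^*)$ is a sub-variation $\cM\subseteq\bH_\bQ$. I would then use two genericity inputs. First, the monodromy on $\bK_\bQ$ acts irreducibly --- equivalently $\bK_\bQ$ is an irreducible Hodge structure for general $(A,\T)$ --- which I would take from \cite{I4} or prove by Picard--Lefschetz theory; if $\bK_\bQ$ is reducible, decompose it into monodromy-isotypic summands and argue on each. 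Second, for general $(A,\T)$ there is no nonzero morphism of Hodge structures between $\bK_\bQ$ and $\theta\cdot H^2(A,\bQ)\cong\bigwedge^2 H^1(A,\bQ)(-1)$, since then the Mumford--Tate group of $A$ is the full symplectic group and the two structures share no isotypic component. Given these, any sub-variation $\cM\subseteq\bH_\bQ$ whose projection to $\bK_\bQ$ is nonzero already contains $\bK_\bQ$.

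The heart of the proof is thus to show that for general $b\in\cB$ the composite $H^2(\tG^1_5)\to H^4(\T,\bQ)\twoheadrightarrow\bH_\bQ/\theta\cdot H^2(A,\bQ)\cong\bK_\bQ$ is nonzero. As this is a morphism of local systems over the irreducible $\cB$, its rank is lower semicontinuous, so it suffices to produce a single point where it does not vanish. The natural device is degeneration: specialize to a point where $X$ acquires a $g^1_3$, so that by Recillas' construction $(A,\T)$ becomes $(JY,W_4(Y))$ for a genus-$5$ curve $Y$ with a $g^1_4$, with $\tG^1_5$, $F_r$, $\rho_1$, $\rho_2$ now explicit in terms of $Y$; since the primitive cohomology of a Jacobian theta divisor is classically described by the curve, one then computes the Abel--Jacobi image of the limiting family and checks it is not absorbed by $\theta\cdot H^2(JY,\bQ)$. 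Alternatively one can argue at a general point: the $(3,1)$-part of $\cM$ is $\rho_{2*}\rho_1^*\big(H^0(\tG^1_5,\omega_{\tG^1_5})\big)$ while $(\theta\cdot H^2(A,\bQ))^{3,1}$ has dimension $\binom{5}{2}=10$, so it is enough that $\rho_{2*}\rho_1^*$ not annihilate $H^0(\tG^1_5,\omega_{\tG^1_5})$ and have image of dimension $>10$ in $H^{3,1}(\T)$, which one unwinds through the exact sequences relating $\Omega^\bullet_\T$, $\Omega^\bullet_A|_\T$ and $N_{\T/A}=\cO_\T(\T)$ together with the Gauss map of $\T$.

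Combining the non-vanishing with the two inputs above yields $\bK_\bQ\subseteq\cM_b$ on a dense open subset of $\cB$, hence $\cM_b+\theta\cdot H^2(A,\bQ)=\bH_\bQ$ there; together with the first inclusion and the fact that $\cB$ dominates $\cA_5$ (so the image of this dense open contains a non-empty Zariski-open subset), this is Theorem \ref{thmrho2rho1}, and Theorem \ref{mainthm} follows. I expect the non-vanishing to be the main obstacle: the family $F_r$ is intricate, and even showing its Abel--Jacobi image meets the primitive cohomology demands either a careful identification of the limiting family of curves on $W_4(Y)$ and of which cycle classes survive the trigonal degeneration, or a concrete enough hold on the surface $\tG^1_5$ and on $H^4(\T,\bQ)$ in Hodge type $(3,1)$.
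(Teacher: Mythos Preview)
Your strategy is genuinely different from the paper's, and it is a reasonable outline, but two of its load-bearing steps are not actually in hand.

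First, the reduction to a single non-vanishing via monodromy irreducibility of $\bK_\bQ$ is not established. The reference \cite{I4} treats $g=4$, not $g=5$, and ``Picard--Lefschetz theory'' is a slogan rather than an argument here: the vanishing cycles for a Lefschetz degeneration of $\T$ live in $H^4(\T)$, and one would have to identify which orbit structure they generate inside $\bK_\bQ$ for fivefolds specifically. Your fallback (decompose into isotypic summands and argue on each) defeats the purpose: you would then need non-vanishing on \emph{every} summand, which is no easier than proving full surjectivity. The paper avoids this entirely by never invoking irreducibility; it proves surjectivity onto $\bH_\bQ$ modulo $\theta\cdot H^2(A,\bQ)$ directly.

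Second, the non-vanishing itself is where essentially all of the work lies, as you acknowledge. Your suggested trigonal (Recillas) degeneration lands on $(JY,\T_Y)$ with $Y$ of genus $5$, whose theta divisor is singular along $W^1_4(Y)$; this is exactly the geometric configuration the paper confronts, and computing the Abel--Jacobi image there still requires a limit mixed Hodge structure analysis. The paper carries this out using the Wirtinger degeneration $\tC_{pq}\to C_{pq}$ instead: after a base change and semistable reduction it computes the weight filtration on $H^4(\T_t)$ (Propositions \ref{propwH4}, \ref{H4theta}), identifies generators of $Gr_4\bH_t$ and $Gr_3\bH_t$ (Proposition \ref{propprim}), and then verifies, by explicit cycle-class computations on the strata of the central fibre (Propositions \ref{AJ21}--\ref{AJ2global}), that the Abel--Jacobi image covers each graded piece modulo $\theta\cdot H^2(A)$. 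That is, the paper replaces your ``non-vanishing $+$ irreducibility'' step by a direct, graded, surjectivity computation; this is more laborious but does not rest on unproved structural facts about $\bK_\bQ$.

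Your argument for the inclusion $\im(\rho_{2*}\rho_1^*)\subset\bH_\bQ$ via the level and the irreducibility of $H^4(A,\bQ)_{\mathrm{prim}}$ for very general $A$ is clean and correct for the generic statement; the paper does not isolate this direction because the Hodge conjecture application only needs $\bH_\bQ\subset\im(\rho_{2*}\rho_1^*)+\theta\cdot H^2(A,\bQ)$.
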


We prove Theorem \ref{thmrho2rho1} by specializing the \'etale double cover $\tX\ra X$ to a Wirtinger cover $\tC_{pq}\ra C_{pq}$, where $C_{pq}$ is the nodal curve of genus $6$ obtained from a general curve $C$ of genus $5$ by identifying $p$ and $q$ in $C$ and the Wirtinger cover $\tC_{pq}$ is obtained as the union
of two copies $C_1$ and $C_2$ of $C$ where $p_k (=p)$ on $C_k$ is identified
with $q_{3-k}$ on $C_{ 3-k}$ for $k=1,2$. The Prym variety of the Wirtinger cover $\tC_{pq}\ra C_{pq}$ is naturally isomorphic to the polarized Jacobian $(J(C),\T_C)$ of the curve $C$ (see e.g. Section \ref{subsecPrymfamily} below).

In most of the paper we work with a one-parameter family $\cX \ra T$ of curves of genus $6$ over an analytic disc $T$ with smooth total space, with general fiber $X_t$ a general curve of genus $6$ and special fiber $X_0 = C_{pq}$ at $0\in T$ a general one-nodal curve of genus $6$. We also assume given an \'etale double cover $\tcX \ra\cX$ whose special fiber $(\tX_0 \ra X_0) = (\tC_{pq} \ra C_{pq})$ is the Wirtinger cover described above. To this family one associates the family of polarized Prym varieties $(\cA, \T) \ra T$ with special fiber $(A_0 , \T_0)$.

The plan of the paper is as follows.

In Section \ref{seccurves} we construct the family of curves $F_r$ in the general case. In Section \ref{limitcurve} we describe the family of curves in the Wirtinger double cover case. We also explicitly describe the flat limit $G_0$ of the base $G_t :=\widetilde{G}^1_5(X_t)$ of the family. This is the transverse union of two smooth isomorphic surfaces. We prove that the total space $\cG \ra T$ of the family of the $G_t$ is smooth.

In Section \ref{sectheta} we describe the total space of the family of theta divisors $\T \ra T$. The singular locus of $\T_0$ is a translate of the smooth genus $11$ curve $W^1_4 \subset \Pic^4 C \cong JC$ parametrizing pencils of degree $4$. 
We prove that the total space $\T$ has ten ordinary double points corresponding to the five $g_i\in W^1_4$, $i=1,...,5$, such that $h^0(g_i-p-q)>0$, and their residuals $h_i:= |K_C-g_i|$.

In Section \ref{secsstheta}, we construct a semistable reduction $\widetilde{\T}$ of the family $\{\T_t\}$. The central fiber $\widetilde{\T}_0$ of the new family 
has two components $M_1$ and $M_2$, where $M_1$ is a resolution of $\T_0$ and $M_2$ is the exceptional divisor. During this process $T$ is replaced by a double cover ramified only at $0$ and we also replace the family $\cG$ by $\widetilde{\cG}$, which is a resolution of the base change of $\cG$ to this double cover.

In Section \ref{secgeneralMH} we recall the necessary background material about the Clemens-Schmid exact sequence and limit mixed Hodge structures.

In Section \ref{seccohtheta} we compute the limit mixed Hodge structure induced by the family $\tT$ on the cohomology of $\T_t$. The weight filtration is nonzero only in weights 3, 4, 5 with associated graded pieces as follows:
$$Gr_3H^4(\T_t)\cong Gr_5H^4(\T_t)\cong\bQ^{12}$$
and $$Gr_4H^4(\T_t)\cong \bQ^{264}.$$

To extend the family of curves to the central fiber we first assume given a section $r: T \ra \tcX, t \mapsto r_t$ of the family of curves $\tcX$. Next we replace the families $F_{r_t}$ by their images in the products $G_t \times \T_t$. The Abel-Jacobi map on the fiber at $t$ can then be described as the map induced by the cycle $(\rho_1,\rho_2)_*[F_{r_t}]\in H^6(G_t\times \T_t)$:
$$\xymatrix{H^2(G_t)\ar[r]^-{\rho_1^*}&H^2(G_t\times\T_t)\ar[rr]^-{\cup(\rho_1,\rho_2)_*[F_{r_t}]}&&H^8( G_t\times\T_t)\ar[r]^-{\rho_{2*}}&H^4(\T_t).}$$

To compute the limit of these maps at $0$, we need a semi-stable reduction of the fiber product $\widetilde{\cG} \times_T \tT$. This is constructed in Section \ref{secssprod}.
The resulting space $\widetilde{\tilde{\cG}\times_T\tilde{\T}}$ is a small resolution of the fiber product $\widetilde{\cG}\times_T\widetilde{\T}$. 

In Section \ref{secAJgensp} we show how the computation of the Abel-Jacobi map on the general fiber can be reduced to computing it on (the strata of) the special fiber. We summarize the latter computations in Propositions \ref{AJ21}-\ref{AJ2global} and show how Theorem \ref{thmrho2rho1} follows from them.

Section \ref{secF'''} describes the limit families of curves at $t=0$.

In Section \ref{secAJ1} we prove Propositions \ref{AJ21}-\ref{AJ2global}. In other words, we compute the image of the Abel-Jacobi map $AJ$ on the graded level with respect to the weight filtration:

\begin{eqnarray}\label{Gr4}Gr_2H^2(\widetilde{\cG})\ra Gr_4H^4(\widetilde{\T})
\end{eqnarray}
and
\begin{eqnarray}\label{Gr3}Gr_1H^2(\widetilde{\cG})\ra Gr_3H^4(\widetilde{\T})
\end{eqnarray}

Finally in the Appendix (Section \ref{secAppx}) we gather some technical results needed in the rest of the paper.

\begin{remark}

For $g\leq 2$, $g! - \frac{1}{g+1}{2g \choose g} = 0$ so $\bK =0$. For $g=3$, the lattice $\bK$ has rank $1$ and level $0$, i.e., it is generated by a Hodge class. The abelian variety $(A, \T) = (JC, \T_C) $ is the Jacobian of a curve of genus $3$. The theta divisor is isomorphic to the second symmetric power $C^{(2)}$ of $C$ and $\bK$ is generated by the class $\theta - 2\eta$ where $\eta$ is the cohomology class of the image of $C$ in $C^{(2)}$ via addition of a point $p$ of $C$:
\[
\begin{array}{ccc}
C & \inj & C^{(2)} \\
t & \mapsto & t+p.
\end{array}
\]

\end{remark}

\section*{Notation and Conventions}
\begin{enumerate}

\item Unless otherwise specified, all singular cohomology groups are with $\bQ$-coefficients.

\item For a smooth curve $C$ of genus $g$ and integer $k>0$, choose a symplectic basis
 $$\xi_i\in H^1(C,\bZ)\cong H^1(Pic^kC,\bZ),\ \ \  i=1,...,2g.$$
 Put $\xi_i':=\xi_{i+g}$, $\sigma_i=\xi_i\xi_i'$ for $i=1,..,g$ and denote $\theta=\sum_{i=1}^g\sigma_i$ the class of the theta divisor in $Pic^kC$. We also denote $\xi_i$, $\sigma_i$ and $\theta$ the pull backs to the k-th symmetric product $C^{(k)}$ under the natural map
  $$C^{(k)}\ra Pic^kC.$$
  Finally, denote $\eta\in H^2(C^{(k)},\bZ)$ the class of the cycle $pt+C^{(k-1)}\subset C^{(k)}$.
  
\item We will interchangeably refer to elements of $Pic^k C$ as invertible sheaves or complete linear systems. We use $\equiv$ to denote linear equivalence between divisors and $D_1\le D_2$ means $D_2-D_1$ is an effective divisor. As usual, we denote $W^r_d \subset Pic^d$ the scheme parametrizing complete linear systems of degree $d$ and dimension $r$. By a $g^r_d$ we will mean a linear system of degree $d$ and dimension $r$.

\item \label{notesym} For products of symmetric powers of $C$, we denote $\omega_k := pr_k^* \omega \in H^{\bullet}(C^{(n_1)}\times ...\times C^{(n_k)}\times ...)$, where $\omega\in H^{\bullet}(C^{(n_k)})$ and $pr_k$ is the $k$-th projection.

\item Via translation by an invertible sheaf of degree $g-1$, we identify $JC = Pic^0 C$ with $Pic^{g-1} C$ so that $\T_C$ is identified with Riemann's theta divisor $W^0_{g-1} \subset Pic^{g-1} C$.

\item As usual, $\omega_C$ will denote the dualizing sheaf of $C$ and $K_C$ an arbitrary canonical divisor on $C$.
\end{enumerate}

\section{The family of curves in $\T$: the general case}\label{seccurves}

Let $X$ be a smooth curve of genus $6$ with an \'etale double cover
$\tX$ of genus 11. For a pencil $M$ of degree $5$ on $X$ consider the curve $B_M$
defined by the pull-back diagram
\[
\begin{array}{ccc}
B_M& \subset & \tX^{ (5) } \\
\downarrow & &\downarrow \\
\bP^1 = |M| & \subset & X^{ (5) }.
\end{array}
\]
By \cite[p. 360]{beauville82} the curve $B_M$ has two isomorphic connected
components, say $B_{M}^1$ and $B_{M}^2$.
Put $M ' = | K_X - M |$. Then
for any $D\in B_M\subset\tX^{ (5) }$ and any $D'\in B_{M'}\subset\tX^{ (5)
}$, the push-forward to $X$ of $D+D'$ is a canonical divisor on
$X$. Hence the image of
\[\begin{array}{ccc}
B_M\times B_{M'} & \lra & Pic^{ 10 }\tX \\
(D,D') &\longmapsto &\cO_{\tX } (D+D' )
\end{array}
\]
is contained in the preimage of $\omega_X$ by the Norm map $Nm :Pic^{
10 }\tX\ra Pic^{ 10 }X$. This preimage has two connected components,
say $A_1$ and $A_2$,
each a principal homogeneous space under the Prym variety $(A, \T)$ of the cover
$\tX\ra X$ and parametrizing divisors whose spaces of global sections
are even, respectively odd, dimensional. If we have labeled the connected
components of $B_M$ and $B_{M'}$ in such a way that $B_{M}^1\times B_{M'}^1$ maps
into $A_1$, then $B_{M}^2\times B_{M'}^2$ also maps into $A_1$ while $B_{M}^1\times
B_{M'}^2$ and $B_{M}^2\times B_{M'}^1 $ map into $A_2$.

In order to obtain a family of curves in the theta divisor $\T=\T_{\tX\ra X}=\frac{1}{2}\T_{\tX}|_{A_1}$ of the Prym
variety $A$, we need to globalize the above construction.

The scheme $W^1_5 (X)$ parametrizing complete linear systems of degree
$5$ and dimension at least $1$ on $X$ has a determinantal structure
which is smooth for $X$ sufficiently general. Let
$G^1_5 (X)$ denote the scheme over $W^1_5 (X)$ parametrizing pencils
of degree $5$. Note that $W^1_5 (X)$ is a surface unless
$X$ is hyperelliptic.

The universal family $P_5^1$ of divisors of the elements of $G^1_5$ is a
$\bP^1$ bundle over $G^1_5$ with natural maps
\[
\begin{array}{ccc}
P^1_5 & \lra & X^{ (5) } \\
\downarrow & & \\
G^1_5 & & \\
\downarrow & & \\
W^1_5 & & 
\end{array}
\]
whose pull-back via $\tX\ra X$ gives us the family of the
curves $B_M$ as $M$ varies:
\[
\begin{array}{ccc}
B & \lra & \tX^{ (5) } \\
\downarrow & &\downarrow \\
P^1_5 & \lra & X^{ (5) } \\
\downarrow & & \\
G^1_5 . & & 
\end{array}
\]
The parameter space of the connected components of the curves $B_M$ is an \'etale double cover $\widetilde{G}^1_5$ of
$G^1_5$.

The family of curves in the theta divisor of the Prym variety will be constructed as follows.
Assuming that
$X$ is not a plane quintic, the natural map
$G^1_5\ra W^1_5$ is an isomorphism. We have the involution $\iota :
M\mapsto M' := |K_X -M |$ on $W^1_5$ and hence also on $G^1_5$. First define a family of surfaces $'F$ over $G^1_5$ as the fiber product
\[
\begin{array}{ccc}
'F & \lra & B \\
\downarrow & & \downarrow^{\iota\varrho } \\
B & \stackrel{\varrho}{\lra } & G^1_5.
\end{array}
\]
As noted above, the image of $'F$ in $Pic^{ 10 }\tX$ maps into $Nm^{-1}(\omega_X)\subset Pic^{10}\tX$ which also shows that $'F$ has two connected
components. One component, denoted $'F_1$, maps into 
$A_1$ and the other, denoted $'F_2$, maps into $A_2$. The fiber of $'F_1$ over a point $|M|\in G^1_5$ has two
connected components $B_{M}^1\times B_{M'}^1$ and $B_M^2\times B_{M'}^2$.

Therefore, if we make the base change
\[
\xymatrix{''F_1\ar[r]\ar[d]&'F_1\ar[d]\\
\widetilde{G}^1_5\ar[r]&G^1_5,}
\]
$''F_1$ splits into two connected components (both isomorphic to $'F_1$ over $\bC$ but
their maps to $\tG^1_5$ differ by the involution of $\tG^1_5$). We denote $F$ the component which has
fiber $B_M^1\times B_{M'}^1$ over the point parametrizing $B_M^1$.

Finally, we think of $F$ as a correspondence
\[
\xymatrix{F\ar@^{(->}[r]&\widetilde{G}^1_5\times
  \widetilde{X}^{(5)}\times \widetilde{X}^{(5)}}
\]
and define our family of curves $F_r$ by intersecting $F$ with the pull back of the
divisor $r+\widetilde{X}^{(4)}$ in the first factor $\widetilde{X}^{(5)}$ for a general point $r\in
\widetilde{X}$. The variety $F_r'$ is the image of $F_r$ in $\tG_5^1\times\T$:
$$\xymatrix{F_r\ar^-{(\rho_1,\rho_2)}[r]\ar[d]^-{\rho_1}&F_r'\subset\tG_5^1\times\T\\
\widetilde{G}^1_5.&}$$

\begin{remark}It is easy to check that $F_r$ maps generically one-to-one to $\tG_5^1\times\T$. So the push-forward of the cycle class $[F_r]$ is the cycle class $[F'_r]$. 
\end{remark}

\section{The family of curves in $\T$: the degeneration to a Wirtinger cover}\label{limitcurve}

Let $\tcX\rightarrow \cX$
be the family of \'etale double covers over $T$ specializing to the Wirtinger cover $\widetilde{C}_{pq} \ra C_{pq}$ at $0\in T$ as explained in the introduction.

Also assume that $\cX$ and
$\widetilde{\cX}$ are smooth. 

Consider the smooth one-parameter family
\[
\begin{array}{ccc}
J^5 C_{ pq } & \lra & \cJ^5 \\
\downarrow & & \downarrow \\
0 & \in & T
\end{array}
\]
obtained as a compactification of the relative degree $5$ Picard scheme of $\cX$. The fiber of $\cJ^5\ra T$ is $Pic^5 X_t$ for $t\ne0$ and the fiber at $t=0$ is  the usual compactification $J^5 C_{ pq }$ of $Pic^5 C_{ pq }$ obtained as
follows.

\subsection{The compactified Jacobian of $C_{ pq }$}\label{sssectcomp}

Let $\bP Pic^5 C_{ pq }$ be the unique projective line bundle over
$Pic^5 C$ containing the $\bG_m$-bundle $Pic^5 C_{ pq }\ra Pic^5
C$. Then $\bP Pic^5 C_{ pq }\setminus Pic^5 C_{ pq }$ is the union of
the zero section $Pic^5_0\cong Pic^5 C$ and the infinity section
$Pic^5_{\infty}\cong Pic^5 C$ of $\bP Pic^5 C_{ pq }\ra Pic^5 C$. The
compactification $J^5 C_{ pq }$ is obtained from $\bP Pic^5 C_{ pq }$
by identifying $x\in Pic^5 C = Pic^5_0$ with $x\otimes\cO_C (p-q)\in
Pic^5 C = Pic^5_{\infty}$. The points of $J^5 C_{pq}\setminus Pic^5 C_{ pq }$
are the push-forwards $\nu_*N$ where $\nu : C\ra C_{ pq }$ is the
normalization map and $N\in Pic^4 C$.

\subsection{The support of $W^1_5 (C_{ pq })$ and of its compactification
$\oW^1_5 (C_{ pq })$}\label{ssectWCpqsupp}

Let $\oW^1_5 (C_{ pq })$ be the
subvariety of $J^5 C_{ pq }$ parametrizing torsion-free rank 1 sheaves $M$
of degree $5$ such that $h^0 (M )\ge 2$. Let $W_{pq}\subset W^1_5 (C)\subset Pic^5 C$ be the surface consisting
of those $L$ such that $h^0 (L -p-q )> 0$, and
let $X_p$ and $X_q$ be the two curves $p+W^1_4(C)$ and $q+W^1_4(C)$ in $W_{pq}$. Pull-back via the normalization map gives a morphism
\[
\nu^* : W^1_5 (C_{pq}) \lra W_{pq}
\]
whose image is $W_{pq} \setminus X_p \cup X_q$. We have

\begin{lemma}\label{lemW15bar}

The morphism $\nu^* : W^1_5 (C_{pq}) \ra W_{pq}$ is injective. Its inverse
extends to a birational morphism
\[
(\nu^*)^{-1} : W_{ pq }\: \surj \:\oW_5^1 (C_{ pq })
\]
that is bijective on $W_{ pq }\setminus X_p\cup X_q$ and sends
$p+g^1_4$ and $q+g^1_4$ to $\nu_* g^1_4 $.

The involution $\iota$ extends to $W_{pq}$ and sends $L$ to $|K_C+p+q-L|$. It also descends to $\oW_5^1 (C_{ pq })$ and sends $\nu_*g^1_4$ to $\nu_*(|K_C-g^1_4|)$.

\end{lemma}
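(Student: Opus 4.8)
The plan is to analyze the pull-back map $\nu^*$ explicitly via the normalization sequence, then handle the extension to the compactification by a direct description of limiting sheaves. First I would recall that a torsion-free rank $1$ sheaf $M$ of degree $5$ on $C_{pq}$ is either locally free (hence of the form $\nu^* L$ descended, equivalently $M$ with $h^0$ computed from a line bundle on $C$ glued at $p\sim q$) or of the form $\nu_* N$ with $N\in\Pic^4 C$. For the locally free case, pulling back gives an invertible sheaf $L=\nu^* M$ on $C$ of degree $5$, and the exact sequence $0\to M\to\nu_*\nu^* M\to \bC_{pt}\to 0$ (skyscraper at the node) shows $h^0(M)\ge 2$ forces $h^0(L)\ge 2$ together with the gluing condition; conversely $L\in W^1_5(C)$ descends to a sheaf $M$ with $h^0(M)\ge 2$ precisely when the glueing datum is compatible with a section, which — since a $g^1_5$ separates $p$ and $q$ unless $h^0(L-p-q)>0$ — happens for all $L\in W^1_5(C)\setminus W_{pq}$ and fails, or rather produces the pushed-forward sheaf instead, exactly on $W_{pq}$. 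This identifies the image of $\nu^*$ with $W_{pq}\setminus(X_p\cup X_q)$ and gives injectivity, since $\nu^*M$ determines $M$ among locally free sheaves (the gluing parameter is pinned down by the requirement $h^0\ge 2$, a general $g^1_5$ off $W_{pq}$ having a unique such gluing up to the $\bG_m$-action which is already accounted for).

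Next I would construct the inverse and its extension. On $W_{pq}\setminus(X_p\cup X_q)$ the inverse is just $L\mapsto M$ as above, a morphism since the whole construction is relative. To extend across $X_p\cup X_q$, take $L\in X_p$, say $L=p+g^1_4$ with $g^1_4\in W^1_4(C)$. As $L$ degenerates onto $X_p$, the gluing parameter runs off to $0$ or $\infty$ in the $\bP^1$-bundle $\bP\Pic^5 C_{pq}$, i.e. the limiting sheaf lands on the zero or infinity section, which in $J^5 C_{pq}$ is exactly the locus of sheaves $\nu_* N$, $N\in\Pic^4 C$. One then checks that the limit is $\nu_*(g^1_4)$: the point is that $h^0(\nu_* g^1_4)=h^0(g^1_4)\ge 2$, so $\nu_* g^1_4\in\oW^1_5(C_{pq})$, and a dimension/degree count shows the rational map $(\nu^*)^{-1}$ extends to a morphism onto $\oW^1_5(C_{pq})$ contracting each of $X_p$, $X_q$ onto the genus-$11$ curve $W^1_4(C)\cong\{\nu_* g^1_4\}$ via $p+g^1_4\mapsto\nu_* g^1_4$ and $q+g^1_4\mapsto\nu_* g^1_4$. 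Since the two curves $X_p$ and $X_q$ meet $W_{pq}\setminus(X_p\cup X_q)$ nowhere and the map is an isomorphism there, the extension is birational.

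For the involution statement, I would first verify that the classical residuation $L\mapsto|K_X-M|$ on $X$ specializes correctly: on $C_{pq}$, whose dualizing sheaf $\omega_{C_{pq}}$ has degree $10$ and pulls back to $\omega_C(p+q)$, residuation with respect to $\omega_{C_{pq}}$ sends a locally free $M$ of degree $5$ with $h^0\ge 2$ to $\mathcal{H}om(M,\omega_{C_{pq}})$, and pulling back to $C$ gives $\omega_C(p+q)\otimes L^{-1}$, i.e. $L\mapsto|K_C+p+q-L|$ on $W_{pq}$; this is an involution of $W_{pq}$ since $\deg(K_C+p+q)=10$ and it manifestly squares to the identity. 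One checks $\iota$ preserves the conditions defining $W_{pq}$ (by Riemann–Roch, $h^0(K_C+p+q-L)-h^0(L-p-q)=\deg-g+1=0$, so $h^0(L-p-q)>0\iff h^0(K_C+p+q-L)>0$). Finally, $\iota$ exchanges $X_p$ and $X_q$? — no: $\iota(p+g^1_4)=|K_C+p+q-p-g^1_4|=|K_C+q-g^1_4|$; to see this lies on one of the boundary curves note $h^0(K_C-g^1_4)=g-1-4+h^0(g^1_4)=2$ since $g=5$ and $h^0(g^1_4)=2$, so $|K_C-g^1_4|$ is again a $g^1_4$ on $C$, and $\iota(p+g^1_4)=q+|K_C-g^1_4|\in X_q$. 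Hence on the boundary $\iota$ descends, via $(\nu^*)^{-1}$, to $\nu_* g^1_4\mapsto\nu_*(|K_C-g^1_4|)$, as asserted. The compatibility of $\iota$ with the morphism $(\nu^*)^{-1}$ on the open part is immediate from naturality of residuation under pull-back.

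The main obstacle I anticipate is the careful identification of the limiting sheaf on $X_p\cup X_q$ — i.e. showing that the rational map $(\nu^*)^{-1}$ genuinely extends to a \emph{morphism} onto $\oW^1_5(C_{pq})$ and computing its value, rather than merely that some limit exists. This requires understanding how the gluing parameter (a point of $\bG_m$ in the fiber of $\Pic^5 C_{pq}\to\Pic^5 C$) degenerates as $L$ approaches $X_p$ or $X_q$, together with the structure of $\oW^1_5(C_{pq})$ near its non-locally-free locus; once one knows that approaching $X_p$ forces the section to acquire a base point at $p$ (forcing the limit onto the $\nu_* $-locus with $N=g^1_4$), the rest is bookkeeping with the compactification $J^5 C_{pq}$ described in \S\ref{sssectcomp}.
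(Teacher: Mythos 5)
The decisive step of the lemma is inverted in your first paragraph. You assert that $L\in W^1_5(C)$ descends to a sheaf $M$ with $h^0(M)\ge 2$ ``for all $L\in W^1_5(C)\setminus W_{pq}$'' and that this fails (producing the pushed-forward sheaf) ``exactly on $W_{pq}$''; this is exactly backwards, and it contradicts the conclusion you then state (your criterion would identify the image of $\nu^*$ with $W^1_5(C)\setminus W_{pq}$, not with $W_{pq}\setminus (X_p\cup X_q)$). Concretely: a line bundle $M$ on $C_{pq}$ with $\nu^*M=L$ is $L$ together with a gluing $\lambda\in\bG_m$ of $L_p$ with $L_q$, and $H^0(M)\subset H^0(L)$ is the kernel of the functional $s\mapsto s(q)-\lambda s(p)$. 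Since $h^0(L)=2$ for $C$ general, if $|L|$ separates $p$ and $q$ (i.e.\ $h^0(L-p-q)=0$) the evaluation map $H^0(L)\to L_p\oplus L_q$ is an isomorphism, so for \emph{every} gluing the functional is nonzero and $h^0(M)=1$: such $L$ is \emph{not} in the image of $\nu^*$. If instead $h^0(L-p-q)>0$ and neither $p$ nor $q$ is a base point, the evaluations at $p$ and $q$ are nonzero proportional functionals on $H^0(L)$, and exactly one gluing makes all of $H^0(L)$ descend; this is what places $W_{pq}\setminus(X_p\cup X_q)$ in the image and simultaneously gives injectivity (the paper encodes the same dichotomy as the vanishing of $H^0(\nu_*\nu^*M)\to H^0(sk)$). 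Your injectivity parenthetical (``a general $g^1_5$ off $W_{pq}$ having a unique such gluing'') repeats the same inversion: off $W_{pq}$ there is no such gluing at all, and there is no residual $\bG_m$-action to quotient by.

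Two further points. The Riemann--Roch check for the involution is miscomputed: with $g=5$ one has $h^0(K_C+p+q-L)-h^0(L-p-q)=1$, not $0$; the correct observation is that $\iota$ exchanges the two conditions defining $W_{pq}\subset W^1_5(C)$, since $h^0(K_C+p+q-L)=h^0(L-p-q)+1$ and $h^0(K_C+p+q-L-p-q)=h^0(K_C-L)=h^0(L)-1$. Finally, the extension of $(\nu^*)^{-1}$ over $X_p\cup X_q$ is left as ``a dimension/degree count'' on degenerating gluing parameters; the paper instead reads it off from the two exact sequences $0\to\nu_*g^1_4\to\nu_*(p+g^1_4)\to sk\to 0$ and its analogue with $q$, which exhibit $\nu_*g^1_4$ as the common boundary value over both $p+g^1_4$ and $q+g^1_4$. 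That gap could be repaired, but the inverted descent criterion above is the actual content of the first half of the lemma and must be fixed.
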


\begin{proof} If $M\in \oW_5^1 (C_{ pq })$ is invertible,
then the pull-back $\nu^*M $ is an invertible sheaf of degree
$5$ on $C$ and we have the usual exact sequence
\[
0\lra M \lra \nu_*\nu^*M \lra sk \lra 0
\]
where $sk$ is a skyscraper sheaf of length $1$ supported at the
singular point of $C_{ pq }$. It follows that if $h^0 (M )\geq 2$,
then $h^0 (\nu^*M )\geq 2$ also. Since $C$ is a general curve of
genus $5$ and $\nu^*M$ has degree $5$, we have $h^0 (\nu^*M )\leq
2$. So the map $H^0 (\nu^*M ) = H^0 (\nu_*\nu^*M )\ra H^0 (sk)$
obtained from the above sequence is zero. Since this map factors
through the evaluation map $H^0 (\nu^*M )\ra (\nu^*M )_p\oplus
(\nu^*M )_q$, a moment of reflection will show that a map
$\nu_*\nu^*M\; \surj \; sk$ that is zero on global sections and has locally
free kernel exists if and only if neither $p$ nor $q$ are base points
of $|\nu^*M |$ and the unique nonzero section of $\nu^*M$ that
vanishes at $p$ also vanishes at $q$.

Conversely, given an invertible sheaf $L$ of degree $5$ on $C$ such
that neither $p$ nor $q$ are base-points of $|L|$ and the unique
nonzero section of $L$ vanishing at $p$ also vanishes at $q$, one sees
immediately that there is a unique quotient map
\[
\nu_* L\:\surj \: sk
\]
onto a skyscraper sheaf of rank $1$ supported at the singular point of
$C_{ pq }$ such that the resulting map on global sections
\[
H^0 (\nu_* L )\lra H^0 (sk)
\]
is zero. The kernel of such a map is also immediately seen to be an
invertible sheaf of degree $5$ on $C_{ pq }$.
Thus $W^1_5(C_{pq})$ maps injectively into $W_{pq}$ under $\nu^*$.

If $M$ is not locally free, then it is the direct image of a $g^1_4\in W^1_4(C)$. We have two
exact sequences
\[
0\lra\nu_*g^1_4\lra\nu_*(p+g^1_4)\lra sk\lra 0,
\]
\[
0\lra\nu_*g^1_4\lra\nu_*(q+g^1_4)\lra sk\lra 0,
\]
that give us two representations of $M$ as the kernel of a
surjective map from the pushforward of an invertible sheaf to $sk$. Thus $\nu^*$ maps $p+g^1_4$ and $q+g^1_4$ to $\nu_* g^1_4 $. The statements about $\iota$ are immediate.
\end{proof}

Note that $W_{pq}\subset Pic^5 C$ naturally embeds in $C^{(3)}$ via
two different maps: $q_1 : L\mapsto \G_3 :=|K_C-L|$ and $q_2 : L\mapsto \G_3' :=|L
-p-q|$. We have

\begin{proposition}

The surface $W_{pq}$ is smooth for $C$, $p$ and $q$ general.

\end{proposition}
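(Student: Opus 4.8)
The plan is to exhibit $W_{pq}$ as (an open subset of) a determinantal locus and to check that the expected codimension is attained along it, using the genericity of $(C,p,q)$ via a Brill--Noether / transversality argument. Recall that $W_{pq}\subset W^1_5(C)$ consists of those $L$ of degree $5$ with $h^0(L)\ge 2$ \emph{and} $h^0(L-p-q)>0$. For $C$ general of genus $5$, the surface $W^1_5(C)$ is smooth of the expected dimension $2$ by the Brill--Noether and Gieseker--Petri theorems, and the general such $L$ satisfies $h^0(L)=2$, $h^1(L)=1$, with base-point-free $|L|$. The condition $h^0(L-p-q)>0$ on $W^1_5(C)$ should be expressed as the vanishing of the determinant of the $2\times 2$ evaluation map $H^0(L)\to L_p\oplus L_q$, i.e. of a section of a rank-one bundle on $W^1_5(C)$; generically this cuts out a curve, but $W_{pq}$ is a \emph{surface}, so instead one fixes $p$ and lets $q$ vary. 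Concretely, I would use the two descriptions from the preceding Note: via $q_1: L\mapsto |K_C-L|$ and $q_2: L\mapsto |L-p-q|$, the surface $W_{pq}$ embeds into $C^{(3)}$, and $W_{pq}$ can be identified with $\{(p+q+D)\ :\ D\in C^{(3)},\ h^0(K_C-p-q-D)\ \text{behaves correctly}\}$ --- more usefully, $q_2$ realizes $W_{pq}$ as the image in $C^{(3)}$ of the incidence variety of pairs $(q,\G_3')$ with $p+q+\G_3'$ special, which is naturally parametrized once $p$ is fixed by $q$ together with a $g^1_4$ through $q$, i.e. $W_{pq}\cong X_q$-type data --- so I would set up the universal/incidence picture over $C$ (varying $q$) and $W^1_4(C)$.

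The key steps, in order: \emph{(i)} Fix general $C$ of genus $5$ and a general point $p\in C$; show that the map $\nu^*$ and the two embeddings $q_1,q_2$ of the Note identify $W_{pq}$ with a concrete incidence surface $\Sigma\subset C\times W^1_4(C)$ (or its image in $C^{(3)}$), namely $\Sigma=\{(q, g^1_4)\ :\ h^0(g^1_4 - q)>0 \text{ or the residual condition holds}\}$, refined so that $L=|K_C - (K_C - p - q - \G_3')|$ is recovered; here $W^1_4(C)$ is a smooth curve of genus $11$ for $C$ general (again by Brill--Noether/Gieseker--Petri). \emph{(ii)} Compute the tangent space to $W_{pq}$ at a general point via the standard identification $T_L W^1_5(C)=(\operatorname{im}(\mu_0))^\perp$ (Petri) together with the derivative of the determinant of the evaluation map at $p,q$; the upshot is that the tangent space to $W_{pq}$ at a general point $L$ has dimension $2$, i.e. smoothness at general points, \emph{provided} the relevant multiplication/evaluation maps are of maximal rank --- this is where generality of $(C,p,q)$ is used. \emph{(iii)} Rule out that the \emph{whole} surface is singular, i.e. exclude the finitely many bad loci ($X_p\cup X_q$ where $\nu^*$ is not bijective, points with $h^0(L)\ge 3$, base-point issues, etc.) and note these are proper closed subsets; since $W_{pq}$ is irreducible (it is dominated by the irreducible incidence surface from step (i), which is a $\bP^1$- or curve-bundle construction over irreducible bases), smoothness at one general point plus properness of the bad locus gives smoothness everywhere by semicontinuity --- more precisely, the generic smoothness from (ii) plus irreducibility shows the singular locus is a proper closed subset, and then a separate direct check (e.g. a specialization or monodromy/genericity argument in $(C,p,q)$, or an explicit dimension count of the locus in the moduli of triples $(C,p,q)$ over which $W_{pq}$ acquires a singularity) shows that for \emph{general} $(C,p,q)$ there is no singularity at all.

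The main obstacle I anticipate is step \emph{(iii)}: generic smoothness along $W_{pq}$ for fixed general $(C,p,q)$ is not automatic from a Brill--Noether count, because the natural parameter space is the pair of data $(q,g^1_4)$ and the map to $C^{(3)}$ (or to $\Pic^5 C$) could in principle contract a curve or ramify, producing singular points of the image even when the source is smooth. To handle this I would argue on the source incidence surface $\Sigma$ directly --- showing $\Sigma$ is smooth (a transversality statement for the incidence of a moving point on a $g^1_4$, standard for general $C$) and that the morphism $\Sigma\to C^{(3)}$, resp. $\Sigma\to\Pic^5 C$, is an immersion at general points by computing its differential via the Petri map --- and then separately show the finitely many exceptional fibers (over $\nu_* g^1_4$, matching Lemma~\ref{lemW15bar}) do not create singularities of the \emph{normalization} but rather occur on $X_p\cup X_q$, which I would check are themselves smooth curves meeting $W_{pq}$ transversally or lying in its smooth locus. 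A dimension count in the moduli space $\mathcal{M}_{5,2}$ of $(C,p,q)$ --- bounding the locus where $W_{pq}$ is singular --- provides the fallback that guarantees the conclusion for \emph{general} $(C,p,q)$ even if smoothness fails on a proper subvariety of moduli.
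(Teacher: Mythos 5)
There is a genuine gap, and it sits exactly where you flag your own worry: your plan only ever produces \emph{generic} smoothness of $W_{pq}$. Steps (i)--(ii) (Petri-type tangent space computation at a point where $h^0(L)=2$, $|L|$ base point free, evaluation map of maximal rank, etc.) give smoothness at a general point of the surface, and irreducibility then only tells you the singular locus is a proper closed subset --- possibly a curve or finitely many points, but not empty. The proposition, however, asserts that the whole surface $W_{pq}$ is smooth, and this is what is actually needed later (e.g.\ $\widetilde G^1_5(C_{pq})$ is built out of two copies of $W_{pq}$ and smoothness of the total space $\cW^1_5$ rests on it). Your fallback --- ``a dimension count in the moduli of triples $(C,p,q)$ over which $W_{pq}$ acquires a singularity'' --- is not carried out, and it is not clear how it could be without doing a pointwise tangent-space analysis anyway: for each fixed $(C,p,q)$ the potential singularities range over a two-dimensional surface, and the locus of triples admitting \emph{some} singular point is exactly what one must bound; no Brill--Noether count is quoted that does this. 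Note also that the bad loci you propose to excise are not obviously avoidable: the curves $X_p$ and $X_q$ lie inside $W_{pq}$ for every choice of $p,q$, so one must prove smoothness \emph{at} those points, not argue they are exceptional.

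There is also a numerical slip that derails your setup: for $C$ of genus $5$ (the relevant case here --- the genus-$6$ curve is $X$, and $C_{pq}$ is its nodal degeneration), $\rho(5,1,5)=3$, so $W^1_5(C)$ is a threefold, not a smooth surface of dimension $2$; $W_{pq}$ is the surface cut out in this threefold by the single determinantal condition $h^0(L-p-q)>0$, with $p$ and $q$ both fixed. Your remark that the condition ``cuts out a curve, so fix $p$ and let $q$ vary'' is therefore not consistent with the statement being proved. The paper's proof avoids all of this by a pointwise argument valid at \emph{every} $L\in W_{pq}$: using the two embeddings $q_1:L\mapsto|K_C-L|$ and $q_2:L\mapsto|L-p-q|$ into $C^{(3)}$, the tangent space $T_LW_{pq}$ lies (projectively) inside $\langle\Gamma_3\rangle\cap\langle\Gamma_3'\rangle$ in the canonical space $|K_C|^*$, and one shows $\langle\Gamma_3\rangle\neq\langle\Gamma_3'\rangle$ for all $L$ once $p,q$ are general --- the coincidences are excluded by two explicit one-parameter dimension counts (hyperplanes tritangent to the canonical curve, and secants through the vertices of the rank-$4$ quadrics containing $C$). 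Then the intersection of the two distinct planes is a line, so $\dim T_LW_{pq}=2$ everywhere and $W_{pq}$ is smooth. Some such uniform, all-points tangent bound is the missing ingredient in your proposal.
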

\begin{proof}
For $L\in W_{pq}$, via the two embeddings of $W_{pq}$ in $C^{ (3) }$, the
tangent space to $W_{pq}$ at $L$ is contained in the tangent
spaces to $C^{ (3) }$ at $\G_3$ and $\G'_3$. Embedding $C^{ (3) }$ in
$Pic^0 C$ via subtraction of a fixed divisor of degree $3$, the
projectivizations of these two tangent spaces can be identified (after
a translation) with the respective spans $\langle \G_3\rangle$ and
$\langle \G'_3\rangle$ of $\G_3$ and $\G'_3$ in the canonical space
$|K_C|^*\cong\bP T_0 Pic^0 C$. We therefore need to prove that $\langle \G_3\rangle \neq
\langle \G_3'\rangle$.

Using Riemann Roch and Serre Duality it
is immediately seen that a divisor of degree $\geq 5$ on $C$ cannot
span a space of dimension $\leq 2$ in $|K_C|^*$. So, if $\langle
\G_3\rangle =\langle \G'_3\rangle$, then $\G_3$ and $\G'_3$ have a divisor
of degree at least $2$ in common: $\G_3 = \G_2 + t$ and $\G'_3 = \G_2 +
t'$ for some $\G_2\in C^{ (2) }$ and $t, t'\in C$. Note that by our
assumptions $\G_3 + \G'_3 +p+q\in |K_C|$ is a canonical divisor.

If $t=t'$, then the span $\langle \G_3 + \G'_3 +p+q\rangle$ is a
hyperplane in $|K_C|^*$ which is tangent to the canonical image of $C$
at three distinct points or has even higher tangency to the canonical
curve. Such hyperplanes form a family of dimension $1$ for $C$
general, hence choosing $p$ and $q$ sufficiently general, this can be
avoided.

If $t\neq t'$, then the span $\langle \G_2 +t+t'\rangle$ is a plane, and, by Riemann Roch
and Serre Duality, $|\G_2 +t+t'|\in W^1_4 (C)$, hence $|K_C-\G_2 -t-t'|\in W^1_4
(C)$. However $|K_C -\G_2 -t-t' | = |p+q+ \G_2 |$. The divisors of $g^1_4 := |\G_2 +t+t'|$
and $h^1_4 := |K_C -\G_2 -t-t' | = |p+q+ \G_2 |$ are cut on $C$ by the two rulings of a
quadric of rank $4$ (since $C$ is general, it is not contained in any quadrics of rank
$3$). Since $\G_2$ appears in both $g^1_4$ and $h^1_4$, the line $\langle \G_2\rangle$
contains the singular point of this quadric of rank $4$. There is a one-parameter family
of such secants to $C$ and for each such secant $\langle \G_2\rangle$, there are exactly
$5$ (counted with multiplicities) divisors $p+q$ such that $h^0 (\G_2 +p+q)\geq
2$. Therefore there is a one parameter family of divisors $p+q$ such that $h^0 (\G_2
+p+q)\geq 2$ for some $\G_2$ such that $\langle \G_2\rangle$ contains the singular point
of some quadric of rank $4$ containing $C$. Taking $p+q$ outside this one-parameter family
this case is also eliminated.

So $\langle \G_3\rangle\neq\langle \G'_3\rangle$, the intersection $\langle
\G_3\rangle\cap\langle \G'_3\rangle$ is a projective line which contains
$\bP T_{L} W_{ pq}$. So $T_{L} W_{ pq}$ has dimension $2$ and
$W_{pq}$ is smooth at $L$.
\end{proof}

The computation of the Hilbert polynomial of $\oW^1_5 (C_{ pq })$ in Lemma \ref{lemWflat} shows that $\overline{W}^1_5(C_{pq})$, with its reduced scheme structure, is the flat limit of $W^1_5(X_t)$. Therefore, if $\cW^1_5\subset\cJ^5$ is the family of sheaves with at least two independent global sections on each fiber $X_t$, then $\cW^1_5\ra T$ is
flat. Moreover, we have

\begin{proposition}\label{Wsm}

The total space $\cW^1_5$ is smooth.

\end{proposition}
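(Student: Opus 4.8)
The plan is to verify smoothness of $\cW^1_5$ directly at each point of the central fiber $\oW^1_5(C_{pq})$, since away from $t=0$ the total space is a family over $T$ whose fibers $W^1_5(X_t)$ are smooth surfaces for $X_t$ general (the determinantal structure is smooth), so $\cW^1_5$ is smooth there. Over $t=0$ we distinguish the two types of points of $\oW^1_5(C_{pq})$: the locally free sheaves, which via $\nu^*$ correspond to points of $W_{pq}\setminus(X_p\cup X_q)$, and the non-locally-free sheaves $\nu_*g^1_4$ with $g^1_4\in W^1_4(C)$. For the first type, I would compute the Zariski tangent space to $\cW^1_5$ at such a point using the determinantal/deformation-theoretic description of $W^1_5$ for a nodal curve, comparing it against the tangent space to the smooth surface $W_{pq}$ (which we have just proved is smooth of dimension $2$) together with the one extra direction coming from deforming $t$; since $\cW^1_5\ra T$ is flat with two-dimensional fibers, a three-dimensional tangent space at such a point forces smoothness.

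The main work, and the expected main obstacle, is at the boundary points $M=\nu_*g^1_4$. Here the fiber $\oW^1_5(C_{pq})$ is singular (it is the image of $W_{pq}$ under a birational morphism collapsing $X_p$ and $X_q$ to a curve, by Lemma \ref{lemW15bar}), so one cannot argue fiberwise; one must show the total space is nonetheless smooth, i.e. that the two branches of the central fiber are smoothed out in the family over $T$. I would do this by an explicit local deformation-theory computation: describe $M=\nu_*g^1_4$ as a torsion-free rank one sheaf on $C_{pq}$, compute $\Ext^1_{C_{pq}}(M,M)$ and the obstruction space, and use the versal deformation of the pair (sheaf, nodal curve) — equivalently, work on the smooth total space $\cX$ and use that a neighborhood of the node in $C_{pq}$ deforms to a smoothing, so $M$ deforms to invertible sheaves on the nearby $X_t$. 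The key local model is the node $xy=t$: a torsion-free non-locally-free sheaf at the node deforms, over the smoothing parameter, in a way that contributes exactly the right tangent directions, and one checks the relevant tangent space to $\cW^1_5$ has dimension $3$.

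Concretely I would proceed in the following steps. First, record that $\cW^1_5\ra T$ is flat with two-dimensional fibers (this is the content of Lemma \ref{lemWflat} and the discussion preceding the proposition). Second, handle $t\neq 0$ and the locally free locus of $\oW^1_5(C_{pq})$ by the tangent-space count above, using smoothness of $W_{pq}$. Third, for $M=\nu_*g^1_4$, set up the local deformation problem at the node; the two exact sequences
\[
0\lra\nu_*g^1_4\lra\nu_*(p+g^1_4)\lra sk\lra 0,\qquad 0\lra\nu_*g^1_4\lra\nu_*(q+g^1_4)\lra sk\lra 0
\]
from Lemma \ref{lemW15bar} exhibit the two local branches; I would show that in the family over $T$ these are the two sheets of a smooth surface meeting the general-fiber surface, so that the total space near $M$ is smooth of dimension $3$. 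The technical heart is the computation that the obstructions vanish and the tangent space has the expected dimension, which I expect to require the generality of $C$, $p$, $q$ (for instance to ensure $h^0(g^1_4)=2$ with $g^1_4$ base-point free and the five special points $g_i$ behaving transversally). I would relegate the purely local computations at the node to the Appendix and cite them here.
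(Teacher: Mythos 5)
Your reduction to a tangent-space count along the central fiber, and your identification of the non-locally-free points $M=\nu_*g^1_4$ as the only real issue, do match the shape of the paper's argument. But at exactly those points your proposal stops short of a proof: ``compute $\Ext^1_{C_{pq}}(M,M)$ and the obstruction space,'' ``one checks the relevant tangent space to $\cW^1_5$ has dimension $3$,'' with the ``technical heart'' relegated to an appendix, is a promissory note, and moreover it is aimed at the wrong object. The deformation theory of the sheaf $M$ itself --- $\Ext^1$, obstruction vanishing, the versal deformation of the pair (sheaf, nodal curve), the local model $xy=t$ at the node --- governs the relative compactified Jacobian $\cJ^5$, whose smoothness is already part of the setup; it says nothing by itself about the Brill--Noether locus $\cW^1_5\subset\cJ^5$ cut out by the condition $h^0\ge 2$, and controlling that condition as an invertible sheaf degenerates to $\nu_*g^1_4$ is precisely the difficulty you have deferred. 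Nothing in your sketch rules out, say, a $4$-dimensional Zariski tangent space at $M$ (two sheets of the central fiber meeting tangentially), so the key step is genuinely missing rather than merely compressed.

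The paper disposes of this point with a short indirect argument that your proposal does not contain. By Lemma \ref{lemWflat} the morphism $\cW^1_5\to T$ is flat and its scheme-theoretic central fiber is the \emph{reduced} $\oW^1_5(C_{pq})$, so $T_M\oW^1_5(C_{pq})$ is the kernel of the differential of $\cW^1_5\to T$ at $M$. That differential is the restriction of the differential of $\cJ^5\to T$, which vanishes at $M$: the total space $\cJ^5$ is smooth while its central fiber $J^5C_{pq}$ is singular at $M$ (two branches glued along the boundary), so the local equation of the fiber lies in the square of the maximal ideal. Hence $T_M\cW^1_5=T_M\oW^1_5(C_{pq})$, and the latter is $3$-dimensional because, by Lemma \ref{lemW15bar} and the smoothness of $W_{pq}$, the fiber near $M$ is the union of two smooth surface sheets of $W_{pq}$, lying in the two distinct branches of $J^5C_{pq}$ and crossing along the curve $\nu_*W^1_4$. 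No $\Ext$ or obstruction computation at the node is required. To make your write-up into a proof you would either have to import this argument (flatness with reduced limit plus vanishing of $dt$ on $T_M\cJ^5$), or actually carry out the local analysis of how $h^0\ge 2$ propagates across the node --- which is the part you left unwritten.
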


\begin{proof}

This is clear at any invertible sheaf $M\in W^1_5 (X_t)$ or $M\in
W^1_5 (C_{ pq })$. Suppose therefore that $M\in \oW^1_5 (C_{ pq
})\setminus W^1_5 (C_{ pq })$. We need to prove that the Zariski
tangent space to $\cW^1_5$ at $M$ is three-dimensional, i.e., it is
equal to the Zariski tangent space to $\oW^1_5 (C_{ pq })$ at
$M$. By Lemma \ref{lemWflat}, the morphism $\cW^1_5\ra T$ is flat and its
scheme-theoretical fiber at $0$ is $\oW^1_5 (C_{ pq })$ with its
reduced structure. So the tangent space to $\oW^1_5 (C_{ pq })$ at
$M$ is the kernel of the differential of the map $\cW^1_5\ra T$ at
$M$ and it is equal to the tangent space to $\cW^1_5$ if and only
this differential is zero. Now the fact that this differential is zero
follows from the fact that the differential of the map $\cJ^5\ra T$ is
zero at $M$ because $\cJ^5$ is smooth and $\cJ^5\ra T$ is flat.
\end{proof}

\subsection{The surface $\tG^1_5$ in the Wirtinger double cover case}\label{subsecG15}

Let $P^1_5$ be the universal $\bP^1$ bundle over $\oW^1_5(C_{pq})$ whose fiber over $M\in \overline{W}^1_5(C_{pq})$ is $\bP H^0(C_{pq},M)$. The following fibered diagram is the limit of the analogous diagram in the smooth case:
\begin{eqnarray}\label{singularB}\xymatrix{B\ar[r]\ar[d]&\tC_{pq}^{(5)}\ar[d]\\
P^1_5\ar[r]\ar[d]&C_{pq}^{(5)}\\
\oW^1_5(C_{pq}).&}
\end{eqnarray}

The horizontal map in the second row is as follows. If $M\in W^1_5(C_{pq})$ and $0\ne s\in H^0(C_{pq},M)$, then the image of $s$ in $C_{pq}^{(5)}$ is $div(s)=\nu_*(div(\nu^*s))$. If $M\in\overline{W}^1_5(C_{pq})\setminus W^1_5(C_{pq})$, then $M=\nu_*g^1_4$ and the image of $s\in H^0(C_{pq},M)=H^0(C,g^1_4)$ is $\nu_*(div(s)+p)=\nu_*(div(s)+q)\in C_{pq}^{(5)}$.

\begin{lemma}\label{lemtGpq}

The surface $\tG^1_5 (C_{pq})$ is the union of two copies of $W_{pq}$, denoted $W_1$ and $W_2$,
where $X_{kp} = W^1_4 (C) +p\subset W_k$ is identified with $X_{3-k,q} =
W^1_4 (C) +q\subset W_{3-k}$ for $k=1,2$.

\end{lemma}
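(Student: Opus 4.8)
The plan is to describe $\tG^1_5(C_{pq})$---the parameter space of connected components of the curves $B_M$ appearing in diagram \eqref{singularB}---by analysing these curves over the two strata of $\oW^1_5(C_{pq})$: the locally free locus $W^1_5(C_{pq})$ and the singular curve $\Sigma:=\{\nu_*g^1_4:g^1_4\in W^1_4(C)\}$. By Lemma \ref{lemW15bar}, $\oW^1_5(C_{pq})$ is obtained from $W_{pq}$ by identifying the two disjoint curves $X_p$ and $X_q$, so $W_{pq}$ is its normalization and $\Sigma$ is its singular locus; the surface we are describing should also turn out to be the flat limit of the étale double covers $G_t=\tG^1_5(X_t)$.

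First consider $M\in W^1_5(C_{pq})$, a line bundle with $L:=\nu^*M$ of degree $5$ on $C$. As in Section \ref{seccurves}, $B_M$ is the pull-back of the pencil $|M|$ along the Wirtinger cover $\tC_{pq}=C_1\cup C_2$; concretely a general point of $B_M$ is a splitting $E=D_1+D_2$ of a divisor $E\in|L|$ with $D_k$ placed on the copy $C_k$ of $C$. I would first check that $B_M$ still has exactly two connected components, separated by the same discrete invariant as in the smooth case of Section \ref{seccurves}, and then that these two components can be labelled coherently as $M$ varies over $W^1_5(C_{pq})\cong W_{pq}\setminus(X_p\cup X_q)$; equivalently, that the double cover $G_t\to W^1_5(X_t)$ becomes trivial away from the part of $W^1_5(X_t)$ degenerating to $\Sigma$. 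It follows that the restriction of $\tG^1_5(C_{pq})$ to $\oW^1_5(C_{pq})\setminus\Sigma$ is the trivial double cover, i.e. two disjoint copies of $W_{pq}\setminus(X_p\cup X_q)$; taking closures $W_1,W_2$ in $\tG^1_5(C_{pq})$ and using that $W_{pq}$ is the normalization of $\oW^1_5(C_{pq})$, each $W_k$ is a copy of $W_{pq}$, inside which $X_{kp}=W^1_4(C)+p$ and $X_{kq}=W^1_4(C)+q$ are exactly the two curves lying over $\Sigma$.

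It remains to see how $W_1$ and $W_2$ are glued along these curves, and here I would analyse $B_M$ directly for $M=\nu_*g^1_4$ not locally free. Using the description of $|M|\cong|g^1_4|$ inside $C_{pq}^{(5)}$ recorded just after \eqref{singularB}---a section $s\in H^0(C,g^1_4)$ gives the divisor $\nu_*(\mathrm{div}(s)+p)=\nu_*(\mathrm{div}(s)+q)$---an effective degree $5$ divisor on $\tC_{pq}$ lying over it is a lift of it together with a choice of one of the two nodes of $\tC_{pq}$, namely $n_1$ (where $p_1\sim q_2$) or $n_2$ (where $p_2\sim q_1$). Tracking which connected component of $B_{M_t}$ degenerates to which piece of $B_{\nu_*g^1_4}$ as $M_t\to\nu_*g^1_4$ along the $X_p$-sheet, respectively the $X_q$-sheet, of $\oW^1_5(C_{pq})$---and using that the Wirtinger cover cross-identifies $C_1$ with $C_2$, so that switching from one sheet to the other interchanges $n_1$ and $n_2$---gives the asserted identifications $X_{1p}\sim X_{2q}$ and $X_{2p}\sim X_{1q}$. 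Finally, flatness of $\cW^1_5\to T$ together with Proposition \ref{Wsm} identifies the resulting surface $W_1\cup W_2$ with the flat limit $G_0$.

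The delicate point is this last circle of ideas: one must show both that the double cover is trivial over $\oW^1_5(C_{pq})\setminus\Sigma$---equivalently that $\tG^1_5(X_t)$ is connected with all of its monodromy concentrated near $\Sigma$---and that the gluing along $\Sigma$ is the \emph{cross} gluing $X_{1p}\sim X_{2q}$ rather than the parallel one $X_{1p}\sim X_{2p}$, which would make $\tG^1_5(C_{pq})$, and hence $\tG^1_5(X_t)$, disconnected. Both amount to following the $\bZ/2$-torsor of connected components of $B_M$ as $M$ crosses $\Sigma$; the cleanest way to do this is to write out explicit one-parameter degenerations of line bundles $M_t\in W^1_5(X_t)$ approaching $\nu_*g^1_4$ along each of the two sheets of $\oW^1_5(C_{pq})$ and to follow the corresponding components of $B_{M_t}$, the combinatorics of the two nodes $n_1,n_2$ of $\tC_{pq}$ forcing the crossing.
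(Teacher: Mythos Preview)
Your outline is plausible but misses the structural observation that makes the paper's proof nearly immediate. The symmetric power $\tC_{pq}^{(5)}$ decomposes into components $C_1^{(d_1)}\times C_2^{(d_2)}$ with $d_1+d_2=5$, and for any $M\in W^1_5(C_{pq})$ the curve $B_M$ sits inside this union. The two connected components of $B_M$ are separated simply by the \emph{parity of the bidegree}: $B_M^1$ lives in the components with $d_2$ odd, $B_M^2$ in those with $d_2$ even. This labelling is visibly global and coherent over all of $W^1_5(C_{pq})$, so the double cover is split away from $\Sigma=\nu_*W^1_4(C)$ without any monodromy or degeneration analysis. You propose instead to ``check that these two components can be labelled coherently'' and reinterpret this as a statement about monodromy of $G_t\to W^1_5(X_t)$ on the general fibre; that detour is unnecessary and you never say what the discrete invariant actually is.

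For the gluing over $\Sigma$, the paper again avoids your proposed one-parameter degenerations: the involution of $\tC_{pq}$ exchanging $C_1$ and $C_2$ swaps the bidegree parity and hence swaps $B_M^1\leftrightarrow B_M^2$, while on $C_{pq}$ it swaps $p\leftrightarrow q$ (since $p_1\sim q_2$ and $p_2\sim q_1$). Thus approaching $\nu_*g^1_4$ along $X_p$ versus $X_q$ differs exactly by this involution, forcing the cross-identification $X_{1p}\sim X_{2q}$. Your degeneration argument would eventually reach the same conclusion, but the involution makes it a one-line observation. In short: your plan is workable, but the paper's proof is organised around the bidegree decomposition of $\tC_{pq}^{(5)}$, which you do not mention and which removes all of the ``delicate points'' you flag.
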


\begin{proof}
 
First note that $\tC_{pq}^{(5)}$ has the following irreducible components:
\[
C_1^{(5)}\cup (C_1^{(4)}\times C_2)\cup( C_1^{(3)}\times C_2^{(2)})\cup
(C_1^{(2)}\times C_2^{(3)})\cup (C_1\times C_2^{(4)})\cup C_2^{(5)}.
\]
Accordingly, for a given $M\in W^1_5 (C_{pq})$, the two
connected components $B^1_{M}$ and $B^2_{M}$ of the curve $B_M$
embed, respectively, into
\[
(C_1^{(4)}\times C_2)\cup (C_1^{(2)}\times C_2^{(3)})\cup C_2^{(5)}
\]and
\[
C_1^{(5)}\cup (C_1^{(3)}\times C_2^{(2)})\cup (C_1\times C_2^{(4)}).
\]

This first shows that $\tG^1_5 (C_{pq})$ has two irreducible
components and that the double cover $\tG^1_5 (C_{pq})\ra G^1_5
(C_{pq})$ is split away from $\nu_* W^1_4 (C)$. The claim of the lemma
over $\nu_* W^1_4 (C)$ follows from the fact that, for a fixed
$M$, the components $B^1_M$ and $B^2_M$ are exchanged by the
involution induced by that exchanging $C_1$ and $C_2$ in $\tC_{pq}$.
\end{proof}

An immediate consequence of Lemma \ref{lemtGpq} is

\begin{corollary}

For a general double cover $\tX \ra X$, the double cover $\tG^1_5 \ra G^1_5$ is nontrivial.

\end{corollary}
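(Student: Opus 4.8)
The plan is to reduce the statement, by degeneration to the Wirtinger cover, to the connectedness contained in Lemma \ref{lemtGpq}. First I would record that the central fibre $\tG^1_5(C_{pq})$ is connected: by Lemma \ref{lemtGpq} it is the union $W_1\cup W_2$ of two copies of $W_{pq}$ in which $X_{1p}\subset W_1$ is glued to $X_{2q}\subset W_2$ (and $X_{2p}\subset W_2$ to $X_{1q}\subset W_1$) along a copy of the curve $W^1_4(C)$; since $W^1_4(C)$ is a nonempty (genus $11$) curve, $W_1\cap W_2\neq\emptyset$ and the union is connected. It is moreover reduced, being a transverse union of two smooth surfaces. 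Consequently $\tG^1_5(C_{pq})\ra G^1_5(C_{pq})$ is a finite morphism of degree $2$ with connected reduced source onto the irreducible base $G^1_5(C_{pq})$, hence it is a nontrivial double cover; equivalently $h^0(\cO_{\tG^1_5(C_{pq})})=1$.

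I would then propagate this to the general fibre through the family $\cG\ra T$ whose fibre over $t$ is $\tG^1_5(X_t)$ and whose central fibre is $\tG^1_5(C_{pq})$. This family is proper, since every fibre is finite over a subvariety of a relative Jacobian, and flat: its total space $\cG$ is smooth (hence Cohen--Macaulay) and maps finitely to the smooth family $\cW^1_5$, which is flat over the regular curve $T$, so miracle flatness gives flatness of $\cG\ra\cW^1_5$ and hence of $\cG\ra T$. For $t\neq 0$ the curve $X_t$ is a general curve of genus $6$, so $G^1_5(X_t)=W^1_5(X_t)$ is a smooth surface and its \'etale double cover $\tG^1_5(X_t)$ is smooth, in particular reduced. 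Applying upper semicontinuity of $t\mapsto h^0(\cO_{G_t})$ for the flat proper family $\cG\ra T$, together with $h^0(\cO_{G_0})=1$ and $h^0(\cO_{G_t})\geq 1$, forces $h^0(\cO_{\tG^1_5(X_t)})=1$ for $t$ in a neighbourhood of $0$, i.e. for general $t$. As $\tG^1_5(X_t)$ is reduced this means it is connected, so the degree-$2$ cover $\tG^1_5(X_t)\ra G^1_5(X_t)$ is nontrivial. Since the general member of the family $\tcX\ra\cX$ is a general \'etale double cover of a general genus-$6$ curve, this is precisely the assertion; and if one wants nontriviality for \emph{every} general double cover, one notes that connectedness of $\tG^1_5$ is an open condition on the moduli of such covers (again by semicontinuity of $h^0(\cO)$), and we have just exhibited a point where it holds.

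The argument carries essentially no content beyond Lemma \ref{lemtGpq}: the only points to check are that $\cG\ra T$ is flat and proper with reduced central fibre so that semicontinuity of $h^0(\cO)$ applies, all of which is either proved earlier in this section or immediate from the smoothness of $\cG$. There is no real obstacle.
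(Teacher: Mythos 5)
Your argument is correct and follows the same route the paper intends: the corollary is deduced from Lemma \ref{lemtGpq} by degeneration to the Wirtinger cover, where the central fibre $W_1\cup W_2$ is connected because the two copies of $W_{pq}$ are glued along the (nonempty) curves $X_{kp}=X_{3-k,q}$, so the general fibre of the connected, flat, proper family $\cG\ra T$ must also be connected and the cover nontrivial. You merely spell out the specialization step (properness, flatness of $\cG\ra T$ via the \'etale map to $\cW^1_5$, reducedness of the fibres, and semicontinuity of $h^0(\cO)$) that the paper leaves implicit in calling the corollary an immediate consequence.
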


\subsection{The pair $(A_0,\T_0)$}\label{subsecPrymfamily}

Denote $(\cA, \T) \ra T$ the family of principally polarized Prym varieties of the above
family of double covers of curves.

Following Beauville \cite[pp. 175-176]{beauville771}, the Prym variety $A_0$ associated to the
Wirtinger cover is given by the following diagram
\[
\xymatrix{&&A_0\ar^-{\cong}[r]\ar[d]&J(C)\ar[d]&\\
1\ar[r]&\bC^*\ar[r]\ar@{=}[d]&J(\widetilde{C}_{pq})\ar[r]^-{\nu^*}\ar^-{Nm}[d]&J(\widetilde{C})\ar^-{Nm}[d]\ar[r]&0\\
1\ar[r]&\bC^*\ar[r]&J(C_{pq})\ar[r]&J(C)\ar[r]&0}
\]
where $J(\widetilde{C}_{pq})$ and $J(C_{pq})$ are the generalized
(noncompact) Jacobians, $Nm$ is the Norm map, and $\nu:\tC=C_1\amalg C_2\ra\tC_{pq}$ is the normalization map.

To obtain a canonical theta divisor in $A_0$, we fix a bidegree $(d_1,d_2)$ such that $d_1+d_2=10$ and the following holds. 

\begin{enumerate}
\item[$\Diamond$]  There exists a line bundle $N$\ on $\widetilde{C}_{pq}$ of multidegree $(d_1, d_2)$, such that $h^0(N)=0$.
\end{enumerate}
By \cite[p. 153]{beauville771}, the only bidegrees satisfying $\Diamond$ are $(6,4)$, $(4,6)$ and $(5,5)$. If there is a one parameter family of line bundles $\cN$ on $\widetilde\cX$ with $N_0=\cN|_{\tC_{pq}}$ of bidegree $(d_1,d_2)$, we can modify $\cN$ by twisting with a component of $\tC_{pq}$ such that $N_0$ has either bidegree $(6,4)$ or $(5,5)$.
\begin{proposition} We have a canonical identification $(A_0,\T_0)\cong (Pic^4C,\T_C)$.
\end{proposition}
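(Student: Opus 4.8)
The plan is to exhibit an explicit isomorphism $A_0 \cong Pic^4 C$ together with a theta divisor that pulls back to $\T_C$, using the bidegree-$(6,4)$ normalization of $\cN$ described just above the statement. Recall from Beauville's diagram that $A_0$ sits inside the generalized Jacobian $J(\tC_{pq})$ as the kernel of $Nm: J(\tC_{pq}) \to J(C_{pq})$, and that $\nu^*$ identifies this kernel with $J(\tC)/(\text{antidiagonal})$, which via the two projections to $J(C_1)=J(C_2)=J(C)$ gives the isomorphism $A_0 \xrightarrow{\sim} J(C)$ already drawn in the diagram. So the group-theoretic identification $A_0 \cong J(C) \cong Pic^4 C$ is essentially given; the content of the proposition is the \emph{polarized} statement, i.e. that under this identification the canonical theta divisor $\T_0 = \tfrac12 \T_{\tC_{pq}}|_{A_{0,1}}$ (the component of $Nm^{-1}$ of the fixed line bundle $N_0$ parametrizing sections of a prescribed parity) goes over to Riemann's theta divisor $W^0_4 \subset Pic^4 C$.

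First I would fix $N_0$ of bidegree $(6,4)$ with $h^0(N_0)=0$ and describe $\T_0$ concretely as the locus of torsion-free rank-$1$ sheaves $L$ on $\tC_{pq}$ of bidegree $(6,4)$ (or its relevant translate/limit) with $h^0(L) > 0$, lying in the correct connected component of $Nm^{-1}(\cdots)$; this is the limit of the description of $\T_{\tX\to X}$ from Section \ref{seccurves}. Next I would compute, for a line bundle $\cO_{C_1}(D_1) \boxtimes \cO_{C_2}(D_2)$ glued at the node to give a bidegree-$(6,4)$ invertible sheaf on $\tC_{pq}$, the dimension $h^0$ in terms of $D_1, D_2$ on $C$: a global section is a pair of sections agreeing at the gluing, and one shows that the parity condition forces $D_2 \equiv K_C - D_1$ (degree $4 = 2g-2-6$ on a genus-$5$ curve) up to the chosen normalization, so that $h^0(\tC_{pq}, L) = h^0(C, D_2)$ with $D_2 \in Pic^4 C$. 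This exhibits a set-theoretic bijection $\T_0 \to W^0_4$ compatible with the group isomorphism $A_0 \to Pic^4 C$.

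Then I would upgrade this to an isomorphism of \emph{polarized abelian varieties}. Two natural ways: (a) check that the principal polarization type is preserved — both $\T_0$ and $W^0_4$ define principal polarizations, and a finite map between abelian varieties of the same dimension intertwining their principal polarizations and which is generically injective is an isomorphism; or (b) argue by degeneration: the family $(\cA, \T) \to T$ is a family of ppav's, the generic fiber is the Prym of a general étale double cover, and semicontinuity of Hilbert polynomials together with the already-established identification of the underlying abelian variety pins down $\T_0$ as the flat limit, which must then be (a translate of) $W^0_4$ since that is the unique principal polarization on $JC$ up to translation (Matsusaka--Ranieri / the fact that $JC$ with a general $C$ has Néron--Severi group of rank $1$). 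Matching up the precise translate is the bookkeeping of which residual pair $(D_1, K_C - D_1)$ and which parity component one selected, which the choice of $N_0$ of bidegree $(6,4)$ is designed to normalize; this is where I expect references to \cite{beauville771} pp.~153, 175--176 to do the work.

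The main obstacle is the careful analysis at the boundary: $\T_0 \subset A_0$ and $W^0_4 \subset Pic^4 C$ must be compared not just on the open locus of invertible sheaves but across the whole (possibly singular) theta divisor, keeping track of the non-locally-free sheaves $\nu_* N$ that appear in the compactified Jacobian (as in \ref{sssectcomp} and Lemma \ref{lemW15bar}), and verifying that the parity/connected-component choice is consistent along the way. In particular one must check that the bijection extends to a morphism (not merely a bijection of sets) and that no blow-up or identification is hidden in the boundary — the cleanest route is probably to run the argument on the total space over $T$, using that $\cA \to T$ is a family of abelian varieties and that the theta divisor in such a family is a flat Cartier divisor, so that $\T_0$ is forced to be the flat limit of the $\T_t$, hence the reduced effective divisor corresponding to the limiting principal polarization, which on $JC$ is $W^0_4$ up to the translation dictated by the section of $\cN$.
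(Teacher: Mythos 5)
Your overall route is the paper's: identify $A_0\subset Pic^{6,4}(\tC_{pq})$ with $Pic^4C$ by restricting to one component, and compare sections on $\tC_{pq}$ with sections of the degree-$4$ restriction. But the one step that carries all the content is exactly the step you assert rather than prove, and as you state it it is incorrect. The norm condition $Nm(N)=\omega_{C_{pq}}$ pulls back to $N|_{C_1}\otimes N|_{C_2}\cong\omega_C(p+q)$ (degrees $6+4=8+2$ on the genus-$5$ curve $C$), not to $D_2\equiv K_C-D_1$; and there is no parity condition to invoke here, since $Nm^{-1}(\omega_{C_{pq}})\subset Pic^{6,4}(\tC_{pq})$ is connected (this Picard scheme is not compact), so no component choice needs normalizing. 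More seriously, the asserted equality $h^0(\tC_{pq},L)=h^0(C,D_2)$ is neither what is needed nor true in general: a section supported on $C_1$ alone must vanish at both nodes, i.e.\ lies in $H^0(N|_{C_1}(-p-q))\cong H^0(\omega_C\otimes (N|_{C_2})^{-1})$, and for special $D_2$ (e.g.\ $D_2\geq p+q$) such sections make $h^0(\tC_{pq},L)$ strictly larger than $h^0(C,D_2)$. What the proposition needs is only the equivalence $h^0(\tC_{pq},N)>0\iff h^0(C_2,N|_{C_2})>0$, and its proof is a short Serre-duality argument you never isolate: if $s\neq0$ while $h^0(N|_{C_2})=0$, then $s|_{C_2}=0$, so $s|_{C_1}$ is a nonzero section of $N|_{C_1}(-p-q)\cong\omega_C\otimes(N|_{C_2})^{-1}$, whose $h^0$ equals $h^1(N|_{C_2})=h^0(N|_{C_2})=0$ because $\deg N|_{C_2}=4=g(C)-1$ --- a contradiction. (Also, injectivity of $N\mapsto N|_{C_2}$ needs the remark that $N$ is determined by $\nu^*N$ and $\nu^*N$ by $N|_{C_2}$ via the norm relation; and $\nu^*$ carries the kernel of the norm to the antidiagonal in $J(C)\times J(C)$, not to a quotient by it.)

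Once this equivalence is in place, the identification $N\mapsto N|_{C_2}$ carries $\T_0$ onto $\T_C$ directly, and the machinery of your last two paragraphs is unnecessary: no appeal to uniqueness of the principal polarization on $JC$, to Hilbert-polynomial semicontinuity, or to flat limits is required, and the worry about non-locally-free sheaves is moot, because $A_0$ sits inside the locus of line bundles $Pic^{6,4}(\tC_{pq})$; the compactified Jacobian plays no role in this proposition (it is used elsewhere, for $\oW^1_5(C_{pq})$). Those alternative arguments could probably be made to work, but each would need its own care (pinning down the translate, knowing a priori that $\T_0$ is the flat limit of the $\T_t$), whereas the degree-$(g-1)$ Serre-duality computation settles the statement in a few lines.
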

\begin{proof}Denote
$Pic^{6,4}(\widetilde{C}_{pq})$ the principal homogeneous space over
$J(\widetilde{C}_{pq})$ parametrizing line bundles of bidegree $(6,4)$ on
$\widetilde{C}_{pq}$. We identify $A_0$ with the
subvariety of $Pic^{6,4}(\widetilde{C}_{pq})$ consisting of line
bundles $N$ such that $Nm(N)=\omega_{C_{pq}}$. Note that $Nm^{-1}(\omega_{C_{pq}})$ only has one connected component by the diagram above ($Pic^{6,4} (\tC_{pq})$ is not compact). We then define the theta divisor $\T_0$ of $A_0$ as the locus of line bundles $N\in A_0\subset Pic^{6,4}(\widetilde{C}_{pq})$ such that $h^0(N)>0$.

 We have an isomorphism $A_0\stackrel{\cong}\ra Pic^4C$ which sends $N$ to $N|_{C_2}$. This is an isomorphism because $N$ is determined by $\nu^*N$ by the diagram above and $N|_{C_2}$ determines $\nu^*N$ since $Nm(\nu^*N)=N|_{C_1}\otimes N|_{C_2}\cong \omega_C(p+q)$. We claim that if $h^0(\tC_{pq},N)\ne0$, then
$h^0(C_2,N|_{C_2})\ne0$. If not, let $0\ne s\in H^0(\tC_{pq},N)$ such that $s|_{C_2}=0$, then
 $s|_{C_1}$ vanishes at $p$ and $q$. Thus $0\ne s|_{C_1}\in
H^0(C_1,N|_{C_1}(-p-q)))$. However, since $N|_{C_1}(-p-q)\otimes N|_{C_2}\cong \omega_C$, we have
$h^0(C_1,N|_{C_1}(-p-q))=h^1(C_2,N|_{C_2})=h^0((C_2,N|_{C_2})=0$, a contradiction. Thus the canonical identification sends $\T_0$ isomorphically to $\T_C$.
\end{proof}

\subsection{The family of curves in the limit} \label{subseccW} Denote the total space of the family of the double covers
${G}_{t} := \tG^1_5 (X_t)$ by $\cG$. The space $\cG$ is an \'etale double cover of $\cW^1_5$ and therefore smooth by Proposition \ref{Wsm}.  The central fiber $G_0$ of $\cG$ is described in Lemma \ref{lemtGpq}.

Assume we
are also given a section $r : t\mapsto r_t$ of $\widetilde{\cX} \rightarrow T$. 
Let $\cF$ (resp. $\cF_r$) be the closure in $\cG\times_{T}\widetilde{\cX}^{(5)}\times_T\widetilde{\cX}^{(5)}$ of the family of fourfolds (resp. threefolds) $F_t$ (resp. $F_{r_t}$) constructed in
Section \ref{seccurves} for the general fibers over $t\neq 0$. 

 By construction, the central fiber $F_0$ of ${\cF}$ fibers over $G_0=W_1\cup W_2$ with fiber over $M\in W_1$ (resp. $W_2$) the surface $B^1_M\times B^1_{M'}$ (resp. $B^2_M\times B^2_{M'}$), where $B^1_M$, $B^1_{M'}$ (resp. $B^2_M$, $B^2_{M'}$) live in  
\[
(C_1^{(4)}\times C_2)\cup (C_1^{(2)}\times C_2^{(3)})\cup C_2^{(5)}
\]
\[(resp. \ 
C_1^{(5)}\cup (C_1^{(3)}\times C_2^{(2)})\cup (C_1\times C_2^{(4)})).
\]

\section{The degeneration of theta divisors}\label{sectheta}

Let $(\cA, \Theta) =\{(A_t,\Theta_t)\}_{t \in T}$ be a 1-parameter family of
principally polarized Abelian varieties of dimension 5 with smooth total space
$\cA$. Assume
that for $t \neq 0$, the fiber $\T_t$ of $\T$ is smooth and that the fiber of $(\cA ,\T)$
at $0$ is the polarized Jacobian $(A_0 =JC ,\T_0 = \T_C)$ of a smooth
curve $C$ of genus 5.

We will obtain information about the cohomology of $\Theta_t$ from the cohomology of
$\Theta_0$ using limit mixed Hodge structures. We shall see below that the total space
$\T$ is singular. We first need to modify the family $(\cA,\T)$ using base change and
blowups to obtain a family of theta divisors with smooth total space whose central fiber
is a divisor with simple normal crossings.

\subsection{The singularities of $\T$}\label{subsecsingt}

Denote by $\cH$ the Siegel upper half space and consider the Riemann
theta function $\te (z,\tau)$ on $\bC^5 \times \cH$. After possibly replacing $T$ with a finite cover we can assume that there is a map $\tau : T \ra \cH$
such that the family $(\cA , \T)$ is the inverse image, via $\tau$, of the universal
family of polarized abelian varieties over $\cH$. In particular, we can assume that
the family $\T$ is defined by $\{(z,t)\in \bC^5 \times T \colon
\te(z,\tau(t))=0\}$ (modulo the action of the lattice of $A_t$). Denote $F(z,t):=\te(z,\tau(t))$.

In the case we are interested in, the singularities of the special fiber $\Theta_0$ are
all double points hence the singularities of the total family $\Theta$ are at worst double
points.

We compute the singularities of $\T$ locally, using the heat equation:
$\del_{\tau_{\al\bb}}\te =\del_{z_\al z_\bb}\te$ modulo multiplication
by a constant. Here the $\tau_{\al\bb}$ are coordinates on $\cH$ and the $z_{\al}$ are coordinates on an abelian variety $A$.

Write $\tau (t) =\sum_{ 1\leq i, j\leq 5 }\lambda_{ ij } (t) \tau_{ ij}$ and let $\dot\lambda_{ ij } (t)$ denote the derivative of $\lambda_{ ij } (t)$ with respect to $t$.

Note that, since $\T_t$ is smooth for $t \neq 0$, the total space $\T$ is smooth away from the special fiber $\T_0$.

\begin{proposition}

A point $(z,0)$ is a singular point of $\T$ exactly when $(z,0)$ is a singular point of $\T_0$
such that the equation $q_z\in S^2 H^1 (\cO_{ A_0 } )^*$ of
the quadric tangent cone to $\T_0$ at $z$ vanishes on the
infinitesimal deformation direction $\dot\tau (0) :=\sum_{ 1\leq i, j\leq 5
}\dot\lambda_{ ij } (0) \tau_{ ij }\in S^2 H^1 (\cO_{ A_0 } )$ under
duality.

\end{proposition}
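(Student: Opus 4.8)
The plan is to work entirely in the local analytic model near a point $(z,0)$ with $\theta(z,\tau(0))=0$, and to Taylor-expand $F(z,t)=\theta(z,\tau(t))$ simultaneously in the $z$-directions (along $A_0$) and in $t$. First I would note that $\T$ is a hypersurface in the smooth fivefold-times-disc $\cA$, cut out locally by $F$, so $(z,0)$ is a singular point of $\T$ precisely when $F(z,0)=0$, $\partial_t F(z,0)=0$, and $\partial_{z_\al}F(z,0)=0$ for all $\al$. Since we already know $\T$ is smooth away from $\T_0$, we only test points $(z,0)$. The vanishing $F(z,0)=0$ together with $\partial_{z_\al}F(z,0)=0$ says exactly that $z$ is a singular point of $\T_0$; so the only new condition is $\partial_t F(z,0)=0$, and the whole proposition reduces to identifying this single scalar equation with the stated quadric-vanishing condition.

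The key computation is the chain rule combined with the heat equation. Writing $\tau(t)=\sum_{i\le j}\lambda_{ij}(t)\tau_{ij}$ (symmetric indices), one has
\[
\partial_t F(z,t)=\sum_{i\le j}\dot\lambda_{ij}(t)\,\partial_{\tau_{ij}}\theta\big(z,\tau(t)\big),
\]
and the heat equation $\partial_{\tau_{ij}}\theta = c_{ij}\,\partial_{z_i}\partial_{z_j}\theta$ (with $c_{ij}$ the universal constant, $1$ or $\tfrac12$ depending on the diagonal convention) converts this into
\[
\partial_t F(z,0)=\sum_{i\le j}c_{ij}\,\dot\lambda_{ij}(0)\,\partial_{z_i}\partial_{z_j}\theta\big(z,\tau(0)\big).
\]
Now I would invoke the fact that at a singular point $z$ of $\T_0$ the first $z$-derivatives of $\theta$ vanish, so the Hessian $\big(\partial_{z_i}\partial_{z_j}\theta(z,\tau(0))\big)_{i,j}$ is a well-defined symmetric tensor whose associated quadratic form is (up to scalar) the equation $q_z\in S^2 H^1(\cO_{A_0})^*$ of the tangent cone to $\T_0$ at $z$; this is the standard identification of the tangent cone of the theta divisor with the Hessian of the theta function, using $T_0 A_0\cong H^1(\cO_{A_0})$. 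Hence $\partial_t F(z,0)$ is precisely the value obtained by pairing $q_z$ with the symmetric tensor $\sum_{i\le j}c_{ij}\dot\lambda_{ij}(0)\,(\partial_{z_i}\odot\partial_{z_j})$, which is exactly the deformation direction $\dot\tau(0)=\sum_{ij}\dot\lambda_{ij}(0)\tau_{ij}\in S^2 H^1(\cO_{A_0})$ viewed via the identification of the tangent space to Siegel space with $S^2 H^1(\cO_{A_0})$ (Kodaira–Spencer). So $\partial_t F(z,0)=0$ if and only if $q_z$ vanishes on $\dot\tau(0)$ under the duality pairing $S^2 H^1(\cO_{A_0})^*\times S^2 H^1(\cO_{A_0})\to\bC$, which is the assertion.

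The routine-but-real bookkeeping is matching the three parallel identifications — tangent space to $\cH$ with $S^2 H^1(\cO_{A_0})$, tangent cone of $\T_0$ with the Hessian quadric in $S^2 H^1(\cO_{A_0})^*$, and the Kodaira–Spencer class of the family with $\dot\tau(0)$ — so that the pairing appearing in the chain-rule formula is literally the duality pairing, with the universal constants from the heat equation absorbed consistently; but this is purely a matter of normalizing conventions. I expect the only genuine subtlety to be the first observation: that $F(z,0)=0$ and $\partial_{z_\al}F(z,0)=0$ already force $z\in\mathrm{Sing}\,\T_0$ and impose no further condition on the $t$-direction, so that the singularity of the \emph{total space} is governed by the single extra equation $\partial_t F(z,0)=0$. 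This uses crucially that $\cA$ is smooth (so $\T$ is a Cartier divisor and its singular locus is cut out by the partials of one defining function) and that $\T_0$ has only isolated double-point singularities in the cases of interest, so there is no contribution from higher-order terms in $t$ and the condition is the vanishing of a quadratic form rather than something more degenerate.
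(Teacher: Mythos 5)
Your proposal is correct and follows essentially the same route as the paper: reduce singularity of the total space at $(z,0)$ to the three conditions $F(z,0)=\partial_{z_\al}F(z,0)=\partial_t F(z,0)=0$, then use the heat equation to rewrite $\partial_t F(z,0)$ as the pairing of the Hessian quadric $q_z$ with $\dot\tau(0)$. The only quibble is your closing remark that $\T_0$ has isolated double-point singularities "in the cases of interest" — in fact its singular locus is the curve $W^1_4$ — but nothing in the argument uses this, so it does not affect the proof.
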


\begin{proof}

By the heat equation, at a point $(z,t) \in \T$ the equation of the tangent
hyperplane to $\T$ in $\cA$ is the pullback from the Siegel space of the
equation
\[
\sum_{ i=1 }^5 Z_i\del_{ z_i } \te +\sum_{ 1\leq i, j\leq 5 } T_{
ij} \del_{ z_i z_j } \te
\]
where the $Z_i$ are the coordinates on the tangent space to a fiber $A_t$ and the $T_{ij}$ coordinates on the tangent space to $\cH$ at $\tau (t)$.

This gives the equation
\[
\sum_{ i=1 }^5 \del_{ z_i } F (z,t)Z_i+\left(\sum_{ 1\leq i, j\leq 5 }\dot\lambda_{
ij} (t)\del_{ z_i z_j } F (z,t)\right)\Omega
\]
where $\Omega$ is the coordinate on the tangent space to $T$ at $t$.

So the point $(z, 0)$ is singular on $\T$ if and only if it is singular on $\T_0$ and
\[
\sum_{ 1\leq i, j\leq 5 }\dot\lambda_{ij} (t)\del_{ z_i z_j } F (z,0) = 0.
\]
Since
\[
q_z = \sum_{ 1\leq i, j\leq 5 } Z_i Z_j \del_{ z_i z_j } F (z,0)
\]
the proposition follows.
\end{proof}

The partial derivatives of $F$ are
(with summation and constant convention):
\[\begin{aligned}
\del_{t}&F(z,t)=\del_{\tau_{\al\bb}}\te(z,\tau(t)) \del_t
\tau_{\al\bb}=\del_{z_\al z_\bb}\te(z,\tau(t))\del_t
\tau_{\al\bb}=\del_{z_\al z_\bb}F(z,t)\del_t \tau_{\al\bb}\\
\del_{z_\al t}&F(z,t)=\del_t( \del_{z_\al}F(z,t))=\del_{z_\al z_\bb
z_\ga}F(z,t)\del_t \tau_{\bb\ga}\\
\del_{tt}&F(z,t) =\del_{z_\al z_\bb z_\ga z_\dd}F(z,t)\del_t
\tau_{\al\bb}\del_t \tau_{\ga\dd}+\del_{z_\al z_\bb}F(z,t)\del_{tt}
\tau_{\al\bb}.
\end{aligned}\]

\subsection{The case $A_0 \simeq J(C) \simeq Pic^4 C$}

In this case the theta-divisor $\T_0 = W^0_4 (C)$ of the special fiber is
smooth outside the curve $W^1_4 := W^1_4 (C)$ and $W^1_4$ is an ordinary double
curve on it. Therefore we have
\[
\begin{array}{rr}
&F(p,0)=0 \quad\forall p \in W^1_4\\
&\del_{z_\al}F(p,0)=0 \quad \forall \al \forall p \in W^1_4\\
 &\text{ rank$\Big(\del_{z_\al z_\bb}F(p,0)\Big)_{1\leq
 \al,\bb \leq 5} =4$, $\forall p \in W^1_4$}.
\end{array}
\]

\begin{theorem}\label{thmtendp}
For $\tau$ sufficiently general, the singularities of $\T$ consist of
ten ordinary double points.
In the case where $(\cA, \T)$ is the family of Prym varieties of a family of double covers $(\tcX, \cX)$ as in \ref{subsecPrymfamily}, the ten distinct singular points $g_1, \ldots , g_5, h_1, \ldots , h_5$ of $\T$ are the $g^1_4$'s cut on $C$ by quadrics of rank $4$ containing $C$ and its secant $\langle p+q \rangle$. In other words, $h^0 (g_i -p-q) > 0$ and $h_i = |K - g_i|$ up to relabeling.
\end{theorem}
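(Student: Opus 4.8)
The plan is to combine the infinitesimal criterion of the previous proposition with an explicit analysis of the quadrics of rank $4$ containing the canonical curve of $C$. By that proposition, the singular points of $\T$ are exactly those $g \in W^1_4$ for which the tangent cone quadric $q_g$ vanishes on the deformation direction $\dot\tau(0) \in S^2 H^1(\cO_{A_0})$. First I would recall (this is classical, going back to Riemann--Kempf) that for $g \in W^1_4(C)$ on a general genus $5$ curve, the tangent cone to $\T_0 = W^0_4$ at $g$ is a quadric of rank $4$ in $\bP T_0 Pic^0 C = |K_C|^*$, namely the quadric whose two rulings cut out the pencils $|g|$ and $|K_C - g|$; moreover these are precisely the rank-$4$ quadrics through the canonical curve, since a general genus $5$ canonical curve is the complete intersection of three quadrics and the discriminant of the net of quadrics is a plane quintic whose points parametrize the (finitely many, generically $5$, rank $\le 4$) singular quadrics, matching $\deg W^1_4$-type counts. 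So the condition $q_g(\dot\tau(0)) = 0$ cuts out, inside the quintic discriminant curve, the locus of rank-$4$ quadrics containing the point (or rather the quadratic form) dual to $\dot\tau(0)$; for $\dot\tau(0)$ general this is a finite set, and a dimension/degree count gives exactly five such $g$, with their Serre-dual residuals $|K_C - g|$ giving five more, for a total of ten. The non-degeneracy (ordinary double points rather than worse) follows because at such a point the Hessian of $F$ in the $(z,t)$ directions is non-degenerate: the $z$-Hessian has corank $1$ with kernel a line $\ell$, and the mixed second derivatives $\partial_{z_\alpha t} F$ together with $\partial_{tt} F$ are generically nonzero on $\ell$, which one reads off from the explicit derivative formulas displayed just above using the heat equation.

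For the second, more precise, statement in the Prym case, I would identify the deformation direction $\dot\tau(0)$ explicitly. Since $(\cA, \T) \to T$ is the family of Prym varieties degenerating to the Wirtinger cover, with $A_0 \cong JC$, the tangent direction to $\cA_5$ at $[A_0]$ along this family is (up to the well-understood infinitesimal behaviour of the Prym map, or equivalently by a direct computation with the Wirtinger degeneration) the rank-one quadric $(p-q)^{\otimes 2}$, i.e. the square of the point of the canonical space determined by the secant line $\langle p + q\rangle$ — more precisely $\dot\tau(0)$ corresponds under the identification $S^2H^1(\cO_{A_0}) \cong S^2 H^0(\omega_C)^*$ to the functional "evaluate the conormal direction at the secant $\langle p+q \rangle$". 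Then $q_g(\dot\tau(0)) = 0$ becomes the statement that the rank-$4$ quadric $q_g$ (whose rulings cut $|g|$ and $|K_C - g|$) contains the secant line $\langle p + q \rangle$. By the geometry of quadrics of rank $4$, a line lies on the quadric iff it meets a line of each ruling, which translates into $h^0(g - p - q) > 0$ (equivalently $h^0((K_C - g) - p - q) > 0$): indeed a divisor in $|g|$ containing both $p$ and $q$ is exactly a line of the ruling through the secant. Counting: for $C$, $p$, $q$ general there are exactly five $g^1_4$'s with $h^0(g - p - q) > 0$ — this is the same count of rank-$4$ quadrics through $C \cup \langle p+q\rangle$, which is $5$ because imposing that the net of quadrics through $C$ also contain the secant gives one linear condition, cutting the discriminant quintic in its intersection with a line, i.e. $5$ points — and each contributes the pair $\{g_i, h_i = |K_C - g_i|\}$, giving exactly the ten points claimed, and they are distinct for general moduli.

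The main obstacle, I expect, is the precise identification of $\dot\tau(0)$ as the rank-one square of the secant point, i.e. showing that the infinitesimal deformation of $(JC, \T_C)$ inside $\cA_5$ coming from smoothing the Wirtinger cover is dual to $(p-q)^{\otimes 2}$ (or whatever the correct normalization is) under the heat-equation identification. This requires carefully tracking the differential of the Prym construction through the Wirtinger degeneration — essentially differentiating the period matrix of the Prym variety of $\tcX_t \to \cX_t$ at $t = 0$. One clean way is to use that, for the Wirtinger cover, $A_0 = JC$ sits in $J(\tC_{pq})$ as a quotient of the generalized Jacobian, and the extension class / the limit of the period matrix is governed by the two points $p, q$; alternatively one can invoke the known description (e.g. via the work on the Prym map and its differential, or Beauville's and Donagi--Smith's analysis) of the tangent space to the Prym--Torelli image at such boundary points. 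Once $\dot\tau(0)$ is pinned down, everything else is the quadric bookkeeping sketched above, which is routine given the genericity hypotheses already in force. I would also need to double-check that the ten points are genuinely distinct and genuinely ordinary double points — the distinctness is a codimension count avoided by general choice of $p+q$ (we already used such genericity in the smoothness of $W_{pq}$), and the ODP property is the non-degeneracy of the full $(z,t)$-Hessian, read from the displayed formulas for $\partial_t F$, $\partial_{z_\alpha t} F$, $\partial_{tt} F$.
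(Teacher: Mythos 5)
Your route to the ten singular points is essentially the paper's: the infinitesimal criterion from the preceding proposition, the Riemann--Kempf description of the tangent cone at a point of $W^1_4$ as a rank-$4$ quadric through the canonical curve, the observation that the singularity condition is one linear condition on the net $I_2(C)$, hence a line in $\bP I_2(C)$ meeting the discriminant quintic in five points, and, in the Prym case, the identification of that line with the pencil of quadrics containing $C\cup\langle p+q\rangle$ via the differential of the Prym map along the Wirtinger locus --- the paper settles exactly the point you flag as the main obstacle by citing Donagi--Smith (pp.~45 and 86), and your guess that $\dot\tau(0)$ is the square of a point of the secant is equivalent, modulo the annihilator of $I_2(C)$, to their description. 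One slip: $h^0(g-p-q)>0$ and $h^0(K_C-g-p-q)>0$ are \emph{not} equivalent. For general $p,q$ the secant line lies in a plane of exactly one of the two rulings of each rank-$4$ quadric, and this asymmetry is precisely what distinguishes $g_i$ from $h_i$ in the statement; both points of $W^1_4$ over a given quadric are nonetheless singular on $\T$, since the tangent-cone condition depends only on the quadric, so your list of ten points is unaffected, but the parenthetical equivalence should be deleted.

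The genuine gap is the ordinary-double-point verification. Asserting that the mixed second derivatives are ``generically nonzero on $\ell$, read off from the displayed formulas'' begs the question: in a basis adapted to $q_z$ the rank-$6$ condition is exactly the nonvanishing of $\sum_{j,k}\dot\lambda_{jk}(0)\,\partial_{z_5 z_j z_k}F(z,0)$, i.e.\ of the cubic term of the theta function at $z\in W^1_4$, contracted with $\dot\tau(0)$ and evaluated along the vertex direction of the rank-$4$ quadric, and the chain-rule formulas carry no information about whether this quantity vanishes. The paper supplies a genuine geometric input at this point: it takes $\dot\lambda_{ij}(0)=\lambda_i\lambda_j$ with $(\lambda_i)$ a general point of the osculating cone to $\T_0$ at $z$, so that the required nonvanishing becomes the statement that the vertex of $q_z$ does not lie in the tangent space to the osculating cone at $(\lambda_j)$, and this is exactly what the Kempf--Schreyer theorem on osculating cones (their p.~353) provides. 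Without that input, or some substitute controlling the third-order behaviour of $\theta$ along $W^1_4$, ``generic nonvanishing'' is unjustified --- a priori the relevant coefficient could vanish identically at the ten points --- so your proof of the ODP statement is incomplete as it stands.
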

\begin{proof}

We use the calculations in Section \ref{subsecsingt}.
The annihilator of the deformation direction
\[
\dot\tau (0) =\sum_{ 1\leq i, j\leq 5
}\dot\lambda_{ ij } (0) \tau_{ ij }\in S^2 H^1 (\cO_{ A_0 } )
\]
is a hyperplane
in $S^2 H^0 (\omega_C)$ which, for $\tau$ sufficiently general, gives
a hyperplane in the space $I_2(C)$ of quadrics containing the canonical image of $C$ and hence a line $l$ in $\bP I_2
(C)\cong\bP^2$. The quadrics of rank $4$ containing the canonical
model of $C$ are the elements of $Q$, a plane quintic in $\bP I_2
(C)$. Those whose equations vanish on $\tau$ are the elements of the
intersection $l\cap Q$ which, for $\tau$ sufficiently general,
consists of $5$ distinct points, say $q_1, \ldots , q_5$. There are ten distinct points in the
singular locus $W^1_4$ of $\T_0$ above these five points: the $g^1_4$'s cut on $C$ by the rulings of $q_1, \ldots , q_5$. Hence we see
that $\T$ has exactly ten distinct singular points.

In the case where our family of abelian varieties is a family of Prym varieties of double
covers with central fiber a Wirtinger cover, the deformation direction $\dot\tau (0)$ is
the image, via the differential of the Prym map, of the infinitesimal deformation
direction, say $\eta$, of double covers induced by the family $(\tcX, \cX)$. As the Prym
map sends the locus $\cW_6$ of Wirtinger covers in $\cR_6$ into the Jacobian locus
$\cJ_5$, its differential induces a linear map from the $1$-dimensional normal space
$N_{C_{pq}}$ to $\cW_6$ to the $3$-dimensional normal space $N_{JC}$ to $\cJ_5$. It is
well-known, see e.g. \cite[p. 45]{DonagiSmith81}, that the normal space to $\cJ_5$ at $JC$ can be
canonically identified with the dual $I_2(C)^*$ to $I_2(C)$. By \cite[p.
86]{DonagiSmith81}, the image of $\bP N_{C_{pq}}$ in $\bP I_2 (C)^* = \bP N_{JC}$ is the pencil of
quadrics containing the canonical image of $C$ together with its secant $\langle p+q
\rangle$. This is also the line that we denoted $l$ above. Therefore the points $q_1, \ldots , q_5$ are the quadrics of
rank $4$ containing $C$ and $\langle p+q \rangle$. The line $\langle p+q \rangle$ is
contained in exactly one ruling of $q_i$ and we denote $g_i$ the $g^1_4$ cut on $C$ by
that ruling. We then have $h^0 (g_i -p-q ) > 0$. The second ruling of $q_i$ cuts $h_i := |K_C - g_i|$ on $C$.

It remains to prove that the ten singular points are ordinary double
points. The degree $2$ term of the Taylor expansion of $F$ near a
singular point $(z,t)$ is (using the heat equation up to a scalar):
\[
\sum_{ 1\leq i, j\leq 5 } Z_i Z_j\del_{ z_i z_j } F +\left(\sum_{ 1\leq
i,j,k\leq 5 } Z_i\dot\lambda_{ jk } \del_{ z_i z_j z_k } F\right)\Omega
+\left(\sum_{ 1\leq i,j,k, l\leq 5 }\dot\lambda_{ ij }\dot\lambda_{ kl }\del_{ z_i
z_j z_k z_l } F+\sum_{ 1\leq i, j\leq 5 }\ddot\lambda_{ ij }\del_{ z_i z_j } F\right)\Omega^2.
\]
The first part of the above is the equation of the quadric $q_z$ which
has rank $4$. In a basis adapted to $q_z$ we have the matrix of second partials
{\Small\Small\Small \[
\left(
\begin{array}{cccccc}
0 & 1 & 0 & 0 & 0 & \sum_{ j, k=1 }^5\dot\lambda_{ jk }\del_{ 1
j k }F \\
1 & 0 & 0 & 0 & 0 & \sum_{ j, k=1 }^5\dot\lambda_{ jk }\del_{ 2
j k }F \\
0 & 0 & 0 & 1 & 0 & \sum_{ j, k=1 }^5\dot\lambda_{ jk }\del_{ 3
j k }F \\
0 & 0 & 1 & 0 & 0 & \sum_{ j, k=1 }^5\dot\lambda_{ jk }\del_{ 4
j k }F \\
0 & 0 & 0 & 0 & 0 & \sum_{ j, k=1 }^5\dot\lambda_{ jk }\del_{ 5 j k }F \\
\sum_{ j, k=1 }^5\dot\lambda_{ jk }\del_{ 1 j k }F & \sum_{
j, k=1 }^5\dot\lambda_{ jk }\del_{ 2 j k }F & \sum_{
j, k=1 }^5\dot\lambda_{ jk }\del_{ 3 j k }F & \sum_{ j,
k=1 }^5\dot\lambda_{ jk }\del_{ 4 j k }F & \sum_{ j, k=1
}^5\dot\lambda_{ jk }\del_{ 5 j k }F & \sum_{ i, j, k, l=1
}^5\dot\lambda_{ ij }\dot\lambda_{ kl }\del_{ i j k l}F \\
&&&&&+\sum_{ 1\leq i, j\leq 5 }\ddot\lambda_{ ij }\del_{ ij} F
\end{array}
\right)
\]}

So we need to see that this matrix has rank $6$ at the points of $W^1_4$. In other words, for
$\tau$ sufficiently general the coefficient $\sum_{ j, k=1
}^5\dot\lambda_{ jk }(0)\del_{ 5 j k }F(z,0)$
is not zero. Taking
$\dot\lambda_{ ij }(0) =\lambda_i\lambda_j$ such that the point $(\lambda_i)$
is on the osculating cone to $\T_0$ and is otherwise general, this means
that
the vertex of the quadric $q_z$ is not contained in the tangent
space to the osculating cone to $\T_0$ at the point $(\lambda_j)$.
For a general choice of the $\lambda_j$ as above this is a consequence of
\cite{kempfschreyer} page 353.
\end{proof}

\section{The semi-stable reduction of the family of theta divisors}\label{secsstheta}

As before denote $(\cA, \T) \ra T$ the family of principally polarized Prym varieties associated to the $\acute{e}$tale cover $\widetilde{\cX}\ra\cX$. The central fiber is the Jacobian $A_0\cong Pic^4C$ of a general curve of genus $5$. By Theorem \ref{thmtendp}, the total space $\T$ has ten ordinary double points on $W^1_4$: $g_1, \ldots, g_5$ which satisfy $h^0(g_i-p-q)>0$ and $h_i := |K_C - g_i|$. We will construct a semistable reduction of $\T$ and, in Section \ref{seccohtheta}, use the Clemens-Schmid exact sequence to compute the cohomology of $\Theta_t$.

\subsection{The base change and first blow-ups}\label{rkcentralfiber} To construct our semistable
reduction, we first make a base change of degree $2$, then resolve singularities. Let $T^{b} \ra T$ be a degree $2$ cover. After possibly shrinking $T$, we assume that the cover $T^b\ra T$ has a unique branch point which is $0\in T$.
 Pulling back, we obtain the family $\T^{b}\subset\cA^b \ra T^b$ and $\T^b$ is singular along $W^1_4\subset\T_0$. We define $\tT$ as the blow up of $\T^b$
along its singular locus $W^1_4$. We will see that $\tT$ is a resolution of $\T^b$ whose special fiber $\tT_0$ is a simple normal crossings divisor ($\cA^b\ra T^b$ is still a smooth family):

\[
\begin{array}{ccccc}
\widetilde{\T} & \lra &\T^b & \lra & \Theta \\
&&\downarrow & & \downarrow \\
&&T^b & \lra & T.
\end{array}
\]

To make our family of curves compatible with the base change, we also need to make a base change of order two on $\cG$ and then blow up along the singular locus of $\cG^b$ to obtain a semistable family. The resulting space is $\widetilde{\cG}$.
\[
\begin{array}{ccccc}
\widetilde{\cG} & \lra &\cG^b & \lra & \cG \\
&&\downarrow & & \downarrow \\
&&T^b & \lra & T.
\end{array}
\]

Recall that, by Lemma \ref{lemtGpq}, the fiber of $\cG$ at $t=0$ is the union of two copies of $W_{pq}$, denoted $W_1$ and $W_2$, where $X_{kp} = W^1_4 (C) +p \subset W_k$ is identified with $X_{3-k,q} =W^1_4 (C) +q \subset W_{3-k}$. After blowing up along the singular locus $X_{1p}\amalg X_{1q}\subset\cG^b$,
the central fiber $\tG_0$ of $\widetilde{\cG}$ has four components: $W_1$, $W_2$, $P_1$ and $P_2$, where $P_1$, resp. $P_2$, is a $\bP^1$ bundle over $X_{1p} = X_{2q}$, resp. $X_{1q} = X_{2p}$. The four components meet as below:

$$\begin{tikzpicture}[xscale=1.5,yscale=1.5][font=\tiny]
  \coordinate (a) at (0,0); 
  \coordinate (b) at (1,0);
  \coordinate (c) at (1,1); 
  \coordinate (d) at (0,1); 
  \path[fill=black](a)circle[radius=0.03];
  \path[fill=black](b)circle[radius=0.03];
  \path[fill=black](c)circle[radius=0.03];
  \path[fill=black](d)circle[radius=0.03];
  \node[left] at ($(a)!1/10!(d)$){$X_{1q}$};
  \node[left] at ($(d)!1/10!(a)$){$X_{1p}$};
  \node[left] at ($(b)!1/10!(c)$){$X_{2p}$};
  \node[left] at ($(c)!1/10!(b)$){$X_{2q}$};
  \draw ($(a)!1+1/2!(b)$)--($(b)!1+1/2!(a)$);
  \draw ($(b)!1+1/2!(c)$)--($(c)!1+1/2!(b)$);
  \draw ($(c)!1+1/2!(d)$)--($(d)!1+1/2!(c)$);
  \draw ($(d)!1+1/2!(a)$)--($(a)!1+1/2!(d)$);
  \node [above] at ($(a)!1+1/2!(b)$){$P_2$};
  \node [above] at ($(d)!1+1/2!(c)$){$P_1$};
  \node [right] at ($(d)!1+1/2!(a)$){$W_1$};
  \node [right] at ($(c)!1+1/2!(b)$){$W_2$};

\end{tikzpicture}\\$$

\begin{notation}\label{notTb}
From now on we will replace the family $\T\subset\cA \ra T$ by the new family $\T^b\subset\cA^b \ra T^b$, $\cG \ra T$ by $\cG^b \ra T^b$ by their respective base changes to $T^b$. We also replace the family $\cF$ and $\tX\ra X$ by the corresponding spaces after base change.

\end{notation}

\subsection{The central fiber $\widetilde{\T}_0$}
\begin{proposition}\label{proptheta0}

The total space $\widetilde \T$ is smooth. Its special fiber $\tT_0$ is a divisor with
simple normal crossings with the following two irreducible components.
\begin{enumerate}
\item The component $M_1$ which is the blowup of $\T_0$ along $W^1_4$.
\item The component $M_2$ which is the exceptional divisor, i.e. the projectivized normal
cone to $\T$ along $W^1_4$. Therefore $M_2$ is a fibration over $W^1_4$. At the points
$g_i$ and $h_i$ the fibers $Q_{3i}^{sing}$ of $M_2$ are isomorphic to the singular quadric $Q_3^{sing}$ of
rank $4$ in $\bP^4$. At all the other points of $W^1_4$, the fibers of $M_2$ are
isomorphic to the smooth quadric hypersurface $Q_3\subset\bP^4$.
\end{enumerate}
The intersection $M_{12}=M_1\cap M_2$ is a $\bP^1\times\bP^1$ bundle over $W^1_4$. In
particular, it is smooth.
\end{proposition}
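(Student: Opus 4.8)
The plan is to work locally analytically near the singular locus $W^1_4 \subset \T_0$ and separate the analysis into three regimes: the generic point of $W^1_4$, the two-parameter deformation transverse to $W^1_4$ at ordinary points, and the ten special points $g_i, h_i$ where the tangent cone degenerates further. First I would set up local coordinates $(w, z_1, \ldots, z_4, s)$ on $\cA^b$ near a point of $W^1_4$, where $w$ is a coordinate along $W^1_4$, the $z_j$ are normal coordinates to $W^1_4$ inside $\T_0$, and $s$ is the parameter on $T^b$. Since the base change $T^b \to T$ is ramified of order $2$ at $0$, we have $t = s^2$, and from the second-derivative computation in the proof of Theorem~\ref{thmtendp} the defining equation of $\T$ in $\cA$ has the form $q(z) + s^2(\cdots)$ near a generic point of $W^1_4$, where $q$ is the rank-$4$ quadratic form in the $z_j$ (independent of $s$ at leading order, since $F$ and $\del_{z_\al}F$ vanish identically on $W^1_4$). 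After the base change and the change of variables absorbing $s$ appropriately, $\T^b$ is locally $\{q(z_1,\ldots,z_4) = s^2 \cdot u\}$ with $u$ a unit, i.e. an $A_1$-type family in the transverse directions times the $w$-line. Blowing up the smooth center $W^1_4 = \{z_1 = \cdots = z_4 = s = 0\}$ — wait, the center is $W^1_4 \subset \T_0$, which is $\{z_j = 0\}$ inside $\T^b$; in the ambient $\cA^b$ the relevant blowup center for resolving $\T^b$ is $W^1_4$ — then I would check on each of the standard affine charts of the blowup that the proper transform $\tT$ is smooth and that $\tT_0$ acquires exactly two components meeting transversally.

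Next I would identify the two components and their geometry. The proper transform of $\T_0$ under $\Bl_{W^1_4}$ is by definition $M_1$, the blowup of $\T_0$ along its double curve $W^1_4$; since $W^1_4$ is an ordinary double curve on $\T_0$ (the transverse tangent cone is a rank-$4$, hence rank-$\geq 2$, quadric in $\bP^3$, i.e. a smooth conic in general and the cone over it), $M_1$ is smooth — this is a standard fact about blowing up a smooth ordinary double curve, and I would cite or reprove it from the local equation $q(z) = 0$ in $\bP(z_1:\cdots:z_4)$-charts. The exceptional component $M_2$ is the projectivized normal cone to $\T$ along $W^1_4$ inside $\cA^b$; fiberwise over $w \in W^1_4$ it is cut out in $\bP^4$ (coordinates $[Z_1:\cdots:Z_4:\Omega]$, the projectivization of the normal space to $W^1_4$ in $\cA^b$, which is $5$-dimensional) by the degree-$2$ part of the Taylor expansion, namely the rank-$6$ symmetric matrix displayed in the proof of Theorem~\ref{thmtendp}. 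Generically this matrix has rank $6$, giving a smooth quadric threefold $Q_3 \subset \bP^4$; at the ten points $g_i, h_i$ the $(5,5)$-entry (the coefficient of $\Omega^2$) vanishes — this is precisely the non-vanishing condition that failed there — so the matrix drops to rank $5$ and the fiber is the rank-$4$ quadric cone $Q_3^{sing}$. Thus $M_2 \to W^1_4$ is the asserted quadric fibration.

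Then I would compute the intersection $M_{12} = M_1 \cap M_2$ and the incidence along the blowup, and verify the simple-normal-crossings condition. On the exceptional divisor of $\Bl_{W^1_4}\cA^b$, the component $M_1$ meets $M_2$ along the intersection of the exceptional $\bP^4$-bundle with the proper transform of $\T_0$; locally this is the vanishing of $q(z_1,\ldots,z_4)$ in the $\bP^3$ of directions $[Z_1:\cdots:Z_4]$ (with $\Omega = 0$), which is a smooth quadric surface $\cong \bP^1 \times \bP^1$ over each $w$, so $M_{12}$ is a $\bP^1\times\bP^1$-bundle over $W^1_4$ and in particular smooth. Finally, to get simple normal crossings of $\tT_0 = M_1 \cup M_2$ I must check (i) $M_1$ and $M_2$ are each smooth — done above; (ii) they meet transversally along $M_{12}$ — immediate from the chart computation, since in the chart where $z_1 = z_1', z_j = z_1' z_j'$ etc.\ one component is $\{z_1' = 0\}$ and the other $\{(\text{equation not involving }z_1')=0\}$; and (iii) $\tT$ is smooth along $M_{12}$, which again follows from writing out the total space equation in the blowup charts and using that $u$ is a unit.

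\textbf{Main obstacle.} The delicate point is the behavior at the ten special points $g_i, h_i$: there the fiber of $M_2$ is singular ($Q_3^{sing}$ has a vertex), so I must be careful that this vertex-singularity of the fiber does \emph{not} propagate to a singularity of the total space $M_2$ or of $\tT$. This is exactly where the input from \cite{kempfschreyer} invoked in the proof of Theorem~\ref{thmtendp} is used: the second-order term in the $\Omega$-direction — the $(5,5)$-entry $\sum \dot\lambda_{ij}\dot\lambda_{kl}\del_{ijkl}F + \sum\ddot\lambda_{ij}\del_{ij}F$ — while vanishing \emph{along} $W^1_4$ at $g_i$, has nonzero derivative in the $W^1_4$-direction for $\tau$ sufficiently general, so that in $M_2$ the locus $\{(5,5)\text{-entry}=0\}$ is transverse and the total space $M_2$ (and hence $\tT_0$, and $\tT$) stays smooth even though individual fibers are cones. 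I would therefore devote the bulk of the argument to a careful local-coordinate computation near $g_i$ (and $h_i$), expanding $F$ to the order needed, and show that in the relevant blowup chart the Jacobian of the defining equations of $\tT$ has maximal rank; the fact that $\T$ itself has only an ordinary double point at $g_i$ (Theorem~\ref{thmtendp}) is what makes this work out after the degree-$2$ base change and single blowup.
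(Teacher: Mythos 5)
Your overall route is the same as the paper's: reduce everything to a local analytic computation along $W^1_4$, use the ordinary double point statement of Theorem \ref{thmtendp} at the ten points $g_i,h_i$, perform the degree two base change, blow up the ideal of $W^1_4$ (all four transverse coordinates together with the base parameter), and verify smoothness and normal crossings chart by chart; your identifications of $M_1$ (blow-up of $\T_0$ along its ordinary double curve) and of $M_{12}$ (the rank-$4$ quadric $\{q_w=0\}$ in the $\bP^3$ of fiber directions, hence a $\bP^1\times\bP^1$-bundle) agree with the paper. The paper streamlines the whole check by writing, directly from Theorem \ref{thmtendp}, the normal form $xy+zw+st=0$ near each $g_i, h_i$ with $s$ a coordinate along $W^1_4$; after base change this is $xy+zw+st^2=0$, and the two affine charts of the blow-up of $(x,y,z,w,t)$ give all the assertions at once, with no separate treatment of the generic point of $W^1_4$.

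However, the step where you describe the fibers of $M_2$, and especially your ``main obstacle'' paragraph, is wrong as written. A fiber of $M_2$ over $w\in W^1_4$ is a quadric in $\bP^4$, hence is given by a $5\times 5$ form: it is the initial form along $W^1_4$ of the base-changed equation, namely $q_w(Z)+c(w)T^2$, where $c(w)=\sum\dot\lambda_{ij}\del_{z_iz_j}F$ is the coefficient of the term linear in $\Omega$. It is not cut out by the $6\times 6$ Hessian of $F$ (that matrix includes the direction along $W^1_4$ and is only intrinsic at the ten points where $\T$ is singular). Moreover, what vanishes at $g_i,h_i$ is precisely $c(w)$ --- that is the condition defining them as singular points of $\T$ --- and not the $\Omega^2$-entry $\sum\dot\lambda_{ij}\dot\lambda_{kl}\del_{z_iz_jz_kz_l}F+\sum\ddot\lambda_{ij}\del_{z_iz_j}F$. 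Consequently, the smoothness of $M_2$ (and of $\tT$) at the vertex of $Q_{3i}^{sing}$ requires that $c$ vanish to first order along $W^1_4$ at $g_i$, i.e., that $\sum_{j,k}\dot\lambda_{jk}\del_{5jk}F\ne 0$, where $Z_5$ is the vertex direction of $q_z$, i.e., the tangent direction to $W^1_4$; this mixed $(Z_5,\Omega)$ entry is exactly the Kempf--Schreyer rank-$6$ condition already established in Theorem \ref{thmtendp}, and in the normal form it is the presence of the monomial $st$, which becomes $sT^2$ in the initial form after base change. Transversality of the zero locus of the $\Omega^2$-coefficient is neither what Theorem \ref{thmtendp} provides nor what is needed: after the substitution $t\mapsto t^2$ that term enters only at order $4$ in the ideal of $W^1_4$ and does not affect the initial form, so carrying out your plan literally with that entry would compute the wrong locus. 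If you replace the misidentified entry by $c(w)$, your transversality argument becomes correct and coincides with the paper's chart computation.
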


\begin{proof}

It immediately follows from the definition of $\tT$ that $\tT_0$ has two components, one
of which is the blow up $M_1$ of $\T_0$ and the other the projectivized normal cone $M_2$
of $W^1_4$ in $\T$. To prove the assertions about the smoothness of $M_1, M_2$ and
$M_{12}$ and the fibers of $M_2$ over $W^1_4$ we work in local coordinates near each of
the ten points $g_i$ and $h_i$ of Theorem \ref{thmtendp}.

By Theorem \ref{thmtendp}, before our base change of degree $2$ the local equation of $\T$
near one of the points $g_i$ or $h_i$ can be written as $xy+zw+st=0$ in $\mathbb{A}^6$
where $t$ is a local analytic coordinate on $T$ centered at $0$. Hence, after base change,
the local equation is
\[
xy+zw+st^2=0.
\]
In the above coordinates, the equations of $W^1_4$ are
\[
x=y=z=w=t=0.
\]
Hence, locally, $\widetilde \T$ is obtained from $\T$ by blowing up the ideal
$\mathcal{I}=(x,y,z,w,t)$. Furthermore, $s$ gives a local coordinate on $W^1_4$. For any
given nonzero value of $s$, the local equation of $\T$ defines a quadric of rank $5$ in
$\bP^4$ and, for $s=0$, the equation defines a quadric of rank $4$ in $\bP^4$. This proves
the assertions about the fibers of $M_2 \ra W^1_4$.

Now let $X$, $Y$, $Z$, $W$, $T$ be the homogeneous coordinates on the blow-up. We have the relations
\[
\text{rank}\left (\begin{matrix} x &y& z& w& t \\ X& Y& Z& W& T \end{matrix}\right )\le1.
\]
By symmetry, we only need to check the following cases.
\begin{enumerate}
\item $\{X\ne0\}$. $\widetilde \T$ is locally isomorphic to
\[
Spec\ \frac{\bC[x,Y,Z,W,T,s]}{(Y+ZW+sT^2)},
\]
which is clearly smooth. $\widetilde{\T}_0$ is given by the equation $t=0$, i.e., $xT=0$,
hence has two smooth components meeting transversely, defined locally by the equations
$T=0$ and $x=0$. The equation $T=0$ locally defines the component $M_1$ while $x=0$
locally defines $M_2$.
\item $\{T\ne0\}$. $\widetilde \T$ is locally isomorphic to
\[
Spec\ \frac{\bC[X,Y,Z,W,t,s]}{(XY+ZW+s)}.
\]
In this open subset, the total space and the central fiber are both smooth and $t=0$
locally defines the component $M_1$.
\end{enumerate}
\end{proof}

\begin{proposition}\label{propM1}
\begin{enumerate}
\item
The divisor $M_1$ can be identified with the correspondence
\[
M_1=\{(D_4,B_4)\in C^{(4)}\times C^{(4)}|\ D_4+B_4\in |K_C|\}
\]
with the two projections $p_1$ and $p_2$ to $C^{(4)}$. We have a fibered diagram  
\[
\xymatrix{&M_1\ar[ld]_-{p_1}\ar[rd]^-{p_2}&\\
C^{(4)}\ar[rd]_-{\phi}&&C^{(4)}\ar[ld]^-{\psi}\\
&\T_0}
\]
where $\phi$ is the natural map
sending $D_4$ to $\cO_C(D_4)$ and $\psi$ sends $B_4$ to $\omega_C(-B_4)$.

\item Both $p_1$ and $p_2$ are birational
morphisms and can be realized as the blow-up of $C^{(4)}$ along the smooth surface
\[
C^1_4=\{D\in C^{(4)}\ |\ h^0(\cO_C(D))\ge2\}.
\]

\item The double locus $M_{12}$ is the fiber product
\[
\xymatrix{&M_{12}\ar[ld]_-{p'_1}\ar[rd]^-{p'_2}&\\
C^1_4\ar[rd]_-{\phi'}&&C^{1}_4\ar[ld]^-{\psi'}\\
&W^1_4}
\]

\end{enumerate}

\end{proposition}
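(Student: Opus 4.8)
The plan is to realize $M_1$ as a fibered product of two Abel-type maps and then deduce (2) and (3) formally. Put $\phi\colon C^{(4)}\to\T_0=W^0_4(C)$, $D_4\mapsto\cO_C(D_4)$, and $\psi\colon C^{(4)}\to\T_0$, $B_4\mapsto\omega_C(-B_4)$ (so $\psi=\s\circ\phi$, with $\s(L)=\omega_C\otimes L^{-1}$ the involution of $\Pic^4C$ preserving $W^0_4$). Both maps are proper and birational: over $\T_0\setminus W^1_4$ the relevant complete linear system is a single reduced point, so $\phi,\psi$ are isomorphisms there, while over $L\in W^1_4$ the fibers are $\bP H^0(C,L)\cong\bP^1$ and $\bP H^0(C,\omega_C L^{-1})\cong\bP^1$. (We use throughout that $C$ is general of genus $5$: then $W^2_4=\emptyset$, so $h^0(L)=h^0(\omega_C L^{-1})=2$ for every $L\in W^1_4$, and $W^1_4$ is a smooth curve; hence $\phi^{-1}(W^1_4)=\psi^{-1}(W^1_4)=C^1_4$ as sets, and $C^1_4$, being the $\bP^1$-bundle of divisors of the pencils parametrized by $W^1_4$, is a smooth surface.) Finally, the correspondence of (1) is exactly the fibered product $C^{(4)}\times_{\T_0}C^{(4)}$ taken along $\phi$ on the left and $\psi$ on the right, since $D_4+B_4\in|K_C|\iff\cO_C(D_4)\cong\omega_C(-B_4)\iff\phi(D_4)=\psi(B_4)$; call it $M_1'$, with projections $p_1,p_2$ and common composite $\beta:=\phi\circ p_1=\psi\circ p_2\colon M_1'\to\T_0$.

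To prove $M_1'\cong M_1=\Bl_{W^1_4}\T_0$ compatibly with $\beta$, I would work in analytic local coordinates along $W^1_4$. By Proposition \ref{proptheta0} and the local computation in its proof, $\T_0$ is, near each point of $W^1_4$, analytically the product $\{xy=zw\}\times\D$ with $W^1_4$ the locus $\{x=y=z=w=0\}$ (the transverse singularity is everywhere an ordinary conifold point, since $M_{12}$ is a $\bP^1\times\bP^1$-bundle), and $\phi,\psi$ restrict there to families of small resolutions of the conifold factor. The essential point is that $\phi$ and $\psi$ are the two \emph{distinct} small resolutions: over $L\in W^1_4$ their exceptional lines are $|L|$ and $|\omega_C L^{-1}|$, and by the Riemann--Kempf description the transverse tangent cone of $\T_0$ at $L$ is the Segre quadric $\bP H^0(L)\times\bP H^0(\omega_C L^{-1})\hookrightarrow\bP^3$, so these two lines are precisely its two rulings. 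Granting this, the classical fact that the blow-up of a threefold conifold point is the fibered product of its two small resolutions (a smooth threefold with exceptional divisor $\bP^1\times\bP^1$) globalizes over $W^1_4$ to an isomorphism $M_1'\cong M_1$ taking $p_i$ to the $i$-th projection and $\beta$ to the blow-down; the maps in the diagram of (1) are then $\phi$ and $B_4\mapsto\omega_C(-B_4)$ as stated. (Consistency check: $M_1'\to|K_C|\cong\bP^4$, $(D_4,B_4)\mapsto D_4+B_4$, is $\binom{8}{4}$-to-one with monodromy containing the full symmetric group on the points of a general canonical divisor, so $M_1'$ is irreducible of dimension $4$.)

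For (2), by (1) the projection $p_1\colon M_1\to C^{(4)}$ is proper and birational --- for $D_4\notin C^1_4$ the fiber is the single point $(D_4,|K_C-D_4|)$ --- an isomorphism over $C^{(4)}\setminus C^1_4$, and it contracts the divisor $M_{12}=p_1^{-1}(C^1_4)$ onto the smooth surface $C^1_4$ along $\bP^1$'s. Since $M_1$ is smooth (Proposition \ref{proptheta0}) and $M_{12}$ is a divisor, $\cI_{C^1_4}\cO_{M_1}$ is invertible (this is visible in the local model of (1)), so the universal property of blow-ups gives a morphism $g\colon M_1\to\Bl_{C^1_4}C^{(4)}$ over $C^{(4)}$; it is an isomorphism over $C^{(4)}\setminus C^1_4$ and, again by the local model (where $p_1$ is the blow-up of a small resolution along its exceptional line), restricts to an isomorphism of exceptional divisors, hence $g$ is bijective and proper, so finite, so an isomorphism. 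The identical argument with $\phi$ and $\psi$ (i.e. $D_4$ and $B_4$) interchanged, using $\psi^{-1}(W^1_4)=C^1_4$, shows that $p_2$ is the blow-up of $C^{(4)}$ along $C^1_4$.

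Finally (3) drops out: under $M_1=\{(D_4,B_4):D_4+B_4\in|K_C|\}$ one has $M_{12}=p_1^{-1}(C^1_4)$, and $D_4\in C^1_4\iff h^0(\cO_C(D_4))\geq2\iff h^0(\omega_C(-B_4))\geq2\iff B_4\in C^1_4$ (using $h^0(\omega_C(-B_4))=h^1(\cO_C(D_4))$ since $D_4+B_4\in|K_C|$); hence $M_{12}=\{(D_4,B_4)\in C^1_4\times C^1_4:D_4+B_4\in|K_C|\}$, and $D_4+B_4\in|K_C|\iff\phi'(D_4)=\psi'(B_4)$ for the restrictions $\phi',\psi'\colon C^1_4\to W^1_4$ of $\phi,\psi$, i.e. $M_{12}=C^1_4\times_{W^1_4}C^1_4$ with projections $p'_1,p'_2$. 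The heart of the proof is the second paragraph: verifying the scheme structures in the conifold normal form and, above all, that $\phi$ and $\psi$ are the two \emph{different} small resolutions, so that $M_1'$ is irreducible and equals the blow-up rather than something reducible; everything downstream of that is formal, via the universal property of the blow-up.
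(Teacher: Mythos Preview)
Your proof is correct and supplies exactly the details the paper suppresses: the paper's own proof reads in full ``Immediate.'' The identification you make --- that $\phi$ and $\psi$ are the two distinct small resolutions of $\T_0$ along $W^1_4$ (via the Riemann--Kempf description of the tangent cone as the Segre quadric $\bP H^0(L)\times\bP H^0(\omega_C L^{-1})$), so that their fibered product is the blow-up --- is precisely the reason this proposition is immediate to the authors, and your downstream deductions of (2) and (3) from (1) are the natural formal consequences.
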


\begin{proof}
Immediate.
\end{proof}

\section{General facts about the Clemens-Schmid exact sequence}\label{secgeneralMH}

We briefly review some general facts about the Clemens-Schmid exact sequence in this section. We will apply the general theory in this section to compute the cohomology of $\widetilde{\T}_0$ and $\widetilde{\T}_t$ in Section \ref{seccohtheta}.
\subsection{The Clemens-Schmid exact sequence}
Given any semistable degeneration
$$\xymatrix{Y_0\ar[r]\ar[d]&\cY\ar[d]\\
\{0\}\ar[r]&T}$$ 
of relative dimension $n$, where $\cY$ deformation retracts to $Y_0$,
 denote 
\begin{eqnarray}
 &&H^m_t :=H^m(Y_t,\bQ),\nonumber\\
 &&H^m :=H^m(\cY,\bQ)\cong H^m(Y_0,\bQ),\nonumber\\
 &&H_m :=H_m(\cY,\bQ)\cong H_m(Y_0,\bQ).\nonumber
\end{eqnarray}

It follows from the work of Clemens-Schmid \cite{clemens77}, \cite{schmid73} and
Steenbrink \cite{steenbrink75} that one can define mixed Hodge structures on $H^*_t$,
$H^*$ and $H_*$ such that we have an exact sequence of mixed Hodge structures

\begin{eqnarray}\label{eqCS}
\xymatrix{\ar[r]&H_{2n+2-m}\ar[r]^-{\alpha}&H^m\ar[r]^-{i_t^*}&H^m_t\ar[r]^-{N}&H^m_t\ar[r]^-{\beta}&
  H_{2n-m}\ar[r]^-{\alpha}
  &H^{m+2}\ar[r]&}
\end{eqnarray}
where $N$ is the logarithm of the monodromy operator, $i_t : Y_t \inj \cY$ is the inclusion
of the general fiber into the total space, $\alpha$ is the composition
\[
\xymatrix{H_{2n+2-m}(\cY)\ar[r]^-{\text{PD}}&H^m(\cY,\partial\cY)\ar[r]&H^m(\cY),}
\]
where $PD$ denotes Poincar\'e Duality, and $\beta$ is the composition
\[
\xymatrix{H^m(Y_t)\ar[r]^-{\text{PD}}&H_{2n-m}(Y_t)\ar[r]^-{i_{t*}}&H_{2n-m}(\cY).}
\]

\subsection{The weight filtrations on $H^m$ and $H_m$}\label{subsecspseq} Recall from \cite[p. 103]{morrison84} that there is a Mayer-Vietoris type spectral sequence abutting to $H^{\bullet}
(Y_0)$ with $E_1$ term 
$$E_1^{p,q} = H^q (Y_0^{[p]}).$$
Here $Y_0^{[p]}$ is the disjoint union of the codimension $p$ strata of $Y_0$, i.e.,
\[
Y_0^{[p]} := \coprod_{i_0, \ldots , i_p} Z_{i_0}\cap \ldots \cap Z_{i_p}
\]
where the $Z_{i_j}$ are distinct irreducible components of $Y_0$. 
 
 The differential $d_1$
$$\xymatrix{E_1^{p,q}\ar[d]^-{\cong}\ar[r]^-{d_1}&E_1^{p+1,q}\ar[d]^-{\cong}\\
H^q(Y_0^{[p]})\ar[r]^-{d_1}&H^q(Y_0^{[p+1]})}$$
is the alternating sum of the restriction maps on all the irreducible components.
By \cite[p. 103]{morrison84} this sequence degenerates at $E_2$. 

 The weight filtration is given by
$$W_kH^m := \oplus_{p+q=m,\ q\le k} E_{\infty}^{p,q} = \oplus_{p+q=m,\ q\le k} E_{2}^{p,q}.$$
Therefore the weights on $H^m$ go from $0$ to $m$ and 
$$Gr_kH^m\cong E_2^{m-k,k}=\frac{Ker(d_1: H^{k}(Y^{[m-k]})\ra H^{k}(Y^{[m-k+1]})}{Im(d_1: H^{k}(Y^{[m-k-1]})\ra H^{k}(Y^{[m-k]})}.$$

We also put a weight filtration on $H_m$:
$$W_{-k}H_m:=(W_{k-1}H^m)^{\perp}$$
under the perfect pairing between $H^m$ and $H_m$. With this definition,
$$Gr_{-k}H_m\cong (Gr_k H^m)^{\lor}.$$

\subsection{The monodromy weight filtration on $H^m_t$} 
Associated to the nilpotent operator $N$ is an increasing filtration of $\bQ$-vector spaces 
$$0\subset W_0\subset W_1\subset...\subset W_{2m}=H^m_t.$$

Let $K^m_t :=Ker N\subset H^m_t$ be the monodromy invariant subspace. It inherits an induced weight filtration from $H^m_t$.  We refer to
\cite[pp. 106-109]{morrison84} for the precise definition of the monodromy weight filtration and the fact that this filtration on $H^m_t$ can be computed via its induced filtration on $K^m_t$:
\begin{eqnarray}\label{sl2}Gr_kH^m_t\cong Gr_kK^m_t\oplus Gr_{k-2}K^m_t\oplus...\oplus Gr_{k - 2 \lfloor\frac{k}{2}\rfloor}K^m_t
\end{eqnarray}
for $k\le m$,
and
\begin{eqnarray}\label{sym}Gr_kH^m_t\cong Gr_{2m-k}H^m_t,
\end{eqnarray}
for $k>m$.

The weight filtrations on $H^m$ and $K_t^m$ are related by the Clemens-Schmit exact sequence. Below are the basic facts we will use (see \cite[pp. 107-109]{morrison84})

\begin{enumerate}
\item $i_t^*$ induces an isomorphism
\begin{eqnarray}\label{Grsmallk}\xymatrix{Gr_kH^m\ar[r]^-{\cong}& Gr_kK^m_t}\ for\ k\le m-1.
\end{eqnarray}
 \item There is an exact sequence
\begin{eqnarray}\label{seqGR4}\xymatrix{0\ar[r]&Gr_{m-2}K^{m-2}_t\ar[r]&Gr_{m-2n-2}H_{2n+2-m}\ar[r]^-{\alpha}&Gr_mH^m\ar[r]&Gr_mK^m_t\ar[r]&0}.
\end{eqnarray}
\end{enumerate}
\subsection{Mixed Hodge structures on $H_c^{\bullet}(\cY)$}\label{MHScompact}
Now suppose further more that $\cY$ is an {\bf analytic} open subset of a smooth projective variety $\overline{\cY}$ of dimension $n+1$. We have a sequence of isomorphisms
$$ H_c^{2n+2-m}(\cY)\cong H^{2n+2-m}(\overline{\cY},\overline{\cY}\setminus \cY)\cong H^{2n+2-m}(\overline{\cY},\overline{\cY}\setminus Y_0),$$
where the last isomorphism follows from the fact that $\overline{\cY}\setminus Y_0$ deformation retracts to $\overline{\cY}\setminus \cY$.

Both $H^{\bullet}(\overline{\cY})$ and $H^{\bullet}(\overline{\cY}\setminus Y_0)$ admit canonical mixed Hodge structures (\cite{deligne74}, \cite[1022-1024]{durfee83}).  The relative singular cochain complex $S^{\bullet}(\overline{\cY},\overline{\cY}\setminus Y_0)$ is quasi isomorphic to the mapping cone of the chain map
$$S^{\bullet}(\overline{\cY})\ra S^{\bullet}(\overline{\cY}\setminus Y_0).$$
Using a standard mapping cone construction (see, for instance, \cite[pp. 1205-1207]{durfee83}),  we can put a canonical mixed Hodge structure on $H^{\bullet}(\overline{\cY},\overline{\cY}\setminus Y_0)$, and therefore on $H^{\bullet}_c(\cY)$, such that the maps in the long exact sequence 
\begin{eqnarray}\label{relativecoh}\xymatrix{...\ar[r]&H^{m-1}(\overline{\cY}\setminus Y_0)\ar[r]&H^{m}(\overline{\cY},\overline{\cY}\setminus Y_0)\ar[r]&H^m(\overline{\cY})\ar[r]&
H^m(\overline{\cY}\setminus Y_0)\ar[r]&...}
\end{eqnarray}
are morphisms of mixed Hodge structures.

There is also a spectral sequence \cite[pp. 1025-1027]{durfee83} for the mapping cone, dual to the Mayer-Vietoris type spectral sequence in Section \ref{subsecspseq}, abutting to $H^{\bullet}_c(\cY)$. This spectral sequence is in the second quadrant, degenerates at $E_2$, and has $E_1$ terms
$$E_{1,c}^{p,q}=H^{q+2p-2}(Y_0^{[-p]}),$$
for $p\le0$. The differential
$$\xymatrix{E_{1,c}^{p,q}\ar[r]\ar@{=}[d]&E_{1,c}^{p+1,q}\ar@{=}[d]\\
H^{q+2p-2}(Y_0^{[-p]})\ar[r]^-{d^c_1}&H^{q+2p}(Y_0^{[-p-1]})}$$
is the alternating sum of Gysin morphisms. 
We have the duality
$$(E_1^{p,q})^{\lor}\cong E_{1,c}^{-p,2n+2-q}$$

 The increasing weight filtration is given by
$$W_kH^m_c(\cY) = \oplus_{p+q=m,q\le k}E_{2,c}^{p,q}.$$
The weights on $H^m_c(\cY)$ go from $m$ to $2m-2$ and, for $m\le k\le 2m-2$,
$$Gr_{k}H^m_c({\cY})\cong E_{2,c}^{m-k,k}=\frac{Ker(H^{2m-k-2}(Y_0^{[k-m]})\ra H^{2m-k}(Y_0^{[k-m-1]}))}{Im (H^{2m-k-4}(Y_0^{[k-m+1]})\ra H^{2m-k-2}(Y_0^{[k-m]}))}$$
with the convention that $Y_0^{[-1]}=\varnothing$.

The mixed Hodge structures on $H^m(\cY)$ and $H^{2n+2-m}_c(\cY)\cong H^{2n+2-m}(\overline{\cY},\overline{\cY}\setminus Y_0)$ are dual to each other. We have
$$Gr_kH^m(\cY)^{\lor}\cong Gr_{2n+2-k}H^{2n+2-m}_c(\cY).$$

\section{The monodromy weight filtration on the cohomology of $\T_t$}
\label{seccohtheta}
We apply the general theory in Section \ref{secgeneralMH} to the case $\cY=\widetilde{\T}$ to compute the cohomology of $\T_t$ in this section.
By the Hard Lefschetz Theorem
\begin{eqnarray}H^m(\T_t)\cong H^{8-m}(\T_t)
\end{eqnarray}
and, by the Lefschetz Hyperplane Theorem,
\begin{eqnarray}\label{invariant}H^m(\T_t)\cong H^m(A_t)\cong\bQ^{10\choose m}
\end{eqnarray}
for $m\leq 3$. The only remaining case is the middle cohomology $H^4(\T_t)$. We will describe the monodromy weight filtration on it. 

According to the general theory explained in Section \ref{secgeneralMH}, in order to compute the monodromy weight filtration on the cohomology of $\T_t$ we first need to compute the cohomology of the central fiber $\widetilde{\T}_0 = M_1 \cup M_2$.

\subsection{The cohomology of the strata of $\widetilde{\T}_0$} \label{subsecstrata}
In this subsection we compute
the cohomology of $M_1$, $M_2$ and $M_{12}$ and describe their generators.
The various spaces fit into the commutative diagram with Cartesian squares
\begin{eqnarray}\label{relation}\xymatrix{M_2\ar@/
  _2pc/[rdd]_-{\pi_2}&M_{12}\ar@{_{(}->}[l]_-{j_2}\ar@{^{(}->}[r]^-{j_1}\ar[d]^-{p'_1}\ar@/_1pc/[dd]_-{\pi_{12}}&M_1\ar[d]^-{p_1}\\
  &C^1_4\ar@{^{(}->}^-{l}[r]\ar[d]^-{\phi'}&C^{(4)}\ar[d]^-{\phi}\\
  &W^1_4\ar@{^{(}->}^-{h}[r]&Pic^4(C)=A_0}
\end{eqnarray}
where we denote $p'_1$ (resp. $\phi'$) the restriction of $p_1$ (resp. $\phi$)  to $M_{12}$ (resp. $C^1_4$) and $j_k: M_{12}\ra M_k$ the inclusion map.

\begin{lemma}\label{lemstrata}We have the following table of Betti numbers.
\begin{table}[h]
\caption{}
\begin{tabular}{|c|ccccccccc|}\hline
 &$h^0$&$h^1$&$h^2$&$h^3$&$h^4$&$h^5$&$h^6$&$h^7$&$h^8$\\\hline
$C^1_4$&1&22&2&22&1&0&0&$0$&0\\\hline
$C^{(4)}$&1&10&46&130&256&130&46&10&1\\\hline
$M_1$&1&10&47&152&258&152&47&10&1\\\hline
$Q_3$&1&0&1&0&1&0&1&$0$&0\\\hline
$Q_3^{sing}$&1&0&1&0&2&0&1&0&0\\\hline
$M_{12}$&1&22&3&44&3&22&1&$0$&0\\\hline
$M_2$&1&22&2&22&12&22&2&22&1\\
\hline
   \end{tabular}
\end{table}
\end{lemma}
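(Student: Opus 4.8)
The plan is to compute the table one row at a time from the explicit descriptions of the strata in Propositions~\ref{proptheta0} and \ref{propM1}. For $C^{(4)}$ I would just invoke Macdonald's formula for the Poincar\'e polynomial of the symmetric products of a curve: the entries of that row are the coefficients of $x^4$ in $(1+tx)^{10}/\bigl((1-x)(1-t^2x)\bigr)$. Since $C$ is general of genus $5$ we have $W^2_4(C)=\varnothing$, so $h^0(L)=2$ for every $L\in W^1_4$; by cohomology and base change the pushforward of a Poincar\'e bundle on $W^1_4\times C$ is locally free of rank two, and $C^1_4$ is its projectivization. Hence $C^1_4\to W^1_4$ is a $\bP^1$-bundle over the genus~$11$ curve $W^1_4$ and by Leray--Hirsch its Betti numbers are those of $\bP^1\times W^1_4$; in the same way $M_{12}\cong C^1_4\times_{W^1_4}C^1_4$ is a $\bP^1\times\bP^1$-bundle over $W^1_4$, with the Betti numbers of $\bP^1\times\bP^1\times W^1_4$. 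For $M_1$ I would use Proposition~\ref{propM1}(2), which realizes $M_1$ as the blow-up of $C^{(4)}$ along the smooth codimension-two surface $C^1_4$, so that $H^k(M_1)\cong H^k(C^{(4)})\oplus H^{k-2}(C^1_4)$ by the blow-up formula.

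Next the two fibre types of $M_2$. The smooth quadric threefold $Q_3$ has the classical cohomology $H^{2i}=\bQ$ for $0\le i\le 3$ and no odd cohomology. For $Q_3^{sing}$, the rank-four quadric in $\bP^4$, which is the cone over $Q_2=\bP^1\times\bP^1$ with vertex a point, I would resolve by blowing up the vertex: one obtains the $\bP^1$-bundle $\widetilde Q=\bP(\cO\oplus\cO(-1,-1))$ over $\bP^1\times\bP^1$ with exceptional divisor $E\cong\bP^1\times\bP^1$ contracted to the vertex. Running Mayer--Vietoris for the pushout $Q_3^{sing}=\widetilde Q\cup_E\{\mathrm{pt}\}$, and using that $H^4(\widetilde Q)\to H^4(E)$ is onto (projection formula together with Poincar\'e duality on the smooth threefold $\widetilde Q$), gives $H^\bullet(Q_3^{sing})$: it coincides with $H^\bullet(Q_3)$ except that $H^4(Q_3^{sing})\cong\bQ^2$, consistent with $\chi(Q_3^{sing})=\chi(Q_3)+1$ (note that Poincar\'e duality fails here).

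The only substantial step is $H^\bullet(M_2)$, which I would extract from the Leray spectral sequence of $\pi_2\colon M_2\to W^1_4$. By proper base change the stalks of $R^q\pi_{2*}\bQ$ are $H^q(Q_3)$ or $H^q(Q_3^{sing})$, which vanish for $q$ odd; since $W^1_4$ is a curve (so $p\le 2$) and the odd rows of $E_2$ vanish, every differential $d_r$ with $r\ge 2$ is zero and the sequence degenerates at $E_2$. The monodromy of $\pi_2$ around each of the ten nodal points $g_1,\dots,g_5,h_1,\dots,h_5$ is trivial — the vanishing cycle of a threefold node sits in $H^3$ of the nearby fibre, which is $0$ — so each $R^q\pi_{2*}\bQ$ is lisse of rank one off the ten points. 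For $q=0,2,6$ the specialization maps $H^q(Q_3^{sing})\to H^q(Q_3)$ (the constant class, the hyperplane class, and the fundamental class) are isomorphisms, so $R^q\pi_{2*}\bQ\cong\bQ_{W^1_4}$; for $q=4$ the specialization $H^4(Q_3^{sing})\cong\bQ^2\to H^4(Q_3)\cong\bQ$ is surjective (the class of a line on the quadric deforms through the degeneration), so there is an exact sequence of sheaves
\[
0\lra\bigoplus_{i=1}^5\bigl(\bQ_{g_i}\oplus\bQ_{h_i}\bigr)\lra R^4\pi_{2*}\bQ\lra\bQ_{W^1_4}\lra 0 .
\]
With $H^p(W^1_4,\bQ)$ equal to $\bQ,\bQ^{22},\bQ$ for $p=0,1,2$, the rows $q=0,2,6$ of $E_2$ are $\bQ,\bQ^{22},\bQ$ and the row $q=4$ is $\bQ^{11},\bQ^{22},\bQ$; summing along the antidiagonals $p+q=m$ reproduces the last row of the table, and the totals are consistent with $\chi(M_2)=(-30)\cdot 4+10\cdot 5=-70$.

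I expect the main obstacle to be exactly this determination of $R^4\pi_{2*}\bQ$: proving that it differs from $\bQ_{W^1_4}$ only by skyscrapers at the ten nodes, i.e.\ that $H^4(Q_3^{sing})\to H^4(Q_3)$ is onto and that the local monodromies are trivial, and being careful with the failure of Poincar\'e duality on $Q_3^{sing}$ that feeds into it. Once these are in place, the rest — the $E_2$-degeneration forced by $\dim W^1_4=1$, the vanishing of the odd rows, and the other six rows of the table — is routine.
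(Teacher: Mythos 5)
Your proposal is correct and follows essentially the same route as the paper: Macdonald's formula plus the blow-up formula for $M_1$, Leray over the genus-$11$ curve $W^1_4$ for $C^1_4$ and $M_{12}$, and the Leray spectral sequence of $\pi_2\colon M_2\to W^1_4$ (degenerating at $E_2$ because the base is a curve) with $R^4\pi_{2*}\bQ$ differing from the constant sheaf exactly by skyscrapers at the ten fibers $Q_3^{sing}$. The only cosmetic difference is how that jump is justified locally: the paper attaches a real $4$-cell along the vanishing $S^3\subset Q_3$ (using $h^3(Q_3)=0$) to see that $h^4$ of $\pi_2^{-1}(U)$ exceeds $h^4(Q_3)$ by one, whereas you resolve the vertex of $Q_3^{sing}$ and argue that the specialization map $H^4(Q_3^{sing})\to H^4(Q_3)$ is onto; the two computations are equivalent and yield the same $E_2$ terms and the same table.
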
 

\begin{proof} This is a straightforward computation so we only sketch the idea.
\begin{enumerate}
\item By Proposition \ref{propM1}, $M_1$ is the blow up of $C^{(4)}$ along $C^1_4$. So we have
$H^{\bullet}(M_1)= p_1^*H^{\bullet}(C^{(4)})\oplus\
j_{1*}{p'_1}^*H^{\bullet-2}(C^1_4)$. The cohomology of $C^{(4)}$ was computed by Macdonald
\cite{macdonald62}:
  $$H^{k}(C^{(4)})= \oplus_{\beta=0}^{\lceil\frac{k}{2}\rceil}\eta^\beta\cdot H^{k-2\beta}(Pic^4C).$$
  
\item Since $M_{12}$ (resp. $C^1_4$) is a smooth fibration over $W^1_4$ with fibers
$\bP^1\times\bP^1$ (resp. $\bP^1$), we can apply the Leray spectral sequence to 
$\pi_{12}:M_{12}\ra W^1_4$ (resp. $\phi'$)
 to compute the cohomology of $M_{12}$ and $C^1_4$.

\item The variety $M_2$ is a fibration over $W^1_4$ with general fiber isomorphic to the smooth quadric threefold $Q_3$ and ten special
fibers isomorphic to the singular quadric $Q_3^{sing}$ of rank $4$. Since the base is a curve, the Leray spectral sequence for $\pi_2$ degenerates at
$E_2$.
\end{enumerate}
We present the Leray spectral sequence computation for $H^4(M_2,\bQ)$, the other
cohomology groups are similar and somewhat easier to compute. The $E_2$ terms are
\[
E_2^{p,q}=H^p(W^1_4,R^q\pi_{2*}\bQ).
\]
Let $U\subset W^1_4$ be a small analytic disc, open neighborhood of a critical value of
$\pi_2$. Then $\pi_2^{-1}(U)$ is homotopic to a smooth fiber $\pi^{-1}_2(t)=Q_3$ with a real $4$ cell
$B^4$ attached to $\pi_2^{-1}(t)$ along a vanishing sphere $S^3$. Since $h^3(Q_3)=0$, this
amounts to increasing $h^4$ by $1$. Thus
\[
R^q\pi_{2*}\bQ\cong\left\{
\begin{array}{ll}
\bQ\oplus(\oplus_{i=1}^{10}\bQ_i)&\quad q=4,\\
\bQ& \quad q=2,\\
0& \quad otherwise,\\
\end{array}
\right.
\]
where $\bQ_i$ is the skyscraper sheaf with stalk $\bQ$ supported at the $i$-th critical point. Therefore

\[
\dim_{\bQ}E_2^{p,4-p}=\dim_{\bQ}E_\infty^{p,4-p}=h^p(W^1_4,R^{4-p}\pi_{2*}\bQ)=\left\{
\begin{array}{ll}
11&\quad p=0,\\
1 & \quad p=2,\\
0& \quad otherwise.\\
\end{array}
\right.
\]

This gives $h^4(M_2)=12$.
\end{proof}

 \begin{notation}\label{notPi}
Denote $e_k\in H^2(M_k)$ the class of $M_{12}$
in $M_k$, $f\in H^2(M_{12})$ the class of a fiber $\pi_{12}^{-1}(t)$ and
$\tau_1={p_1'}^*l^*\eta\in H^2(M_{12})$ (see Diagram \eqref{relation}). Here $l^*\eta$ is represented by the curve $C^1_4\cap (x+C^{(3)})\subset C^1_4$ for $x\in C$ general, and $\tau_1$ is represented by a $\bP^1$ bundle over this curve.
The product $\tau_1\cdot f\in H^4(M_{12})$ is the class of the ruling of 
$\pi_{12}^{-1}(t)\cong\bP^1\times\bP^1$ which projects to a point under $p_1$, and the product  $j_2^*e_2\cdot f$ is the hyperplane class in
$\pi_{12}^{-1}(t)$. We also have the relation
$$-j_1^*e_1=j_2^*e_2.$$

Furthermore, denote $[\bP^2_i]$, $i= 1, \ldots , 5$ (resp. $i= 6, \ldots , 10$) the class of the projective plane spanned by a line of the ruling corresponding to $\tau_1\cdot f$ and the vertex of the singular quadric $Q_{3i}^{sing}=\pi_2^{-1}(g_i)$ (resp. $Q_{3i}^{sing}=\pi_2^{-1}(h_{i-5})$).
\end{notation}

\begin{lemma}\label{Mcoho}We have the following generators for the cohomology:
\begin{eqnarray}H^2(M_{12})= & \langle f,\ \tau_1,\ j_2^*e_2\rangle &\cong\bQ^3,\nonumber\\
H^3(M_{12})= & \tau_1\cdot\pi_{12}^*H^1(W^1_4)\oplus j_2^*e_2\cdot \pi_{12}^*H^1(W^1_4) & \cong\bQ^{44},\nonumber\\
H^4(M_{12})= & \langle f\cdot \tau_1,\ f\cdot j_2^*e_2,\ \tau_1\cdot j_2^*e_2\rangle & \cong\bQ^3,\nonumber\\
H^3(M_1)= & p_1^*H^3(C^{(4)})\oplus j_{1*}\pi_{12}^*H^1(W^1_4) &\cong\bQ^{152},\nonumber\\
H^4(M_1)= & p_1^*H^4(C^{(4)})\oplus\langle j_{1*}f, j_{1*}\tau_1\rangle & \cong\bQ^{258},\nonumber\\
H^3(M_2)= & e_2\cdot \pi_{2}^*H^1(W^1_4) & \cong\bQ^{22}\nonumber\\
H^4(M_2)= & \langle [\bP^2_i],\ j_{2*}f,\ j_{2*}\tau_1\ |\ i=1,...,10\rangle & \cong\bQ^{12}\nonumber.
\end{eqnarray}

\end{lemma}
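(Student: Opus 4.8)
The plan is to obtain each of the seven displays from the structural descriptions of $M_1$, $M_{12}$, $M_2$ that were used in the proof of Lemma~\ref{lemstrata}, together with the Betti numbers found there; since in each line the number of listed classes equals the corresponding Betti number, it will suffice to check that they span. For $M_1$ I would use that, by Proposition~\ref{propM1}, $p_1\colon M_1\to C^{(4)}$ is the blow-up of the smooth surface $C^1_4$, which has codimension $2$, so $H^k(M_1)=p_1^*H^k(C^{(4)})\oplus j_{1*}{p_1'}^*H^{k-2}(C^1_4)$. Macdonald's formula (recalled in the proof of Lemma~\ref{lemstrata}) gives $H^3(C^{(4)})$ and $H^4(C^{(4)})$, while the projective bundle formula for the $\bP^1$-bundle $\phi'\colon C^1_4\to W^1_4$ gives $H^1(C^1_4)={\phi'}^*H^1(W^1_4)$ and $H^2(C^1_4)={\phi'}^*H^2(W^1_4)\oplus\bQ\cdot l^*\eta$; the one point to verify is that $l^*\eta$ restricts nontrivially to a fibre of $\phi'$, which holds because a pencil $g^1_4$ contains exactly one divisor through a general point of $C$, so $l^*\eta$ has degree $1$ on that $\bP^1$. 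Pulling back by $p_1'$ and using $\phi'\circ p_1'=\pi_{12}$ turns these into the stated $j_{1*}\pi_{12}^*H^1(W^1_4)$ and $\langle j_{1*}f,\,j_{1*}\tau_1\rangle$.

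For $M_{12}$ I would use that $\pi_{12}\colon M_{12}\to W^1_4$ is a $\bP^1\times\bP^1$-bundle over a curve, so its Leray spectral sequence degenerates at $E_2$ (the sheaves $R^q\pi_{12*}\bQ$ vanish for $q$ odd, and over a curve there is no room for higher differentials), giving $H^\bullet(M_{12})\cong H^\bullet(W^1_4)\otimes H^\bullet(\bP^1\times\bP^1)$ as graded vector spaces. The class $f=\pi_{12}^*(\mathrm{pt})$ lifts the point class of the base, and $\tau_1={p_1'}^*l^*\eta$ together with $j_2^*e_2$ restrict on a fibre to a basis of $H^2(\bP^1\times\bP^1)$: the first because $l^*\eta$ is nonzero on the $\bP^1$-fibres of $C^1_4$, the second because $M_{12}$ is cut on the $Q_3$-fibres of $M_2$ by a hyperplane, so $e_2$ restricts to $\cO_{\bP^1\times\bP^1}(1,1)$ (equivalently one may invoke $-j_1^*e_1=j_2^*e_2$). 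Forming all products of $f,\tau_1,j_2^*e_2$ and using $f^2=0$ then produces spanning sets of the stated shape for $H^2(M_{12})$, $H^3(M_{12})$, $H^4(M_{12})$.

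For $M_2$ — the substantial part — I would run the Leray spectral sequence of the quadric-threefold fibration $\pi_2\colon M_2\to W^1_4$, which is smooth ($Q_3\subset\bP^4$) over the complement of the ten points $g_1,\dots,g_5,h_1,\dots,h_5$ and has the rank-$4$ quadric (a conifold) as fibre over each of them. Exactly as in the proof of Lemma~\ref{lemstrata}, using the local model $xy+zw+s\tau^2=0$ of Proposition~\ref{proptheta0}, one gets $R^q\pi_{2*}\bQ=0$ for $q$ odd, $R^0\pi_{2*}\bQ=R^2\pi_{2*}\bQ=\bQ_{W^1_4}$ (generated by $1$ and by the hyperplane class), and $R^4\pi_{2*}\bQ\cong\bQ_{W^1_4}\oplus\bigoplus_{i=1}^{10}\bQ_{x_i}$, with $\bQ_{W^1_4}$ generated by the class of a line of $Q_3$ and each skyscraper $\bQ_{x_i}$ the one produced by the vanishing cycle at the $i$-th conifold point; the sequence again degenerates at $E_2$ over the curve $W^1_4$. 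Hence $H^3(M_2)=H^1(W^1_4,R^2\pi_{2*}\bQ)$ is spanned by $e_2\cdot\pi_2^*H^1(W^1_4)$ (as $e_2$ restricts to the hyperplane class on fibres), and $H^4(M_2)=H^0(W^1_4,R^4\pi_{2*}\bQ)\oplus H^2(W^1_4,R^2\pi_{2*}\bQ)$, where $j_{2*}f=e_2\cdot\pi_2^*(\mathrm{pt})$ spans the one-dimensional second summand and $j_{2*}\tau_1$ — restricting on a general fibre to the class of a line of $Q_3$ — spans the constant part $\bQ_{W^1_4}$ of the first; it remains to see that $[\bP^2_i]$ spans the $i$-th skyscraper $\bQ_{x_i}$.

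The main obstacle is precisely this last point: that the plane classes $[\bP^2_i]$ account exactly for the ten skyscraper summands of $R^4\pi_{2*}\bQ$. Restricted to a general fibre, each $[\bP^2_i]$ vanishes (a smooth quadric threefold contains no plane; checked, say, by pairing with $e_2$ and using that $\bP^2_i$ is disjoint from a general fibre), so its image in $H^0(W^1_4,R^4\pi_{2*}\bQ)$ is a section supported on the $x_j$'s; since $\bP^2_i$ lies in the fibre over $g_i$ (resp.\ $h_{i-5}$) it is disjoint from $\pi_2^{-1}(U_j)$ for $j\neq i$, so this section can only be nonzero at $x_i$, and there its value lies in $H^4(\pi_2^{-1}(U_i))=\bQ\langle\mathrm{line}\rangle\oplus\bQ\,\delta_i$. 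The content is the non-vanishing of the $\delta_i$-component, i.e.\ that $[\bP^2_i]$ is not a multiple of the line class even in a neighbourhood of the conifold fibre; this is an explicit computation with the local model $xy=zw+s\tau^2$ (after a small resolution the proper transform of $\bP^2_i$ meets the exceptional $\bP^1$), and it is in effect what was already invoked in the proof of Lemma~\ref{lemstrata} to conclude that $E_2^{0,4}=H^0(W^1_4,R^4\pi_{2*}\bQ)$ has dimension $11$. Everything else — the blow-up formula, the projective bundle formula, and $E_2$-degeneration of the Leray sequence over a curve — is routine.
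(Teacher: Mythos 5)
Your proposal is correct and follows essentially the same route as the paper, whose proof is just the one-line appeal to the blow-up formula for $M_1$ and the Leray spectral sequences of $\pi_{12}$ and $\pi_2$ that you spell out (the ingredients being exactly those already set up in Proposition \ref{propM1} and the proof of Lemma \ref{lemstrata}). Your identification of the only non-routine point --- that each $[\bP^2_i]$ restricts to zero on nearby fibers and has nonzero component in the skyscraper summand of $R^4\pi_{2*}\bQ$ at the conifold fiber, so that the ten plane classes together with $j_{2*}\tau_1$ and $j_{2*}f$ span $H^4(M_2)$ --- is accurate and consistent with what the paper implicitly uses.
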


\begin{proof} The statements about $M_1$ follow from the formula for the cohomology of a blow-up and the statements about $M_2$ and $M_{12}$ follow from the Leray spectral sequence.
\end{proof}

\subsection{The cohomology of $\tT_0$}

Recall that $Q\subset \bP I_2 (C)$ is the plane quintic parametrizing quadrics of rank $4$ and also the quotient of $W^1_4$ by the involution exchanging $g^1_4$ with $|K_C - g^1_4|$ (see the proof of Theorem \ref{thmtendp}). We have

\begin{proposition}\label{propwH4}The weight filtration on $H^4:= H^4 (\tT_0) = H^4( M_1 \cup M_2)$ is as follows:
\begin{eqnarray}Gr_kH^4= & 0 & \text{for}\ k\le 2;\nonumber\\
Gr_3H^4\cong & \frac{H^1(W^1_4)}{h^*H^1(Pic^4C)} \cong H^1 (Q) & \cong\bQ^{12};\nonumber\\
Gr_4H^4= & Ker(H^4(M_1)\oplus H^4(M_2)\stackrel{j_1^*-j_2^*}\lra H^4(M_{12})) & \cong\bQ^{267}.\nonumber
\end{eqnarray}
\end{proposition}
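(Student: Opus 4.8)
The plan is to compute $H^4(\tT_0)$ via the Mayer--Vietoris spectral sequence of Section \ref{subsecspseq} applied to the normal crossings degeneration $\tT_0 = M_1 \cup M_2$ with double locus $M_{12}$. Since $\tT_0$ has only two components and the triple intersections are empty, the spectral sequence is very short: $E_1^{0,q} = H^q(M_1)\oplus H^q(M_2)$, $E_1^{1,q}=H^q(M_{12})$, and the differential $d_1$ in bidegree $(0,q)$ is the restriction difference $j_1^* - j_2^*$. Degeneration at $E_2$ then gives $Gr_kH^4 = E_2^{4-k,k}$, so only $k=4$ (from $E_2^{0,4}$) and $k=3$ (from $E_2^{1,3}$) can contribute, plus potentially $k=2$ from $E_2^{2,2}$; but $Y_0^{[2]}=\varnothing$ forces $E_2^{2,2}=0$, so $Gr_kH^4 = 0$ for $k\le 2$ is immediate. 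The weight-$4$ piece is by definition $\ker(j_1^*-j_2^* : H^4(M_1)\oplus H^4(M_2)\to H^4(M_{12}))$, which is exactly the stated formula.

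For the weight-$3$ piece I would identify $E_2^{1,3} = \coker(j_1^*-j_2^* : H^3(M_1)\oplus H^3(M_2)\to H^3(M_{12}))$. Using the generator descriptions from Lemma \ref{Mcoho}, $H^3(M_1) = p_1^*H^3(C^{(4)})\oplus j_{1*}\pi_{12}^*H^1(W^1_4)$, $H^3(M_2) = e_2\cdot\pi_2^*H^1(W^1_4)$, and $H^3(M_{12}) = \tau_1\cdot\pi_{12}^*H^1(W^1_4)\oplus j_2^*e_2\cdot\pi_{12}^*H^1(W^1_4)$. The restriction maps are computed through Diagram \eqref{relation}: $j_2^*$ on the class $e_2\cdot\pi_2^*\alpha$ produces $j_2^*e_2\cdot\pi_{12}^*\alpha$, while $j_1^*$ of $j_{1*}\pi_{12}^*\alpha$ is $(j_1^*e_1)\cdot\pi_{12}^*\alpha = -j_2^*e_2\cdot\pi_{12}^*\alpha$ using the relation $-j_1^*e_1 = j_2^*e_2$ from Notation \ref{notPi}; and $p_1^*H^3(C^{(4)})$ restricts onto $\tau_1\cdot\pi_{12}^*H^1(W^1_4)$ via the blow-up and the fact that $H^3(C^{(4)})\to H^3(C^1_4)$ is surjective (compare Betti numbers in Lemma \ref{lemstrata} and the structure $H^3(C^{(4)}) = \oplus_\beta \eta^\beta H^{3-2\beta}(Pic^4C)$). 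Tracking these carefully, the image of $j_1^*-j_2^*$ surjects onto $\tau_1\cdot\pi_{12}^*H^1(W^1_4)$ and onto the subspace $j_2^*e_2\cdot\pi_{12}^*(h^*H^1(Pic^4C))$ of the second summand, so the cokernel is $j_2^*e_2\cdot\pi_{12}^*H^1(W^1_4)$ modulo $j_2^*e_2\cdot\pi_{12}^*h^*H^1(Pic^4C)$, i.e. $H^1(W^1_4)/h^*H^1(Pic^4C)$. Finally, since $W^1_4\to Q$ is an \'etale double cover with covering involution acting by $-1$ on the relevant part of $H^1$ (the part not coming from $Pic^4C$), one identifies $H^1(W^1_4)/h^*H^1(Pic^4C)\cong H^1(Q)$, which has dimension $2g(Q)$; computing $g(Q)$ from the genus of the plane quintic (with its ten nodes resolved appropriately, or directly from $g(W^1_4)=11$) gives $\bQ^{12}$.

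The main obstacle I expect is the bookkeeping of the restriction maps $j_k^*$ on $H^3$ and $H^4$ in terms of the explicit generators — specifically, verifying that $p_1^*H^3(C^{(4)})$ restricts \emph{onto} (not just into) $\tau_1\cdot\pi_{12}^*H^1(W^1_4)$ and computing the exact image on the $H^4$ level to pin down $\dim Gr_4 H^4$. The dimension count $\dim Gr_4H^4$ must be consistent with the total Euler characteristic and with the final tally $b^4(\T_t) = 12 + 264 + 12 = 288$ quoted in the introduction; combined with the $Gr_3$ and $Gr_5$ pieces of $H^4(\T_t)$ and the sequences \eqref{Grsmallk}, \eqref{seqGR4} relating $H^4(\tT_0)$ to $H^4(\T_t)$, this forces $\dim Gr_4H^4(\tT_0) = 267$. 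I would cross-check this number directly by computing $\dim\ker(j_1^*-j_2^*)$ on $H^4$ using $\dim H^4(M_1)+\dim H^4(M_2) = 258+12 = 270$, $\dim H^4(M_{12}) = 3$, and rank considerations for the $3$-dimensional target, which indeed yields $267$ provided $j_1^*-j_2^*$ is surjective onto $H^4(M_{12})$ — a fact that follows by exhibiting, among the generators $j_{k*}f$, $j_{k*}\tau_1$, $[\bP^2_i]$, classes restricting to each of $f\cdot\tau_1$, $f\cdot j_2^*e_2$, $\tau_1\cdot j_2^*e_2$.
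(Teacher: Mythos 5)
Your overall strategy coincides with the paper's: the two-column Mayer--Vietoris spectral sequence gives $Gr_kH^4=0$ for $k\le 2$ (empty triple locus), $Gr_4H^4=Ker(j_1^*-j_2^*)$ on $H^4$, and $Gr_3H^4=\coker(j_1^*-j_2^*)$ on $H^3$; and your $Gr_4$ count ($258+12-3=267$, via surjectivity onto $H^4(M_{12})$ checked on the generators $j_{2*}f$, $j_{2*}\tau_1$, $[\bP^2_i]$) is exactly the paper's Lemma \ref{d104}. The genuine problem is your identification of the image of $j_1^*-j_2^*$ on $H^3$. You assert that $l^*\colon H^3(C^{(4)})\to H^3(C^1_4)$ is surjective and hence that $p_1^*H^3(C^{(4)})$ restricts \emph{onto} $\tau_1\cdot\pi_{12}^*H^1(W^1_4)$. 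This is false: by Macdonald, $H^3(C^{(4)})=\phi^*H^3(Pic^4C)\oplus\eta\,\phi^*H^1(Pic^4C)$; the first summand dies on $C^1_4$ because $\phi\circ l=h\circ\phi'$ and $H^3(W^1_4)=0$, while the second restricts into $l^*\eta\cdot{\phi'}^*h^*H^1(Pic^4C)$, so $j_1^*p_1^*H^3(C^{(4)})=\tau_1\cdot\pi_{12}^*h^*H^1(Pic^4C)$ is only $10$-dimensional inside the $22$-dimensional summand $\tau_1\cdot\pi_{12}^*H^1(W^1_4)$ (a Betti-number comparison $130\ge 22$ cannot establish surjectivity in any case). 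Symmetrically, your claim that the image meets the other summand only in $j_2^*e_2\cdot\pi_{12}^*h^*H^1(Pic^4C)$ is also wrong: $j_1^*$ on $j_{1*}\pi_{12}^*H^1(W^1_4)$ equals $\pm\, j_2^*e_2\cup\pi_{12}^*(\,\cdot\,)$, and $j_2^*$ on $H^3(M_2)=e_2\cdot\pi_2^*H^1(W^1_4)$ gives $j_2^*e_2\cdot\pi_{12}^*(\,\cdot\,)$, so the full summand $j_2^*e_2\cdot\pi_{12}^*H^1(W^1_4)$ is in the image.

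The correct picture is thus the mirror image of yours (and is what the paper's Lemma \ref{d103} proves): the $j_2^*e_2$-summand is entirely hit, and the cokernel is $\tau_1\cdot\pi_{12}^*H^1(W^1_4)$ modulo $\tau_1\cdot\pi_{12}^*h^*H^1(Pic^4C)$, i.e.\ $H^1(W^1_4)/h^*H^1(Pic^4C)$. Your two misidentifications compensate numerically, so the final answer $\bQ^{12}$ is correct, but as written the key step rests on a false surjectivity claim and would not survive checking; fixing it means redoing the restriction-map bookkeeping as above. A minor further slip: the covering involution of $W^1_4\to Q$ acts by $+1$ on the part of $H^1(W^1_4)$ identified with $H^1(Q)$ and by $-1$ on $h^*H^1(Pic^4C)$, not the other way around; the count $22-10=12$ is unaffected.
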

\begin{proof}We apply the spectral sequence of Section \ref{subsecspseq}, which degenerates at $E_2$, to the case $\cY=\widetilde{\T}$. Since $\widetilde{\T}_0=M_1\cup M_2$, the $E_1$ term of the spectral sequence has only two nonzero columns corresponding to $p=0$ and $p=1$. Thus from the definition of the weight filtration, we obtain
\begin{eqnarray}
&&Gr_kH^4\cong E_2^{4-k,k}=0\ \text{for }k\le2,\nonumber\\
 &&Gr_3H^4\cong E_2^{1,3}= Coker(H^3(M_1)\oplus H^3(M_2)\stackrel{j_1^*-j_2^*}\lra H^3(M_{12})),\nonumber\\
&&Gr_4 H^4\cong E_2^{0,4}=Ker(H^4(M_1)\oplus H^4(M_2)\stackrel{j_1^*-j_2^*}\lra H^4(M_{12})).\nonumber
\end{eqnarray}

We compute $E_2^{1,3}$ in Lemma \ref{d103}. By Lemma \ref{d104}, the image of $j_1^*-j_2^*$ is equal to $H^4(M_{12})$, therefore $E_2^{0,4}\cong\bQ^{267}$ by a dimension count. 
\end{proof}

\begin{lemma} \label{d103} We have isomorphisms $Coker(H^3(M_1)\oplus H^3(M_2)\stackrel{j_1^*-j_2^*}\lra H^3(M_{12}))\cong\frac{H^1(W^1_4)}{h^*H^1(Pic^4C)}\cong H^1 (Q)\cong\bQ^{12}$.
\end{lemma}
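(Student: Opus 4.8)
The plan is to make the two restriction maps $j_1^*\colon H^3(M_1)\to H^3(M_{12})$ and $j_2^*\colon H^3(M_2)\to H^3(M_{12})$ explicit in the bases furnished by Lemma \ref{Mcoho}, compute the span of their images inside $H^3(M_{12})$, and then divide. I would first treat $j_2^*$. Since $\pi_2\circ j_2=\pi_{12}$ (Diagram \eqref{relation}) and $j_2^*$ sends the class $e_2\in H^2(M_2)$ to the class $j_2^*e_2$ of Notation \ref{notPi}, for $\alpha\in H^1(W^1_4)$ one has $j_2^*(e_2\cdot\pi_2^*\alpha)=j_2^*e_2\cdot\pi_{12}^*\alpha$; hence $\im(j_2^*)=j_2^*e_2\cdot\pi_{12}^*H^1(W^1_4)$, which is exactly one of the two summands in the decomposition $H^3(M_{12})=\tau_1\cdot\pi_{12}^*H^1(W^1_4)\oplus j_2^*e_2\cdot\pi_{12}^*H^1(W^1_4)$ of Lemma \ref{Mcoho}.

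Next I would compute $j_1^*$ on the two summands of $H^3(M_1)=p_1^*H^3(C^{(4)})\oplus j_{1*}\pi_{12}^*H^1(W^1_4)$. On the second summand, the self-intersection formula together with the relation $j_1^*e_1=-j_2^*e_2$ of Notation \ref{notPi} gives $j_1^*j_{1*}\pi_{12}^*\alpha=\pi_{12}^*\alpha\cdot j_1^*e_1=-j_2^*e_2\cdot\pi_{12}^*\alpha$, which contributes nothing not already in $\im(j_2^*)$. On the first summand, the Cartesian square of Diagram \eqref{relation} gives $j_1^*p_1^*={p_1'}^*l^*$; Macdonald's formula gives $H^3(C^{(4)})=\phi^*H^3(Pic^4C)\oplus\eta\cdot\phi^*H^1(Pic^4C)$, and since $W^1_4$ is a curve the part coming from $H^3(Pic^4C)$ vanishes, because $j_1^*p_1^*\phi^*\delta=\pi_{12}^*h^*\delta$ with $h^*\delta\in H^3(W^1_4)=0$. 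Using $\phi\circ l=h\circ\phi'$, $\pi_{12}=\phi'\circ p_1'$ and $\tau_1={p_1'}^*l^*\eta$, the surviving part becomes $j_1^*p_1^*(\eta\cdot\phi^*\gamma)=\tau_1\cdot\pi_{12}^*h^*\gamma$. Thus $\im(j_1^*)=\tau_1\cdot\pi_{12}^*h^*H^1(Pic^4C)+j_2^*e_2\cdot\pi_{12}^*H^1(W^1_4)$, and since $\im(j_1^*-j_2^*)=\im(j_1^*)+\im(j_2^*)$ this equals $\im(j_1^*)$. Dividing $H^3(M_{12})$ by it, and using that $\alpha\mapsto\tau_1\cdot\pi_{12}^*\alpha$ is an isomorphism onto the first summand of Lemma \ref{Mcoho}, I obtain $\coker(j_1^*-j_2^*)\cong H^1(W^1_4)/h^*H^1(Pic^4C)$.

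It then remains to identify $H^1(W^1_4)/h^*H^1(Pic^4C)$ with $H^1(Q)$. The involution $\sigma\colon g^1_4\mapsto|K_C-g^1_4|$ on $W^1_4$ is the restriction of the involution $L\mapsto\omega_C\otimes L^{-1}$ of $Pic^4C$; the latter is the composition of the inversion map of $JC$ with a translation, so it acts as $-1$ on $H^1(Pic^4C)$, and therefore $h^*H^1(Pic^4C)$ is contained in the $(-1)$-eigenspace $H^1(W^1_4)^-$ of $\sigma$. Because $C$ is general, $JC$ is simple, so the positive-dimensional subvariety $W^1_4$ generates it, $h^*$ is injective, and $\dim h^*H^1(Pic^4C)=10$. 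On the other hand, for $C$ general $Q$ contains no rank-$3$ quadric, so it is a smooth plane quintic, $g(Q)=6$, and $H^1(Q)\cong H^1(W^1_4)^{\sigma}=H^1(W^1_4)^+$ has dimension $12$; since $g(W^1_4)=11$ this forces $\dim H^1(W^1_4)^-=22-12=10$, whence the inclusion $h^*H^1(Pic^4C)\subseteq H^1(W^1_4)^-$ is an equality. Therefore $H^1(W^1_4)/h^*H^1(Pic^4C)\cong H^1(W^1_4)^+\cong H^1(Q)\cong\bQ^{12}$.

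I expect the only genuinely delicate part to be the bookkeeping with the blow-up structure of $M_1$ and with Diagram \eqref{relation} — keeping every pullback identity straight, in particular the sign in $j_1^*e_1=-j_2^*e_2$ and the identity $\tau_1={p_1'}^*l^*\eta$; the rest of the first two paragraphs is formal. The one substantive geometric input is the injectivity of $h^*$, which rests on the generality of $C$ (so that $JC$ is simple and $W^1_4$ generates it), together with the elementary genus computations $g(W^1_4)=11$ and $g(Q)=6$.
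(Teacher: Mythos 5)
Your computation of the images of $j_1^*$ and $j_2^*$ and of the resulting cokernel is correct and is essentially the paper's own argument: the same use of Lemma \ref{Mcoho}, Macdonald's decomposition of $H^3(C^{(4)})$, the vanishing of the $H^3(Pic^4C)$ contribution via $H^3(W^1_4)=0$, and the formula $j_1^*j_{1*}=\cup\, j_1^*e_1=-\cup\, j_2^*e_2$. Your third paragraph, identifying $H^1(W^1_4)/h^*H^1(Pic^4C)$ with $H^1(Q)\cong\bQ^{12}$ through the eigenspace decomposition of $H^1(W^1_4)$ under the residuation involution (using injectivity of $h^*$ and that, for $C$ general, $Q$ is a smooth plane quintic and $W^1_4\to Q$ is the quotient by a fixed-point-free involution), correctly supplies a step the paper states without proof, and it is sound.
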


\begin{proof} 

By Lemma \ref{Mcoho},
\[
\xymatrix{H^3(M_1){=}p_1^*H^3(C^{(4)})\oplus j_{1*}\pi_{12}^{*}H^1(W^1_4)}
\]
and, by \cite[p. 325]{macdonald62},
$$H^3(C^{(4)})= H^3(Pic^4(C))\oplus \eta\cdot H^1(Pic^4C).$$

Note that
\[
\xymatrix{H^3(Pic^4C)\ar[rr]^-{p_1^*\circ\ \phi^*}&&H^3(M_1)\ar[r]^-{j_1^*}&H^3(M_{12})}
\]
is zero since $\phi\circ p_1\circ j_1= h\circ\phi'\circ p_1'$ (see Diagram \ref{relation}) and $H^3(W^1_4)=0$.

Furthermore, we see from Lemma \ref{Mcoho} that the image of  
$$\xymatrix{H^3(M_2)=e_2\cdot\pi_2^*H^1(W^1_4)\ar[r]^-{j_2^*}&H^3(M_{12})}$$
is equal to $j^*_1(j_{1*}\pi_{12}^{*}H^1(W^1_4))=j_1^*e_1\cdot\pi_{12}^*H^1(W^1_4)$.
This is because
$$j_1^*\circ j_{1*}=j_1^*e_1\cup\bullet=- j_2^*e_2\cup\bullet.$$

Therefore we have
$$Coker(j^*_1-j_2^*)\cong \frac{\tau_1\cdot\pi_{12}^*H^1(W^1_4)}{j_1^*p_1^*(\eta\cdot H^1(Pic^4C))}\cong\frac{H^1(W^1_4)}{h^*H^1(Pic^4C)}\cong H^1 (Q) \cong \bQ^{12}.$$
\end{proof}

\begin{lemma}\label{d104} 
The map $j_2^*:H^4(M_2)\ra H^4(M_{12})$ acts as follows
 \begin{eqnarray} j_2^* : j_{2*}f  & \longmapsto & f\cdot j_2^*e_2,\nonumber\\
 j_{2*}\tau_1 & \longmapsto & \tau_1\cdot j_2^*e_2,\nonumber\\
 { [\bP^2_i]} & \longmapsto & f\cdot \tau_1\ \text{for}\ i=1,...,10.\nonumber
\end{eqnarray}

The map $j_1^*:H^4(M_1)=p_1^*H^4(C^{(4)})\oplus\langle j_{1*}f,j_{1*}\tau_1\rangle\ra H^4(M_{12})$ acts as follows
\begin{eqnarray} j_1^* : -j_{1*}f & \longmapsto & f\cdot j_2^*e_2,\nonumber\\
-j_{1*}\tau_1 & \longmapsto & \tau_1\cdot j_2^*e_2,\nonumber\\
p_1^*\omega & \longmapsto & {p_1'}^*l^*\omega\in\bQ f\cdot\tau_1,\  \forall\ \omega\in H^4(C^{(4)}).
\end{eqnarray}

As a consequence, $j^*_k: H^4(M_k)\ra H^4(M_{12})$ is surjective for $k=1,2$.
\end{lemma}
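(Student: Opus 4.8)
The plan is to evaluate each of the two restriction maps on the explicit generators of $H^4(M_1)$ and $H^4(M_2)$ furnished by Lemma \ref{Mcoho}. For the classes $j_{k*}f$ and $j_{k*}\tau_1$ I would invoke the self-intersection formula $j_k^*j_{k*}\beta=\beta\cup j_k^*e_k$, valid since $M_{12}$ is a smooth divisor in $M_k$ with class $e_k$; this gives $j_2^*j_{2*}f=f\cdot j_2^*e_2$ and $j_2^*j_{2*}\tau_1=\tau_1\cdot j_2^*e_2$ directly, and, combined with the relation $-j_1^*e_1=j_2^*e_2$ of Notation \ref{notPi}, the first two lines of the formula for $j_1^*$.

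For the plane classes $[\bP^2_i]$ I would argue fiber by fiber. Each $\bP^2_i$ lies inside the singular fiber $Q_{3i}^{sing}=\pi_2^{-1}(g_i)$ (or $\pi_2^{-1}(h_{i-5})$), which by the local model $xy+zw+st^2=0$ of Proposition \ref{proptheta0} is the rank-$4$ quadric $\{xy+zw=0\}\subset\bP^4$, a cone with a single vertex over a smooth quadric surface; moreover $M_{12}\cap Q_{3i}^{sing}=\pi_{12}^{-1}(g_i)\cong\bP^1\times\bP^1$ is the hyperplane section $\{t=0\}$, which misses the vertex. Since $\bP^2_i$ is the plane spanned by the vertex and a line $\ell$ in the ruling of $\pi_{12}^{-1}(g_i)$ of class $\tau_1\cdot f$ (Notation \ref{notPi}), the cone $\bP^2_i$ meets that hyperplane section precisely in $\ell$; checking in the same coordinates that the intersection is generically transverse, we obtain $j_2^*[\bP^2_i]=f\cdot\tau_1$ for every $i$, which is the third line for $j_2^*$.

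It remains to treat $p_1^*\omega$ for $\omega\in H^4(C^{(4)})$. Because $p_1\circ j_1=l\circ p_1'$ factors through the blow-up center $C^1_4$ (Diagram \eqref{relation}), we have $j_1^*p_1^*\omega={p_1'}^*l^*\omega$; and $H^4(C^1_4)\cong\bQ$ by Lemma \ref{lemstrata}, generated by the point class, whose pullback under the $\bP^1$-bundle $p_1':M_{12}\to C^1_4$ (Proposition \ref{propM1}) is the class of a fiber, i.e.\ a line of the ruling contracted by $p_1$, which is $\tau_1\cdot f$ (Notation \ref{notPi}); hence $j_1^*p_1^*\omega\in\bQ\,f\cdot\tau_1$. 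For the surjectivity statement, the images under $j_2^*$ of $j_{2*}f$, $j_{2*}\tau_1$ and $[\bP^2_1]$ are $f\cdot j_2^*e_2$, $\tau_1\cdot j_2^*e_2$ and $f\cdot\tau_1$, which span $H^4(M_{12})\cong\bQ^3$ by Lemma \ref{Mcoho}; and for $j_1^*$ the classes $-j_{1*}f$ and $-j_{1*}\tau_1$ already supply $f\cdot j_2^*e_2$ and $\tau_1\cdot j_2^*e_2$, so it suffices that $l^*:H^4(C^{(4)})\to H^4(C^1_4)$ be nonzero, which holds since $[C^1_4]\neq 0$ in $H_4(C^{(4)})$ (an irreducible projective surface has nonzero fundamental class).

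The delicate point will be the computation of $j_2^*[\bP^2_i]$: one must locate precisely how $\bP^2_i$ and the $\bP^1\times\bP^1$-section $\pi_{12}^{-1}(g_i)$ sit inside the singular quadric fiber relative to its vertex, and then confirm in the local coordinates that $\bP^2_i$ meets $M_{12}$ with multiplicity one along $\ell$, so that $j_2^*[\bP^2_i]$ is exactly $f\cdot\tau_1$ and not a proper multiple of it. The rest is essentially formal.
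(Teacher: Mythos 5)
Your proposal is correct and follows essentially the same route as the paper: the paper's proof is exactly the self-intersection formula $j_k^*j_{k*}=(\;\cdot\;)\cup j_k^*e_k$ together with $-j_1^*e_1=j_2^*e_2$, the factorization $p_1\circ j_1=l\circ p_1'$, and the definition of $[\bP^2_i]$. The one point you flag as delicate does go through: in the chart $\{Y\neq 0\}$ of the local model, $M_2=\{y=0\}$ has coordinates $(Z,W,T,s)$, $M_{12}=\{T=0\}$, $\bP^2_i=\{Z=s=0\}$, and these meet transversely along the line $\{Z=T=s=0\}$ of the ruling of class $\tau_1\cdot f$, giving $j_2^*[\bP^2_i]=f\cdot\tau_1$ with multiplicity one.
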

\begin{proof}The Lemma follows from the formula
$$j_k^*\circ j_{k*}=-\cup j_k^*e_k$$
for $k=1,2$
and the definition of $[\bP^2_i]$ (see Notation \ref{notPi}).

\end{proof}

\subsection{The monodromy weight filtration on $H^4_t$} 
\begin{proposition}\label{H4theta} The weight filtration on $H^4(\T_t)$ is as follows:

\begin{enumerate}\item $Gr_kH^4_t=0,\ \text{for}\ k\le2,\ \text{or} \ k\ge6$.
\item $Gr_5H^4_t\cong Gr_3H^4_t=i_t^*Gr_3H^4\cong\frac{H^1(W^1_4)}{h^*H^1(Pic^4C)} \cong H^1 (Q) \cong\bQ^{12}$.
\item There is an exact sequence
$$\xymatrix{0\ar[r]&H^2(M_{12})\ar[r]^-{(-j_{1*},j_{2*})}&Gr_4H^4\ar[r]^-{i_t^*}&Gr_4H^4_t\ar[r]&0}.$$
Consequently,
$Gr_4H^4_t\cong \bQ^{264}$ and
$H^4(\T_t)\cong\bQ^{288}.$
 \end{enumerate}

\end{proposition}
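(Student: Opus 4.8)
The plan is to apply the machinery of Section~\ref{secgeneralMH} to the semistable family $\cY=\tT\ra T$ (after the base change of Section~\ref{secsstheta}), of relative dimension $n=4$, with $\tT_0=M_1\cup M_2$, $M_{12}=M_1\cap M_2$ as in Proposition~\ref{proptheta0}. Parts~(1) and~(2) should be immediate from the formal properties of the monodromy weight filtration: \eqref{Grsmallk} gives isomorphisms $Gr_kH^4\cong Gr_kK^4_t$ for $k\le 3$, and by Proposition~\ref{propwH4} these vanish for $k\le 2$ and equal $H^1(W^1_4)/h^*H^1(\Pic^4C)\cong H^1(Q)\cong\bQ^{12}$ for $k=3$; feeding this into \eqref{sl2} (and using $Gr_0K^4_t=Gr_1K^4_t=Gr_2K^4_t=0$) yields $Gr_kH^4_t=0$ for $k\le2$, $Gr_3H^4_t=i_t^*Gr_3H^4\cong\bQ^{12}$, and $Gr_4H^4_t=Gr_4K^4_t$; the symmetry \eqref{sym} gives $Gr_5H^4_t\cong Gr_3H^4_t$ and $Gr_kH^4_t\cong Gr_{8-k}H^4_t=0$ for $k\ge6$. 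So everything reduces to computing $Gr_4K^4_t$.

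For this I would use the weight-$4$ graded piece \eqref{seqGR4} of the Clemens--Schmid sequence \eqref{eqCS}, which for $m=n=4$ reads
\[
0\lra Gr_2K^2_t\lra Gr_{-6}H_6\overset{\al}{\lra}Gr_4H^4\overset{i_t^*}{\lra}Gr_4K^4_t\lra 0 .
\]
Hence $i_t^*\colon Gr_4H^4\ra Gr_4H^4_t$ is onto with kernel $\im(\al)$, and the crux is to show $\im(\al)$ equals the image of the Gysin map
\[
(-j_{1*},j_{2*})\colon H^2(M_{12})\lra Gr_4H^4=Ker\bigl(H^4(M_1)\oplus H^4(M_2)\overset{j_1^*-j_2^*}{\lra}H^4(M_{12})\bigr),
\]
which does land in $Gr_4H^4$ since $j_k^*j_{k*}=\cup\,j_k^*e_k$ and $j_1^*e_1=-j_2^*e_2$ (Notation~\ref{notPi}). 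Granting this, and that $(-j_{1*},j_{2*})$ is injective on $H^2(M_{12})$, we obtain the stated exact sequence, whence $Gr_4H^4_t\cong\bQ^{267-3}=\bQ^{264}$ and, summing graded pieces, $H^4(\T_t)\cong\bQ^{12+264+12}=\bQ^{288}$.

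The identification of $\al$ is where I would spend the effort. Using the spectral sequences of Section~\ref{secgeneralMH}, under $H_6\cong H^4_c(\tT)$ the relevant graded piece is $Gr_{-6}H_6\cong Gr_4H^4_c(\tT)\cong\coker\bigl(H^0(M_{12})\ra H^2(M_1)\oplus H^2(M_2)\bigr)$ (no triple intersections contribute, as $\tT_0$ has two components), $\al$ is the forget-supports map $H^4_c(\tT)\ra H^4(\tT)$, and tracing the double-locus classes through the Mayer--Vietoris sequence of $H^\bullet(\tT_0)$ and its dual should turn $\al$ into $(-j_{1*},j_{2*})$. Injectivity of $(-j_{1*},j_{2*})$ on $H^2(M_{12})=\langle f,\tau_1,j_2^*e_2\rangle$ should be a direct check using the blow-up description of $M_1$ (Proposition~\ref{propM1}): $j_{1*}f$ and $j_{1*}\tau_1$ span the summand $j_{1*}{p_1'}^*H^2(C^1_4)$ of $H^4(M_1)$, while $j_{1*}(j_2^*e_2)=-e_1^2$ has a nonzero component along $p_1^*[C^1_4]$. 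As a numerical check, $H^2(\T_t)\cong H^2(A_t)$ by \eqref{invariant} and has trivial monodromy (the family $\cA^b\ra T^b$ being smooth and proper over a disc, hence $C^\infty$-trivial), so $K^2_t=H^2_t$ is pure of weight $2$ with $Gr_2K^2_t\cong\bQ^{45}$; and $Gr_{-6}H_6\cong\bigl(Gr_6H^6(\tT_0)\bigr)^\vee=\bigl(Ker(H^6(M_1)\oplus H^6(M_2)\ra H^6(M_{12}))\bigr)^\vee$ has dimension $47+2-1=48$ by Lemma~\ref{lemstrata} and Poincar\'e duality on the strata, so $\dim\im(\al)=48-45=3=\dim H^2(M_{12})$, in agreement with the claim.

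The main obstacle is the precise determination of $\al$ on the weight-$4$ piece as the Gysin pushforward $(-j_{1*},j_{2*})$ from $M_{12}$, with the right signs: this only involves the Mayer--Vietoris spectral sequence, its dual, and the forget-supports map, so it is ``soft'', but it is exactly the sort of index- and sign-bookkeeping that is delicate. Once it is in place, the injectivity check and the dimension count are routine.
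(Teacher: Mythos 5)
Your proposal is correct and follows essentially the same route as the paper: parts (1)--(2) are the same formal consequence of \eqref{Grsmallk}, \eqref{sl2}, \eqref{sym} and Proposition \ref{propwH4}, and part (3) is the paper's use of the sequence \eqref{seqGR4}. The one step you defer as the ``main obstacle'' --- identifying $\im(\alpha)$ with $(-j_{1*},j_{2*})H^2(M_{12})$ --- is exactly the paper's Lemma \ref{lemCS}, proved by the dualization you sketch ($Gr_{-6}H_6\cong(Gr_6H^6)^{\lor}$, realized as the cokernel of $H^0(M_{12})\to H^2(M_1)\oplus H^2(M_2)$) together with the graded formula $\alpha(\gamma_1,\gamma_2)=\bigl(-j_{1*}(j_1^*\gamma_1-j_2^*\gamma_2),\ j_{2*}(j_1^*\gamma_1-j_2^*\gamma_2)\bigr)$ and the surjectivity of $j_1^*-j_2^*$ on $H^2$.
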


\begin{proof}If $k\le3$, by (\ref{sl2}) and (\ref{Grsmallk}), 
$$Gr_kH^4_t\cong Gr_kK^4_t\oplus...\oplus Gr_{k-\lfloor\frac{k}{2}\rfloor}K^4_t\cong Gr_kH^4\oplus...\oplus Gr_{k-\lfloor\frac{k}{2}\rfloor}H^4.$$
Therefore, the statements about $Gr_kH^4_t$ for $k\le3$ follow immediately from the computation of the weight filtration on $H^4$ in Proposition \ref{propwH4}.

For $k=4$, 
$$Gr_4H^4_t\cong Gr_4K^4_t\oplus Gr_2K^4_t\oplus Gr_0K^4_t\cong Gr_4K^4_t.$$
The exact sequence (\ref{seqGR4}) becomes, 
\begin{eqnarray}\xymatrix{0\ar[r]&Gr_{2}K^{2}_t\ar[r]&Gr_{-6}H_{6}\ar[r]^-{\alpha}&Gr_4H^4\ar[r]&Gr_4K^4_t\ar[r]&0}.\nonumber
\end{eqnarray}

By Lemma \ref{lemCS} below, the image of $\alpha$ is equal to $(-j_{1*},j_{2*})H^2(M_{12})$ and $(-j_{1*},j_{2*})$ is clearly injective. Therefore (3) holds.
The statements for $k\ge 5$ follow by symmetry (see Section \ref{sym}).
\end{proof}

\begin{proposition}\label{propprim}The induced monodromy filtration on the primitive cohomology $\bK_t\subset H^4_t$ and $\bH_t=\bK_t\oplus\theta H^2(A_t)$ satisfies the following:
\begin{enumerate}
\item
$Gr_k\bK_t\cong Gr_kH^4_t$ for $k=3,5$.
\item We have the exact sequence 
$$\xymatrix{(I\oplus H^4(M_2))\cap Gr_4H^4\ar[r]^-{i_t^*}&Gr_4\bH_t\ar[r]&0,}$$
where $I:=p_1^*(\theta H^2(Pic^4C)\oplus\eta H^2(Pic^4C)\oplus\eta^2)\oplus\langle j_{1*}f, j_{1*}\tau_1\rangle\subset H^4(M_1)$.
\end{enumerate}
\end{proposition}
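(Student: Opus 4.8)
The plan is to deduce both parts from Proposition~\ref{H4theta} together with the fact that the family of abelian varieties does not degenerate. Since $\cA^b\ra T^b$ is smooth and proper over a disc, the monodromy on $H^\bullet(A_t)$ is trivial, so the limit mixed Hodge structure on each $H^m(A_t)$ is the pure Hodge structure of weight $m$; in particular $j^*H^4(A_t)$ and $\theta\cdot H^2(A_t,\bQ)$ are pure of weight $4$ in the limit mixed Hodge structure on $H^4(\T_t)$. Now $j^*$ and the Gysin map $j_*\colon H^4(\T_t)\ra H^6(A_t)$ are morphisms of limit mixed Hodge structures, $j_*j^*=\cup\theta$ is an isomorphism by Hard Lefschetz, and $x\in\bH_t$ if and only if $j_*x\in\theta^2\cdot H^2(A_t)$; hence $\bK_t=Ker\,j_*$ and $\bH_t=Ker\big(H^4(\T_t)\stackrel{j_*}{\ra}H^6(A_t)\ra H^6(A_t)/\theta^2H^2(A_t)\big)$ are sub-limit-mixed-Hodge-structures of $H^4(\T_t)$, and we obtain splittings of limit mixed Hodge structures
\[H^4(\T_t)=j^*H^4(A_t)\oplus\bK_t=j^*H^4_{\mathrm{prim}}(A_t)\oplus\bH_t\]
whose first summands are pure of weight $4$. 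Passing to $Gr_k$ for $k\ne4$ kills the pure weight-$4$ summand, giving $Gr_k\bK_t=Gr_kH^4_t=Gr_k\bH_t$; for $k=3,5$ this is part~(1). For $k=4$ we get $Gr_4H^4_t=j^*H^4(A_t)\oplus Gr_4\bK_t=j^*H^4_{\mathrm{prim}}(A_t)\oplus Gr_4\bH_t$, so $Gr_4\bH_t$ is the complement of $j^*H^4_{\mathrm{prim}}(A_t)$ in $Gr_4H^4_t$.

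For part~(2) I would first record two compatibilities. Since $\tT\ra\cA^b$ restricts to $\T_t\inj A_t$ for $t\ne0$ and $H^\bullet(\cA^b)\cong H^\bullet(A_0)\cong H^\bullet(A_t)$, the composite $H^4(A_0)\ra H^4(\tT)=H^4(\tT_0)\stackrel{i_t^*}{\ra}H^4(\T_t)$ equals $j^*$; and since $W^1_4$ is a curve, $H^4(W^1_4)=0$, so this map $H^4(A_0)\ra H^4(\tT_0)\subset H^4(M_1)\oplus H^4(M_2)$ is $x\mapsto(p_1^*\phi^*x,0)$, which lies in $Gr_4H^4=Ker(j_1^*-j_2^*)$ because $p_1^*\phi^*x$ restricts to $0$ on $M_{12}$. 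Next, Macdonald's formula for $H^4(C^{(4)})$, the Lefschetz decomposition $H^4(Pic^4C)=\theta\cdot H^2(Pic^4C)\oplus H^4_{\mathrm{prim}}(Pic^4C)$ and Lemma~\ref{Mcoho} give $H^4(M_1)=I\oplus p_1^*\phi^*H^4_{\mathrm{prim}}(Pic^4C)$, whence
\[Gr_4H^4=\big((I\oplus H^4(M_2))\cap Gr_4H^4\big)+p_1^*\phi^*H^4_{\mathrm{prim}}(Pic^4C).\]
Applying $i_t^*$ and the first compatibility, $Gr_4H^4_t=i_t^*\big((I\oplus H^4(M_2))\cap Gr_4H^4\big)+j^*H^4_{\mathrm{prim}}(A_t)$; since $Gr_4H^4_t=Gr_4\bH_t\oplus j^*H^4_{\mathrm{prim}}(A_t)$, the desired surjection will follow once we check $i_t^*\big((I\oplus H^4(M_2))\cap Gr_4H^4\big)\subseteq\bH_t$.

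This inclusion is the main point. By the kernel description of $\bH_t$, it is equivalent to showing that $j_*\circ i_t^*$ sends $I\oplus H^4(M_2)$ into $\theta^2\cdot H^2(A_0)=\theta^2\cdot H^2(A_t)$. Using the smoothness of $\tT$ and base change for Gysin maps, $j_*\circ i_t^*\colon H^4(\tT_0)\ra H^6(A_0)$ is the Gysin push-forward along the natural map $\tT_0=M_1\cup M_2\ra A_0$, so on a class $(y_1,y_2)$ it equals $\phi_*p_{1*}(y_1)+(h\circ\pi_2)_*(y_2)$. The second term vanishes, because $M_2\ra A_0$ factors through the curve $W^1_4$ and so the Gysin map $H^4(M_2)\ra H^6(A_0)$ is zero for dimension reasons. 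For the first term, $p_{1*}$ is the identity on $p_1^*H^4(C^{(4)})$ and kills $j_{1*}f$ and $j_{1*}\tau_1$ (each being $j_{1*}$ of a class pulled back from $C^1_4$, hence with trivial image under the blow-down $p_1$), and, by Poincar\'e's formula, $\phi_*(\theta\cdot\phi^*z)=\theta^2z$, while $\phi_*(\eta\cdot\phi^*z)$ is a multiple of $\theta^2z$ and $\phi_*(\eta^2)$ is a multiple of $\theta^3=\theta^2\cdot\theta$ for $z\in H^2(Pic^4C)$. Hence $\phi_*p_{1*}(I)\subseteq\theta^2\cdot H^2(A_0)$, which completes the inclusion and with it part~(2).

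The hard part is the third paragraph: the identification of $j_*\circ i_t^*$ with the Gysin push-forward out of $\tT_0$ must be made compatibly with the Clemens--Schmid formalism (this is close to the computation of the map $\alpha$ of (\ref{seqGR4}), cf. Lemma~\ref{lemCS}), and one must also carefully verify the decomposition $H^4(M_1)=I\oplus p_1^*\phi^*H^4_{\mathrm{prim}}(Pic^4C)$ and the fact that the kernel $(-j_{1*},j_{2*})H^2(M_{12})$ of $i_t^*$ already lies in $(I\oplus H^4(M_2))\cap Gr_4H^4$ — for the latter one uses the relation $j_1^*e_1=-j_2^*e_2$ of Notation~\ref{notPi} and the fact that $e_1^2$ and $[C^1_4]$ are polynomials in $\theta$ and $\eta$.
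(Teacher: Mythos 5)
Your proposal is correct and takes essentially the same route as the paper: part (1) from the non-degeneration of $A_t$ together with the compatibility of restriction and Gysin maps with the limit mixed Hodge structures, and part (2) from the commutative diagram relating $j_{0*}$, $j_*$, $j_{t*}$, the fact that the Gysin image of $I\oplus H^4(M_2)$ lies in $\theta^2H^2(Pic^4C)$, and the surjectivity of $i_t^*$ from Proposition \ref{H4theta}(3). The only differences are cosmetic: you spell out the Gysin computation ($\phi_*\eta=\tfrac{1}{2}\theta^2$, etc.) that the paper leaves implicit, and you get surjectivity from the decomposition $Gr_4H^4=\bigl((I\oplus H^4(M_2))\cap Gr_4H^4\bigr)+p_1^*\phi^*H^4_{\mathrm{prim}}(Pic^4C)$ instead of the paper's dimension count, which also makes the kernel-containment check in your last paragraph unnecessary for your argument.
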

\begin{proof}Since the family of Prym varieties $A_t$ does not degenerate, we have $Gr_6H^6(A_t)\cong H^6(A_t)$ and $Gr_5H^6(A_t)=Gr_7H^6(A_t)=0$. Therefore, under the Gysin push-forward, $Gr_3H^4(\T_4)$ and $Gr_5H^4(\T_t)$ all map to zero. Thus the first statement of the proposition follows. Now consider the commutative diagram
$$\xymatrix{H^4(M_1\cup M_2)\ar[r]^-{\cong}\ar[d]^-{j_{0*}}&H^4(\widetilde{\T})\ar[r]^-{i_t^*}\ar[d]^-{j_*}&H^4(\T_t)\ar[d]^-{j_{t*}}\\
H^6(Pic^4C)\ar[r]^-{\cong}&H^6(\cA)\ar[r]^-{\cong}&H^6(A_t).}$$
Since the induced Gysin map on the graded piece
$$Gr_4H^4(M_1\cup M_2)\stackrel{j_{0*}}\lra Gr_6H^6(Pic^4C)\cong H^6(Pic^4C)$$ sends $(I\oplus H^4(M_2))\cap Gr_4H^4$ to the subspace $\theta^2H^2(Pic^4C)$ and the bottom horizontal maps are isomorphisms, we see that $i_t^*$ sends $(I\oplus H^4(M_2))\cap Gr_4H^4$ into $Gr_4\bH_t$. By Proposition \ref{H4theta} (3), the kernel of $i_t^*$ is 3 dimensional, therefore $i_t^*$ sends $(I\oplus H^4(M_2))\cap Gr_4H^4$ onto $Gr_4\bH_t$ by a simple dimension count.
\end{proof}

It remains to describe the $3$-dimensional image of $\alpha$ in (\ref{seqGR4}). 
Recall from the definition of the spectral sequence in Section \ref{subsecspseq} that $Gr_4H^4$ fits in the exact sequence
$$\xymatrix{0\ar[r]&Gr_4H^4\ar[r]&H^4(M_1)\oplus
  H^4(M_2)\ar[rr]^-{d_1^{0,4}=j_1^*-j_2^*}&&H^4(M_{12})\ar[r]&0.}$$
The composition of the natural map
$$\xymatrix{H^2(M_{12})\ar[r]^-{(-j_{1*},j_{2*})}&H^4(M_1)\oplus
  H^4(M_2)}$$
with $d_1^{0,4}$ is zero, therefore $(-j_{1*},j_{2*})$ factors through $Gr_4H^4$:
\[
\xymatrix{&&H^2(M_{12})\ar[ld]_{(-j_{1*},j_{2*})}\ar[d]^-{(-j_{1*},j_{2*})}&&\\0\ar[r]&Gr_4H^4\ar[r]&H^4(M_1)\oplus
  H^4(M_2)\ar[r]^-{d_1^{0,4}}&H^4(M_{12})\ar[r]&0.}
\]
\begin{lemma}\label{lemCS} The image of $\alpha: Gr_{-6}H_6\lra Gr_4H^4$ is equal to the image of 
$$\xymatrix{H^2(M_{12})\ar[r]^-{(-j_{1*},j_{2*})}&Gr_4H^4\subset H^4(M_1)\oplus H^4(M_2).}$$
\end{lemma}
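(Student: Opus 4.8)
The plan is to identify the image of $\alpha$ via the Clemens–Schmid sequence by computing the other maps in the relevant portion of that sequence, using the explicit description of $\widetilde\T_0$ and its strata. First I would write down the piece of the Clemens–Schmid exact sequence around $\alpha: H_6 \to H^4$, namely
\[
\xymatrix{H_6\ar[r]^-{\alpha}&H^4\ar[r]^-{i_t^*}&H^4_t},
\]
so that $\im\alpha = \ker(i_t^*: H^4 \to H^4_t)$. Passing to the weight-$4$ graded pieces (using that $H^6$ and $H_6$ carry the mixed Hodge structures of Section \ref{secgeneralMH} and that $\alpha$ strictly shifts weights appropriately), exactness gives $\im(\alpha: Gr_{-6}H_6 \to Gr_4H^4) = \ker(i_t^*: Gr_4H^4 \to Gr_4H^4_t)$. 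By Proposition \ref{H4theta}(3) this kernel is exactly the image of $(-j_{1*}, j_{2*}): H^2(M_{12}) \to Gr_4H^4$, which is $3$-dimensional and injective; so in principle the lemma would follow immediately from Proposition \ref{H4theta}(3) by pure diagram chasing. Since Proposition \ref{H4theta}(3) was itself proved assuming this lemma, I would instead prove the lemma independently and then regard Proposition \ref{H4theta}(3) as its consequence.

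To prove the lemma directly, I would describe $\alpha$ geometrically. The map $\alpha: H_6(\widetilde\T) \to H^4(\widetilde\T)$ factors through $H^4(\widetilde\T, \partial\widetilde\T)$ via Poincaré–Lefschetz duality, and on the level of the Mayer–Vietoris spectral sequence (and its dual for $H_*$, as in Section \ref{MHScompact}), the weight-$(-6)$ part of $H_6$ is computed from $H^2$ of the double locus $M_{12}$: concretely $Gr_{-6}H_6 \cong (Gr_4 H^4_c)^\vee$ and the relevant $E_1$ term is $H^2(M_{12})$. Under these identifications, $\alpha$ is induced on $H^2(M_{12})$ by the pair of Gysin maps into the two components — this is precisely the dual statement to the restriction maps $j_1^* - j_2^*$ appearing in the computation of $Gr_4H^4$. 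Thus I would check that the composite
\[
\xymatrix{H^2(M_{12})\ar[r]^-{(-j_{1*},j_{2*})}&H^4(M_1)\oplus H^4(M_2)\ar[r]^-{j_1^*-j_2^*}&H^4(M_{12})}
\]
is zero — which is the content of the displayed diagram just before the lemma, and follows from $j_k^*\circ j_{k*} = -\cup j_k^* e_k$ together with the relation $-j_1^*e_1 = j_2^*e_2$ of Notation \ref{notPi} — so that $(-j_{1*},j_{2*})$ lands in $Gr_4H^4 = \ker(j_1^*-j_2^*)$, and that this is exactly the subspace realized by $\alpha$.

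The main obstacle I anticipate is the bookkeeping in matching the Clemens–Schmid map $\alpha$ with the spectral-sequence map $(-j_{1*},j_{2*})$: one must check that the abstract boundary map $\alpha$, defined through Poincaré duality and the inclusion $H^m(\widetilde\T,\partial\widetilde\T)\to H^m(\widetilde\T)$, is genuinely computed on associated graded pieces by the Gysin pushforwards from $M_{12}$ into $M_1$ and $M_2$, with the correct signs. This is where I would invoke the compatibility, established in Section \ref{secgeneralMH} (following \cite{morrison84}, \cite{durfee83}), between the Clemens–Schmid sequence and the Mayer–Vietoris spectral sequence, reducing the identification to a statement about $E_1$ and $E_2$ terms: the only weight-$4$ contribution to $Gr_{-6}H_6$ comes from $H^2$ of the double stratum, and the connecting map there is the alternating Gysin map $d_1^c$ dual to $d_1 = j_1^* - j_2^*$. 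Once this compatibility is in place, injectivity of $(-j_{1*},j_{2*})$ (clear, since $j_{1*}$ is the Gysin map of a divisorial embedding into the irreducible $M_1$, hence injective on $H^2$) and the dimension count $\dim \im\alpha = \dim Gr_4H^4 - \dim Gr_4 H^4_t = 267 - 264 = 3 = \dim H^2(M_{12})$ force the two images to coincide.
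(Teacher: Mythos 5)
Your overall strategy is the same as the paper's (compute $\alpha$ on the weight-graded level through the Mayer--Vietoris spectral sequence and its compactly supported dual, and recognize Gysin maps from the double locus), and you rightly refuse to quote Proposition \ref{H4theta}(3), which is downstream of this lemma. But the key identification in your direct argument is wrong: the weight $(-6)$ piece of $H_6$ is not $H^2(M_{12})$ (nor its dual). By the definitions in Sections \ref{subsecspseq} and \ref{MHScompact}, $Gr_{-6}H_6\cong(Gr_6H^6)^{\lor}\cong Gr_4H^4_c(\widetilde{\T})\cong E_{2,c}^{0,4}=\bigl(H^2(M_1)\oplus H^2(M_2)\bigr)\big/\,\im H^0(M_{12})$, a $48$-dimensional space; the group $H^2(M_{12})$ occurs in the compact-support spectral sequence only as the term $E_{1,c}^{-1,6}$, the source of the Gysin differential into $H^4(M_1)\oplus H^4(M_2)$, not as the weight-$4$ graded piece. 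Consequently the sentence ``$\alpha$ is induced on $H^2(M_{12})$ by the pair of Gysin maps'' does not describe the actual source of $\alpha$. What is true, and what the paper's proof establishes, is that on graded pieces $\alpha$ is induced by $(\gamma_1,\gamma_2)\mapsto\bigl(-j_{1*}(j_1^*\gamma_1-j_2^*\gamma_2),\,j_{2*}(j_1^*\gamma_1-j_2^*\gamma_2)\bigr)$, i.e. $\alpha$ factors as $(-j_{1*},j_{2*})\circ(j_1^*-j_2^*)$. This a priori gives only $\im\alpha\subset\im(-j_{1*},j_{2*})$; the equality of images requires the surjectivity of $j_1^*-j_2^*:H^2(M_1)\oplus H^2(M_2)\to H^2(M_{12})$ (easy from Lemma \ref{Mcoho} and Notation \ref{notPi}: $\tau_1$ is restricted from $M_1$, while $f$ and $j_2^*e_2$ are restricted from $M_2$), and this is precisely the closing step of the paper's proof that your write-up omits.

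Your fallback -- injectivity of $(-j_{1*},j_{2*})$ plus the count $\dim\im\alpha=267-264=3$ -- does not repair the gap, because $\dim Gr_4H^4_t=264$ is obtained in the paper only as a consequence of Proposition \ref{H4theta}(3), which is deduced from the present lemma; this is exactly the circularity you flagged at the outset and then reintroduced. The count could be made independent (e.g. $h^4(\T_t)=288$ follows from $\chi(\T_t)=5!=120$, Poincar\'e duality and the Lefschetz hyperplane theorem, and then $\dim Gr_4H^4_t=288-2\cdot 12$ using parts (1) and (2) of Proposition \ref{H4theta}, which do not depend on the lemma), but you neither carry this out nor note that it is needed. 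As written, your argument therefore either has a missing step (the factorization of $\alpha$ through $j_1^*-j_2^*$ together with the surjectivity of that map on $H^2$) or relies on a circular dimension count.
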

\begin{proof} We have the isomorphism $Gr_{-6}H_6^{\lor}\cong Gr_6H^6$ and the latter fits into the exact sequence
$$\xymatrix{0\ar[r]&Gr_6H^6\ar[r]&H^6(M_1)\oplus H^6(M_2)\ar[r]^-{j_1^*-j_2^*}&H^6(M_{12})\ar[r]&0}$$
whose Poincar\'e dual is
$$\xymatrix{0\ar[r]&H^0(M_{12})\ar[r]^-{(j_{1*},-j_{2*})}&H^2(M_1)\oplus H^2(M_2)\ar[r]&(Gr_6H^6)^{\lor}\ar[r]&0.}$$

The map $\alpha$ is induced by 
$$\xymatrix{H_6(\widetilde{\T})\ar[r]^-{\text{PD}}&H^4(\widetilde{\T},\partial\tT)\ar[r]&H^4(\widetilde{\T})\cong H^4(M_1\cup M_2)}.$$
On the graded level, 
$$\alpha: Gr_{-6} H_6 = (Gr_5 H^6)^* = \frac{H^2(M_1)\oplus H^2(M_2)}{H^0(M_{12})}\lra Gr_4H^4$$
 is induced by the map
\begin{eqnarray}
H^2(M_1)\oplus H^2(M_2) & \lra & Gr_4H^4\subset H^4(M_1)\oplus H^4(M_2) \nonumber \\
(\gamma_1,\gamma_2) & \longmapsto & (-j_{1*}(j_1^*\gamma_1-j_2^*\gamma_2),\ j_{2*}(j_1^*\gamma_1-j_2^*\gamma_2)). \nonumber
\end{eqnarray}

Since the map
$$ {j_1^*-j_2^*} : \xymatrix{H^2(M_1)\oplus H^2(M_2)\ar[r]&H^2(M_{12})}$$
is surjective, the image of $\alpha$ is equal to the 3 dimensional image of $H^2(M_{12})$ via $(-j_{1*}, j_{2*})$.
\end{proof}



\section{The semistable reduction of the fiber product}\label{secssprod}

\subsection{} We need to construct a semistable reduction for the fiber product $\widetilde{\cG}\times_T\widetilde{\T}$. The central fibers of $\widetilde{\cG}$ and $\widetilde{\T}$ are described in Section \ref{rkcentralfiber} and Proposition \ref{proptheta0} respectively. We follow the notation there. The total space of $\widetilde{\cG}\times_T\widetilde{\T}$ is singular along $X_{kp}\times M_{12}$ and $X_{kq}\times M_{12}$ for $k=1,2$. The semi-stable reduction is simply the blow up  $\widetilde{\tilde{\cG}\times_{T}\tilde{\T}}$ of $\widetilde{\cG}\times_T\widetilde{\T}$ along the union of $W_1\times M_1$ and $W_2\times M_1$ and it sits in the commutative diagram with Cartesian squares

$$\xymatrix{\widetilde{\tilde{\cG}\times_T\tilde{\T}}\ar[rd]\ar^-{\rho_2}[rrrd]\ar_-{\rho_1}[rddd]&&\\
&\widetilde{\cG}\times_T\widetilde{\T}\ar[rr]\ar[d]&&\widetilde{\T}\ar[d]\\
&\widetilde{\cG}\times_T\T\ar^-{\epsilon_2}[rr]\ar[d]^-{\epsilon_1}&&\T\ar[d]&\\
&\widetilde{\cG}\ar[rr]&&T.}$$

\begin{proposition} The blow up $\widetilde{\tilde{\cG}\times_{T}\tilde{\T}}$ of  $\widetilde{\cG}\times_T\widetilde{\T}$ along the union of $W_1\times M_1$ and $W_2\times M_1$ is a semistable family whose central fiber has eight components:
\begin{enumerate}
\item For $k=1,2$, the total transform $\widetilde{W_k\times M_1}$ of $W_k\times M_1$  which is isomorphic to the blow-up of $W_k\times M_1$ along $X_{kp}\times M_{12}\cup X_{kq}\times M_{12}$.

\item The proper transforms $\widetilde{P_1\times M_2}$ and $\widetilde{P_2\times M_2}$ of $P_1 \times M_2$ and $P_2 \times M_2$ respectively, which are isomorphic to the blow-ups of $P_1\times M_2$ and $P_2\times M_2$ along $X_{1p}\times M_{12}\cup X_{2q}\times M_{12}$ and $X_{1q}\times M_{12}\cup X_{2p}\times M_{12}$ respectively.

\item The proper transforms of $P_1\times M_1$, $P_2\times M_1$, $W_1\times M_2$ and $W_2\times M_2$ which are unchanged under the blow-up.

\end{enumerate}

\end{proposition}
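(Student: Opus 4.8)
Write $\cZ := \tcG\times_T\tT$ for the fiber product over the base-changed disc $T=T^b$ of Notation \ref{notTb}, put $\Xi := (W_1\times M_1)\cup(W_2\times M_1)$, and let $\widetilde{\cZ} := \Bl_\Xi\cZ$. The plan is to reduce the proposition to an explicit local computation around the singular locus of $\cZ$, to identify that locus as a disjoint union of conifolds, to recognize $\widetilde{\cZ}\to\cZ$ as the associated small resolution (so in particular $\widetilde{\cZ}$ is smooth), and then to track the eight components of $\tG_0\times\tT_0$ through $\Bl_\Xi$ and verify that $\widetilde{\cZ}_0$ has simple normal crossings.

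First I would determine the singular locus of $\cZ$. Since $\tcG$ and $\tT$ are smooth and, by \S\ref{rkcentralfiber} and Proposition \ref{proptheta0}, their central fibers $\tG_0 = W_1\cup W_2\cup P_1\cup P_2$ and $\tT_0 = M_1\cup M_2$ are reduced simple normal crossings divisors (with $\tG_0$ having no triple points, as recorded in the incidence diagram of \S\ref{rkcentralfiber}), the morphism $\tcG\to T$ is a submersion away from the double locus $D := X_{1p}\sqcup X_{1q}\sqcup X_{2p}\sqcup X_{2q}$ of $\tG_0$, where $X_{1p}=W_1\cap P_1$, $X_{1q}=W_1\cap P_2$, $X_{2p}=W_2\cap P_2$, $X_{2q}=W_2\cap P_1$, and $\tT\to T$ is a submersion away from $M_{12}=M_1\cap M_2$. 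Hence $\cZ$ is smooth away from $D\times M_{12}$, and along each of the four disjoint $4$-folds $X_{ab}\times M_{12}$ it is analytically a product of a polydisc with the $3$-dimensional ordinary double point $\{uv=xy\}\subset\mathbb{A}^4$: locally $\tcG$ carries coordinates $(u,v)$ transverse to $X_{ab}$ with $t=uv$, $\{u=0\}$ the branch $W_a$ and $\{v=0\}$ the branch equal to one of $P_1,P_2$; by Theorem \ref{thmtendp} and Proposition \ref{proptheta0} the space $\tT$ carries coordinates $(x,y)$ transverse to $M_{12}$ with $t=xy$, $\{x=0\}=M_1$ and $\{y=0\}=M_2$; and $\cZ$ is then the hypersurface $\{uv=xy\}$ in these coordinates, times the spectator directions along $X_{ab}$ and along $M_{12}$. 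The centre $\Xi$ is a reduced Weil divisor, and a disjoint union because $W_1\cap W_2=\varnothing$, so the blow-up is carried out one piece at a time: near $X_{1p}\times M_{12}$ and $X_{1q}\times M_{12}$ only $W_1\times M_1$ meets the neighbourhood, and near $X_{2p}\times M_{12}$ and $X_{2q}\times M_{12}$ only $W_2\times M_1$, because $X_{1p},X_{1q}\not\subset W_2$ and $X_{2p},X_{2q}\not\subset W_1$; in the local model this piece is the plane $\{u=x=0\}$, a non-Cartier Weil divisor through the vertex, and $\Bl_{(u,x)}\{uv=xy\}$ is the standard small resolution of the conifold. Away from $D\times M_{12}$, $\cZ$ is smooth, so $\Xi$ is Cartier there and the blow-up is an isomorphism. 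Therefore $\widetilde{\cZ}$ is smooth, $\widetilde{\cZ}\to\cZ$ is a small resolution whose exceptional locus is a $\bP^1$-bundle over the $4$-fold $D\times M_{12}$ (of codimension $2$ in the $7$-fold $\widetilde{\cZ}$), and $\widetilde{\cZ}\to T$ is flat by miracle flatness, $\widetilde{\cZ}$ being smooth, $T$ a smooth curve and the fibers equidimensional.

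Next I would follow the eight components $C\times C'$ with $C\in\{W_1,W_2,P_1,P_2\}$ and $C'\in\{M_1,M_2\}$ of $\tG_0\times\tT_0$ through $\Bl_\Xi$, using that $\Bl_\Xi\cZ$ induces on a component $V\not\subset\Xi$ the blow-up of $V$ along the restricted ideal $\cI_\Xi\cdot\cO_V$, i.e. along the scheme-theoretic intersection $\Xi\cap V$. For the two pieces $W_k\times M_1$ of $\Xi$ the total transform is $\Bl_{(X_{kp}\cup X_{kq})\times M_{12}}(W_k\times M_1)$; for $P_1\times M_2$ and $P_2\times M_2$, which meet $\Xi$ in codimension $2$ along $(X_{1p}\cup X_{2q})\times M_{12}$, respectively $(X_{1q}\cup X_{2p})\times M_{12}$, the proper transform is the blow-up along that locus; and for $P_1\times M_1$, $P_2\times M_1$, $W_1\times M_2$ and $W_2\times M_2$ the scheme $\Xi\cap V$ is a Cartier divisor on $V$ — in the local model the corresponding planes $\{v=x=0\}$ and $\{u=y=0\}$ lie in the same ruling of $\{uv=xy\}$ as $\{u=x=0\}$, so the small resolution leaves their proper transforms unchanged — so these four components are unaltered; this is exactly the list in the proposition, and each component is smooth, being the blow-up of a product of smooth varieties along a smooth centre. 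Finally I would check that $\widetilde{\cZ}_0$ is a reduced simple normal crossings divisor: away from the exceptional locus it coincides with $\tG_0\times\tT_0$, which is simple normal crossings at the points where $\cZ$ is smooth, and over the exceptional $\bP^1$'s, in the two standard charts of the conifold small resolution (say $x=\lambda u$, $v=\lambda y$, and its counterpart) the central fiber is cut out by $\lambda u y=0$, hence by three smooth coordinate hypersurfaces, so at most three of the eight components pass through any point, transversally, each with multiplicity one.

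The step I expect to require the most care is precisely this last normal crossings verification, together with its global bookkeeping: one must check that the four local small resolutions at the loci $X_{ab}\times M_{12}$ are mutually consistent — which they are, because $\Xi$ is a globally defined Weil divisor and blow-ups are canonical — and that this particular choice of centre, as opposed to the opposite small resolution at one or more of these loci, turns the product $\tG_0\times\tT_0$ of normal crossings divisors into an honest simple normal crossings divisor without creating higher-codimension strata in the wrong place. Everything else follows directly from the local equation $xy+zw+st=0$ of $\T$ near $W^1_4$ in Theorem \ref{thmtendp}, the structure of $\tT$ in Proposition \ref{proptheta0}, and the descriptions of $\tcG$ in \S\ref{rkcentralfiber} and Lemma \ref{lemtGpq}.
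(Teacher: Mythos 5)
Your proposal is correct and takes essentially the same route as the paper: the local conifold model $\{uv=xy\}$ near each $X_{ab}\times M_{12}$, blowing up the Weil-divisor ideal $(u,x)$ and recognizing it as a small resolution, a chart check that the total space is smooth and the central fiber $\{\lambda u y=0\}$ has simple normal crossings, together with the component bookkeeping that the paper's proof dismisses as immediate. One slip in an aside: the unchanged planes $\{v=x=0\}$ and $\{u=y=0\}$ lie in the ruling \emph{opposite} to that of the center $\{u=x=0\}$ (the plane in the same ruling is $\{v=y=0\}$, which is precisely the one that does get blown up), but this does not affect your argument, since the justification you actually invoke --- that the ideal of the center restricts to a principal (Cartier) ideal on those components --- is the correct one.
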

\begin{proof} We check locally that this is indeed a semistable reduction. Locally, the total space of the fiber product near, say, $X_{1p}\times M_{12}$, is isomorphic to the product of an affine space and
\begin{eqnarray}\label{flop}
Spec\ \frac{\bC[x,y,z,w,t]}{( xy-t, zw-t )}\cong Spec\ \frac{\bC[x,y,z,w]}{ xy-zw
 }.
 \end{eqnarray}
In the above local coordinates, $X_{1p}\times M_{12}$ is defined by the ideal $(x,y,z,w)$ and blowing up $\widetilde{\cG}\times_{T}\widetilde{\T}$ along $W_1\times M_1$ amounts to blowing up (\ref{flop}) along the ideal $(x ,z)$. Let $X$, $Z$ be the corresponding homogeneous coordinates in the blow-up. By symmetry, it is sufficient to check the result on the chart $\{X\ne0\}$. Here $\widetilde{\tilde{\cG}\times_{T}\tilde{\T}}$ is isomorphic to the product of an affine space and
\[
Spec\ \frac{\bC[x,y,Z,w]}{y-Zw }\cong Spec\ \bC[x,Z,w]
\]
 which is smooth. The central fiber in this chart is given by
 $t=xy=xZw$ which is a simple normal crossing divisor.
 
 The other assertions about the components of the central fiber are immediate.
 \end{proof}

\subsection{}\label{subseccomp} The eight components of the central fiber meet as follows
 $$\begin{tikzpicture}[xscale=1,yscale=1][font=\tiny]
   \coordinate (a) at (0,0);
   \coordinate (b) at (3,0);
   \coordinate (c) at (3,2);
   \coordinate (d) at (0,2);
   \coordinate (e) at ($(a)!1/6!(b)$);
   \coordinate (f) at ($(b)!1/6!(a)$);
   \coordinate (g) at ($(b)!1/6!(c)$);
   \coordinate (h) at ($(c)!1/6!(b)$);
   \coordinate (i) at ($(c)!1/6!(d)$);
   \coordinate (j) at ($(d)!1/6!(c)$);
   \coordinate (k) at ($(d)!1/6!(a)$);
   \coordinate (l) at ($(a)!1/6!(d)$);
   \coordinate (m) at ($(e)!1/2-1/8!(j)$);
   \coordinate (n) at ($(l)!1/3!(g)$);
   \coordinate (o) at ($(g)!1/3!(l)$);
   \coordinate (p) at ($(f)!1/2-1/8!(i)$);
   \draw (a)--(b);
   \draw (b)--(c);
   \draw (c)--(d);
   \draw (d)--(a);
   \draw ($(j)!1/4!(e)$)--($(e)!1/4!(j)$);
   \draw ($(l)!1/4!(g)$)--($(g)!1/4!(l)$);
   \draw ($(f)!1/4!(i)$)--($(i)!1/4!(f)$);
   \draw ($(n)!1+1/4!(m)$)--($(m)!1+1/4!(n)$);
   \draw ($(p)!1+1/4!(o)$)--($(o)!1+1/4!(p)$);
   \node [above] at ($(c)!1/2!(d)$){$\widetilde{W_{1}\times M_{1}}$};
   \node [left] at ($(j)!1/3!(e)$){$\mathit{k}$};
   \node [right] at ($(i)!1/3!(f)$){$\mathit{c}$};
   \node [above] at ($(l)!1/2!(g)$){$\mathit{a}$};
   \node [above] at ($(m)!2/3!(n)$){$\mathit{p}$};
   \node [above] at ($(p)!2/3!(o)$){$\mathit{b}$};
   \coordinate (a) at (0+3.5,0);
   \coordinate (b) at (3+3.5,0);
   \coordinate (c) at (3+3.5,2);
   \coordinate (d) at (0+3.5,2);
   \coordinate (e) at ($(a)!1/4!(b)$);
   \coordinate (f) at ($(b)!1/4!(a)$);
   \coordinate (g) at ($(b)!1/4!(c)$);
   \coordinate (i) at ($(c)!1/4!(d)$);
   \coordinate (j) at ($(d)!1/4!(c)$);
   \coordinate (l) at ($(a)!1/4!(d)$);
   \draw (a)--(b);
   \draw (b)--(c);
   \draw (c)--(d);
   \draw (d)--(a);
   \draw ($(j)!1/4!(e)$)--($(e)!1/6!(j)$);
   \draw ($(i)!1/4!(f)$)--($(f)!1/6!(i)$);
   \draw ($(l)!1/6!(g)$)--($(g)!1/6!(l)$);
   \node [above] at ($(c)!1/2!(d)$){$P_{1}\times M_{1}$};
   \node [left] at ($(j)!1/3!(e)$){$\mathit{c}$};
   \node [right] at ($(i)!1/3!(f)$){$\mathit{e}$};
   \node [above] at ($(l)!1/2!(g)$){$\mathit{d}$};
   \coordinate (a) at (0+7,0);
   \coordinate (b) at (3+7,0);
   \coordinate (c) at (3+7,2);
   \coordinate (d) at (0+7,2);
   \coordinate (e) at ($(a)!1/6!(b)$);
   \coordinate (f) at ($(b)!1/6!(a)$);
   \coordinate (g) at ($(b)!1/6!(c)$);
   \coordinate (h) at ($(c)!1/6!(b)$);
   \coordinate (i) at ($(c)!1/6!(d)$);
   \coordinate (j) at ($(d)!1/6!(c)$);
   \coordinate (k) at ($(d)!1/6!(a)$);
   \coordinate (l) at ($(a)!1/6!(d)$);
   \coordinate (m) at ($(e)!1/2-1/8!(j)$);
   \coordinate (n) at ($(l)!1/3!(g)$);
   \coordinate (o) at ($(g)!1/3!(l)$);
   \coordinate (p) at ($(f)!1/2-1/8!(i)$);
   \draw (a)--(b);
   \draw (b)--(c);
   \draw (c)--(d);
   \draw (d)--(a);
   \draw ($(j)!1/4!(e)$)--($(e)!1/4!(j)$);
   \draw ($(l)!1/4!(g)$)--($(g)!1/4!(l)$);
   \draw ($(f)!1/4!(i)$)--($(i)!1/4!(f)$);
   \draw ($(n)!1+1/4!(m)$)--($(m)!1+1/4!(n)$);
   \draw ($(p)!1+1/4!(o)$)--($(o)!1+1/4!(p)$);
   \node [above] at ($(c)!1/2!(d)$){$\widetilde{W_{2}\times M_{1}}$};
   \node [left] at ($(j)!1/3!(e)$){$e$};
   \node [right] at ($(i)!1/3!(f)$){$i$};
   \node [above] at ($(l)!1/2!(g)$){$g$};
   \node [above] at ($(m)!2/3!(n)$){$f$};
   \node [above] at ($(p)!2/3!(o)$){$h$};
   \coordinate (a) at (0+10.5,0);
   \coordinate (b) at (3+10.5,0);
   \coordinate (c) at (3+10.5,2);
   \coordinate (d) at (0+10.5,2);
   \draw (a)--(b);
   \draw (b)--(c);
   \draw (c)--(d);
   \draw (d)--(a);
   \coordinate (e) at ($(a)!1/4!(b)$);
   \coordinate (f) at ($(b)!1/4!(a)$);
   \coordinate (g) at ($(b)!1/4!(c)$);
   \coordinate (i) at ($(c)!1/4!(d)$);
   \coordinate (j) at ($(d)!1/4!(c)$);
   \coordinate (l) at ($(a)!1/4!(d)$);
   \draw ($(j)!1/4!(e)$)--($(e)!1/6!(j)$);
   \draw ($(i)!1/4!(f)$)--($(f)!1/6!(i)$);
   \draw ($(l)!1/6!(g)$)--($(g)!1/6!(l)$);
   \node [above] at ($(c)!1/2!(d)$){$P_{2}\times M_{1}$};
   \node [left] at ($(j)!1/3!(e)$){$i$};
   \node [right] at ($(i)!1/3!(f)$){$k$};
   \node [above] at ($(l)!1/2!(g)$){$j$};
   \coordinate (d) at (0,0-3);
   \coordinate (c) at (3,0-3);
   \coordinate (b) at (3,2-3);
   \coordinate (a) at (0,2-3);
   \draw (a)--(b);
   \draw (b)--(c);
   \draw (c)--(d);
   \draw (d)--(a);
   \coordinate (e) at ($(a)!1/4!(b)$);
   \coordinate (f) at ($(b)!1/4!(a)$);
   \coordinate (g) at ($(b)!1/4!(c)$);
   \coordinate (i) at ($(c)!1/4!(d)$);
   \coordinate (j) at ($(d)!1/4!(c)$);
   \coordinate (l) at ($(a)!1/4!(d)$);
   \draw ($(j)!1/4!(e)$)--($(e)!1/6!(j)$);
   \draw ($(i)!1/4!(f)$)--($(f)!1/6!(i)$);
   \draw ($(l)!1/6!(g)$)--($(g)!1/6!(l)$);
   \node [below] at ($(c)!1/2!(d)$){$W_{1}\times M_{2}$};
   \node [left] at ($(j)!1/3!(e)$){$l$};
   \node [right] at ($(i)!1/3!(f)$){$m$};
   \node [above] at ($(l)!1/2!(g)$){$a$};
   \coordinate (d) at (0+3.5,0-3);
   \coordinate (c) at (3+3.5,0-3);
   \coordinate (b) at (3+3.5,2-3);
   \coordinate (a) at (0+3.5,2-3);
   \coordinate (e) at ($(a)!1/6!(b)$);
   \coordinate (f) at ($(b)!1/6!(a)$);
   \coordinate (g) at ($(b)!1/6!(c)$);
   \coordinate (h) at ($(c)!1/6!(b)$);
   \coordinate (i) at ($(c)!1/6!(d)$);
   \coordinate (j) at ($(d)!1/6!(c)$);
   \coordinate (k) at ($(d)!1/6!(a)$);
   \coordinate (l) at ($(a)!1/6!(d)$);
   \coordinate (m) at ($(e)!1/2-1/8!(j)$);
   \coordinate (n) at ($(l)!1/3!(g)$);
   \coordinate (o) at ($(g)!1/3!(l)$);
   \coordinate (p) at ($(f)!1/2-1/8!(i)$);
   \draw (a)--(b);
   \draw (b)--(c);
   \draw (c)--(d);
   \draw (d)--(a);
   \draw ($(j)!1/4!(e)$)--($(e)!1/4!(j)$);
   \draw ($(l)!1/4!(g)$)--($(g)!1/4!(l)$);
   \draw ($(f)!1/4!(i)$)--($(i)!1/4!(f)$);
   \draw ($(n)!1+1/4!(m)$)--($(m)!1+1/4!(n)$);
   \draw ($(p)!1+1/4!(o)$)--($(o)!1+1/4!(p)$);
   \node [below] at ($(c)!1/2!(d)$){$\widetilde{P_{1}\times M_{2}}$};
   \node [left] at ($(j)!1/3!(e)$){$m$};
   \node [right] at ($(i)!1/3!(f)$){$n$};
   \node [above] at ($(l)!1/2!(g)$){$d$};
   \node [below] at ($(m)!2/3!(n)$){$b$};
   \node [below] at ($(p)!2/3!(o)$){$f$};
   \coordinate (d) at (0+7,0-3);
   \coordinate (c) at (3+7,0-3);
   \coordinate (b) at (3+7,2-3);
   \coordinate (a) at (0+7,2-3);
   \draw (a)--(b);
   \draw (b)--(c);
   \draw (c)--(d);
   \draw (d)--(a);
   \coordinate (e) at ($(a)!1/4!(b)$);
   \coordinate (f) at ($(b)!1/4!(a)$);
   \coordinate (g) at ($(b)!1/4!(c)$);
   \coordinate (i) at ($(c)!1/4!(d)$);
   \coordinate (j) at ($(d)!1/4!(c)$);
   \coordinate (l) at ($(a)!1/4!(d)$);
   \draw ($(j)!1/4!(e)$)--($(e)!1/6!(j)$);
   \draw ($(i)!1/4!(f)$)--($(f)!1/6!(i)$);
   \draw ($(l)!1/6!(g)$)--($(g)!1/6!(l)$);
   \node [below] at ($(c)!1/2!(d)$){$W_{2}\times M_{2}$};
   \node [left] at ($(j)!1/3!(e)$){$n$};
   \node [right] at ($(i)!1/3!(f)$){$o$};
   \node [above] at ($(l)!1/2!(g)$){$g$};
   \coordinate (d) at (0+10.5,0-3);
   \coordinate (c) at (3+10.5,0-3);
   \coordinate (b) at (3+10.5,2-3);
   \coordinate (a) at (0+10.5,2-3);
   \coordinate (e) at ($(a)!1/6!(b)$);
   \coordinate (f) at ($(b)!1/6!(a)$);
   \coordinate (g) at ($(b)!1/6!(c)$);
   \coordinate (h) at ($(c)!1/6!(b)$);
   \coordinate (i) at ($(c)!1/6!(d)$);
   \coordinate (j) at ($(d)!1/6!(c)$);
   \coordinate (k) at ($(d)!1/6!(a)$);
   \coordinate (l) at ($(a)!1/6!(d)$);
   \coordinate (m) at ($(e)!1/2-1/8!(j)$);
   \coordinate (n) at ($(l)!1/3!(g)$);
   \coordinate (o) at ($(g)!1/3!(l)$);
   \coordinate (p) at ($(f)!1/2-1/8!(i)$);
   \draw (a)--(b);
   \draw (b)--(c);
   \draw (c)--(d);
   \draw (d)--(a);
   \draw ($(j)!1/4!(e)$)--($(e)!1/4!(j)$);
   \draw ($(l)!1/4!(g)$)--($(g)!1/4!(l)$);
   \draw ($(f)!1/4!(i)$)--($(i)!1/4!(f)$);
   \draw ($(n)!1+1/4!(m)$)--($(m)!1+1/4!(n)$);
   \draw ($(p)!1+1/4!(o)$)--($(o)!1+1/4!(p)$);
   \node [below] at ($(c)!1/2!(d)$){$\widetilde{P_{2}\times M_{2}}$};
   \node [left] at ($(j)!1/3!(e)$){$o$};
   \node [right] at ($(i)!1/3!(f)$){$l$};
   \node [above] at ($(l)!1/2!(g)$){$j$};
   \node [below] at ($(m)!2/3!(n)$){$h$};
   \node [below] at ($(p)!2/3!(o)$){$p$};
   \node at (6.5/2+7/2,-3-1){$\downarrow$};
   \coordinate (a) at (0,0-5);
   \coordinate (b) at (3,0-5);
   \coordinate (c) at ($(a)!1/2!(b)$);
   \draw (a)--(b);
   \path[fill=black]($(a)!1/4!(b)$)circle[radius=0.04];
   \path[fill=black]($(b)!1/4!(a)$)circle[radius=0.04];
   \node[below] at ($(a)!1/4!(b)$){$X_{1q}$};
   \node[below] at ($(b)!1/4!(a)$){$X_{1p}$};
   \node at ($(c)+(0,-1/2)$) {$W_1$};
   \coordinate (a) at (0+3.5,0-5);
   \coordinate (b) at (3+3.5,0-5);
   \coordinate (c) at ($(a)!1/2!(b)$);
   \draw (a)--(b);
   \path[fill=black]($(a)!1/4!(b)$)circle[radius=0.04];
   \path[fill=black]($(b)!1/4!(a)$)circle[radius=0.04];
   \node[below] at ($(a)!1/4!(b)$){$X_{1p}$};
   \node[below] at ($(b)!1/4!(a)$){$X_{2q}$};
   \node at ($(c)+(0,-1/2)$) {$P_1$};
   \coordinate (a) at (0+7,0-5);
   \coordinate (b) at (3+7,0-5);
   \coordinate (c) at ($(a)!1/2!(b)$);
   \draw (a)--(b);
   \path[fill=black]($(a)!1/4!(b)$)circle[radius=0.04];
   \path[fill=black]($(b)!1/4!(a)$)circle[radius=0.04];
   \node[below] at ($(a)!1/4!(b)$){$X_{2q}$};
   \node[below] at ($(b)!1/4!(a)$){$X_{2p}$};
   \node at ($(c)+(0,-1/2)$) {$W_2$};
   \coordinate (a) at (0+10.5,0-5);
   \coordinate (b) at (3+10.5,0-5);
   \coordinate (c) at ($(a)!1/2!(b)$);
   \draw (a)--(b);
   \path[fill=black]($(a)!1/4!(b)$)circle[radius=0.04];
   \path[fill=black]($(b)!1/4!(a)$)circle[radius=0.04];
   \node[below] at ($(a)!1/4!(b)$){$X_{2p}$};
   \node[below] at ($(b)!1/4!(a)$){$X_{1q}$};
   \node at ($(c)+(0,-1/2)$) {$P_2$};
\end{tikzpicture}\\
$$
The lines with the same label indicate the subvarieties that are glued together to form the double loci of the central fiber. The horizontal lines represent the loci that project onto $M_{12}$ via $\rho_2$ and the vertical lines the loci that project onto either $X_{kp}$ or $X_{kq}$ by $\rho_1$. The slanted lines represent exceptional loci: these are $\bP^1$-bundles over the products $X_{kp} \times M_{12}$ and $X_{kp} \times M_{12}$, hence are contracted by $\rho_1$ and $\rho_2$.
The dual graph of the central fiber is
$$\begin{tikzpicture}[xscale=1.5,yscale=1.5][font=\tiny]
  \coordinate (a) at (0,0); 
  \coordinate (b) at (1.5,-1);
  \coordinate (c) at (3,0); 
  \coordinate (d) at (1.5,1);
  \coordinate (e) at ($(a)!1/4!(c)$);
  \coordinate (f) at ($(b)!1/4!(d)$);
  \coordinate (g) at ($(c)!1/4!(a)$);
  \coordinate (h) at ($(d)!1/4!(b)$);
  \draw [thick](a)--(b);
  \draw [thick](b)--(c);
  \draw [thick](c)--(d);
  \draw [thick](d)--(a);
  \draw [thick](a)--(h);
  \draw [thick](a)--(e);
  \draw [thick](a)--(f);
  \draw [thick](b)--(f);
  \draw [thick](c)--(h);
  \draw [thick](c)--(g);
  \draw [thick](c)--(f);
  \draw [thick](d)--(h);
  \draw [thick](e)--(f);
  \draw [thick](f)--(g);
  \draw [thick](g)--(h);
  \draw [thick](h)--(e);
  \draw [ultra thin]($(a)!1/4!(e)$)--($(a)!1/4!(f)$);
  \draw [ultra thin]($(a)!2/4!(e)$)--($(a)!2/4!(f)$);
  \draw [ultra thin]($(a)!3/4!(e)$)--($(a)!3/4!(f)$);
  \draw [ultra thin]($(a)!1/4!(e)$)--($(a)!1/4!(h)$);
  \draw [ultra thin]($(a)!2/4!(e)$)--($(a)!2/4!(h)$);
  \draw [ultra thin]($(a)!3/4!(e)$)--($(a)!3/4!(h)$);
  \draw [ultra thin]($(c)!1/4!(f)$)--($(c)!1/4!(g)$);
  \draw [ultra thin]($(c)!2/4!(f)$)--($(c)!2/4!(g)$);
  \draw [ultra thin]($(c)!3/4!(f)$)--($(c)!3/4!(g)$);
  \draw [ultra thin]($(c)!1/4!(h)$)--($(c)!1/4!(g)$);
  \draw [ultra thin]($(c)!2/4!(h)$)--($(c)!2/4!(g)$);
  \draw [ultra thin]($(c)!3/4!(h)$)--($(c)!3/4!(g)$);
  \draw [ultra thin]($(a)!1/6!(b)$)--($(a)!1/6!(f)$);
  \draw [ultra thin]($(a)!2/6!(b)$)--($(a)!2/6!(f)$);
  \draw [ultra thin]($(a)!3/6!(b)$)--($(a)!3/6!(f)$);
  \draw [ultra thin]($(a)!4/6!(b)$)--($(a)!4/6!(f)$);
  \draw [ultra thin]($(a)!5/6!(b)$)--($(a)!5/6!(f)$);
  \draw [ultra thin]($(c)!1/6!(b)$)--($(c)!1/6!(f)$);
  \draw [ultra thin]($(c)!2/6!(b)$)--($(c)!2/6!(f)$);
  \draw [ultra thin]($(c)!3/6!(b)$)--($(c)!3/6!(f)$);
  \draw [ultra thin]($(c)!4/6!(b)$)--($(c)!4/6!(f)$);
  \draw [ultra thin]($(c)!5/6!(b)$)--($(c)!5/6!(f)$);
  \draw [ultra thin]($(a)!1/6!(d)$)--($(a)!1/6!(h)$);
  \draw [ultra thin]($(a)!2/6!(d)$)--($(a)!2/6!(h)$);
  \draw [ultra thin]($(a)!3/6!(d)$)--($(a)!3/6!(h)$);
  \draw [ultra thin]($(a)!4/6!(d)$)--($(a)!4/6!(h)$);
  \draw [ultra thin]($(a)!5/6!(d)$)--($(a)!5/6!(h)$);
  \draw [ultra thin]($(c)!1/6!(d)$)--($(c)!1/6!(h)$);
  \draw [ultra thin]($(c)!2/6!(d)$)--($(c)!2/6!(h)$);
  \draw [ultra thin]($(c)!3/6!(d)$)--($(c)!3/6!(h)$);
  \draw [ultra thin]($(c)!4/6!(d)$)--($(c)!4/6!(h)$);
  \draw [ultra thin]($(c)!5/6!(d)$)--($(c)!5/6!(h)$);
\end{tikzpicture}
$$
The four vertices of the inside square correspond to the four components in the top row of the previous picture and the four vertices of the outside square to the bottom row. The shaded triangles correspond to triple intersections in the central fiber.

\subsection{}\label{subsecnotation}
 Let $ Pic^{(10)}(\cX/T)$ be the (noncompact) relative Picard scheme whose central fiber is $Pic^{6,4}(\tC_{pq})$.
There is a rational map 
$\psi:\widetilde{\cX}^{(5)}\times_T\widetilde{\cX}^{(5)}\dashrightarrow Pic^{(10)}(\cX/T)$ which is regular on the fibers over $t\ne0$. We will show in Proposition \ref{propFregular} that the rational map
$id\times\psi:\cG\times_T\widetilde{\cX}^{(5)}\times_T\widetilde{\cX}^{(5)}\dashrightarrow \cG\times_TPic^{(10)}(\cX/T)$ 
restricted to $\cF\subset\cG\times_T\widetilde{\cX}^{(5)}\times_T\widetilde{\cX}^{(5)}$ is regular. In other words, we have the following commutative diagram
 \[
\xymatrix{\cF \ar@{^{(}->}[rr]\ar[d]&&\cG\times_{T}\widetilde{\cX}^{(5)}\times_T\widetilde{\cX}^{(5)}\ar@{-->}^-{id\times\psi}[d]\\
\cG\times_T\T\ar@^{(->}[r]&\cG\times_T\cA\ar@^{(->}[r]&\cG\times_T Pic^{(10)}(\cX/T)
.}
\]

\begin{notation}\label{notationthreeF}
Denote $\cF', \cF_r'$ the images of $\cF, \cF_r$ in $\cG\times_T\T$, and $\cF'', \cF_r''$ and $\cF''', \cF_r'''$ the proper transforms of $\cF'$ and $\cF_r'$ in $\widetilde{\cG}\times_T\T, \widetilde{\tilde{\cG}\times_T\tilde{\T}}$ respectively. We summarize the relations between the various spaces in the diagram below:
$$\xymatrix{&{\cF}_r''' \subset \cF''' \ar@^{(->}[r]\ar[dd]&\widetilde{\tilde{\cG}\times_T\tilde{\T}}\ar[d]&\\&&\widetilde\cG\times_T\widetilde{\T}\ar[d]&\\
&{\cF}_r'' \subset \cF'' \ar@^{(->}[r]\ar[d]&\widetilde{\cG}\times_T\T\ar[d]\ar@^{(->}[r]&\widetilde{\cG}\times_T\cA\ar[d]\\
{\cF}_r\subset\cF\ar@^{-->}[ruuu]\ar[r]&{\cF}_r'\subset \cF'\ar@^{(->}[r]&\cG\times_T\T\ar@^{(->}[r]&\cG\times_T\cA.}$$
\end{notation}

\clearpage
\section{Abel-Jacobi maps on the generic and special fibers: outline of the proof of Theorem \ref{thmrho2rho1}}
\label{secAJgensp}

The Abel-Jacobi map $AJ$ on the total space is the composition
\[
\xymatrix{H^2(\widetilde \cG)\ar[r]^-{\rho_1^*}&H^2(\widetilde{\tilde{\cG}\times_T\tilde{\T}})\ar[r]^-{\cup[{\cF}'''_r]}&H^8(\widetilde{\tilde{\cG}\times_T\tilde{\T}})\ar[r]^-{\rho_{2*}}&H^4(\widetilde{\T}),}
\]
where the Gysin map $\rho_{2*}$ is defined as
\[
\xymatrix{H^8(\widetilde{\tilde{\cG}\times_T\tilde{\T}})\stackrel{PD}{\cong} H^6_c(\widetilde{\tilde{\cG}\times_T\tilde{\T}})^{\lor}\ar[r]^-{(\rho_2^*)^{\lor}}&H^6_c(\widetilde{\T})^{\lor}\stackrel{PD}{\cong} H^4(\widetilde{\T}),}
\]
where $PD$ denotes Poincar\'e duality.
As explained in Section \ref{MHScompact}, there exist canonical mixed Hodge structures on  $H^6_c(\widetilde{\tilde{\cG}\times_T\tilde{\T}})$ and $H^6_c(\widetilde{\T})$, such that $\rho_2^*$ (and therefore $(\rho_2^*)^{\lor}$) is a morphism of mixed Hodge structures. Thus the Abel-Jacobi map $AJ$, as a composition of such, is also a morphism of mixed Hodge structures.

By functoriality of the morphisms involved, we have a commutative diagram 
$$\xymatrix{H^2(\widetilde{\cG})\ar[r]\ar[d]^-{AJ}&H^2(G_t)\ar[d]^{AJ_t}\\
H^4(\widetilde \T)\ar[r]&H^4(\T_t),}$$
where the images of the horizontal maps are the monodromy invariant parts of the cohomology groups of $G_t$ and $\T_t$.  

\subsection{The map $AJ$ on the $E_1$ terms}\label{subsecAJE1} The maps $\rho_1^*$, $\cup[{\cF}'''_r]$ and $\rho_2^*$ are defined on the $E_1$ terms of the spectral sequences in Section \ref{secgeneralMH} and commute with the differentials $d_1$.

For $k=0,1$, the map $\rho_1^*$ on the $E_1$ terms is
\[
\xymatrix{_{\widetilde{\cG}}E_1^{k,2-k}\ar[r]^-{\rho_1^*}\ar@{=}[d]&_{\widetilde{\tilde{\cG}\times_T\tilde{\T}}}E_1^{k,2-k}\ar@{=}[d]\\
H^{2-k}(\tG_0^{[k]})\ar[r]^-{\rho_1^*}&H^{2-k}((\widetilde{\tilde{\cG}\times_T\tilde{\T}})_0^{[k]}).}
\]

If, for a stratum $S$ in $(\widetilde{\tilde{\cG}\times_T\tilde{\T}})_0^{[k]}$, $\rho_1(S)$ is not contained in  $\tG_0^{[k]}$, then the projection of $\rho_1^*$ onto the summand $H^{2-k}(S)\subset H^{2-k}((\widetilde{\tilde{\cG}\times_T\tilde{\T}})_0^{[k]})$ is zero (some components of $(\widetilde{\tilde{\cG}\times_T\tilde{\T}})_0^{[1]}$ map onto components in $\tG_0^{[0]}$, c.f. Section \ref{secssprod}).

Cup-product with $[{\cF}_r''']$ induces the horizontal maps
\[
\xymatrix{_{\widetilde{\tilde{\cG}\times_T\tilde{\T}}}E_1^{k,2-k}\ar[r]^-{\cup [{\cF}_r''']}\ar@{=}[d]&_{\widetilde{\tilde{\cG}\times_T\tilde{\T}}}E_1^{k,8-k}\ar@{=}[d]\\
H^{2-k}((\widetilde{\tilde{\cG}\times_T\tilde{\T}})_0^{[k]})\ar[r]^-{\cup [{\cF}_r''']}&H^{8-k}((\widetilde{\tilde{\cG}\times_T\tilde{\T}})_0^{[k]}),}
\]
where the lower horizontal map is cup-product with the cycle class of the {\em scheme theoretic} intersection of ${\cF}'''_r$ with each component in $(\widetilde{\tilde{\cG}\times_T\tilde{\T}})_0^{[k]}$.

The map $\rho_2^*$ on cohomology with compact supports is
\[
\xymatrix{_{\widetilde{\T}}E_{1,c}^{-k,k+6}\ar[r]^-{\rho_2^*}\ar@{=}[d]&_{\widetilde{\tilde{\cG}\times_T\tilde{\T}}}E_{1,c}^{-k,k+6}\ar@{=}[d]\\
H^{4-k}(\widetilde \T_0^{[k]})\ar[r]^-{\rho_2^*}&H^{4-k}((\widetilde{\tilde{\cG}\times_T\tilde{\T}})_0^{[k]}).}
\]
Similarly to the case of $\rho_1^*$ above, we only pull back to the strata of $(\widetilde{\tilde{\cG}\times_T\tilde{\T}})_0^{[k]}$ which map to $\widetilde \T_0^{[k]}$. Thus the dual map $\rho_{2*} = (\rho_2^*)^{\lor}$ is
induced by the usual Gysin maps between the relevent strata:
\[
\xymatrix{_{(\widetilde{\tilde{\cG}\times_T\tilde{\T}})}E_{1}^{k,8-k}\ar[r]^-{\rho_{2*}}\ar@{=}[d]&_{\widetilde{\T}}E_1^{k,4-k}\ar@{=}[d]\\
H^{8-k}((\widetilde{\tilde{\cG}\times_T\tilde{\T}})_0^{[k]})\ar[r]^-{\rho_{2*}}&H^{4-k}(\widetilde\T_0^{[k]}).}
\]
                                                                                       
 To compute (\ref{Gr4}) and ({\ref{Gr3}), we first compute the Able-Jacobi map $AJ^k$ on the $E_1$ terms for $k=0,1$:  
$$AJ^k : \xymatrix{H^{2-k}(\tG_0^{[k]})\ar[r]^-{\rho_1^*}&H^{2-k}((\widetilde{\tilde{\cG}\times_T\tilde{\T}})_0^{[k]})\ar[r]^-{\cup[{\cF}'''_r]}&H^{8-k}((\widetilde{\tilde{\cG}\times_T\tilde{\T}})_0^{[k]})\ar[r]^-{\rho_{2*}}&H^{4-k}(\widetilde\T_0^{[k]}),}$$
then pass to the $E_2$ terms of the corresponding spectral sequences.

\subsection{Proof of the main theorem}
Notation as in Section \ref{subsecstrata}. We divide the proof of Theorem \ref{thmrho2rho1} into four propositions. 

For the Abel-Jacobi map on the $E_1$ terms, we write $AJ^0=(AJ^0_1,AJ^0_2): H^2(\tG_0^{[0]})\ra H^4(\widetilde\T_0^{[0]})=H^4(M_1)\oplus H^4(M_2)$. We have

\begin{proposition}\label{AJ21} The image of the map $AJ^0_1: H^2(\tG_0^{[0]})\ra H^4(M_1)$ contains the subspace $I:=p_1^*(\theta H^2 (Pic^4 C) \oplus \eta H^2(Pic^4C)\oplus \eta^2)\oplus\langle j_{1*}f,j_{1*}\tau_1\rangle$ modulo $\langle j_{1*}f,j_{1*}\tau_1\rangle\oplus p_1^*(\theta H^2(Pic^4C))$.
\end{proposition}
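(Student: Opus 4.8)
The plan is to compute $AJ^0_1$ stratum by stratum. Of the eight irreducible components of the central fiber of $\widetilde{\tilde{\cG}\times_T\tilde{\T}}$ listed in Section~\ref{secssprod}, exactly four — $\widetilde{W_1\times M_1}$, $\widetilde{W_2\times M_1}$, $P_1\times M_1$ and $P_2\times M_1$ — are mapped onto $M_1$ by $\rho_2$ and onto a component of $\tG_0$ (namely $W_1$, $W_2$, $P_1$, $P_2$) by $\rho_1$. Hence, on $E_1$-terms, $AJ^0_1$ is the sum of the four correspondence maps
\[
AJ^0_{1,W_k}\colon H^2(W_k)\lra H^4(M_1),\qquad AJ^0_{1,P_k}\colon H^2(P_k)\lra H^4(M_1)\qquad(k=1,2),
\]
each of the form $\gamma\mapsto\rho_{2*}\bigl(\rho_1^*\gamma\cup[\cF'''_r\cap(\text{component})]\bigr)$, the intersections $\cF'''_r\cap(\text{component})$ being the limit families of curves to be described in Section~\ref{secF'''}. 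Since $P_k$ is a $\bP^1$-bundle over the curve $X_{kp}$, the group $H^2(P_k)$ is two-dimensional and, because the corresponding limit curves lie over $W^1_4\subset\T_0$, $AJ^0_{1,P_k}$ has image inside $\langle j_{1*}f,j_{1*}\tau_1\rangle\subset N$; so the whole burden falls on $AJ^0_{1,W_1}$ and $AJ^0_{1,W_2}$.

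First I would extract from Section~\ref{secF'''} the cycle $[\cF'''_r\cap\widetilde{W_k\times M_1}]$. Over a general $M\in W_k\cong W_{pq}$ the limit curve inside $M_1=\{(D_4,B_4)\in C^{(4)}\times C^{(4)}\mid D_4+B_4\in|K_C|\}$ is the proper transform of the curve traced out, as $D'$ runs over the relevant connected component $B^1_{M'}$ of $B_{M'}$ with $M'=\iota(M)=|K_C+p+q-M|$ and $D\in B^1_M$ the divisor through $r_0$, by the point $\cO_{C_2}\bigl((D+D')|_{C_2}\bigr)$ of $\T_0=W^0_4$. On the component of $\tC_{pq}^{(5)}$ carrying the relevant part of $B^1_M$ the restriction $D|_{C_2}$ is a single point, so this curve has the shape $(\text{point})+\Gamma\subset C^{(4)}$, where $\Gamma\subset C^{(3)}$ runs over a one-dimensional family attached to $M$ via the two embeddings $q_1,q_2\colon W_{pq}\hookrightarrow C^{(3)}$; in particular every such curve lies in a divisor of class $\eta=[r_0+C^{(3)}]$, and it maps nicely to $C^{(4)}$ under $p_1$ (not $p_2$). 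This matches the shape $p_1^*\bigl(\eta H^2(Pic^4C)\bigr)\oplus\mathbb Q\,p_1^*\eta^2$ of the target in the statement, once one quotients by the exceptional classes $\langle j_{1*}f,j_{1*}\tau_1\rangle$ (coming from the various blow-ups and from the locus over $W^1_4$) and by $p_1^*(\theta H^2(Pic^4C))$.

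Granting this, I would pass to $W_k\times M_1$ itself, which is legitimate modulo the exceptional classes of $N$: $AJ^0_{1,W_k}$ becomes the map $Z_{k*}$ attached to the correspondence $Z_k\subset W_k\times M_1$ whose general fibre over $W_{pq}$ is the curve $(\text{point})+\Gamma$. Using the blow-up formula $H^\bullet(M_1)=p_1^*H^\bullet(C^{(4)})\oplus j_{1*}\pi_{12}^*H^{\bullet-2}(C^1_4)$ of Proposition~\ref{propM1} and Macdonald's splitting $H^\bullet(C^{(4)})=\bigoplus_\beta\eta^\beta H^{\bullet-2\beta}(Pic^4C)$, together with the fact that all fibres of $Z_k$ lie in divisors of class $\eta$, I expect $AJ^0_{1,W_k}$ to reduce, modulo $N$, to $\gamma\mapsto p_1^*\bigl(\eta\cup\Phi_{k*}\gamma\bigr)$ for an explicit Gysin-plus-Abel push-forward $\Phi_{k*}\colon H^2(W_{pq})\to H^2(C^{(4)})$. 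Since cup product with $\eta$ is injective on $H^2(C^{(4)})$ and $\eta\cup H^2(C^{(4)})=\eta H^2(Pic^4C)\oplus\mathbb Q\eta^2$, the proposition then reduces to the statement that $\Phi_{1*}$ and $\Phi_{2*}$ together surject onto $H^2(C^{(4)})$; for this I would use that $W_{pq}$ generates $Pic^4C$ as a group, the explicit description of $H^\bullet(W_{pq})$ through its two projections $q_1,q_2$ to $C^{(3)}$, and the $C_1\leftrightarrow C_2$ symmetry interchanging $W_1$ and $W_2$.

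I expect the main obstacle to be pinning down the cycle $[\cF'''_r\cap\widetilde{W_k\times M_1}]$ with enough precision: this means tracking the degenerate curves $B^1_M,B^1_{M'}$ across the several irreducible components $C_1^{(a)}\times C_2^{(5-a)}$ of $\tC_{pq}^{(5)}$ (Section~\ref{limitcurve}), and then through the degree-two base change and the successive blow-ups producing $\widetilde{\T}$, $\widetilde{\cG}$ and $\widetilde{\tilde{\cG}\times_T\tilde{\T}}$, keeping careful account of which subvarieties are contracted by $\rho_1$ or $\rho_2$ and which contribute only exceptional $M_1$-classes — the freedom of working modulo $N=\langle j_{1*}f,j_{1*}\tau_1\rangle\oplus p_1^*(\theta H^2(Pic^4C))$ is precisely what keeps this bookkeeping under control. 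The surjectivity of $(\Phi_{1*},\Phi_{2*})$ onto $H^2(C^{(4)})$ is the one genuinely non-formal input, and is where the generality of $C$, $p$, $q$ and of the point $r_0$ is used.
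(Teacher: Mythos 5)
Your overall skeleton agrees with the paper's: restrict to the summands $H^2(W_k)$, work modulo $\langle j_{1*}f,j_{1*}\tau_1\rangle$ and $p_1^*(\theta H^2(Pic^4C))$, and push the computation down to a correspondence in $W_k\times C^{(4)}$. But the decisive steps are left as expectations, and one of them is actually false as stated. Your uniform description of the limit cycles over $W_k$ --- ``a fixed point plus a moving $\Gamma$, hence contained in a divisor of class $\eta$, mapping nicely to $C^{(4)}$ under $p_1$ (not $p_2$)'' --- fails for the components where the map $\psi$ to $\T_0$ is given by residuation (see Tables \ref{tableW1} and \ref{tableW2}): already over $W_1$ the bidegree $(2,3)+(2,3)$ piece has image $K_C(-D_2-D_2')$, so the ``point plus curve inside an $\eta$-divisor'' structure lives on the $p_2$-side of $M_1$, and over $W_2$ most components are of this residual type. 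The paper deals with exactly this by computing those contributions on the $p_2$-side and then applying the involution $p_{2*}p_1^*$ of Lemma \ref{involution}, which converts $\eta$ into $\theta-\eta$ and produces the $\theta$-terms visible in Lemma \ref{totalclass}; only after this correction, and only modulo $\theta H^2(Pic^4C)$, is the relevant K\"unneth component divisible by $\eta$. Relatedly, the fiberwise containment of each limit curve in some translate $a_M+C^{(3)}$ does not imply that the total cycle class in $H^6(W_1\times C^{(4)})$ is divisible by (the pullback of) $\eta$, so your proposed factorization $AJ^0_{1,W_k}\equiv p_1^*(\eta\cup\Phi_{k*}(\cdot))$ is not established by the geometry you invoke; in the paper it emerges only from the explicit class $(-2\theta_1+4\eta_1+4\eta_2)\eta_3^2+4\delta_{13}^2\eta_3+(\theta_1-\eta_1)\theta_3\eta_3$ computed bidegree by bidegree with the secant plane formula (Appendix, Section \ref{cycleclasses}, together with Proposition \ref{propcycleP}).

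The second gap is the final surjectivity claim. Even granting the factorization, the assertion that $(\Phi_{1*},\Phi_{2*})$ surjects onto $H^2(C^{(4)})$ cannot be obtained from soft statements like ``$W_{pq}$ generates $Pic^4C$ as a group'' or the $C_1\leftrightarrow C_2$ symmetry: a correspondence-induced map on $H^2$ can well have a kernel or miss classes, and whether it does here depends on the explicit coefficients. The paper's proof consists precisely in computing them: restricting to $(q_1,q_2)^*H^2(C^{(3)}\times C^{(3)})\subset H^2(W_1)$ (only $W_1$ is needed), evaluating $\overline{AJ^0_1}$ on the generators $\eta_1$, $\xi_{1i}\xi_{1j}$, $\sigma_{1k}$ via Macdonald's ring structure, and checking that the resulting classes $10\eta^2-11\theta\eta$, $c_{ij}\xi_i\xi_j\eta$, $8\eta^2-11\theta\eta+16\sigma_k\eta$ span $\eta H^2(Pic^4C)\oplus\bQ\eta^2$ modulo $\theta H^2(Pic^4C)$. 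If any of those integers had vanished or been dependent, the statement would fail; so the computation you defer is the actual content of the proposition, and your outline as it stands does not yet constitute a proof.
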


\begin{proposition}\label{AJ22} The map $AJ^0_2:H^2(\tG_0^{[0]})\ra H^4(M_2)$ is surjective modulo $\langle j_{2*}f,j_{2*}\tau_1\rangle$.\end{proposition}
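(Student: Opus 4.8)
The plan is to compute the image of $AJ^0_2$ on the $E_1$ level of the spectral sequences, exploiting the quadric‑bundle structure $\pi_2\colon M_2\ra W^1_4$ and the explicit description of the limit family of curves in Section \ref{secF'''}. First I would reduce to the ten ``vanishing'' classes: by Lemma \ref{Mcoho} the classes $[\bP^2_1],\dots,[\bP^2_{10}],j_{2*}f,j_{2*}\tau_1$ form a basis of $H^4(M_2)$, so $[\bP^2_1],\dots,[\bP^2_{10}]$ descend to a basis of $H^4(M_2)/\langle j_{2*}f,j_{2*}\tau_1\rangle$; recall (Notation \ref{notPi}, Theorem \ref{thmtendp}) that $\bP^2_i$ sits over the node $g_i$ of $\T$ for $i\le5$ and over $h_{i-5}$ for $i\ge6$. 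It therefore suffices to produce, for each $i$, a class $\gamma_i\in H^2(\tG_0^{[0]})$ with $AJ^0_2(\gamma_i)\equiv c_i\,[\bP^2_i] \pmod{\langle j_{2*}f,j_{2*}\tau_1\rangle}$ and $c_i\ne0$.

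Next I would unwind $AJ^0_2$ using Section \ref{subsecAJE1}. Among the eight components of the central fiber of $\widetilde{\tilde{\cG}\times_T\tilde{\T}}$, only $W_1\times M_2$, $W_2\times M_2$ and the blow-ups $\widetilde{P_1\times M_2}$, $\widetilde{P_2\times M_2}$ dominate $M_2$ under $\rho_2$, and $\rho_1$ carries these onto the components $W_1$, $W_2$, $P_1$, $P_2$ of $\tG_0$ respectively; hence $AJ^0_2$ splits as a sum of four maps, one out of each summand of $H^2(\tG_0^{[0]})=H^2(W_1)\oplus H^2(W_2)\oplus H^2(P_1)\oplus H^2(P_2)$, and each $\gamma_i$ may be chosen in a single summand. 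The description of the central fiber of $\cF'''_r$ in Section \ref{secF'''} then shows that the part of this limit family lying over $M_2$ consists, over a general point of the base, of a family of ruling lines of the smooth quadric fibers of $\pi_2$ — contributing only to $\langle j_{2*}f,j_{2*}\tau_1\rangle$ — together with, over the locus above the node $g_i$ (resp.\ $h_{i-5}$), an exceptional piece created in the blow-ups of Sections \ref{rkcentralfiber} and \ref{secssprod}, whose image under $\rho_2$ is the plane $\bP^2_i$ swept out by the lines of the $\tau_1\cdot f$-ruling through the vertex of $Q_{3i}^{sing}$. Taking $\gamma_i$ to be the class of a divisor on the relevant $W_k$ or $P_k$ meeting this locus with multiplicity one and applying the projection formula yields $AJ^0_2(\gamma_i)\equiv c_i\,[\bP^2_i] \pmod{\langle j_{2*}f,j_{2*}\tau_1\rangle}$, $c_i\ne0$; this is the stratum-by-stratum computation carried out in Section \ref{secAJ1}.

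The main obstacle will be the precise bookkeeping of the central fiber of $\cF'''_r$ along the strata dominating $M_2$: one must check that the exceptional piece over each node is generically reduced, that its scheme-theoretic intersection with the singular quadric fiber $Q_{3i}^{sing}$ meets the $\tau_1\cdot f$-ruling in a single line through the vertex with multiplicity one, and that the remaining ``horizontal'' contributions of $\cF'''_r$ to $H^4(M_2)$ genuinely lie in $\langle j_{2*}f,j_{2*}\tau_1\rangle$, so that they vanish in the quotient and cannot cancel $[\bP^2_i]$. The inputs that make this work are the identification in Theorem \ref{thmtendp} of the ten nodes with the rank-$4$ quadrics through $C\cup\langle p+q\rangle$, the smoothness of the total spaces from Propositions \ref{Wsm} and \ref{proptheta0}, and the fact that $F_0$ fibers over $W_{pq}$ with fiber $B^k_M\times B^k_{M'}$, which confines the exceptional behaviour to exactly those ten nodes.
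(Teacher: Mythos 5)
Your overall strategy (reduce mod $\langle j_{2*}f,j_{2*}\tau_1\rangle$ to the ten classes $[\bP^2_i]$, note that only the fibers of $M_2$ over the nodes $g_i,h_i$ can contribute, and compute stratum by stratum) matches the paper's, but the crucial step is asserted rather than proved. You claim one can take $\gamma_i$ to be a divisor class on $W_k$ (or $P_k$) "meeting this locus with multiplicity one", so that $AJ^0_2(\gamma_i)\equiv c_i[\bP^2_i]$ with no other terms. This does not work as stated: the exceptional piece over each node is a $\bP^2$-bundle over a \emph{curve} in $Z_k$ (Propositions \ref{Znu1}, \ref{Znu2}, \ref{conestructure}), and these curves project under $\lambda_1$ to curves in the surface $W_k$ whose classes are interlinked combinations of $[C_j]$, $[C_j']$ and $q_k^*(\theta-\eta)$ for \emph{all} $j$. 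Consequently any divisor class $\alpha\in H^2(W_k)$ pairs simultaneously with the loci over all ten nodes; this is exactly what the paper's formula in Lemma \ref{lemAJ22} records, e.g. the coefficient of $[\bP^2_i]$ in $AJ^0_2(\alpha)$ is $\int_{W_1}\alpha\cdot([C]_{tot}+4[C_i']+2q_1^*(\theta-\eta))$, and the natural test classes produce outputs like $-58[\bP^2_i]+44\sum_{j\ne i}[\bP^2_j]$, never a single $[\bP^2_i]$. Surjectivity therefore hinges on a nondegeneracy statement — that the resulting $10\times 10$ matrix of intersection numbers has full rank — which the paper establishes via the explicit intersection numbers $C_i^2=C_i'^2=-2$, $C_iC_j'=2$ ($i\ne j$), etc. (Lemma \ref{lemCiCj}, resting on Claim \ref{claimhigi}) and by exhibiting explicit classes whose images give invertible matrices on $\langle[\bP^2_1],\dots,[\bP^2_5]\rangle$ and then on $\langle[\bP^2_6],\dots,[\bP^2_{10}]\rangle$. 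Your proposal contains no substitute for this linear-algebra/intersection-theoretic input, and without it the claim that each $[\bP^2_i]$ is individually hit is unsupported (a priori the images could all lie in a proper subspace).

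A second, smaller omission: you treat all exceptional pieces as contributing positively in the ruling of $[\bP^2_i]$, but some components over a given node map to $\T_0$ through the residual series $K_C-(\cdot)$, so their $\bP^2$'s lie in the \emph{opposite} ruling of $Q^{sing}_{3i}$, which modulo $j_{2*}f$ enters with a minus sign (this is why the paper's $AJ^0_2(\beta)$ formula has a sign). Ignoring this one cannot rule out cancellations, which again feeds into the rank computation you skip. Finally, note that the contributions from $H^2(P_k)$ are actually zero (the cycles over $P_k$ arise by base change), so the relevant source is only $H^2(W_1)\oplus H^2(W_2)$; this is harmless for your argument but worth stating, since your four-summand splitting suggests the $P_k$ summands carry information they do not.
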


For $AJ^1$, we have
\begin{proposition}\label{AJ1E1} The image of  $AJ^1:H^1(\tG_0^{[1]})\ra H^3(\widetilde\T_0^{[1]})=H^3(M_{12})$ contains $\tau_1\cdot\pi_{12}^*H^1(W^1_4)$. 
\end{proposition}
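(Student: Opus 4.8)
The plan is to evaluate $AJ^1$ directly on $E_1$-terms following Section \ref{subsecAJE1}, using the descriptions of the central fibres of $\widetilde{\cG}$, $\widetilde{\T}$ and $\widetilde{\tilde{\cG}\times_T\tilde{\T}}$ in Sections \ref{rkcentralfiber}, \ref{proptheta0} and \ref{secssprod}, together with the description of the limit family of curves $\cF_r'''$ in Section \ref{secF'''}. First I would isolate the strata that contribute. Here $\tG_0^{[1]}$ is the disjoint union of the four double curves $X_{1p},X_{1q},X_{2p},X_{2q}$ of $\tG_0$, each isomorphic to $W^1_4=W^1_4(C)$, and $\widetilde{\T}_0^{[1]}=M_{12}$. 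By the vanishing rules for $\rho_1^*$ and $\rho_{2*}$ on $E_1$-terms recalled in Section \ref{subsecAJE1}, a double locus $S$ of $(\widetilde{\tilde{\cG}\times_T\tilde{\T}})_0$ contributes to $AJ^1$ only when $\rho_1(S)\subseteq\tG_0^{[1]}$ and $\rho_2(S)\subseteq M_{12}$. Running through the double loci of Section \ref{subseccomp}: those of shape $W_k\times M_{12}$ map onto a component $W_k$ of $\tG_0$ under $\rho_1$, and those of shape $X_{k\bullet}\times M_1$ map onto a component $M_1$ of $\widetilde{\T}_0$ under $\rho_2$; so the only survivors are the four exceptional $\bP^1$-bundles $E_{k\bullet}\to X_{k\bullet}\times M_{12}$ appearing as double loci of $(\widetilde{\tilde{\cG}\times_T\tilde{\T}})_0$ (the ``slanted'' loci of Section \ref{subseccomp}), on which $\rho_1$ factors through $E_{k\bullet}\to X_{k\bullet}\times M_{12}\to X_{k\bullet}$ and $\rho_2$ through $E_{k\bullet}\to X_{k\bullet}\times M_{12}\to M_{12}$. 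Thus $AJ^1$ is a sum of four maps $AJ^1_{k\bullet}$, one per double curve, given by
\[
AJ^1_{k\bullet}:\ H^1(X_{k\bullet})\xrightarrow{\ \rho_1^*\ }H^1(E_{k\bullet})\xrightarrow{\ \cup[\cF_r'''\cap E_{k\bullet}]\ }H^7(E_{k\bullet})\xrightarrow{\ \rho_{2*}\ }H^3(M_{12}).
\]

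Second, and this is the heart of the matter, I would read off from Section \ref{secF'''} the class of $\cF_r'''$ on $E_{1p}$. Over a point of $X_{1p}$ --- a sheaf $M=\nu_*g$ with $g\in W^1_4(C)$ --- the fibre of $\cF_r$ before taking closures is, after the cut by $r+\widetilde{X}^{(4)}$, essentially one divisor of $|g|$ (the one through the image of $r$) times the curve $B^1_{M'}$, so its image in the $\bP^1\times\bP^1$ fibre of $M_{12}$ over $g$ is a fibre of $p_1':M_{12}\to C^1_4$. Globalising over $X_{1p}\cong W^1_4$ and passing to the semistable model, the relevant component of $\cF_r'''\cap E_{1p}$ is the pull-back along $E_{1p}\to X_{1p}\times M_{12}\to M_{12}$ of ${p_1'}^{-1}(s(W^1_4))$, where $s:W^1_4\to C^1_4$ is the section cut out by the divisors of the $g^1_4$'s through the image of $r$; by Notation \ref{notPi} the section $s(W^1_4)$ represents $l^*\eta$, so ${p_1'}^{-1}(s(W^1_4))$ represents $\tau_1={p_1'}^*l^*\eta$ in $M_{12}$.

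Third, the computation of $AJ^1_{1p}(\gamma)$ for $\gamma\in H^1(X_{1p})$ then collapses to the projection formula on $M_{12}$. The map $\cF_r'''\cap E_{1p}\to X_{1p}$ factors through ${p_1'}^{-1}(s(W^1_4))\xrightarrow{\ p_1'\ }s(W^1_4)\xrightarrow{\ \phi'\ }W^1_4$, which is the restriction of $\pi_{12}$ to ${p_1'}^{-1}(s(W^1_4))$ (Diagram \eqref{relation}), so $\rho_1^*\gamma$ restricts to $\pi_{12}^*\gamma$ and, writing $i:{p_1'}^{-1}(s(W^1_4))\hookrightarrow M_{12}$ for the inclusion,
\[
AJ^1_{1p}(\gamma)=i_*\,i^*(\pi_{12}^*\gamma)=\pi_{12}^*\gamma\cup[{p_1'}^{-1}(s(W^1_4))]=\tau_1\cdot\pi_{12}^*\gamma
\]
up to sign. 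Since $\gamma\mapsto\tau_1\cdot\pi_{12}^*\gamma$ is injective with image the summand $\tau_1\cdot\pi_{12}^*H^1(W^1_4)$ of $H^3(M_{12})$ from Lemma \ref{Mcoho}, already $AJ^1_{1p}$ surjects onto $\tau_1\cdot\pi_{12}^*H^1(W^1_4)$; the other three $AJ^1_{k\bullet}$ land in the same subspace by the identical argument, and the proposition follows.

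The main obstacle is the second step: pinning down $[\cF_r'''\cap E_{k\bullet}]$. This requires tracking how the surfaces $B^1_M\times B^1_{M'}$ degenerate as $M$ runs into the boundary divisor $X_{1p}\cup X_{1q}$ of $W_{pq}$ inside $\tG^1_5(C_{pq})$ --- in particular keeping account of all components of $B^1_M$ and $B^1_{M'}$ in $\widetilde{C}_{pq}^{(5)}$ --- then how the conifold singularities of $\widetilde{\cG}\times_T\widetilde{\T}$ along $X_{k\bullet}\times M_{12}$ are resolved in the model of Section \ref{secssprod}, and, most delicately, checking that the cut by $r+\widetilde{X}^{(4)}$ introduces no further components of the limit cycle whose image under $\rho_{2*}$ escapes $\tau_1\cdot\pi_{12}^*H^1(W^1_4)$, and that the parts of $\cF_r'''$ lying on the other, contracted, exceptional loci contribute nothing after $\rho_1^*$ or $\rho_{2*}$. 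All of this is exactly what the explicit description in Section \ref{secF'''} is designed to supply.
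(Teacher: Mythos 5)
Your first step and your overall strategy coincide with the paper's: the only double loci contributing to $AJ^1$ are the slanted exceptional $\bP^1$-bundles over $X_{k\bullet}\times M_{12}$, one restricts to the summand $H^1(X_{1p})$, and by the projection formula the computation reduces to a correspondence through $X_{1p}\times M_{12}$ landing in the $\tau_1$ ruling (this is exactly Section \ref{secGr3}). The genuine gap is your second step, which you yourself flag as ``the main obstacle'' and then do not carry out: the identification of the cycle $\cF_r'''\cap E_{1p}$. You assert that over a boundary point $M=\nu_*g^1_4\in X_{1p}$ the limit of $F_{r_t}$ only involves the divisor of $|g^1_4|$ through $r_0$, so that the relevant cycle is controlled by the single section $s(W^1_4)\subset C^1_4$ of class $l^*\eta$. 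What the computation actually requires is the part of the projectivized normal cone of $F''_{r_0}\cap(W_1\times W^1_4)$ inside $\cF''_r$ lying over the components of $Z_1$ that dominate $X_{1p}$, and Lemma \ref{X1p} shows there are \emph{three} such components: the one you describe (in $Z_{(2,3)(4,1)}$, where $\lambda_2(p+g^1_4)=g^1_4$ via the divisor through $r_0$), a second on which $\lambda_2(p+g^1_4)=|K_C-g^1_4|$, and a third which is a degree-$3$ cover of $X_{1p}$, also mapping to $|K_C-g^1_4|$. Consequently $AJ^1_{1p}(\gamma)$ is not $\tau_1\cdot\pi_{12}^*\gamma$ but $\tau_1\cdot\pi_{12}^*\Phi(\gamma)$, where $\Phi\colon H^1(X_{1p})\ra H^1(W^1_4)$ is the correspondence induced by all of $Z_{1p}$ (a combination of the identity and transfers through the involution, with definite multiplicities). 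To conclude that the image contains the full summand one must still check (i) that every component contributes lines in the $\tau_1$ ruling -- in the paper this follows because $\lambda_2$ on $F_{(4,1)(2,3)}$ and $F_{(2,3)(4,1)}$ factors through $\phi\colon C^{(4)}\ra\T_0$ -- and (ii) that $\Phi$ is surjective; since $\iota^*$ acts with both eigenvalues $\pm1$ on $H^1(W^1_4)$, an unlucky combination of the identity and involution contributions could kill exactly the piece you need, so ignoring the extra components is not harmless.

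Two further points where the argument as written does not close. First, the ``pull-back along $E_{1p}\ra X_{1p}\times M_{12}\ra M_{12}$ of ${p_1'}^{-1}(s(W^1_4))$'' is four-dimensional, whereas $\cF_r'''\cap E_{1p}$ is two-dimensional; what you presumably intend is the graph-type surface in $X_{1p}\times M_{12}$ which is a $\bP^1$-bundle over a curve, but justifying that the scheme-theoretic limit really is (generically) such a $\bP^1$-bundle, with the fibers in the $\tau_1\cdot f$ ruling and with multiplicity one, is precisely what the infinitesimal analysis of Section \ref{subsecinfinite} (Propositions \ref{propZsmooth} and \ref{propFsmooth}, cf. Proposition \ref{conestructure}) provides and what your outline never invokes. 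Second, without that analysis the phrase ``the relevant component'' has no cycle-theoretic content: the contributions of the remaining components of $Z_{1p}$ and any non-reduced structure enter the class $[\cF_r'''\cap E_{1p}]$ on an equal footing. So the skeleton is the paper's, but the heart of the proof -- Lemma \ref{X1p} together with the normal-cone description of the limit cycle -- is missing.
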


Next we pass to the Abel-Jacobi map on the $E_2$ terms.
\begin{proposition}\label{AJ2global}

The image of the restriction of $AJ^0=(AJ^0_1,AJ^0_2)$ to
$$Gr_2H^2(\tG)=Ker(H^2(\tG_0^{[0]})\stackrel{d_1}\ra H^2(\tG_0^{[1]}))$$ contains $(I\oplus H^4(M_2))\cap Gr_4H^4(\widetilde{\T})$ modulo $(-j_{1*},j_{2*})H^2(M_{12})+(p_1^*(\theta H^2(Pic^4C)),0)$.

\end{proposition}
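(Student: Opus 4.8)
The plan is to derive Proposition \ref{AJ2global} from Propositions \ref{AJ21} and \ref{AJ22}, by passing from the full map $AJ^0$ on $H^2(\tG_0^{[0]})$ to its restriction to $Ker(d_1)$ using the compatibility of $AJ^0$ with the Mayer--Vietoris differentials recorded in Section \ref{subsecAJE1}. Since $AJ^0$ commutes with $d_1$, it carries $Gr_2H^2(\tG)=Ker\big(H^2(\tG_0^{[0]})\stackrel{d_1}{\ra} H^2(\tG_0^{[1]})\big)$ into $Gr_4H^4(\widetilde\T)=Ker\big(H^4(M_1)\oplus H^4(M_2)\stackrel{j_1^*-j_2^*}{\ra} H^4(M_{12})\big)$, and this restriction is the map \eqref{Gr4}. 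The first step is to check that the two subspaces one works modulo already sit inside $Gr_4H^4(\widetilde\T)$: the subspace $(-j_{1*},j_{2*})H^2(M_{12})$ is the image of $\alpha$ (Lemma \ref{lemCS}) and is killed by $j_1^*-j_2^*$ because $-j_1^*e_1=j_2^*e_2$; and $(p_1^*(\theta H^2(Pic^4C)),0)$ lies in the kernel because $\theta H^2(Pic^4C)$ pulls back to $M_{12}$ through the curve $W^1_4$ (Diagram \eqref{relation}), on which $H^4=0$.

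The second step is to glue Propositions \ref{AJ21} and \ref{AJ22}. By those results, the image of $AJ^0$ on all of $H^2(\tG_0^{[0]})$, reduced modulo $(-j_{1*},j_{2*})H^2(M_{12})+(p_1^*(\theta H^2(Pic^4C)),0)$, contains $I\oplus H^4(M_2)$: indeed $p_1^*(\theta H^2(Pic^4C))$ is killed outright, and the remaining indeterminacy $\langle j_{1*}f,j_{1*}\tau_1\rangle$ of Proposition \ref{AJ21} is, modulo $(-j_{1*},j_{2*})H^2(M_{12})$, identified with $\langle j_{2*}f,j_{2*}\tau_1\rangle$, which is exactly the indeterminacy of Proposition \ref{AJ22}; so the two partial surjectivities add up. The nontrivial point is then to realize the required classes by cocycles in $Ker(d_1)$. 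Given $v=(v_1,v_2)\in(I\oplus H^4(M_2))\cap Gr_4H^4$ and $w\in H^2(\tG_0^{[0]})$ with $AJ^0(w)\equiv v$ modulo the two subspaces, the identity $(j_1^*-j_2^*)\circ AJ^0=AJ\circ d_1$, where $AJ\colon H^2(\tG_0^{[1]})\ra H^4(M_{12})$ denotes the Abel--Jacobi map on the $[1]$-strata, forces $AJ(d_1w)=0$; one then wants to correct $w$ by an element of $Ker(AJ^0)$ so as to land in $Ker(d_1)$ without leaving the class of $v$ modulo the two subspaces. This follows once one knows $Ker\big(H^2(\tG_0^{[1]})\ra H^4(M_{12})\big)\cap\im(d_1)\subseteq d_1\big(Ker(AJ^0)\big)$ up to the two named subspaces, which I would verify from an explicit description of the Abel--Jacobi map on the $[1]$-strata, obtained exactly as in Proposition \ref{AJ1E1} from the $[1]$-strata of $\widetilde{\tilde{\cG}\times_T\tilde{\T}}$ (Section \ref{secssprod}) and the limit families of curves of Section \ref{secF'''}.

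The last step is a dimension count, both as a check and to upgrade containment to the asserted statement: by Lemma \ref{Mcoho}, Lemma \ref{d104} (surjectivity of $j_1^*$ on $I$ and of $j_2^*$ on $H^4(M_2)$) and Proposition \ref{propwH4} one gets $\dim\big((I\oplus H^4(M_2))\cap Gr_4H^4\big)=102$, while $(-j_{1*},j_{2*})H^2(M_{12})$ and $(p_1^*(\theta H^2(Pic^4C)),0)$ are independent of dimensions $3$ and $45$, so the target quotient is $54$-dimensional, in agreement with $\dim Gr_4\bK_t$ read off from Propositions \ref{H4theta} and \ref{propprim}; one then checks that the classes produced above --- essentially $p_1^*\big(\eta H^2(Pic^4C)\oplus\eta^2\big)$ on the $M_1$ side together with $H^4(M_2)\cap Gr_4H^4$ modulo $\langle j_{2*}f,j_{2*}\tau_1\rangle$ on the $M_2$ side --- exhaust these $54$ dimensions. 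I expect the main obstacle to be the middle step: controlling the two components of $AJ^0$ simultaneously under the constraint $d_1w=0$ forces one to compute the Abel--Jacobi map $H^2(\tG_0^{[1]})\ra H^4(M_{12})$ accurately enough to see that its only ``defect'' relative to $d_1$ is exactly the two named subspaces, and the delicate bookkeeping is how the $\bP^1$-bundle components $P_1,P_2$ of $\tG_0$ match the exceptional components of $\widetilde{\tilde{\cG}\times_T\tilde{\T}}$ under $\rho_1$ and $\rho_2$.
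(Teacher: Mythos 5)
Your two reduction steps are exactly where the real content of this proposition lies, and as written both have genuine gaps. The claim that ``the two partial surjectivities add up'' is not a valid inference: Proposition \ref{AJ21} controls only the component $AJ^0_1$ and Proposition \ref{AJ22} only $AJ^0_2$, and the image of the single map $AJ^0=(AJ^0_1,AJ^0_2)$ in $H^4(M_1)\oplus H^4(M_2)$ is not determined by its two projections --- to hit a pair $(\gamma_1,\gamma_2)$ you must control both components with the same class, and the statement of \ref{AJ21} says nothing about what $AJ^0_2$ does on the classes realizing $I$, nor conversely. The paper's proof handles precisely this, and it cannot be done from the statements of \ref{AJ21}--\ref{AJ22} alone: it re-uses the explicit formulas from their proofs, first killing $\gamma_2$ up to $\langle j_{2*}f,j_{2*}\tau_1\rangle$ with the specific classes of \eqref{eqnAJ22}--\eqref{eqnAJ22'}, and then adjusting $\gamma_1$ only by classes $q_1^*\omega$ with $\omega\in\langle\theta\eta,\eta^2\rangle^{\perp}\subset H^2(C^{(3)})$, for which \eqref{eqnvanish} guarantees that $AJ^0_2(q_1^*\omega,0)$ stays inside the allowed indeterminacy. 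Even then the restricted image does not exhaust $I$; the argument is completed by using the cocycle condition $j_1^*\gamma_1=j_2^*\gamma_2$ (i.e.\ membership in $Gr_4H^4$) to show the leftover pair lies in $\im(-j_{1*},j_{2*})$. So your intermediate assertion that the unrestricted image of $AJ^0$ already contains all of $I\oplus H^4(M_2)$ modulo the two subspaces is unproven and stronger than what the argument establishes (the proposition only concerns the intersection with $Gr_4H^4$, and that hypothesis is used essentially).

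The second gap is your mechanism for landing in $Ker(d_1)$: correcting a preimage $w$ by elements of $Ker(AJ^0)$ hinges on the inclusion $Ker\big(H^2(\tG_0^{[1]})\ra H^4(M_{12})\big)\cap\im(d_1)\subseteq d_1\big(Ker(AJ^0)\big)$, which you do not prove and which would require yet another computation on the $[1]$-strata. The paper avoids this entirely by constructing the preimages inside $Ker(d_1)$ from the start: $AJ^0_1$ and $AJ^0_2$ vanish on the $H^2(P_k)$ summands because the cycles over $P_k$ arise by base change (Proposition \ref{propcycleP}), so the cocycle condition reduces to the integral compatibilities \eqref{compatible}; the classes used in \eqref{eqnAJ22}--\eqref{eqnAJ22'} are checked to satisfy them, and for the $W_1$-classes the conditions \eqref{eqncompatible} and \eqref{eqnvanish} impose only two linear conditions on $\omega$, since the relevant push-forward classes span $\langle\theta\eta,\eta^2\rangle$ in $H^4(C^{(3)})$. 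Your opening observation that the two ambiguity subspaces lie in $Gr_4H^4$, and your dimension count ($102$, $3$, $45$, quotient $54=\dim Gr_4\bK_t$), are correct consistency checks, but they do not substitute for these two missing steps.
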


Assuming the above four propositions, we can prove our main theorem.

\begin{proof} of {\bf Theorem \ref{thmrho2rho1}}. Identifying $H^4 (A_t)$ with a subspace of $H^4(\T_t)$ via pull-back, we have $H^4(\T_t) = (\bK_t\otimes\bQ)\oplus H^4(A_t)$, and, since $A_t$ does not degenerate,
 $$Gr_3H^4(\T_t) = Gr_3(\bK_t\otimes\bQ),$$
 and
 $$Gr_4H^4(\T_t) = Gr_4(\bK_t\otimes\bQ)\oplus H^4(A_t).$$  

Consider the commutative diagram
$$\xymatrix{H^2(\widetilde{\cG})\ar[r]^{i_t^*}\ar[d]^-{AJ}&H^2(G_t)\ar[d]^-{AJ_t}\\
H^4(\widetilde{\T})\ar[r]^{i_t^*}&H^4(\T_t).}$$
Proposition \ref{AJ1E1} implies that the image of $AJ_t$ sends $Gr_1H^2(G_t) = i_t^* Gr_1 H^2(\tcG)$ surjectively to $Gr_3H^4(\T_t)=i_t^*Gr_3H^4(\widetilde{\T})\cong \frac{H^1(W^1_4)}{H^1(Pic^4C)} = H^1 (Q)$.
Since the logarithm of the monodromy operator $N$ induces an isomorphism from $Gr_5H^4(\T_t)$ to $Gr_3H^4(\T_t)$ and from $Gr_3H^2(G_t)$ to $Gr_1H^2(G_t)$, we conclude that $AJ_t$ sends $Gr_3H^2(G_t)$ surjectively to $Gr_5H^4(\T_t)$. 

Next, by Lemma \ref{lemCS}, the ambiguity $(-j_{1*},j_{2*})H^2(M_{12})$ restricts to zero under $i_t^*$. Therefore, by Propositions \ref{propprim} and \ref{AJ2global}, the image of $Gr_2H^2(G_t)$ by $AJ_t$ contains $Gr_4(\bH_t\otimes\bQ)$ modulo $\theta_tH^2(A_t)$.

Combining the above, we see that the image of $AJ_t$ contains $\bH_t\otimes\bQ$ modulo $\theta_tH^2(A_t)$. Since, as we observed earlier, $\theta_tH^2(A_t)$ is always contained in the image of $H^2 (\T \cap \T_a)$ for $a\in A$ general, the theorem follows.
\end{proof}

\section{The cycles at time zero: before resolving the family of theta divisors} 

\subsection{The central fiber $F_0$}
We list the intersections of the central fiber $F_0$ of $\cF$ with each component $W_k \times (C_1^{(d_1)}\times C_2^{(d_2)})\times (C_1^{(e_1)} \times C_2^{(e_2)})$ in the tables below.  The left column lists the ambient spaces of all possible bidegrees. The middle column gives the conditions defining the cycles $F_{0}$ in each ambient space. 

For each pair of bidegrees $(d_1,d_2)$ and $(e_1,e_2)$, we define a morphism
\begin{eqnarray}
\psi_{(d_1,d_2)(e_1,e_2)} : F_0\cap \left(W_k \times (C_1^{(d_1)}\times C_2^{(d_2)})\times (C_1^{(e_1)} \times C_2^{(e_2)})\right) & \lra & \T_0\subset Pic^4C\nonumber \\
\nonumber (L,D_{d_1},D_{d_2},D'_{e_1},D'_{e_2}) & \longmapsto & \cO_{C}(D_{d_2}+D'_{e_2}-m(p+q)),
\end{eqnarray}
where $m$ is the integer such that $d_2+e_2=4+2m$. These morphisms are listed case by case in the right most column of the tables below.

\begin{table}[h]

\centering
\begin{tabular}{|c|c|c|c|c|}\hline
 Ambient Spaces& $F_{0}$ & Image under $\psi$\\\hline
$(L,D_4,a,D_4',a')
\in W_1\times (C_1^{(4)}\times C_2)\times (C_1^{(4)}\times C_2)$&$
\begin{cases}D_4+a\in|L| \\D_4'+a'\in|L'|\end{cases}$&$\cO_C(a+a'+p+q)$\\\hline

$(L,D_4,a,D_2',D_3')
\in W_1\times (C_1^{(4)}\times C_2)\times (C_1^{(2)}\times C_2^{(3)})$&$\begin{cases}D_4+a\in|L|\\ D_2'+D_3'\in|L'|\end{cases}$&$\cO_C(a+D'_3)$ \\ \hline

 $(L,D_4,a,D_5')\in W_1\times (C_1^{(4)}\times C_2)\times C_2^{(5)}$&$\begin{cases}D_4+a\in|L|\\D_5\in|L'|\end{cases}$&$K_C(-D_4)$\\\hline
 
$(L,D_2,D_3,D_4',a')\in W_1\times (C_1^{(2)}\times C_2^{(3)})\times (C_1^{(4)}\times C_2)$&$\begin{cases} D_2+D_3\in|L|\\ D_3'+a'\in|L'|\end{cases}$&$\cO_C(D_3+a')$\\ \hline

$(L,D_2,D_3,D_2',D_3')\in W_1\times (C_1^{(2)}\times C_2^{(3)})\times (C_1^{(2)}\times C_2^{(3)})$&$\begin{cases}D_2+D_3\in|L|\\  D_2'+D_3'\in|L'|\end{cases}$&$K_C(-D_2-D'_2)$\\
\hline

$(L,D_2,D_3,D_5)\in W_1\times (C_1^{(2)}\times C_2^{(3)})\times C_2^{(5)}$&$\begin{cases}D_2+D_3\in|L|\\ D_5\in|L'|\end{cases}$&$K_C(-D_2-p-q)$\\\hline

$(L,D_5,D_4',a')\in W_1\times C_2^{(5)}\times (C_1^{(4)}\times C_2)$&$\begin{cases}D_5\in|L|\\ D_4'+a'\in|L'|\end{cases}$&$K_C(-D_4')$\\\hline

$(L,D_5,D_2',D_3')\in W_1\times C_2^{(5)}\times (C_1^{(2)}\times C_2^{(3)})$&$\begin{cases}D_5\in|L|\\ D_2'+D_3'\in|L'|\end{cases}$&$K_C(-D_2'-p-q)$\\\hline

$(L,D_5,D_5')\in W_1\times C_2^{(5)}\times C_2^{(5)}$&$\begin{cases}D_5\in|L|\\ D_5'\in|L'|\end{cases}$&$K_C(-2p-2q)$\\\hline
\end{tabular}
\vspace{1cm}
\centering
\caption{Cycles in $W_1\times C_{pq}^{(5)}\times C_{pq}^{(5)}$}
\label{tableW1}
\end{table}

\clearpage
\begin{table}

\begin{tabular}{|c|c|c|c|c|}\hline
 Ambient Spaces& $F_{0}$ &Image under $\psi$\\\hline
$(L,D_5,D_5')\in W_2\times C_1^{(5)}\times C_1^{(5)}$&$\begin{cases}D_5\in|L|\\D_5'\in|L'|\end{cases}$&$\cO_C(2p+2q)$\\\hline

$(L,D_5,D_3',D_2')
\in W_2\times C_1^{(5)}\times (C_1^{(3)}\times C_2^{(2)})$&$\begin{cases}D_5\in|L|\\ D_3'+D_2'\in|L'|\end{cases}$&$\cO_C(D_2'+p+q)$\\\hline

$(L,D_5,a',D_4')
\in W_2\times C_1^{(5)}\times(C_1\times C_2^{(4)})$&$\begin{cases}D_5\in|L|\\ a'+D_4'\in|L'|\end{cases}$&$\cO_C(D_4')$\\\hline
 
$(L,D_3,D_2,D_5')\in W_2\times(C_1^{(3)}\times C_2^{(2)})\times C_1^{(5)}$&$\begin{cases} D_3+D_2\in|L|\\
 D_5'\in|L'|\\
\end{cases}$&$\cO_C(D_2+p+q)$\\\hline

$(L,D_3,D_2,D_3',D_2')\in 

W_2\times (C_1^{(3)}\times C_2^{(2)})\times(C_1^{(3)}\times C_2^{(2)})$&$\begin{cases} D_3+D_2\in|L|\\
 D_3'+D_2'\in|L'|
 \end{cases}$&$\cO_C(D_2+D'_2)$\\
\hline
$(L,D_3,D_2,a',D_4')\in W_2\times (C_1^{(3)}\times C_2^{(2)})\times(C_1\times C_2^{(4)})$&$\begin{cases}  D_3+D_2\in|L|\\
 a'+D_4'\in|L'|
 \end{cases}$&$K_C(-D_3-a')$\\ \hline
 
 $(L,a,D_4,D_5')\in W_2\times (C_1\times C_2^{(4)})\times C_1^{(5)}$&$\begin{cases}a+D_4\in|L|\\D_5'\in|L'|\end{cases}$&$\cO_C(D_4)$\\ \hline

 $(L,a, D_4,D_3',D_2')\in W_2\times (C_1\times C_2^{(4)})\times (C_1^{(3)}\times C_2^{(2)}$&$\begin{cases}a+D_4\in|L|\\D_3'+D_2'\in|L'|\end{cases}$&$K_C(-a-D_3')$\\ \hline
 
 $(L,a,D_4,a',D_4')\in W_2\times (C_1\times C_2^{(4)})\times (C_1\times C_2^{(4)})$& $\begin{cases}a+D_4\in|L|\\a'+D_4'\in|L'|\end{cases}$&$K_C(-a-a'-p-q)$\\\hline
 
   \end{tabular}
  \vspace{1cm} 
  \centering
   \caption{Cycles in $W_2\times C_{pq}^{(5)}\times C_{pq}^{(5)}$}
\label{tableW2}
\end{table}

\clearpage
 \subsection{The morphism to $\T_0$ }
\begin{proposition}\label{propFregular}The rational map $id\times\psi:\cG\times_T\widetilde{\cX}^{(5)}\times_T\widetilde{\cX}^{(5)}\dashrightarrow \cG\times_TPic^{(10)}(\cX/T)$ extends to a morphism when restricted to $\cF\subset\cG\times_T\widetilde{\cX}^{(5)}\times_T\widetilde{\cX}^{(5)}$ (see Section \ref{subsecnotation} for the notation). 
\end{proposition}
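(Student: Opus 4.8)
The plan is to reduce the statement to a local computation near the central fibre and then verify the extension stratum by stratum, using the explicit description of $F_0$. Since $\psi$ is a morphism on each fibre over $T\setminus\{0\}$, the restriction $\psi|_{\cF}$ is already a morphism on $\cF\setminus F_0$, and there it takes values in $\cG\times_T\T\subset\cG\times_T\cA$; as $\cG\times_T\cA\to T$ is proper, it suffices to extend $\psi|_{\cF}$ to a morphism into $\cG\times_T\cA$, and then the image automatically lies in $\cG\times_T\T$ because $\cF$ is irreducible and $\T\subset\cA$ is closed. Concretely, for $(L,D,D')\in F_0$ we must show that $\lim_{t\to0}\cO_{\tX_t}(D_t+D_t')$ exists along $\cF$, i.e. that the indeterminacy of the ambient rational map $\psi$ does not actually obstruct on $\cF$.

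I would organize this using the stratification of $F_0$ recorded in Tables \ref{tableW1} and \ref{tableW2} by the bidegree of $(D,D')$ on $\tC_{pq}=C_1\cup C_2$. On the dense open locus of each stratum where the limiting divisor $D_0+D_0'$ avoids the two nodes of $\tC_{pq}$, the relative line bundle $\cO_{\tX_t}(D_t+D_t')$ specializes to the honest line bundle $\cO_{\tC_{pq}}(D_0+D_0')$; twisting by the multiple of a component $C_1$ or $C_2$ which brings its bidegree to $(6,4)$ — an operation constant along the stratum — and restricting to $C_2$ one recovers exactly the line bundle in the right-hand column of the tables, so $\psi|_{\cF}$ extends there and equals the morphism $\psi_{(d_1,d_2)(e_1,e_2)}$. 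What then remains is to extend across the loci where $D_0+D_0'$ meets a node of $\tC_{pq}$; these are precisely the overlaps of two strata and the double/triple loci of $F_0$, and they form a subset of codimension $\geq2$ in $\cF$.

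The main obstacle is exactly these node loci, where the ambient $\psi$ is genuinely indeterminate — they are the conifold-type points already visible in \eqref{flop}. There I would work in the local model $\widetilde{\cX}\cong\{xy=t\}$ near a node (after the base change of Section \ref{rkcentralfiber}, which is what makes the relevant families of points through the node exist) and exploit the rigidity of $\cF$: $D_t$ is cut on $\tX_t$ by the pullback of a section of the pencil $M_t$, and $M_t$ together with the choice of connected component of $B_{M_t}$ is recorded by the point of the smooth space $\cG$ (Proposition \ref{Wsm}, Lemma \ref{lemtGpq}); this pins down $\lim_{t\to0}\cO_{\tX_t}(D_t)$, and one checks that the resulting value of $\psi$ equals the tabulated formula and is independent of which stratum one approaches from, the compatibility on an overlap being the visible fact that reassigning the node points of the divisor between its $C_1$- and $C_2$-parts carries one formula in the tables into the other. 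Alternatively, the same local computation shows $\cF$ is regular in codimension $1$, hence — being Cohen–Macaulay — normal, which together with properness of $\cG\times_T\cA\to T$ again reduces the extension to these codimension $\geq2$ loci; but either way the delicate point, and the one where care is needed because the ambient space $\widetilde{\cX}^{(5)}\times_T\widetilde{\cX}^{(5)}$ is itself singular along them, is the behaviour at the nodes. Assembling these local extensions gives the asserted morphism $\cF\to\cG\times_T\T$, and $\cF'$ is by definition its image.
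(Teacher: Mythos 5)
Your proposal is correct and follows essentially the same route as the paper: extend $\psi$ stratum by stratum on $F_0$ via the normalization of bidegree to $(6,4)$ and restriction to $C_2$ (which is exactly how the tabulated $\psi_{(d_1,d_2)(e_1,e_2)}$ arise), and then verify agreement on the overlaps, where a point of the divisor at a node is reassigned between its $C_1$- and $C_2$-parts — this is precisely the compatibility check the paper carries out. The extra scaffolding you add (properness of $\cG\times_T\cA\to T$, the codimension count at the node loci, the local model $xy=t$) is consistent with, but not needed beyond, the paper's argument.
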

\begin{proof} We need to extend the rational map $\psi$ to the central fiber $F_0$ of $\cF$.  As explained in Section \ref{subsecPrymfamily}, the natural extension of the map $\psi$ to a general point of $W_k \times (C_1^{(d_1)}\times C_2^{(d_2)})\times (C_1^{(e_1)} \times C_2^{(e_2)})$ is given by $\psi_{(d_1,d_2)(e_1,e_2)}$.  Therefore we need to show that the morphisms $\psi_{(d_1,d_2)(e_1,e_2)}$ coincide on the intersection of $F_0$ with the overlaps of the different components of $G_0\times \tC_{pq}^{(5)}\times \tC_{pq}^{(5)}$. For instance, a point $(p+g^1_4,D_2,D_3=B_2+p,D_4'=B_3'+q,a')\in F_0\cap W_1\times(C_1^{(2)}\times C_2^{(3)})\times (C_1^{(4)}\times C_2)$ is identified with $(q+g^1_4,D_2+q,B_2,B_3',a'+p)\in F_0\cap W_2\times(C_1^{(3)}\times C_2^{(2)})\times (C_1^{(3)}\times C_2^{(2)})$. The images under $\psi_{(2,3)(4,1)}$ and $\psi_{(3,2)(3,2)}$ are both equal to $\cO_C(B_2+a'+p)$.
Therefore all the $\psi_{(d_1,d_2)(e_1,e_2)} |_{F_0}$ glue together and we obtain a morphism from $F_0$ to $\T_0$. 
\end{proof}

Recall that we have a tower of blow-ups and algebraic cycles in each blow-up.
$$\xymatrix{&{\cF}_r'''\ar@^{(->}[r]\ar[dd]&\widetilde{\tilde{\cG}\times_T\tilde{\T}}\ar[d]&\\&&\widetilde\cG\times_T\widetilde{\T}\ar[d]&\\
&{\cF}_r''\ar@^{(->}[r]\ar[d]&\widetilde{\cG}\times_T\T\ar[d]\ar@^{(->}[r]&\widetilde{\cG}\times_T\cA\ar[d]\\
{\cF}_r\ar[r]&{\cF}_r'\ar@^{(->}[r]&\cG\times_T\T\ar@^{(->}[r]&\cG\times_T\cA.}$$

Denote $F_{(d_1,d_2)(e_1,e_2)}$ the intersection of $F_{r_0}$ with $W_k \times (C_1^{(d_1)}\times C_2^{(d_2)})\times (C_1^{(e_1)} \times C_2^{(e_2)})$, and $\lambda :=(\lambda_1,\lambda_2):F_{r_0}\ra G_0\times \T_0$ the restriction of $id\times\psi$ to $F_{r_0}$. 
\subsection{The cycle $F_{r_0}''$}

Recall that $\widetilde{G}_0$ has four components $W_1, W_2, P_1, P_2$, where $P_k$ is a $\bP^1$ bundle over $X_{kp}$ for $k=1, 2$ (see Section \ref{rkcentralfiber}).  We use the notation $F''_{r_0}|_{W_k\times\T_0}$, $F''_{r_0}|_{P_k\times\T_0}$ to denote the components of $F''_{r_0}$ which lie in $W_k\times \T_0$, $P_k\times \T_0$ respectively. 

\begin{proposition}

 \label{propcycleP}\begin{enumerate}
\item The cycle $F''_{r_0}|_{W_1\times \T_0}$ is the push-forward under $\lambda$ of 
\[
F_{r_01}:=F_{(4, 1)(4,1)} \amalg F_{(4, 1)(2, 3)} \amalg F_{(2,3)(4,1)} \amalg F_{(2,3)(2,3)}.
\]
\item The cycle $F''_{r_0}|_{W_2\times \T_0}$ is the push-forward of 
\[
F_{r_02}:=F_{(5, 0)(3,2)} \amalg F_{(5, 0)(1, 4)} \amalg F_{(3,2)(3,2)} \amalg F_{(3,2)(1,4)} \amalg F_{(1, 4)(3,2)} \amalg F_{(1, 4)(1,4)}.
\]
\item The cycle $F''_{r_0}|_{P_k\times\T_0}$  is the image of the fiber product
$$\xymatrix{F_{r_0}|_{P_k}\ar[r]\ar[d]&{F}_{r_0}|_{X_{kp}}\ar[d]\\
P_k\ar[r]&X_{kp},}$$
where $F_{r_0}|_{P_k}$ maps to $P_k$ via projection and to $\T_0$ via $\lambda_2$.
\end{enumerate}
\end{proposition}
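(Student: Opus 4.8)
The plan is to peel the two blow-ups off one at a time and reduce everything to the explicit description of the central fibre $F_0$ of $\cF$ recorded in Tables \ref{tableW1}--\ref{tableW2}. By Notation \ref{notationthreeF} and Proposition \ref{propFregular} we have $\cF'_r=(id\times\psi)(\cF_r)$, and $\cF''_r$ is the proper transform of $\cF'_r$ under $\widetilde{\cG}\times_T\T\to\cG^b\times_T\T$, the base change of the blow-up $\widetilde{\cG}\to\cG^b$ of $X_{1p}\amalg X_{1q}$. So it is enough to (i) list the irreducible components of the flat limit $F_{r_0}$; (ii) push these forward by $\lambda=(\lambda_1,\lambda_2)=(id\times\psi)|_{F_{r_0}}$, discarding those contracted by $\lambda$ onto a locus of dimension $<3$; and (iii) take the proper transform under the blow-up.

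For (i): by Section \ref{subseccW}, $F_0$ fibres over $G_0=W_1\cup W_2$ with fibre $B^1_M\times B^1_{M'}$ over $M\in W_1$ and $B^2_M\times B^2_{M'}$ over $M\in W_2$, and $F_{r_0}$ is cut out in $F_0$ by the condition $r_0\le D$ on the first $\tC_{pq}^{(5)}$-factor, where $r_0=r(0)$ is a general point, which we place on $C_1$. Since, by the proof of Lemma \ref{lemtGpq}, $B^1_M$ (resp.\ $B^2_M$) is the locus of divisors with an even (resp.\ odd) number of points on $C_1$, the condition $r_0\le D$ forces that number to be $\ge1$; over $W_1$ this deletes the piece in $C_2^{(5)}$ and leaves first-factor bidegrees $(4,1)$ and $(2,3)$, while over $W_2$ it deletes nothing. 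For $M$ away from the double locus $X_{1p}\cup X_{1q}$ the sheaf $M$ is locally free and the fibre of $F_{r_0}$ over $M$ is a finite set of copies of $B^1_{M'}$ (resp.\ $B^2_{M'}$); decomposing by the bidegree of the second divisor then exhibits the components of $F_{r_0}$ over $W_k$ as the threefolds $F_{(d_1,d_2)(e_1,e_2)}$, with coordinates and defining equations as in Tables \ref{tableW1}--\ref{tableW2}. Over $M\in X_{kp}$, where $M=\nu_*g^1_4$ is not locally free and every divisor of $|M|$ passes through the node of $C_{pq}$, one gets the remaining components, which lie over the double locus of $\cG^b$.

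For (ii): on $F_{(d_1,d_2)(e_1,e_2)}$ the map $\lambda_2$ equals $\psi_{(d_1,d_2)(e_1,e_2)}$, which depends only on the $C_2$-parts $D_{d_2},D'_{e_2}$ of the two divisors. When the second-factor bidegree is $(5,0)$ or $(0,5)$ — i.e.\ $D'$ is supported on a single copy of $C$ — then for a fixed first divisor the sheaf $\cO_C(D'_{e_2})$ is constant along the one-dimensional family $B^j_{M'}$: it is $\cO_C$ when $e_2=0$, and it is $L':=\nu^*M'$ when $e_2=5$, since then $D'$ runs over the pull-back of the pencil $|M'|$. Hence $\psi$ is constant along that family, $\lambda$ drops dimension by one on the corresponding piece, and its push-forward as a cycle vanishes. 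This removes exactly the $(0,5)$-second-factor pieces over $W_1$ and the $(5,0)$-second-factor pieces over $W_2$, leaving $F_{r_01}$ and $F_{r_02}$; the surviving pieces over $W_k$ map generically finitely to $W_k\times\T_0$, so $F''_{r_0}|_{W_k\times\T_0}=\lambda_*[F_{r_0k}]$, which is (1) and (2).

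For (iii): the smooth threefold $\cG$ has normal-crossings central fibre $W_1\cup W_2$, so after the degree-$2$ base change $\cG^b$ acquires an $A_1$ singularity transverse to $X_{kp}$, resolved by the blow-up; this is the computation of Sections \ref{rkcentralfiber} and \ref{secssprod}, and in the relevant chart one reads off that the proper transform of the part of $F'_{r_0}$ lying over $X_{kp}\times\T_0$ is the image in $P_k\times\T_0$ of the fibre product $F_{r_0}|_{P_k}=P_k\times_{X_{kp}}F_{r_0}|_{X_{kp}}$ (mapped to $\T_0$ through $\lambda_2$ on the second factor), which is (3). The main obstacle is steps (i)--(ii): showing that the flat limit $F_{r_0}$ is \emph{exactly} the union of the pieces read off from the tables — no missing or spurious components, and with the correct multiplicities — and pinning down which of them $\lambda$ contracts; both require controlling the degenerations of the individual curves $B^j_M$, $B^j_{M'}$ and of their images under $\psi$ as $t\to0$, rather than merely the set-theoretic limit of $\cF_r$.
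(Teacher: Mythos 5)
Your argument is correct and is essentially the paper's own proof: the paper likewise discards the $(0,5)$-first-factor pieces because they do not meet $F_{r_0}$, reads off from Tables \ref{tableW1}--\ref{tableW2} that exactly the second-factor $(0,5)$ (over $W_1$) and $(5,0)$ (over $W_2$) pieces are contracted by $\lambda$ (your constancy-of-$\psi$ observation is precisely why the tabulated images do not involve the varying second divisor), notes that $\lambda$ is generically one-to-one on the remaining components, and deduces (3) directly from the construction of $\cF''_{r_0}$. The only differences are cosmetic: the paper additionally remarks that the images of the contracted pieces lie in the closures of the images of the surviving ones, and it is no more explicit than you are about flat-limit multiplicities or about the chart computation behind part (3).
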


\begin{proof}Since any component of $F_0$ with bidegree $(0,5)+(e_1,e_2)$ does not intersect $F_{r_0}$, we see that $F_{(0,5)(e_1,e_2)}$ is empty. From Tables \ref{tableW1} and \ref{tableW2}, we see that $F_{(4,1)(0,5)}$, $F_{(2,3)(0,5)}$ and $F_{(d_1,d_2)(5,0)}$ are contracted by $\lambda$ and their image by $\lambda$ is contained in the closure of the image of cycles of other bidegrees. For other bidegrees, $\lambda$ is generically one to one on any irreducible component. This proves the first two statements. The third statement follows immediately from the construction of $\cF''_{r_0}$.
\end{proof}

\section{The cycles at time zero: after resolving the family of theta divisors}
\label{secF'''}

\subsection{}

The cycle ${\cF}_r'''$ is the proper transform of ${\cF}_r''$ under
$$\xymatrix{\widetilde{\tilde{\cG}\times_T\tilde{\T}}\ar[r]&{\widetilde{\cG}\times_T\widetilde{\T}}\ar[r]&\widetilde{\cG}\times_T\T}$$
where the arrow on the right is the blow up of $\widetilde{\cG}\times_T\T$ along $\tG_0\times W^1_4$ and the arrow on the left, which is a small resolution, is the blow up of $\widetilde{\cG}\times_T\widetilde{\T}$ along $\amalg_k (W_k\times
M_1)$. The central fiber of $\widetilde{\tilde{\cG}\times_T\tilde{\T}}$ has 8 components (see Section \ref{secssprod}), where $\widetilde{W_k\times M_1}$ and $P_k\times M_1$ are the main components and $W_k\times M_2$, $\widetilde{P_k\times M_2}$ are the exceptional components.

\begin{proposition}\label{propF'''}
$F_{r_0}'''|_{\widetilde{W_k\times M_1}}$ is the proper transform of $F_{r_0}''|_{W_k\times \T_0}$ under the birational morphism
$$\xymatrix{\widetilde{W_k\times M_1}\ar[r]&W_k\times M_1\ar^-{(id,p_1)}[r]&W_k\times C^{(4)}\ar^-{(id,\phi)}[r]&W_k\times\T_0.}$$
\end{proposition}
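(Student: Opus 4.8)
The plan is to process the two blow-ups
$$\widetilde{\tilde{\cG}\times_T\tilde{\T}}\lra\widetilde{\cG}\times_T\widetilde{\T}\lra\widetilde{\cG}\times_T\T$$
one at a time, tracking strict transforms. First I would record the bookkeeping that lets me work fiberwise: $\cF_r$, and hence $\cF'_r$ and $\cF''_r$, is pure of dimension four with every irreducible component dominating the smooth curve $T$, so all three are flat over $T$; and no component of $\cF''_r$ is contained in either blow-up center. Indeed, the first center $\tG_0\times W^1_4$ has dimension three, and the image in $\T_0$ of $F''_{r_0}|_{W_k\times\T_0}$ is not contained in the curve $W^1_4$ (a point of generic bidegree maps under $\psi$, by the tables of Section \ref{secF'''}, to a line bundle $\cO_C(D)$ with $h^0(\cO_C(D))=1$), while the three\nobreakdash-dimensional components of $F''_{r_0}|_{W_k\times\T_0}$ are not contained in the codimension-one flopping locus of the second center $\amalg_k(W_k\times M_1)$. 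By flatness, forming the strict transform then commutes with restriction to the central fiber, so $F'''_{r_0}$ is the strict transform of $F''_{r_0}$, and $F'''_{r_0}|_{\widetilde{W_k\times M_1}}$ is the strict transform of the part of $F''_{r_0}$ supported in $W_k\times\T_0$, namely the cycle $F''_{r_0}|_{W_k\times\T_0}$ described in Proposition \ref{propcycleP}.

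Next I would handle the first blow-up. Since $\widetilde{\T}\to\T$ is $Bl_{W^1_4}\T$ and $\widetilde{\cG}\times_T\T\to\T$ is flat, the map $\widetilde{\cG}\times_T\widetilde{\T}\to\widetilde{\cG}\times_T\T$ is the blow-up along $\tG_0\times W^1_4$, with exceptional divisor $\tG_0\times M_2$; the strict transform of the central-fiber component $W_k\times\T_0$ is $W_k\times M_1$, where by Propositions \ref{propM1} and \ref{proptheta0} the restricted blow-down $W_k\times M_1\to W_k\times\T_0$ is $(\mathrm{id},\phi\circ p_1)$, the product with the blow-down of $M_1=Bl_{W^1_4}\T_0$. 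Since the remaining exceptional locus over $W_k$ sits in $\tG_0\times M_2$ and not in $W_k\times M_1$, the part of the strict transform of $F''_{r_0}$ lying in $W_k\times M_1$ is exactly the strict transform of $F''_{r_0}|_{W_k\times\T_0}$ under $(\mathrm{id},\phi\circ p_1)$.

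Finally I would handle the small resolution $\widetilde{\tilde{\cG}\times_T\tilde{\T}}\to\widetilde{\cG}\times_T\widetilde{\T}$, the blow-up along $\amalg_k(W_k\times M_1)$, which is an isomorphism away from the codimension-two locus $\amalg_k\big((X_{kp}\times M_{12})\cup(X_{kq}\times M_{12})\big)$. By the local analysis of Section \ref{secssprod}, the strict transform of the component $W_k\times M_1$ is $\widetilde{W_k\times M_1}$, the blow-up of $W_k\times M_1$ along $X_{kp}\times M_{12}\cup X_{kq}\times M_{12}$, and the restricted map is precisely this blow-up; hence the strict transform of the cycle obtained in the previous step is its strict transform under $\widetilde{W_k\times M_1}\to W_k\times M_1$. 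Composing the two steps yields the asserted description. The step I expect to require genuine care — as opposed to formal functoriality of strict transforms — is the behavior of $F''_{r_0}$ along the flopping loci $X_{kp}\times M_{12}$ and $X_{kq}\times M_{12}$: one must check, in the local model $\Spec\bC[x,y,z,w]/(xy-zw)$ of Section \ref{secssprod} and using the gluing of the different-bidegree pieces from the proof of Proposition \ref{propFregular}, that the small resolution neither contracts a component of $F''_{r_0}|_{W_k\times\T_0}$ onto a lower-dimensional locus nor produces a spurious one, so that the two descriptions of $F'''_{r_0}|_{\widetilde{W_k\times M_1}}$ agree there; away from these loci all the maps in sight are isomorphisms and the identification is immediate.
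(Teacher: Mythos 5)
Your overall strategy (track proper transforms through the two blow-ups, using that the relevant cycles generically avoid the centers) is close in spirit to the paper's, but the pivotal step you invoke is false: it is not true that ``forming the strict transform commutes with restriction to the central fiber'' for a family flat over $T$, and in particular $F'''_{r_0}$ is \emph{not} the strict transform of $F''_{r_0}$. The central fiber of $\cF'''_r$ is the Cartier divisor cut by $t=0$, and besides the strict transforms of the components of $F''_{r_0}$ it acquires three-dimensional components supported inside the exceptional divisors: $F'''_{r_0}|_{W_k\times M_2}=\cF'''_r\cap(W_k\times M_2)$ is purely three-dimensional (it is the projectivized normal cone to $F''_{r_0}\cap(W_k\times W^1_4)$ in $\cF''_r$, a $\bP^2$-bundle over the curves $Z_k$ of Propositions \ref{Znu1} and \ref{Znu2}), and these components are essential later in the computation of $AJ^0_2$. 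So the principle you lean on would, taken literally, erase exactly the cycles the rest of the paper needs, and it cannot be used to identify $F'''_{r_0}|_{\widetilde{W_k\times M_1}}$.

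The conclusion survives because the proposition only concerns the components $\widetilde{W_k\times M_1}$, and there the correct argument is the simple one the paper gives (and which your own ``bookkeeping'' observations essentially contain): the composite birational morphism $\widetilde{W_k\times M_1}\to W_k\times\T_0$ has its inverse defined on the open set $(W_k\setminus(X_p\cup X_q))\times(\T_0\setminus W^1_4)$, and every component of $F''_{r_0}|_{W_k\times\T_0}$ meets this open set in a dense open subset (your remark that the image under $\psi$ of a generic point is an $\cO_C(D)$ with $h^0=1$, i.e.\ lies off $W^1_4$, together with the fact that the generic point avoids $X_p\cup X_q$ and the flopping locus). Hence over this locus $\cF'''_r$ and $\cF''_r$ are identified, and the part of $\cF'''_r\cap(t=0)$ lying in $\widetilde{W_k\times M_1}$ is the closure of the isomorphic preimage of that dense open subset, i.e.\ the proper transform of $F''_{r_0}|_{W_k\times\T_0}$; any hypothetical extra three-dimensional piece in $\widetilde{W_k\times M_1}$ would have to sit in the exceptional/double loci and is accounted for in the exceptional components, not here. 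If you replace the ``commutes with restriction'' claim by this direct density argument, your two-step bookkeeping of the blow-downs $(id,\phi\circ p_1)$ and $\widetilde{W_k\times M_1}\to W_k\times M_1$ is fine and matches the statement.
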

\begin{proof}The inverse of the birational morphism 
$$\xymatrix{\widetilde{W_k\times M_1}\ar[r]&W_k\times M_1\ar[r]&W_k\times\T_0}$$ is defined on the open subset $(W_k\setminus(X_p\cup X_q))\times(\T_0\setminus W^1_4)$. This open subset contains an open dense subset of $F_{r_0}''|_{W_k\times \T_0}$.
\end{proof}

\subsection{The center of the blow-up} \label{subsecZ''}
Next we study ${F}_{r_0}'''|_{W_k\times M_2}$, which is the (scheme-theoretic) intersection of ${\cF}_r'''$ with the exceptional divisor $W_k\times M_2$. So ${F}_{r_0}'''|_{W_k\times M_2}$ is the projectivized normal cone to ${F}_{r_0}''\cap(W_k\times W^1_4)\subset F''_{r_0}|_{W_k\times \T_0}$\footnote{We take the scheme theoretic intersection.} in ${\cF}_r''$. We first study the center of the blow-up.

By Proposition \ref{propcycleP}, $F''_{r_0}|_{W_k\times\T_0}$ is the image of 
\[
\xymatrix{F_{r_0k}\ar^-{(\lambda_1,\lambda_2)}[r]&F''_{r_0}|_{W_k\times\T_0}\subset W_k\times \T_0}.
 \]
Denote $Z_k\subset F_{r_0k}$ the inverse image scheme of ${F}_{r_0}''\cap(W_k\times W^1_4)\subset F''_{r_0}|_{W_k\times \T_0}$ and put $Z := Z_1\cup Z_2$ and $Z_{(d_1,d_2)(e_1,e_2)}=Z\cap F_{(d_1,d_2)(e_1,e_2)}$.  Then $Z_k$ maps onto $W^1_4\subset\T_0$ by $\lambda_2$ and we have the Cartesian diagram
   \begin{eqnarray}\label{diagZ''}\xymatrix{Z_{k}\ar[r]\ar@^{(->}[d]& W^1_4\ar@^{(->}[d]\\
  F_{r_0k}\ar[r]^-{\lambda_2}&\T_0.}
  \end{eqnarray}
\begin{proposition}\label{Znu1} $Z_1$ is one dimentional and for $s^1_4\ne g_i$ or $h_i$, the fiber $\lambda_2^{-1}(s^1_4)\cap F_{r_01}$ is finite. For $i = 1, \ldots 5$, the fiber $\lambda_2^{-1}(g_i)\cap F_{r_01}$ is one dimensional (modulo finitely many points) and its support is listed in the following table.

\begin{table}[h]
\centering
\begin{tabular}{|c|c|}\hline
 Ambient Spaces& Support of $\lambda_2^{-1}(g_i)\cap F_{r_01}$\\\hline
$(L,D_4,a,D_4',a')\in
W_1\times (C_1^{(4)}\times C_2)\times (C_1^{(4)}\times C_2)$&$
\begin{cases}a+a'+p+q\equiv g_i\\h^0(L-a-r_0)>0\\D_4\equiv L-a, D_4'\equiv L'-a'\end{cases}$\\\hline
$(L,D_4,a,D_2',D_3')
\in W_1\times (C_1^{(4)}\times C_2)\times (C_1^{(2)}\times C_2^{(3)})$&$\begin{cases}D_3'+a\equiv g_i\\L=a+g_j, j\ne i, a\in C\\r_0\le D_4\equiv g_j\\D_2'\equiv h_i-(g_i-p-q) \end{cases}$ \\ \hline
 $(L,D_4,a,D_2',D_3')
\in W_1\times (C_1^{(4)}\times C_2)\times (C_1^{(2)}\times C_2^{(3)})$&$\begin{cases}D_3'+a\equiv g_i\\L=h_i+p+q-c,c\in C\\h^0(L-r_0-a)>0, D_4\equiv L-a\\D_2'=a+c\end{cases}$\\\hline
$(L,D_2,D_3,D_4',a')\in W_1\times (C_1^{(2)}\times C_2^{(3)})\times (C_1^{(4)}\times C_2)$&$\begin{cases}D_3+r_0\equiv g_i \\L=c+g_i,c\in C\\a'=r_0, D_4'\equiv h_i+p+q-c-r_0\\
  D_2=r_0+c\end{cases}$\\ \hline
$(L,D_2,D_3,D_4',a')\in W_1\times (C_1^{(2)}\times C_2^{(3)})\times (C_1^{(4)}\times C_2)$&$\begin{cases} D_3+a'\equiv g_i\\L=r_0+g_i\\D_4'\equiv h_i+p+q-r_0-a', a'\in C\\D_2=a'+r_0\end{cases}$\\ \hline

\end{tabular}
\vspace{.5cm}
\centering
\end{table}

The fiber $\lambda_2^{-1}(h_i)\cap F_{r_01}$ is also one-dimensional with support described below.
\begin{table}[h]
\centering

\begin{tabular}{|c|c|}\hline
 Ambient Spaces& Support of $\lambda_2^{-1}(h_i)\cap F_{r_01}$\\\hline

$(L,D_4,a,D_2',D_3')
\in W_1\times (C_1^{(4)}\times C_2)\times (C_1^{(2)}\times C_2^{(3)})$&$\begin{cases}D_3'+a\equiv h_i\\L=a+g_i, a\in C\\r_0\le D_4\equiv g_i\\D_2'=p+q\end{cases}$ \\ \hline
\end{tabular}
\vspace{.5cm}
\centering
\end{table}

 \end{proposition}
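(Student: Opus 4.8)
The statement is a concrete geometric computation: we must understand the fibers of $\lambda_2 : F_{r_01} \to \T_0$ over points of $W^1_4 \subset \T_0$. Recall from Proposition \ref{propcycleP} that $F_{r_01}$ is the disjoint union of four pieces $F_{(4,1)(4,1)}$, $F_{(4,1)(2,3)}$, $F_{(2,3)(4,1)}$, $F_{(2,3)(2,3)}$, each a threefold sitting inside $W_1 \times (C_1^{(d_1)}\times C_2^{(d_2)}) \times (C_1^{(e_1)}\times C_2^{(e_2)})$, cut out by the incidence conditions of Table \ref{tableW1} together with the extra constraint $r_0 \in D_{d_1}$ (on the $C_1$-part of the first symmetric-power factor) coming from slicing $F$ by $r_0 + \widetilde X^{(4)}$. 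The map $\lambda_2$ on each piece is the morphism $\psi_{(d_1,d_2)(e_1,e_2)}$ recorded in the last column of Table \ref{tableW1}. So the plan is: for each of the four pieces, write down $\psi^{-1}(s^1_4)$ explicitly as $s^1_4$ varies over $W^1_4$, using the dictionary $W^1_4 \hookrightarrow Pic^4 C$ and the fact (Theorem \ref{thmtendp}) that the ten special points $g_i, h_i$ are exactly the $g^1_4$'s with $h^0(g_i - p - q) > 0$, $h_i = |K_C - g_i|$.

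\textbf{Step-by-step.} First I would dispose of $F_{(2,3)(2,3)}$: there $\psi(L,D_2,D_3,D_2',D_3') = K_C(-D_2-D_2')$, so the fiber over $s^1_4 \in W^1_4$ forces $D_2 + D_2' \equiv K_C - s^1_4$, a pencil's worth of divisors in $|K_C - s^1_4| = |h|$ if $s^1_4 = g_i$ (or the analogous residual), but the remaining conditions $D_2 + D_3 \in |L|$, $D_2' + D_3' \in |L'|$, $r_0 \le D_2$ (wait—$r_0$ lies on $C_1$, so $r_0 \le D_2$) pin $L$ down once $D_2$ is chosen, and a dimension count shows the generic fiber is finite; I expect this piece contributes only finitely many points to $Z_1$ and hence drops out of the "modulo finitely many points" statement, which is why it does not appear in the tables. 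Next, for the three pieces $F_{(4,1)(4,1)}$, $F_{(4,1)(2,3)}$, $F_{(2,3)(4,1)}$ I would run the same analysis: over a generic $s^1_4$ the incidence conditions plus $r_0 \le D_{d_1}$ cut the threefold down to dimension $0$, so $\lambda_2$ is generically finite and $Z_1 = \lambda_2^{-1}(W^1_4)$ is one-dimensional; the fiber jumps to dimension $1$ precisely when $s^1_4$ is one of the $g_i$ (resp. $h_i$), because then there is a one-parameter family of ways to realize the constraint $h^0(L - r_0 - \bullet) > 0$ or $h^0(L-p-q)>0$ compatibly with $s^1_4 = g_i$. Concretely, for $F_{(4,1)(4,1)}$ with $\psi = \cO_C(a+a'+p+q)$, the fiber over $g_i$ demands $a + a' + p + q \equiv g_i$, i.e. $a + a' \equiv g_i - p - q$, a point (not a pencil) of $C^{(2)}$ since $h^0(g_i - p - q) = 1$; so here the one-dimensionality comes instead from the locus $\{L : h^0(L - a - r_0) > 0\}$, which for fixed $a$ is a curve $X_p$ or $X_q$ inside $W_1 = W_{pq}$—giving the description in the first row of the first table. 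For $F_{(4,1)(2,3)}$ with $\psi = \cO_C(a + D_3')$, the fiber over $g_i$ (resp. $h_i$) similarly opens up into the two components listed, distinguished by whether $L = a + g_j$ with $j \ne i$ or $L = h_i + p + q - c$; and for $F_{(2,3)(4,1)}$ with $\psi = \cO_C(D_3 + a')$, into the two components with $L = c + g_i$ or $L = r_0 + g_i$. Each of these is a direct unwinding of the defining conditions of Table \ref{tableW1} intersected with $r_0 \le D_{d_1}$ and $\psi(\cdot) = g_i$ or $h_i$, using Lemma \ref{lemW15bar} / the proof of Theorem \ref{thmtendp} to identify which $L \in W_{pq}$ satisfy the requisite $h^0$ conditions.

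\textbf{The main obstacle.} The bookkeeping is the real difficulty: one has to be careful that the "modulo finitely many points" qualifier genuinely holds, i.e. that no additional one-dimensional components of $\lambda_2^{-1}(g_i) \cap F_{r_01}$ have been missed, and that the listed families are reduced of the right dimension. This amounts to a transversality/genericity argument in $r_0$ (and in $C$, $p$, $q$): for generic $r_0$ the condition $r_0 \le D_{d_1}$ meets the relevant incidence loci transversally, so the fiber dimensions are as claimed. The cleanest way to organize this is to note that away from $g_i, h_i$ the quadric tangent cone to $\T_0$ at $s^1_4$ has rank $4$ with the canonical curve in general position, forcing finiteness, whereas at $g_i, h_i$ the extra rulings of the rank-$4$ quadric containing $C$ and $\langle p + q\rangle$ (Theorem \ref{thmtendp}) are exactly what produces the one-parameter families; I would invoke generality of $C, p, q, r_0$ to rule out spurious excess-dimensional loci, deferring the routine case checks to a direct inspection of Table \ref{tableW1}.
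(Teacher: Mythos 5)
Your overall strategy coincides with the paper's: split $F_{r_01}$ into the four bidegree pieces, project each to a smaller product where $\lambda$ is generically injective, and analyze the conditions $\psi_{(d_1,d_2)(e_1,e_2)}(\cdot)\equiv s^1_4$ case by case (this is exactly how the paper organizes the proof, via Lemmas \ref{lemZ4141}, \ref{lemZ4123} and \ref{lemZ2341}). Your treatment of $F_{(4,1)(4,1)}$ is essentially the paper's argument: $h^0(s^1_4-p-q)>0$ forces $s^1_4=g_i$, the pair $(a,a')$ is then finite, and the one-parameter family comes from $L$ moving in the curve $\{L\in W_{pq}: h^0(L-a-r_0)>0\}$ --- though that curve is not $X_p$ or $X_q$ (those are $p+W^1_4$ and $q+W^1_4$); it is a different curve, of class $(\theta-\eta)|_{W_1}$ by the secant plane formula. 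Likewise your side remark that $F_{(2,3)(2,3)}$ ``contributes only finitely many points to $Z_1$'' is off: $Z_{(2,3)(2,3)}$ is one-dimensional; the relevant point is only that its fibers over every $s^1_4$ (not just the generic one) are finite, which still needs a word of justification (the paper uses $r_0$ general so that $r_0+p+q$ lies in no $g^1_4$).

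The genuine gap is in the pieces $F_{(4,1)(2,3)}$ and $F_{(2,3)(4,1)}$, which is where the actual content of the proposition lies. You assert that the fibers over $g_i$ and $h_i$ ``open up into the two components listed'' by ``direct unwinding'' plus a genericity/transversality argument, but no unwinding is given, and genericity cannot do this work: the positive-dimensional families genuinely exist, and the proposition is precisely the identification of which ones occur over which point, with which constraints (e.g.\ why the $h_i$-fiber carries $L=a+g_i$ with $D_2'=p+q$, while the $g_i$-fiber splits into $L=a+g_j$, $j\ne i$, and $L=h_i+p+q-c$). In the paper this requires the dichotomy $a\le\G_3'$ versus $a\not\le\G_3'$ in the canonical space, the subcases $h^0(\G_2'+p+q)=2$ or $1$, and above all Claim \ref{claimhigi}: for $\G_2'$ with $\G_2'+p+q\in|g_i|$, the five $g^1_4$'s containing $\G_2'$ are exactly $g_i$ and the $h_j$, $j\ne i$, proved via the degree-$4$ Del Pezzo surface that is the base locus of the pencil of quadrics through the canonical curve and $\langle p+q\rangle$. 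Nothing in your proposal supplies this input, and your heuristic about ``extra rulings of the rank-$4$ quadric'' does not distinguish $g_i,h_i$ from other points of $W^1_4$ (the tangent cone has rank $4$ along all of $W^1_4$; what is special at $g_i,h_i$ is that the quadric contains the secant $\langle p+q\rangle$). Finiteness over $s^1_4\ne g_i,h_i$ in these bidegrees also requires the concrete argument (e.g.\ $h^0(L'-s^1_4)>0$ forces $L'=p+s^1_4$ or $q+s^1_4$ when $s^1_4\ne g_i$), not an appeal to general position. As it stands the proposal restates the conclusion of the hard lemmas rather than proving it.
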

 \begin{proof}We study $Z_1$ case by case according to the bidegree. The proof is divided into three Lemmas: \ref{lemZ4141}, \ref{lemZ4123} and \ref{lemZ2341}.
 \end{proof}
  
\begin{proposition}\label{Znu2}$Z_2$ is one dimensional and, for $s^1_4\ne g_i$ or $h_i$, the fiber $\lambda_2^{-1}(s^1_4)\cap F_{r_02}$ is finite. For $i = 1, \ldots 5$, the fiber $\lambda_2^{-1}(g_i)\cap F_{r_02}$ is one dimensional with support described below.

\begin{table}[h]
\centering

\begin{tabular}{|c|c|}\hline
 Ambient Spaces& Support of $\lambda_2^{-1}(g_i)\cap F_{r_02}$\\\hline

$(L,D_5,D_3',D_2')
\in W_2\times C_1^{(5)}\times (C_1^{(3)}\times C_2^{(2)})$&$\begin{cases}D_2'+p+q\equiv g_i\\h^0(L'-D_2')>0, D_3'\equiv L'-D_2'\\r_0\le D_5\equiv L\end{cases}$ \\ \hline
$(L,D_5,a',D_4')\in W_2 \times C_1^{(5)}\times(C_1\times C_2^{(4)})$&$\begin{cases}c\le D_4'\equiv g_i\\L=h_i+p+q-c,c\in C\\ a'=c\\r_0\le D_5\equiv L\end{cases}$ \\ \hline

\end{tabular}
\vspace{.5cm}
\centering
\end{table}

The fiber  $\lambda_2^{-1}(h_i)\cap F_{r_02}$ is one dimensional with support described below.

\begin{table}[h]
\centering

\begin{tabular}{|c|c|}\hline
 Ambient Spaces& Support of $\lambda_2^{-1}(h_i)\cap F_{r_02}$\\\hline

$(L,D_3,D_2,a',D_4',)
\in W_2\times (C_1^{(3)}\times C_2^{(2)})\times(C_1\times C_2^{(4)})$&$\begin{cases}D_3+a'\equiv g_i\\r_0\le D_3\\L=c+g_i,c\in C\\D_2=a'+c,D_4'\equiv h_i+p+q-c-a'\end{cases}$ \\ \hline
$(L,a,D_4,D_3',D_2')\in W_2\times(C_1\times C_2^{(4)})\times(C_1^{(3)}\times C_2^{(2)})$&$\begin{cases}D_3'+r_0\equiv g_i\\L=h_i+p+q-c,c\in C\\a=r_0,D_4=h_i+p+q-c-r_0\\D_2'=r_0+c\end{cases}$ \\ \hline

\end{tabular}
\vspace{.5cm}
\centering
\end{table}

\end{proposition}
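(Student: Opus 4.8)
The plan is to prove Proposition \ref{Znu2} by the same bidegree-by-bidegree analysis already used for $W_1$ in Proposition \ref{Znu1}, now carried out on the six strata of $F_{r_02}$ listed in Proposition \ref{propcycleP}(2): $F_{(5,0)(3,2)}$, $F_{(5,0)(1,4)}$, $F_{(3,2)(3,2)}$, $F_{(3,2)(1,4)}$, $F_{(1,4)(3,2)}$ and $F_{(1,4)(1,4)}$. For each stratum I would describe $F_{(d_1,d_2)(e_1,e_2)}$ explicitly as the locus of tuples $(L,D_{d_1},D_{d_2},D'_{e_1},D'_{e_2})$ satisfying the incidence conditions read off from Table \ref{tableW2} --- membership of $D_{d_1}+D_{d_2}$ and of $D'_{e_1}+D'_{e_2}$ in the appropriate pull-backs of $L$ and of $L'=|K_C+p+q-L|$, and $r_0\le D_{d_1}$ --- identify $\lambda_2$ on it with the explicit formula $\psi_{(d_1,d_2)(e_1,e_2)}$, and then analyse the fibers of $\lambda_2$ over $W^1_4\subset\T_0$. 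Since $Z_2$ is the scheme-theoretic preimage of $W^1_4$ fitting in the Cartesian square \eqref{diagZ''}, this determines $Z_2$ and in particular its fibers over the ten special points.

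First I would treat a general point $s^1_4\in W^1_4$. For $C$, $p$, $q$ and the section $r_0$ sufficiently general, a Brill--Noether dimension count shows that each of the six strata is three-dimensional and that $\lambda_2$ is generically finite onto a threefold in $\T_0$; as $W^1_4$ is a curve, it follows that $Z_2$ is one-dimensional and that the fiber of $\lambda_2$ over a general point of $W^1_4$ is finite. Genericity is used here to guarantee that the auxiliary degree-two and degree-three linear series forced by the equation $\psi_{(d_1,d_2)(e_1,e_2)}=s^1_4$ (for instance the degree-two class $s^1_4-p-q$ that appears in the $(5,0)(3,2)$ stratum) are empty or of the expected dimension, and that $r_0$ avoids the finitely many special sections along which an extra component could appear.

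The substance of the argument is the behaviour over the points $g_i$ and $h_i$. By Theorem \ref{thmtendp} these are precisely the $g^1_4$'s with $h^0(g_i-p-q)>0$ and $h_i=|K_C-g_i|$, equivalently the rulings of the five rank-$4$ quadrics through $C$ and its secant $\langle p+q\rangle$, and it is exactly this incidence that makes one of the linear series in the equation $\psi=s^1_4$ jump and produces a one-parameter family. For instance, in the $(5,0)(3,2)$ stratum the equation $D'_2+p+q\equiv s^1_4$ is solvable only for $s^1_4=g_i$, whereupon $D'_2\equiv g_i-p-q$ is fixed and $L$ is free to move in a one-dimensional locus of $W_2$ cut out by $h^0(L'-D'_2)>0$ together with $r_0\le D_5\equiv L$. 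The other three contributing strata --- $(5,0)(1,4)$ over $g_i$, and $(3,2)(1,4)$ and $(1,4)(3,2)$ over $h_i$, the latter two via $h_i=|K_C-g_i|$ and the involution $L\mapsto|K_C+p+q-L|$ exchanging $M$ and $M'$ --- are handled in the same way and yield the supports recorded in the tables, while $(3,2)(3,2)$ and $(1,4)(1,4)$ are shown to have finite fibers over every point of $W^1_4$. As in Proposition \ref{Znu1} I would organise these computations into a short sequence of lemmas, one per contributing bidegree.

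The main obstacle, just as for $W_1$, is the bookkeeping: carrying the scheme structure through (the normal-cone computation in Section \ref{secF'''} needs $Z_2$ as a scheme, not merely its support), matching the boundary loci where a divisor on $C_1$ or $C_2$ degenerates onto the node so that the images of the six strata agree on overlaps (cf. Proposition \ref{propFregular}), and pinning down exactly which finitely many choices of $(C,p,q,r_0)$ must be excluded so that no spurious one-dimensional component appears and the listed pieces are reduced of the stated dimension. No single step is deep; the difficulty is to push all six bidegrees through uniformly and completely.
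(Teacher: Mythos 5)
Your proposal is correct and follows essentially the same route as the paper: the paper's own proof of this proposition is a one-line remark that the argument is entirely analogous to that of Proposition \ref{Znu1}, and your bidegree-by-bidegree analysis of the six strata of $F_{r_02}$ (with the jump in the fibers over $g_i$ and $h_i$ coming from $h^0(g_i-p-q)>0$ and $h_i=|K_C-g_i|$, and genericity of $C,p,q,r_0$ ruling out spurious components) is exactly that analogous argument, mirroring Lemmas \ref{lemZ4141}--\ref{lemZ2341}.
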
  

\begin{proof}The proof is entirely analogous to that of Proposition \ref{Znu1}, we omit the details.
\end{proof}

\begin{lemma}\label{lemZ4141} 
For any $s^1_4\in W^1_4$,  the intersection $\lambda_2^{-1}(s^1_4)\cap F_{(4,1)(4,1)}$ is empty except when $s^1_4=g_i$. The support of the intersection $\lambda_2^{-1}(g_i)\cap F_{(4,1)(4,1)}$ is of pure dimension $1$ and equal to
$$\{(L,D_4,a,D_4',a') : \ a+a'+p+q\equiv g_i,\ h^0(L-r_0-a)>0,D_4\equiv L-a, D_4'\equiv L'-a'\}.$$
\end{lemma}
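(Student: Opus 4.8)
The plan is to make the description of $F_{(4,1)(4,1)}$ completely explicit, to read off the emptiness statement from a property of the points of $W^1_4$, and to reduce the dimension count over $g_i$ to the intersection of two surfaces in $C^{(3)}$. By the first row of Table \ref{tableW1} together with the construction of $\cF$ and $\cF_r$ in Sections \ref{seccurves}--\ref{limitcurve}, a point of $F_{(4,1)(4,1)}$ is a tuple $(L,D_4,a,D_4',a')$ in $W_1\times(C_1^{(4)}\times C_2)\times(C_1^{(4)}\times C_2)$ with $L\in W_1=W_{pq}$, with $D_4+a\in|L|$ and $D_4'+a'\in|L'|$ (where $L'=|K_C+p+q-L|$ as in Lemma \ref{lemW15bar}), and — since $\cF_r$ is cut out of $\cF$ by requiring the first $\tX^{(5)}$--coordinate to contain $r_0$, and $r_0$ lies on $C_1$ — with $r_0\le D_4$. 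On this stratum $\lambda_2$ is the morphism $\psi_{(4,1)(4,1)}$, namely $(L,D_4,a,D_4',a')\mapsto\cO_C(a+a'+p+q)\in\T_0=W^0_4(C)$.

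For the emptiness claim, any point of $\lambda_2^{-1}(s^1_4)\cap F_{(4,1)(4,1)}$ yields $s^1_4=\cO_C(a+a'+p+q)$, so $s^1_4-p-q\equiv a+a'$ is effective and $h^0(s^1_4-p-q)>0$. By Theorem \ref{thmtendp} and its proof, the only $g^1_4$'s on $C$ with this property are $g_1,\dots,g_5$: the rank-$4$ quadrics containing $C$ and the secant $\langle p+q\rangle$ are the $q_i$, which cut $g_i$ and $h_i=|K_C-g_i|$ on $C$, and for general $p,q$ the line $\langle p+q\rangle$ lies in exactly one ruling of each $q_i$, so $h^0(h_i-p-q)=0$. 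Hence $\lambda_2^{-1}(s^1_4)\cap F_{(4,1)(4,1)}=\varnothing$ unless $s^1_4=g_i$ for some $i$.

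Now fix $i$. As $C$ is general, hence not hyperelliptic, $h^0(g_i-p-q)=1$, so $a+a'$ must be the unique divisor of $|g_i-p-q|$ and $(a,a')$ ranges over the (finite) set of its orderings; write $(a,a')=(x_i,y_i)$. With $a=x_i$ fixed, the conditions ``$D_4+a\in|L|$ and $r_0\le D_4$'' amount to $h^0(L-r_0-x_i)>0$, after which the effective divisors $D_4\equiv L-x_i$ and $D_4'\equiv L'-y_i$ are determined by $L\in W_{pq}$ away from the proper (and, for general $C,p,q,r_0$, empty) locus on which $x_i$ is a base point of $|L|$ or $y_i$ is a base point of $|L'|$. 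This identifies the support of $\lambda_2^{-1}(g_i)\cap F_{(4,1)(4,1)}$ with the set in the statement. Via the isomorphism $W^1_5(C)\xrightarrow{\ \sim\ }C^{(3)}$, $L\mapsto|K_C-L|$, and Riemann--Roch, this set becomes the intersection, inside $C^{(3)}$, of the surfaces $\{\Gamma_3:h^0(\Gamma_3+p+q)\ge2\}$ (the image of $W_{pq}$) and $\{\Gamma_3:h^0(\Gamma_3+r_0+x_i)\ge2\}$, each a codimension-one degeneracy locus.

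It remains to show this intersection is pure of dimension $1$ and nonempty. Since $C^{(3)}$ is smooth of dimension $3$, every component of the intersection has dimension $\ge1$; nonemptiness (also needed so that $\lambda_2^{-1}(g_i)$ itself is nonempty) should follow from computing the classes of the two surfaces in $H^2(C^{(3)})$ in terms of $\theta$ and $\eta$ and checking that their product is nonzero, or from the genericity results collected in the Appendix. The main obstacle is the reverse bound: one must rule out, for general $(C,p,q,r_0)$ and each $i$, that a component of $\{\Gamma_3:h^0(\Gamma_3+r_0+x_i)\ge2\}$ is contained in the image of $W_{pq}$, i.e.\ that $\lambda_2$ acquires a $2$-dimensional fibre over $g_i$ on this stratum. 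Because $x_i$ is determined by $(C,p,q)$ while $r_0$ is an independent general point, I expect this to follow from a dimension count on the incidence variety of triples (curve data, $r_0$, a point lying on a common component of the two surfaces); the delicate part is that $W_{pq}$ already exceeds its naive expected dimension by one, so the count must genuinely use the geometry of the canonical curve of $C$ and the position of the $g_i$ relative to $\langle p+q\rangle$ established in the proof of Theorem \ref{thmtendp}.
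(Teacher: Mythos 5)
Your reduction follows the paper's own route: project $F_{(4,1)(4,1)}$ to the data $(L,a,a')$, note that the only nontrivial conditions are $h^0(L-r_0-a)>0$ (coming from $r_0\le D_4$) and, on the fibre of $\lambda_2$, $a+a'+p+q\equiv s^1_4$, so that $h^0(s^1_4-p-q)>0$ forces $s^1_4=g_i$, and $a+a'$ is then the unique divisor of $|g_i-p-q|$, leaving finitely many $(a,a')$. Up to this point your argument agrees with the proof of Lemma \ref{lemZ4141} in the paper. The difficulty is that the remaining, central assertion --- that the fibre over $g_i$ is nonempty of pure dimension one --- is precisely what you do not prove: you concede that nonemptiness ``should follow'' and that the upper bound (no component of $q_1(W_{pq})\cap\{\G_3 : h^0(\G_3+r_0+x_i)\ge 2\}$ is a surface, equivalently the condition $h^0(L-r_0-x_i)>0$ does not hold identically on a component of $W_{pq}$) is only ``expected'' from an incidence-variety count you describe as delicate and do not carry out. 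As written, the lemma is not established.

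The gap is genuine but easy to close, and your reformulation as an intersection of two excess-dimensional surfaces in $C^{(3)}$ makes it look harder than it is; it is simpler to stay inside the surface $W_{pq}$, as the paper implicitly does. For fixed $a=x_i$, the locus $\{L\in W_{pq}: h^0(L-r_0-x_i)>0\}$ is where the $2\times 2$ evaluation map $H^0(L)\to L_{r_0}\oplus L_{x_i}$ drops rank, hence is locally cut by one equation in $W_{pq}$, so every component has dimension $\ge 1$. It is a proper subset for $r_0$ general: on any two-dimensional component of $W_{pq}$ a general $L_0$ has $h^0(L_0-x_i)=1$ (pencils with base point $x_i$ form only the curve $x_i+W^1_4$), and the unique divisor of $|L_0|$ through $x_i$ has finite support, which a general $r_0$ --- chosen independently of $(C,p,q)$, hence of the finitely many points $x_i$ --- avoids. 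Hence no component is two-dimensional, giving purity of dimension one. Nonemptiness is also immediate: by the secant plane formula (Section \ref{appsecplane}) the class of this locus in $W_{pq}$ is $(\theta-\eta)|_{W_{pq}}$ (exactly the class invoked later in the proof of Lemma \ref{lemAJ22}), and it is nonzero since, for instance, $\int_{C^{(3)}}(\theta-\eta)^3=14\ne 0$. No delicate dimension count on an incidence variety, and no appeal to the excess dimension of $W_{pq}$, is needed.
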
 
\begin{proof} The map $\lambda:F_{(4,1)(4,1)}\ra W_1\times\T_0$ factors through the projection of $F_{(4,1)(4,1)}\subset W_1\times(C_1^{(4)}\times C_2)\times(C_1^{(4)}\times C_2)$ to $W_{1}\times C_2\times C_2$, which is generically one to one to its image. The image of $F_{(4,1)(4,1)}$ in $W_1\times C_2\times C_2$ consisting of $(L,a,a')$ such that
$$ h^0(L-r_0-a)>0.$$
The image of $Z_{(4,1)(4,1)}=Z\cap F_{(4,1)(4,1)}$ under projection is defined {\em{scheme theoretically}} by imposing an extra condition 
$$ h^0(a+a'+p+q)>1.$$

 If $a+a'+p+q \equiv s^1_4 \in W^1_4$, then $s^1_4$ is equal to one of the $g_i$. This first shows that $\lambda_2^{-1} (s^1_4)$ is empty unless $s^1_4 = g_i$ for some $i$. Then it shows that there are only finitely many choices for $a$, hence $\lambda_2^{-1}(g_i)\cap F_{(4,1)(4,1)}$ of pure dimension one and as described .
\end{proof}

 \begin{lemma}\label{lemZ4123}
For $s^1_4\ne g_i$ or $h_i$, the intersection $\lambda_2^{-1}(s^1_4) \cap F_{(4,1)(2,3)}$ is finite. The intersection $\lambda_2^{-1}(h_i)\cap F_{(4,1)(2,3)}$ (up to finitely many points) has support
$$\{(L,D_4,a,D_2',D_3'):L=a+g_i, a\in C, r_0\le D_4\equiv g_i, D_3'\equiv h_i-a, D_2'=p+q \},$$
and the intersection $\lambda_2^{-1}(g_i)\cap F_{(4,1)(2,3)}$ (again, up to finitely many points) has support
$$\{(L,D_4,a,D_2',D_3'):L=a+g_j,j\ne i, a\in C, r_0\le D_4\equiv g_j,D_3'\equiv g_i-a,D_2'\equiv h_i-(g_i-p-q)\}$$
and $$\{(L,D_4,a,D_2',D_3'):L=h_i+p+q-c,c\in C, h^0(L-r_0-a)>0,D_3'\equiv g_i-a,D_2'=a+c)\}.$$
\end{lemma}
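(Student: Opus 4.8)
## Proof plan for Lemma \ref{lemZ4123}

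The plan is to analyze the fiber $\lambda_2^{-1}(s^1_4)\cap F_{(4,1)(2,3)}$ by unwinding the definitions in Table \ref{tableW1}. A point of $F_{(4,1)(2,3)}$ is a tuple $(L,D_4,a,D_2',D_3')\in W_1\times(C_1^{(4)}\times C_2)\times(C_1^{(2)}\times C_2^{(3)})$ with $D_4+a\in|L|$, $D_2'+D_3'\in|L'|$ where $L'=|K_C+p+q-L|$, together with the constraint coming from intersecting with $F_{r_0}$, namely $r_0\le D_4$ (here $r_0\in C_1$). Under $\psi_{(4,1)(2,3)}$ this tuple maps to $\cO_C(a+D_3')$, so the equation $\lambda_2=s^1_4$ reads $a+D_3'\equiv s^1_4$, a degree-$4$ divisor class on $C$. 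First I would fix such a tuple and record the three equivalences: $a+D_3'\equiv s^1_4$, $D_4\equiv L-a$ with $r_0\le D_4$ (so $h^0(L-a-r_0)>0$, i.e. $L\equiv a+r_0+(\text{something effective of degree }2)$, but more usefully $h^0(L-r_0-a)>0$), and $D_2'+D_3'\equiv K_C+p+q-L$. Eliminating $D_3'$ via $D_3'\equiv s^1_4-a$ and $D_2'\equiv K_C+p+q-L-s^1_4+a$, the data reduces to a choice of $a\in C$ and $L\in W_{pq}$ subject to $h^0(L-r_0-a)>0$ and to $D_3':=s^1_4-a$ and $D_2':=K_C+p+q-L-D_3'$ being \emph{effective}.

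The key step is the dimension count. Generically, for $s^1_4\in W^1_4$ not one of the $g_i,h_i$, imposing effectivity of $D_3'=s^1_4-a$ (a pencil's worth of $a$ for each point of $|s^1_4|$, so a finite condition once we also ask $h^0(L-r_0-a)>0$, which pins $L$ to finitely many choices given $a$) forces the fiber to be finite; I would make this precise by noting that $h^0(s^1_4)=2$ for general $s^1_4\in W^1_4$, so $a$ ranges over the image of a $\bP^1$, but then $L\in W_{pq}$ with $h^0(L-r_0-a)>0$ is a curve condition on the surface $W_{pq}$, cut further by requiring $h^0(K_C+p+q-L-s^1_4+a)>0$, a second curve condition; two general curves on a surface meet in finitely many points. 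The interesting phenomenon occurs exactly when $s^1_4=g_i$ or $h_i$: here the special geometry from Theorem \ref{thmtendp} — that $g_i$ is the $g^1_4$ cut by a ruling of a rank-$4$ quadric through $C$ and $\langle p+q\rangle$, with $h^0(g_i-p-q)>0$ and $h_i=|K_C-g_i|$ — makes one of these two conditions degenerate, so the intersection acquires a one-dimensional component. For $s^1_4=h_i$: from $h^0(g_i-p-q)>0$ we get $h_i\equiv K_C-g_i$ and $|K_C+p+q-g_i|\ni$ divisors containing $p+q$; tracing through, $D_2'=p+q$ becomes forced and $L=a+g_i$, $D_3'\equiv h_i-a$, $r_0\le D_4\equiv g_i$ gives exactly the stated one-parameter family indexed by $a\in C$. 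For $s^1_4=g_i$ I would split into the two sub-cases in the statement according to whether $L\equiv a+g_j$ for some $j\ne i$ (with $D_2'\equiv h_i-(g_i-p-q)$, using $h^0(g_i-p-q)>0$) or $L\equiv h_i+p+q-c$ for $c\in C$ (with $h^0(L-r_0-a)>0$ and $D_2'\equiv a+c$); each is visibly a one-parameter family.

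The main obstacle I anticipate is \emph{the case analysis for $s^1_4=g_i$}: showing these are the \emph{only} one-dimensional components and that the two listed families are genuinely distinct (not both degenerations of the same locus), and that outside them the fiber is finite "up to finitely many points". This requires carefully using that $C$, $p$, $q$ are general — in particular the classification in Theorem \ref{thmtendp} of which $g^1_4$'s satisfy $h^0(g^1_4-p-q)>0$ — to rule out unexpected coincidences, e.g. $L\equiv a+g_j$ with $h^0(L-r_0-a)>0$ forcing $r_0\le D_4\in|g_j|$, which needs $r_0$ general enough that it is not a base point complication. I would handle this by invoking the genericity of $r_0$ (chosen after $C,p,q$) together with the fact, already used in the proof of the smoothness of $W_{pq}$, that a one-parameter family of "bad" secant configurations can always be avoided. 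The effectivity analysis of $D_2'$ and $D_3'$ is routine Riemann–Roch bookkeeping once the quadric-of-rank-$4$ picture is in hand, so I would not belabor it.
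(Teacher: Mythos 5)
Your opening reduction is the same as the paper's: project a point of $F_{(4,1)(2,3)}$ to the triple $(L,a,D_3')$ subject to $a+D_3'\equiv s^1_4$, $h^0(L-r_0-a)>0$ and $h^0(L'-D_3')>0$, and you correctly locate the source of the jump at $g_i,h_i$ in the relation $h^0(g_i-p-q)>0$. But your generic dimension count is wrong, and in a way that matters. For a fixed $(a,D_3')$ the condition $h^0(L'-D_3')>0$ is \emph{not} a curve condition on the surface $W_{pq}$: already in $Pic^5C$ the locus $\{M:\ h^0(M)\ge 2,\ h^0(M-D_3')>0\}$ is one-dimensional (write $M=D_3'+x+y$; then $h^0(M)\ge 2$ forces $x+y$ to lie in a member of the pencil $|K_C-D_3'|$), so imposing in addition the condition $h^0(M-p-q)>0$ that cuts out $W_{pq}$ leaves finitely many $M=L'$ for general $D_3'$ --- it is a codimension-two condition on $W_{pq}$, not codimension one. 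Conversely, if both conditions really were curve conditions as you assert, then, since $a$ itself moves in a one-dimensional family, your count would produce a one-dimensional fiber over \emph{every} $s^1_4$, contradicting the finiteness you are trying to prove; "two general curves on a surface meet in finitely many points" only gives finiteness for each fixed $a$, not finiteness of the fiber. The paper's actual argument avoids this by the dichotomy $a\not\le\G_3':=|L-p-q|$ versus $a\le\G_3'$: in the first case $h^0(L'-D_3')>0$ forces $h^0(L'-s^1_4)>0$, so $L'=s^1_4+c$, and membership of $L'$ in $W_{pq}$ then allows only finitely many $c$ unless $s^1_4=g_i$, where $h^0(g_i-p-q)>0$ makes that membership automatic and produces the family $L=h_i+p+q-c$.

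The larger gap is in the case $a\le\G_3'$, which is exactly where the completeness of your list of one-dimensional components must be proved. Writing $\G_3'=a+\G_2'$, the conditions become $h^0(K_C-s^1_4-\G_2')>0$ and $h^0(\G_2'+p+q-r_0)>0$; the latter is automatic precisely when $h^0(\G_2'+p+q)=2$, i.e. $\G_2'\equiv g_j-p-q$ for some $j$, and one then has to know \emph{exactly} which $g^1_4$'s contain $\G_2'$. This is Claim \ref{claimhigi} (they are $g_j$ and the $h_l$ with $l\ne j$), whose proof is a genuine geometric step using the pencil of quadrics through the canonical curve and the secant $\langle p+q\rangle$ and its base Del Pezzo surface; it is what yields the families $L=a+g_i$ over $h_i$ and $L=a+g_j$, $j\ne i$, over $g_i$, and simultaneously rules out any other one-dimensional components, with the residual possibility $h^0(\G_2'+p+q)=1$ (forcing $r_0\le\G_2'$) contributing only finitely many points. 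Your outline replaces this by "routine Riemann--Roch bookkeeping once the quadric-of-rank-4 picture is in hand" together with genericity of $r_0$ and avoidance of bad secant configurations; but the one-dimensional components are not avoidable by genericity --- classifying them is the content of the lemma --- and nothing in your plan supplies the classification of the $g^1_4$'s containing $g_i-p-q$. As it stands, the proposal assumes the answer over the special points and has an off-by-one error over the general ones, so it does not yet constitute a proof.
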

\begin{proof}
 Consider the projection of $Z_{(4,1)(2,3)}$ to  $W_{1}\times C_2\times C_2^{(3)}$ consisting of $(L,a,D_3')$ satisfying  equations
\begin{eqnarray}
&\label{eqnZ1}  h^0(L'-D_3')>0 & \\
\label{eqnZ4123} & h^0(L-r_0-a)>0 & \\
& \label{eqnZ2}h^0(a+D_3')>1. &
 \end{eqnarray}

  Fix any $s^1_4\in W^1_4(C)$. Suppose $a+D'_3\equiv s^1_4$. In the canonical space $|K_C|^*$, the span $\langle D'_3 \rangle$ is a plane ($C$ is not trigonal). By Riemann-Roch, $a \in \langle D_3' \rangle$. We have two cases:
  
  \begin{enumerate} 
  \item $a\not\leq \G_3':=K_C-L'$.  In this case, $h^0(L'-D_3')=h^0(K_C-\G_3'-D'_3)>0$ implies $h^0(K_C-\G'_3-s^1_4)>0$, i.e. $h^0(L'-s^1_4)>0$. If $s^1_4\ne g_i$, then $L'=p+s^1_4$ or $L'=q+s^1_4$ because $h^0 (L' -p-q ) >0$.
     In either case, there are finitely many choices of $a$ satisfying condition \eqref{eqnZ4123} and therefore there are finitely many points in $Z_{(4,1)(2,3)}$ that map to $s^1_4$. If $s^1_4=g_i$, then there exists $c\in C$ such that $L'= c+g_i$ and \eqref{eqnZ4123} becomes
 $$h^0(K+p+q-(c+g_i)-r_0-a)=h^0(h_i+p+q-c-r_0-a)>0.$$
For each $c$, there are 4 choices of $a$ satisfying the above condition, therefore $(L,a,D_3')$ in this case has to be
 $$\{(L=h_i+p+q-c,a,D_3'=g_i-a):\ c\in C,\ h^0(h_i+p+q-c-r_0 -a)> 0\},$$
 Finally, there is a unique lifting of such $(L,a,D_3')$  to a point $(L,D_4,a,D_2',D_3')$ in $Z_{(4,1)(2,3)}$ as described in the statement.

\item $a\leq\G'_3$. Write $\G_3'=a+\G_2'$. The conditions defining the fiber of $Z_{(4,1)(2,3)}$ over $s^1_4$ are

$$\begin{cases}
 h^0(K_C-a-\G_2'-D_3')=h^0(K_C-s^1_4-\G_2')>0\\
 h^0(K_C-\G_3-a-r_0)=h^0(\G_3'+p+q-a-r_0)=h^0(\G_2'+p+q-r_0)>0\\
 a+D_3'\equiv s^1_4
  \end{cases}$$
Put $h^1_4 := |K_C-s^1_4|$ so that, by the above, $h^0 (h^1_4 -\G_2') >0$.
 There are two subcases:
 \begin{enumerate} 
 \item  $h^0(\G_2'+p+q)=2$. So the second condition above is automatically satisfied.
 
Here $\G_2'+p+q\in g_{i}$ for some $i$.
 
\begin{claim}\label{claimhigi} The five $g^1_4$s containing $\G_2'$ are $g_{i}$ and $h_j$ for $j\ne i$.
\end{claim}
To prove this, denote $l_{pq}$ the line in $\bP^2=\bP(I_2(C))$ consisting of quadrics vanishing on the secant line $\langle p+q \rangle$ in $|K_C|^*$. There are five rank 4 quadrics $Q_j$, $j=1,...,5$ in $l_{pq}$, corresponding to the intersection of $l_{pq}$ with the quintic curve parametrizing rank 4 quadrics in $\bP (I_2(C))$. For each $j$, $g_j$ is cut on $C$ by one ruling of $Q_j$. Let $S$ be the base locus of the pencil $l_{pq}$. Then $S$ is a Del Pezzo surface of degree $4$. By construction $\langle p+q \rangle$ is contained in $S$.  Since the span $\langle p+q+\G_2'\rangle$ is a plane in $|K_C|^*$, $S\cap\langle p+q+\G_2'\rangle$ is a conic containing $\langle p+q \rangle$, thus $S\cap\langle p+q+\G_2'\rangle = \langle p+q \rangle\cup\langle \G_2' \rangle$. Therefore the pencil of quadrics containing $\langle \G_2' \rangle$ is also $l_{pq}$. We know that $\langle p+q+\G_2'\rangle \subset Q_{i}$. For all $j \neq i$, $Q_j \cap \langle p+q+\G_2'\rangle = S \cap \langle p+q+\G_2'\rangle = \langle p+q \rangle\cup\langle \G_2' \rangle$. So $\G_2'$ and $\langle p+q \rangle$ belong to different rulings of $Q_j$, i.e., $\G_2'$ is contained in the ruling of $Q_j$ corresponding to $h_j$ for $j\ne i$. The claim is proved.

Thus $h^1_4=g_{i}$ or $h^1_4 = h_j$ for some $j\ne i$.

So those $(L,a,D_3')$ which map to $s^1_4=h_{i}$ are 
$$\{(L=a+g_{i}, a, D_3'\equiv h_{i}-a):\ a\in C\}.$$
Similarly, the $(L,a,D_3')$ which map to $s^1_4=g_j$ for $j\ne i$ are 
$$\{(L = a+g_{i},a,D_3'\equiv g_j-a):\ a\in C,\ j\ne i\}.$$
There are unique liftings to points in $Z_{(4,1)(2,3)}$ as described in the statement of the proposition.

\item $h^0(\G_2'+p+q)=1$. Then the second condition implies $h^0 (\G'_2 -r_0) > 0$. For each $s^1_4$, there are finitely many choices of $\G_2'=r_0+b$ satisfying the first condition and for each choice of $\G_2'$, there are finitely many choices of $a$ such that $a+\G_2'\in W_{pq}$ (because this means $h^0 (K_C -a - \G_2' -p-q ) > 0$, and, since $h^0(\G_2'+p+q)=1$, we have $h^0 (K_C -\G_2' -p-q ) = 1$ as well). Therefore, there are no positive dimensional fibers in this case.
\end{enumerate}
\end{enumerate}
\end{proof}

\begin{lemma}\label{lemZ2341}
\begin{enumerate}
\item The only positive dimensional fibers in $Z_{(2,3)(4,1)}$ are $\lambda_2^{-1}(g_i)\cap Z_{(2,3)(4,1)}$. 
For each $i$, the one-dimensional components of $\lambda_2^{-1}(g_i)\cap Z_{(2,3)(4,1)}$ are supported on the curve
$$\{(L,D_2,D_3,D_4',a'): L=c+g_i,c\in C, D_3\equiv g_i-r_0, D_2=r_0+c, a'=r_0,D_4'\equiv h_i+p+q-c-r_0\}$$
 and
 $$ \{(L,D_2,D_3,D_4',a'):L=r_0+g_i, D_3\equiv g_i-a', D_2=a'+r_0, D_4'\equiv h_i+p+q-r_0-a',a'\in C\}\footnote{This component is contracted by $\lambda=(\lambda_1,\lambda_2)$, and therefore does not contribute to the Abel-Jacobi map in Section \ref{secAJ1}.}.$$
\item All fibers in $Z_{(2,3)(2,3)}$ are finite.
\end{enumerate}
\end{lemma}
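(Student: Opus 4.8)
```latex
\textbf{Proof proposal for Lemma \ref{lemZ2341}.}

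The plan is to imitate the analysis of Lemmas \ref{lemZ4141} and \ref{lemZ4123}, treating the two bidegrees $(2,3)(4,1)$ and $(2,3)(2,3)$ separately, and in each case reducing the study of the fibers of $\lambda_2$ to a dimension count on the canonical model of $C$. For part (1), I would start from Table \ref{tableW1}: a point of $F_{(2,3)(4,1)}$ is a tuple $(L,D_2,D_3,D_4',a')$ with $D_2+D_3\in|L|$, $D_3'+a'\in|L'|$ (here $D_4'=D_3'+q$ after normalization, matching the component structure), subject to the extra incidence condition $r_0\le D_4'$ coming from $F_{r_0}$, and $\psi_{(2,3)(4,1)}$ sends it to $\cO_C(D_3+a')$. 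So the fiber $\lambda_2^{-1}(s^1_4)$ is cut out by $D_3+a'\equiv s^1_4$ together with $h^0(L)\ge2$, $h^0(L')\ge2$ and the $r_0$-incidence. The key step is the observation (exactly as in Lemma \ref{lemZ4123}) that $D_3+a'\equiv s^1_4$ with $\langle D_3\rangle$ a plane forces $a'\in\langle D_3\rangle$, so either $a'\le\G_3:=K_C-L$ or not, and in the generic situation one checks finiteness. A positive-dimensional fiber can only occur when one of the linear systems involved jumps, which, using that $h^0(L-p-q)>0$ on $W_{pq}$ and the description of $W^1_4$ via rank-$4$ quadrics through $C$ and $\langle p+q\rangle$ (proof of Theorem \ref{thmtendp}), pins $s^1_4$ down to one of the $g_i$ (the $h_i$ case being excluded by the asymmetry between $L$ and $L'$ in this bidegree). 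Given $s^1_4=g_i$, I would then solve explicitly for the remaining data: writing $L=c+g_i$ forces $D_3\equiv g_i-r_0$, $D_2=r_0+c$, $a'=r_0$, $D_4'\equiv h_i+p+q-c-r_0$, giving the first curve; the alternative branch $L=r_0+g_i$ with $a'$ free gives the second, and one notes it is contracted by $\lambda$ since $\lambda_1$ is constant along it and $\lambda_2$ lands in the single point $g_i$, as recorded in the footnote. For part (2), the same setup applied to $F_{(2,3)(2,3)}$ gives the image of $\psi$ equal to $K_C(-D_2-D_2')$, and the point is that now \emph{both} $|L|$ and $|L'|$ must jump simultaneously to produce a moving family; a Brill--Noether/secant-variety count (again invoking generality of $C,p,q$ as in the Proposition before Lemma \ref{lemW15bar}) shows this cannot happen in positive dimension, so all fibers are finite.

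The main obstacle I anticipate is the bookkeeping in part (1): one must keep straight which copy of $C$ the points $D_2,D_3,D_4',a'$ live on under the identification of the central fiber components of $\tC_{pq}^{(5)}$, and correctly account for the normalization shifts by $p$ and $q$ when passing between overlapping components (the same subtlety handled in Proposition \ref{propFregular}). In particular the $r_0$-incidence condition needs care because $r_0$ is a section of $\widetilde{\cX}\ra T$ and its specialization lands on one of $C_1,C_2$; one must use that specialization consistently when writing $r_0\le D_4'$ versus $r_0\le D_3$ and when asserting $h^0(L-r_0-a)>0$. Once that is set up, the finiteness statements are routine dimension counts, and the explicit curves in (1) are obtained by direct substitution, so I would not belabor those computations; the real content is the classification of which $s^1_4$ admit positive-dimensional fibers, which follows from the quadric-of-rank-$4$ description of $W^1_4$ together with the genericity hypotheses already in force.
```
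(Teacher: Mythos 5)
Your overall plan (project to a smaller product, write out the defining conditions, run the case analysis of Lemma \ref{lemZ4123}, then solve explicitly over $g_i$) is the paper's strategy, but your setup misplaces the $r_0$-incidence, and this breaks the key classification step. The cycle $F_{r}$ is cut out by requiring $r_0$ to lie in the \emph{first}-factor divisor, which in bidegree $(2,3)(4,1)$ is $D_2+D_3$; with the conventions used throughout Proposition \ref{Znu1} this reads $r_0\le D_2$, i.e. $h^0(L-D_3-r_0)>0$, and this is exactly the condition the paper works with (it is visible in the curves of the statement: $D_2=r_0+c$, resp. $D_2=a'+r_0$). You instead impose $r_0\le D_4'$, a condition on the second factor. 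With your condition the projected system on $(L,D_3,a')$ becomes $h^0(L-D_3)>0$, $h^0(L'-a'-r_0)>0$, $h^0(D_3+a')>1$, which is precisely the system of Lemma \ref{lemZ4123} with the roles of $L$ and $L'$ exchanged (the involution $\iota$ preserves $W_{pq}$); the mirrored analysis then yields one-dimensional fibers over the $h_i$ as well, and different curves over the $g_i$, contradicting the statement you are proving. So your parenthetical claim that ``the $h_i$ case is excluded by the asymmetry'' is not only unargued, it is false for the conditions as you wrote them, and the explicit solution you give afterwards ($L=c+g_i$ forcing $a'=r_0$, $D_2=r_0+c$, etc.) tacitly uses the correct condition rather than the stated one. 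Relatedly, the case split you propose to import from Lemma \ref{lemZ4123} (whether or not $a'\le K_C-L$) is not the relevant dichotomy here: in this bidegree there is no condition pairing $D_3$ with $L'$; what forces $s^1_4=g_i$ and produces exactly the two families is the interaction of $r_0\le D_2\equiv L-D_3$ with the $W_{pq}$-condition $h^0(L-p-q)>0$, which can persist along a one-parameter family only if $h^0(s^1_4-p-q)>0$, giving either $a'=r_0$ with $L=c+g_i$ moving, or $L=r_0+g_i$ fixed with $a'$ moving (the latter contracted by $\lambda$, as you correctly note).

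For part (2) your sketch is serviceable but the mechanism is described inaccurately. The paper projects $Z_{(2,3)(2,3)}$ to $W_{pq}\times C_1^{(2)}\times C_1^{(2)}$ with conditions $h^0(L-D_2)>0$, $h^0(L'-D_2')>0$, $r_0\le D_2$, $h^0(K_C-D_2-D_2')>1$; over a fixed $s^1_4$ the relation $D_2+D_2'\equiv K_C-s^1_4$ together with $r_0\le D_2$ and the generality of $r_0$ pins $D_2$ and $D_2'$ down to finitely many choices, after which the two secancy conditions leave only finitely many $L\in W_{pq}$. Nothing here is about $|L|$ and $|L'|$ ``jumping simultaneously''; if you rewrite your dimension count along these lines (and fix the location of the $r_0$-condition in part (1)), the argument matches the paper's.
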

\begin{proof}
\begin{enumerate}
\item
The projection of $Z_{(2,3)(4,1)}$ to $W_{pq}\times C_2^{(3)}\times C_2$ is the locus of 
$(L,D_3,a')$ satisfying
$$\begin{cases}h^0(L-D_3-r_0)>0\\
h^0(a'+D_3)>1.
\end{cases}$$
As in the previous lemma, only the inverse image of $g_i$ is positive dimensional, it is equal to
$$\{(L=c+g_i, a'=r_0, D_3\equiv g_i-r_0)\ :\ c\in C\} \cup \{(L=r_0+g_i, D_3\equiv g_i-a', a')\ :\ a'\in C\}.$$
As before, we can uniquely lift these curves to $Z_{(2,3)(4,1)}$.

\item The projection of $Z_{(2,3)(2,3)}$ to $W_{pq}\times C_1^{(2)}\times C_1^{(2)}$ is the locus of $(L,D_2,D_2')$ satisfying

$$\begin{cases} h^0(L-D_2)>0\\
 h^0(L'-D_2')>0\\
 r_0\leq D_2\\
 h^0(K_C-D_2-D_2')>1.
 \end{cases}$$
These cycles are also one dimensional but there are only finitely many points mapping to a fixed $s^1_4$ (we choose $r_0$ general such that $r_0+p+q$ is not in any $g^1_4$). 
\end{enumerate}\end{proof}
By the previous three Lemmas, Proposition \ref{Znu1} is proved.

We also need to describe the components of $Z_1$ which lie over $X_{1p}$ under $\lambda_1$. This will be needed in the computation of the Abel-Jacobi map in Section \ref{secGr3}.
\begin{lemma}\label{X1p} The scheme $Z_1$ has the following components which map onto $X_{1p}$ by $\lambda_1$.  Each component maps onto $W^1_4$ by $\lambda_2$. They are supported on
$$\{(L,D_2,D_3,D_4',a'):L=p + g^1_4,D_3\equiv g^1_4-r_0, D_2=p+r_0,a'=r_0,D_4'\equiv p+g^1_4-r_0 \}\subset Z_{(2,3)(4,1)},$$
$$\{(L,D_4,a,D_2',D_3'):L=p+g^1_4, a=p,r_0\le D_4\equiv g^1_4,a=p, D_3'\equiv K_C-g^1_4 -a\}\subset Z_{(4,1)(2,3)},$$
and
 $$\{(L,D_4,a,D_2',D_3'):L=p+g^1_4, h^0(g^1_4-r_0-a)>0, D_4\equiv p+g^1_4-a, D_3'\equiv K_C-g^1_4 -a,D_2'=a+q\}\subset Z_{(4,1)(2,3)}.$$
 
\end{lemma}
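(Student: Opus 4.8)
The plan is to run through the four bidegree strata
$(d_1,d_2)(e_1,e_2)\in\{(4,1)(4,1),\ (4,1)(2,3),\ (2,3)(4,1),\ (2,3)(2,3)\}$
making up $F_{r_01}$ in Proposition \ref{propcycleP}, and in each one to single out the components of $Z\cap F_{(d_1,d_2)(e_1,e_2)}$ whose image under $\lambda_1$ is all of $X_{1p}$. The key starting observation is that a point of $W_1=W_{pq}$ lies on $X_{1p}$ precisely when it has the form $L=p+g^1_4$ with $g^1_4\in W^1_4(C)$; by Lemma \ref{lemW15bar} such a point corresponds to the non-locally-free sheaf $\nu_*g^1_4$ on $C_{pq}$, its residual $M'$ corresponds to $\nu_*h^1_4$ with $h^1_4:=|K_C-g^1_4|$, and the sections of $\nu_*g^1_4$ (resp.\ $\nu_*h^1_4$) are $H^0(C,g^1_4)$ (resp.\ $H^0(C,h^1_4)$), their images in $C_{pq}^{(5)}$ being $\nu_*(\operatorname{div}(s)+p)=\nu_*(\operatorname{div}(s)+q)$ with $\operatorname{div}(s)$ a member of $|g^1_4|$ (resp.\ $|h^1_4|$), as recorded in Section \ref{subsecG15}. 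I would fix such an $L$ and impose, in turn, the incidence conditions $D_{d_1}+D_{d_2}\in|M|$ and $D'_{e_1}+D'_{e_2}\in|M'|$, the condition $r_0\le D_{d_1}$ coming from $F_{r_0}$ (recall $r_0\in C_1$), and the condition defining $Z$, namely that $\psi_{(d_1,d_2)(e_1,e_2)}$ (the morphism to $\T_0$ listed in Tables \ref{tableW1} and \ref{tableW2}) sends the point into $W^1_4$.

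In the strata $(4,1)(4,1)$ and $(2,3)(2,3)$ I expect no component to dominate $X_{1p}$: once $L=p+g^1_4$ with $r_0$ general, the member of $|M|$ through $r_0$ is rigid, and the remaining condition ($a+a'+p+q\in W^1_4$, resp.\ $h^0(K_C-D_2-D'_2)\ge2$ with $r_0\le D_2$) is too restrictive, exactly as in Lemmas \ref{lemZ4141} and \ref{lemZ2341}. In the stratum $(2,3)(4,1)$, substituting $L=p+g^1_4$ forces the divisor of $M$ meeting $r_0\in C_1$ to be the member of $|g^1_4|$ through $r_0$, splitting off $D_2=p+r_0$ and $D_3\equiv g^1_4-r_0$; the condition $\psi_{(2,3)(4,1)}(\cdot)=\cO_C(D_3+a')\in W^1_4$ then forces $a'=r_0$, and solving the $M'$-incidence gives $D'_4\equiv p+g^1_4-r_0$, i.e.\ exactly the first curve in the statement. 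The stratum $(4,1)(2,3)$ is analogous, but here $\psi_{(4,1)(2,3)}(\cdot)=\cO_C(a+D'_3)\in W^1_4$ forces $D'_3\equiv K_C-g^1_4-a$, and there are two ways to match $r_0$ and the node-point $p$ with the components of $\tC_{pq}$ — either $a=p$ with $r_0\le D_4$, or $a$ a free point with $r_0\le D_4$ and $p$ appearing in $D'_2$ — which produces the two listed curves in $Z_{(4,1)(2,3)}$.

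I expect the main obstacle to be twofold. First there is the bookkeeping of which copy $C_1$ or $C_2$ each divisor, and each of the marked points $p$, $q$, $r_0$, lives on, and of how a section of the non-locally-free sheaf $\nu_*g^1_4$ or $\nu_*h^1_4$ distributes its divisor over the reducible curve $\tC_{pq}$; this is precisely what is needed to confirm that the curves produced are genuine components of $Z_1$ (and not pieces of the fibres $\lambda_2^{-1}(g_i)$, $\lambda_2^{-1}(h_i)$ already described in Propositions \ref{Znu1} and \ref{Znu2}) and to read off their reduced supports. Second there is \emph{completeness}: one must check that no further component of $Z_1$ maps onto $X_{1p}$. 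I would deduce this from the generic-finiteness statements already established in Lemmas \ref{lemZ4141}, \ref{lemZ4123} and \ref{lemZ2341} — every stratum is generically finite over $W^1_4$ away from the $g_i$, $h_i$ — so that any component dominating the curve $X_{1p}\subset W_1$ must occur among the finitely many families isolated by the case analysis above, and inspecting those families leaves exactly the three listed.
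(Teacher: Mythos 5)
Your route is the same as the paper's: fix a general $L=p+g^1_4\in X_{1p}$, note that only $Z_{(2,3)(4,1)}$ and $Z_{(4,1)(2,3)}$ can contribute, and extract the components from the incidence conditions for $|L|$, $|L'|$, the condition $r_0\le D_4$ (resp. $r_0\le D_2$), and the requirement that the image under $\psi_{(d_1,d_2)(e_1,e_2)}$ lie in $W^1_4$; your dichotomy in $Z_{(4,1)(2,3)}$ (either $a=p$, or $a$ among the points of $g^1_4-r_0$ with the node point absorbed into $D_4$) matches the paper's enumeration of the four solutions of $h^0(p+g^1_4-r_0-a)>0$, which produces the second and third curves (the latter a degree $3$ cover of $X_{1p}$), and completeness is indeed automatic once the fibre of $Z_1$ over a general $L\in X_{1p}$ is described exhaustively.

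The gap is that your case analysis inside these two strata is not exhaustive as written, and the omitted configurations are precisely where the paper needs a genericity argument rather than bookkeeping. In $Z_{(2,3)(4,1)}$ the condition $h^0(p+g^1_4-D_3-r_0)>0$ does not force $D_3\equiv g^1_4-r_0$: the alternative $D_3=p+B_2$ with $h^0(g^1_4-r_0-B_2)>0$ (node point carried on the $C_2$ side) is also allowed, and it is eliminated only because $|a'+p+B_2|$ is not a pencil for $p$ and $g^1_4$ general. Likewise in $Z_{(4,1)(2,3)}$ the condition $h^0(L'-D_3')>0$ admits $D_3'=p+B_2'$ with $h^0(K_C-g^1_4-B_2')>0$, which must be excluded by the analogous genericity argument; moreover your claim that $\cO_C(a+D_3')\in W^1_4$ ``forces'' $D_3'\equiv K_C-g^1_4-a$ is not valid on its own, since membership in $W^1_4$ only says $a+D_3'$ lies in \emph{some} pencil of degree $4$ --- one needs the surviving case $h^0(K_C-g^1_4-D_3')>0$ together with the fact that an effective divisor of degree $3$ on a general curve of genus $5$ is contained in at most one $g^1_4$ to identify that pencil with $|K_C-g^1_4|$ (and the same remark is needed to force $a'=r_0$ in the first stratum). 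Without these exclusions the completeness of the list of components over $X_{1p}$, which is the content of the lemma, is not established; note also that generic finiteness of $\lambda_2$ over $W^1_4$ is not by itself the right mechanism for completeness, since a priori a component dominating $X_{1p}$ under $\lambda_1$ could sit inside one of the fibres $\lambda_2^{-1}(g_i)$ or $\lambda_2^{-1}(h_i)$ --- what rules everything out is the exhaustive fibre analysis over a general point of $X_{1p}$, i.e. the paper's argument.
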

\begin{proof} Fix a general $L=p+g^1_4\in X_{1p}$. One easily sees from Table 2 that only $Z_{(2,3)(4,1)}$ and $Z_{(4,1)(2,3)}$ have a point over $L$. 

In $Z_{(2,3)(4,1)}$, the condition $h^0(p+g^1_4-D_3-r_0)>0$ implies that either $D_3\equiv g^1_4-r_0$ or $D_3=p+B_2$ with $h^0(g^1_4-r_0-B_2)>0$. In the first case, $h^0(a'+D_3)>1$ implies $a'=r_0$. This is because $D_3'$ can only be contained in at most one pencil of degree $4$. Thus we obtain the first curve in the statement of the lemma. The second case can not happen because $|a'+p+B_2|$ can not be a pencil for $p$ and $g^1_4$ general.

In $Z_{(4,1)(2,3)}$, there are four choices of $a$ such that $ h^0(p+g^1_4-r_0-a)>0$. The condition $h^0(L'-D_3')=h^0(q+K_C-g^1_4-D_3')>0$ implies that either $h^0(K_C-g^1_4-D_3')>0$ or $D_3'=p+B_2'$ with $h^0(K_C-g^1_4-B_2')>0$. In the first case, $h^0(a+D_3')>1$ implies that $D_3'\equiv K_C-g^1_4-a$. This is because $D_3'$ can only be contained in at most one pencil of degree $4$. Thus we obtain the curves in the statement of the lemma. In the second case, $|a+q+D_2'|$ can not be a pencil for $q$ and $g^1_4$ general.  Note that the last component is a degree 3 cover of $X_{1p}$ under $\lambda_1$.
\end{proof}

\subsection{Infinitesimal study of $F_{r_0}$ and $Z$} \label{subsecinfinite}
 In this subsection, we prove that each irreducible component of the center of the blow-up $F''_{r_0}\cap (W_k\times W^1_4)$ is generically smooth, or equivalently, generically reduced. We also prove that $F''_{r_0}$ is generically smooth along a general point in each irreducible component of $F''_{r_0}\cap (W_k\times W^1_4)$.  
 
 The infinitesimal study is similar for all components. So let us take one component, say the image in $W_1\times \T_0$ of the curve in $Z_{(4,1)(2,3)}$
 $$\{(L,D_4,a,D_2',D_3'):L=a+g_i, a\in C, r_0\le D_4\equiv g_i, D_3'\equiv h_i-a, D_2'=p+q \}.$$
 This curve projects isomorphically to (with identification $C_1=C_2=C$)
 \begin{eqnarray}\label{sampleZ}Z'_{(4,1)(2,3)}=\{(L,a,D_3'):L=a+g_i,a,D_3'\equiv h_i-a, a\in C\}\subset W_1\times C\times C^{(3)}
\end{eqnarray}
It surfaces to show that the curve $Z'_{(4,1)(2,3)}$ is generically reduced. To this end, recall that by \cite[p 189]{ACGH}, for any line bundle $M$ of degree $d$ on $C$, and $v\in H^1(\cO_C)=T_MPic^dC$ a tangent vector, all sections in $H^0(C,M)$ extend to first order along $v$ if and only if 
$$(v,Im\mu_M)_S=0$$
where $(,)_S$ is the pairing for Serre duality and 
$$\mu_M:H^0(M)\otimes H^0(K_C- M)\ra H^0(K_C)$$
is the multiplication map.

 Note that $Im\mu_{g_i}$ is of codimension $1$ in $H^0(K_C)$ by the base point free pencil trick. 
 
If we embed $W_1\times C\times C^{(3)}$ in $Pic^5C\times Pic^1C\times Pic^3C$, by the previous paragraph, the tangent space to $W_1\times C\times C^{(3)}$ at the point $(L,a,D_3')$ consists of $(v_1,v_2,v_3)\in H^1(\cO_C)^{\oplus3}$ such that 
\begin{eqnarray}\label{conditionv1}&&v_1\in Im\mu_L^\perp\cap Im\mu_{L'}^{\perp},\\
&&\label{conditionv2} v_2\in H^0(K_C-a)^{\perp},\\
&&\label{conditionv3}  v_3\in H^0(K_C-D_3')^\perp.
  \end{eqnarray}
 
 \begin{lemma}A tangent vector $(v_1,v_2,v_3)\in H^1(\cO_C)^{\oplus3}$ of $W_{pq}\times C\times C^{(3)}$ is tangent to $Z'_{(4,1)(2,3)}\subset W_{pq}\times C\times C^{(3)}$ at $(L=a+g_i,a,D_3'=h_i-a)$ if in addition the following holds
\begin{eqnarray}\label{conv13}&&v_1+v_3\in H^0(K_C-p-q)^\perp,\\
\label{conv12}&&v_1-v_2\in H^0(K_C-(g_i-r_0))^\perp,\\
&&\label{conv23}v_2+v_3\in Im\mu_{g_i}^\perp.
\end{eqnarray} 
\end{lemma}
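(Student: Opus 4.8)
The plan is to realise $Z'_{(4,1)(2,3)}$, locally near a general point, as the zero scheme inside $W_{pq}\times C\times C^{(3)}$ of the first-order shadows of the line-bundle constraints cutting out $Z_{(4,1)(2,3)}\subset F_{r_0}$, and then to read off the Zariski tangent space by linearising. By Table \ref{tableW1} and \eqref{sampleZ}, at a general point of $Z'_{(4,1)(2,3)}$ the auxiliary divisors $D_4$ and $D_2'$ are forced to be $r_0+E$ with $E$ the unique member of $|L-a-r_0|$, respectively the unique member of $|L'-D_3'|$; eliminating them identifies $Z_{(4,1)(2,3)}$, near the point $(L=a+g_i,\,a,\,D_3'\equiv h_i-a)$, with the locus in $W_{pq}\times C\times C^{(3)}$ defined by
\[
\text{(a)}\ h^0(L-a-r_0)>0,\qquad \text{(b)}\ h^0(L'-D_3')>0,\qquad \text{(c)}\ h^0\bigl(\cO_C(a+D_3')\bigr)\ge2 .
\]
On $Z'_{(4,1)(2,3)}$ one has $L-a-r_0\equiv g_i-r_0$, $L'-D_3'\equiv p+q$ and $K_C-a-D_3'\equiv g_i$, and for $C,p,q,r_0$ general these satisfy $h^0(g_i-r_0)=1$, $h^0(p+q)=1$, $h^0(g_i)=2$, since $g_i$ and $h_i=K_C-g_i$ are base-point-free pencils and $r_0,p,q$ are general points.

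I would then linearise (a), (b), (c) by the criterion of \cite[p.\ 189]{ACGH}: all sections of a line bundle $M$ on $C$ extend to first order along $v\in H^1(\cO_C)=T_M\Pic C$ exactly when $v\in(\im\mu_M)^\perp$. For (a), the bundle is $\cO_C(L-a-r_0)$, its tangent vector in the deformation $(v_1,v_2,v_3)$ is $v_1-v_2$, and $\im\mu_{g_i-r_0}=H^0\bigl(K_C-(g_i-r_0)\bigr)$ because $h^0(g_i-r_0)=1$; so (a) linearises to \eqref{conv12}. For (b), the bundle is $\cO_C(L'-D_3')$ with tangent vector $-v_1-v_3$, and $\im\mu_{p+q}=H^0(K_C-p-q)$ because $h^0(p+q)=1$; so (b) linearises to \eqref{conv13}. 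For (c), one uses that $W^1_4$ is smooth at $g_i$ with $T_{g_i}W^1_4=(\im\mu_{g_i})^\perp$ (Riemann's singularity theorem; see \cite[p.\ 189]{ACGH}); since the tangent vector of $\cO_C(K_C-a-D_3')$ is $-v_2-v_3$, the requirement that it remain in $W^1_4$ linearises to \eqref{conv23}. Together with the description \eqref{conditionv1}--\eqref{conditionv3} of the tangent space of the ambient $W_{pq}\times C\times C^{(3)}$, this shows that $(v_1,v_2,v_3)$ is tangent to $Z'_{(4,1)(2,3)}$ as soon as it is tangent to $W_{pq}\times C\times C^{(3)}$ and satisfies \eqref{conv13}--\eqref{conv23}, which is the assertion of the lemma.

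The delicate point is the scheme-theoretic bookkeeping. Since $Z_{(4,1)(2,3)}$ is a fibre product over the singular divisor $\T_0$, whose singular locus $W^1_4$ is locally $\{x_1=\cdots=x_4=0\}$ inside the quadric cone $\{x_1x_2=x_3x_4\}$, one must check that the pulled-back ideal of $W^1_4$, together with the equations of $F_{r_0}$, really reduces at a general point of $Z'_{(4,1)(2,3)}$ to the three functions (a), (b), (c) above --- equivalently, that forgetting $D_4$ and $D_2'$ is a local isomorphism of schemes there --- so that no spurious infinitesimal directions enter. This is exactly where the genericity hypotheses (e.g.\ $h^0(g_i-r_0)=1$, $r_0+p+q$ not contained in any $g^1_4$, $K_C\not\equiv 2g_i$) are used, keeping all the relevant $h^0$'s and multiplication maps $\mu_M$ in the expected ranks. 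Granting this, a dimension count finishes the job: the obvious tangent direction $(v,v,-v)$ with $v\in H^0(K_C-a)^\perp$ already spans a line, and \eqref{conv13}--\eqref{conv23} cut the ambient tangent space down to precisely that line, so $\dim T_xZ'_{(4,1)(2,3)}=1=\dim Z'_{(4,1)(2,3)}$ and $Z'_{(4,1)(2,3)}$ is generically reduced, as needed for the infinitesimal study of $F''_{r_0}$.
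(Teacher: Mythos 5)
Your proposal is correct and takes essentially the same route as the paper: the paper likewise observes that $Z'_{(4,1)(2,3)}$ is cut out scheme-theoretically by the three conditions $h^0(L'-D_3')>0$, $h^0(L-r_0-a)>0$, $h^0(a+D_3')>1$ and linearizes them via the first-order criterion of \cite[p. 189]{ACGH}, exactly as you do (your closing dimension count is the content of the subsequent smoothness proposition rather than of this lemma, but it is consistent with the paper's argument there).
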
 
\begin{proof} The cycle $Z'_{(4,1)(2,3)}$ is defined scheme theoretically by (\ref{eqnZ1}), (\ref{eqnZ4123}), and (\ref{eqnZ2}). These translate into the above conditions for infinitesimal deformations.
\end{proof}

\begin{proposition}\label{propZsmooth}
Each irreducible component of $F_{r_0}''\cap(W_k\times W^1_4)$ is generically smooth. 
 \end{proposition}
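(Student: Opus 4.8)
The plan is to reduce Proposition \ref{propZsmooth} to a Zariski tangent space computation carried out at a general point of each of the curves that make up $Z=Z_1\cup Z_2$. By Proposition \ref{propcycleP} and the construction of $\cF_r''$, the center $F_{r_0}''\cap(W_k\times W^1_4)$ is the image under $\lambda=(\lambda_1,\lambda_2)$ of the scheme $Z_k$, and on each component of $Z_k$ that is not contracted $\lambda$ is birational onto its image; the one-dimensional components of $Z_k$ were enumerated explicitly in Propositions \ref{Znu1} and \ref{Znu2} and the supporting Lemmas \ref{lemZ4141}--\ref{lemZ2341} and \ref{X1p}. A one-dimensional scheme is generically reduced precisely when its Zariski tangent space is one-dimensional at the general point of each component, and a birational morphism between reduced curves is an isomorphism over a dense open set; so it suffices to show that each listed curve has one-dimensional tangent space at its general point, and generic reducedness will then descend to $F_{r_0}''\cap(W_k\times W^1_4)$. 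The same analysis applied to $F_{r_0}''$ itself, rather than to its intersection with $W_k\times W^1_4$, will moreover yield the generic smoothness of $F_{r_0}''$ along each of these curves, which we record for use in forming the projectivized normal cone.

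As announced before the proposition, the computation is uniform, so I would carry it out in full for the single representative $Z'_{(4,1)(2,3)}$ of (\ref{sampleZ}) inside $W_{pq}\times C\times C^{(3)}$ and only indicate the bookkeeping changes for the other components. By (\ref{conditionv1})--(\ref{conditionv3}) and the lemma just proved, the Zariski tangent space of $Z'_{(4,1)(2,3)}$ at a general point $(L=a+g_i,\,a,\,D_3'=h_i-a)$ is the space of triples $(v_1,v_2,v_3)\in H^1(\cO_C)^{\oplus3}$ satisfying (\ref{conditionv1})--(\ref{conditionv3}) together with (\ref{conv13})--(\ref{conv23}), and the goal is to prove this space one-dimensional. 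Using Serre duality to identify $H^1(\cO_C)$ with $H^0(K_C)^*$, each of these conditions says that a fixed linear combination of the $v_j$ annihilates an explicit subspace of $H^0(K_C)$: one of the multiplication images $Im\,\mu_L$, $Im\,\mu_{L'}$, $Im\,\mu_{g_i}$, or one of the spaces $H^0(K_C-a)$, $H^0(K_C-D_3')$, $H^0(K_C-p-q)$, $H^0(K_C-(g_i-r_0))$ of sections vanishing on a divisor. I would first record the dimensions of all these subspaces by Riemann--Roch, Serre duality, and the base-point-free pencil trick, using the relations at hand ($L=a+g_i\in W^1_5\setminus W^2_5$, $g_i+h_i\equiv K_C$, $D_3'=h_i-a$, $h^0(g_i-p-q)>0$), and then run the linear algebra: (\ref{conditionv2}) pins $v_2$ to a line, (\ref{conditionv1}) pins $v_1$ to the two-plane $T_LW_{pq}$, and (\ref{conditionv3}) pins $v_3$ to the three-plane $T_{D_3'}C^{(3)}$, after which the three relations (\ref{conv13})--(\ref{conv23}) must be shown to reduce this six-dimensional space to exactly one dimension.

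The step I expect to be the main obstacle is verifying that these subspaces of $H^0(K_C)$ are in sufficiently general position for the count to land on $1$. They are not in fully general position: the relations $L=a+g_i$, $g_i+h_i\equiv K_C$, $D_3'=h_i-a$, $g_i-p-q\ge0$ force prescribed overlaps among $Im\,\mu_L$, $Im\,\mu_{g_i}$, $H^0(K_C-D_3')$ and the others, and one must show that, once exactly these forced intersections are accounted for, no further coincidence occurs for $C$, $p$, $q$, $r_0$ general. I would establish this with the canonical-curve geometry already deployed in Lemma \ref{lemZ4123} and Claim \ref{claimhigi}: the pencil $l_{pq}$ of quadrics through the canonical curve and its secant $\langle p+q\rangle$, the five rank-$4$ quadrics it contains, and the degree-$4$ Del Pezzo surface that is its base locus, which together dictate precisely which sections of $K_C$ lie in which multiplication image, so that choosing $p$, $q$, $r_0$ off the finitely many bad divisors identified there excludes the remaining relations. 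Granting this, the dimension count gives a one-dimensional tangent space, hence generic reducedness of $Z'_{(4,1)(2,3)}$; the other components are treated identically, and the birationality of $\lambda$ on them transports the conclusion to $F_{r_0}''\cap(W_k\times W^1_4)$, completing the proof.
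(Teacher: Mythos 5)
Your setup coincides with the paper's: you reduce to a tangent--space computation at a general point of the representative component \eqref{sampleZ}, using exactly the conditions \eqref{conditionv1}--\eqref{conditionv3} and \eqref{conv13}--\eqref{conv23}. But the proof stops precisely where the actual work begins. You observe that $(v_1,v_2,v_3)$ ranges over a six-dimensional space and that the relations \eqref{conv13}--\eqref{conv23} ``must be shown to reduce this six-dimensional space to exactly one dimension,'' and you defer that verification to a general-position argument to be extracted from the quadric-pencil/Del Pezzo geometry of Lemma \ref{lemZ4123} and Claim \ref{claimhigi}. That is a genuine gap, for two reasons. First, a naive count does not work: the three relations impose up to $3+2+4=9$ linear conditions on a six-dimensional space, so the issue is not ``no further coincidence occurs'' but rather identifying exactly which dependencies among the conditions are forced and proving that what remains cuts down to dimension one; nothing in your outline does this. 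Second, the tool you point to is the wrong one: Claim \ref{claimhigi} records set-theoretic incidence data (which $g^1_4$'s contain a given $\G_2'$), whereas what is needed here are first-order transversality statements about subspaces of $H^0(K_C)$, and you do not indicate how the quadric geometry would produce them.

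What actually closes the argument in the paper is a short, specific piece of linear algebra that your proposal does not contain. One considers the map $(v_1,v_2,v_3)\mapsto v_2+v_3$, whose image lies in the one-dimensional space $Im\mu_{g_i}^\perp$ by \eqref{conv23} (base-point-free pencil trick), and then shows its kernel is trivial: if $v_2+v_3=0$, then $v_1+v_3=v_1-v_2$ is annihilated by both $H^0(K_C-p-q)$ and $H^0(K_C-(g_i-r_0))$, and these span $H^0(K_C)$ because the pencil $|h_i+r_0|$ has no base point at $p$ or $q$ and separates $p$ from $q$; hence $v_1=v_2=-v_3$, and this common vector lies in $Im\mu_L^\perp\cap Im\mu_{L'}^\perp\cap H^0(K_C-a)^\perp$, which vanishes for $a$ general (the point $a$ of the canonical curve does not lie on the line $\bP T_LW_{pq}$). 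These two concrete transversality facts, not a general-position count, are the heart of the proof; without them, or an equivalent substitute, your argument does not establish that the tangent space is one-dimensional.
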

 \begin{proof}  We only prove the proposition for the component which is the image in $W_1\times \T_0$ of $Z'_{(4,1)(2,3)}$.

Fix a general point $(L=a+g_i,a,D_3'\equiv h_i-a)$.  
 Consider the linear map form tangent space of $Z_{(4,1)(2,3)}$  to $H^1(\cO_C)$ which sends
 $(v_1,v_2,v_3)$ to $v_2+v_3$. Its image is 1 dimensional by (\ref{conv23}).
 To show the tangent space of $Z_{(4,1)(2,3)}$ is $1$ dimensional, it suffices to show that the kernel of this linear map is trivial, i.e. if $v_2+v_3=0$, then $v_1=v_3=0$.
 
 So assume $v_2+v_3=0$. Then
 $$v_1+v_3=v_1-v_2\in H^0(K_C-p-q)^\perp\cap H^0(K_C-(g_i-r_0))^\perp.$$
 Since the pencil $K_C-(g_i-r_0)=h_i+r_0$ does not have base points at $p$ or $q$ and can separate $p$ and $q$, we conclude 
 $$H^0(K_C-p-q)^\perp\cap H^0(K_C-(g_i-r_0))^\perp=(H^0(K_C-p-q)+H^0(K_C-(g_i-r_0)))^\perp=0.$$
 Therefore $v_1=-v_3$. Now by (\ref{conditionv1}) and (\ref{conditionv2}),
 $v_1=v_2=-v_3\in Im\mu_L^\perp\cap Im\mu_{L'}^\perp\cap H^0(K_C-a)^\perp=0$ for $a\in C$ general, this implies $v_1=v_2=v_3=0$.

  \end{proof}

\begin{proposition}\label{propFsmooth}
The scheme $F_{r_0k}$ is smooth at a general point of each component of $Z_k$.
\end{proposition}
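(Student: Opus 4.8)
The plan is to prove smoothness by computing the Zariski tangent space of $F_{r_0k}$ at a general point of each irreducible component of $Z_k$, using the same infinitesimal method employed in Section \ref{subsecinfinite} for the center $Z_k$ of the blow-up. The key observation is that, in the ambient space in which the relevant bidegree stratum naturally lives, $F_{r_0k}$ is cut out by a \emph{proper subset} of the equations that cut out $Z_k$: one keeps the incidence conditions of the middle columns of Tables \ref{tableW1}--\ref{tableW2} together with the condition $r_0\le D_{d_1}$ (resp. $r_0\le D_{e_1}$), but drops the condition $h^0(\dots)>1$ that forces $\lambda_2$ to land in $W^1_4$. Thus the tangent space of $F_{r_0k}$ at a point of $Z_k$ is obtained from that of $Z_k$ by discarding the one linear condition coming from tangency to $W^1_4$, and the content of the proposition is that this enlarges the tangent space by exactly $\dim F_{r_0k}-\dim Z_k=3-1=2$.

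Concretely, I would proceed as follows. By Propositions \ref{Znu1}--\ref{Znu2} and Lemmas \ref{lemZ4141}, \ref{lemZ4123}, \ref{lemZ2341}, $Z_k$ has finitely many irreducible components, each a curve lying in a single bidegree stratum $F_{(d_1,d_2)(e_1,e_2)}$ and described explicitly by formulas in one moving point of $C$. For a representative component I would fix a general point and write down the local equations of $F_{(d_1,d_2)(e_1,e_2)}$ inside the corresponding product of $W_{pq}$ (or of the $\bP^1$-bundle $P_k$ over $X_{kp}$) with symmetric powers of $C$ — these are the incidence and $r_0$ conditions above — and then translate each into a linear condition on the tangent vector $(v_1,v_2,\dots)$ via the criterion of \cite[p.~189]{ACGH}, exactly as in the Lemma preceding Proposition \ref{propZsmooth}. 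For the sample component inside $F_{(4,1)(2,3)}$ this yields the conditions \eqref{conv13} and \eqref{conv12} (but not \eqref{conv23}) on $(v_1,v_2,v_3)\in T_LW_{pq}\oplus T_aC\oplus T_{D_3'}C^{(3)}$. One then checks that the solution space of this linear system has dimension $3=\dim F_{r_0k}$; since the tangent space is always at least the local dimension, this forces $F_{r_0k}$ to be smooth at the chosen point, which is a general point of the given component of $Z_k$. By the parallelism already noted at the start of Section \ref{subsecinfinite}, it is enough to carry this out for one representative component of each type, the remaining cases being formally identical and involving a subset of the same conditions.

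The main obstacle is the genericity bookkeeping in the dimension count. Along $Z_k$ the subspaces entering the linear system are generally \emph{not} in general position — for example, at a point of the sample component one has $q_1(L)=D_3'$, so that $T_LW_{pq}$ is already contained in $T_{D_3'}C^{(3)}$ — so one cannot count dimensions naively but must track the precise coincidences. The inputs needed are all statements about linear series on the general curve $C$ through the general points $p,q,r_0$: for instance that $|h_i+r_0|$ is base-point free away from $r_0$, that the pencils $g^1_4$ containing a given $\Gamma'_2$ are governed by the rank-$4$ quadrics of the pencil $l_{pq}$ (Claim \ref{claimhigi} and Theorem \ref{thmtendp}), and bounds of the kind of \cite{kempfschreyer} already invoked in the proof of Theorem \ref{thmtendp}. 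As a sanity check one may also regard $F_{r_0k}\to W_k$ (resp. $\to P_k$) as a family of curves and use that smoothness of the total space is automatic at points where the fibre curve $B^j_{M'}$ is smooth and the map is flat; but since $B^j_{M'}$ may itself be singular at the divisors occurring in $Z_k$ (those passing through a node of $\tC_{pq}$), this only reorganizes the computation rather than replacing it, so the infinitesimal analysis above remains the essential step.
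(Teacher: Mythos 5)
Your proposal follows essentially the same route as the paper: for a representative component (the one in $F_{(4,1)(2,3)}$, projected to $W_{pq}\times C\times C^{(3)}$) you compute the Zariski tangent space as the solution space of the conditions \eqref{conditionv1}--\eqref{conv12}, i.e.\ you drop the condition \eqref{conv23} coming from $W^1_4$, and conclude smoothness once this space is shown to be $3$-dimensional, which is exactly the paper's argument. The only difference is that the paper carries out the dimension count you defer (and whose genericity subtleties you correctly flag), by observing that the projection of the solution space to the $v_1$-summand is surjective onto $T_LW_{pq}$ with $1$-dimensional kernel.
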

\begin{proof} Again we only check the proposition for a general point of the image in $W_1\times \T_0$ of (\ref{sampleZ}). The defining equation for $F_{(41)(23)}\subset W_1\times C\times C^{(3)}$ is (\ref{eqnZ1}) and (\ref{eqnZ4123}). The tangent space of $F_{r_0}$ at $(L=a+g_i,a,D_3'=h_i-a)$ consists of $(v_1,v_2,v_3)$ satisfying the conditions from (\ref{conditionv1}) to (\ref{conv12}).  Projection to the $v_1$ summand of $(v_1,v_2,v_3)$ is surjective and the kernel of this projection is $1$ dimensional. The proposition follows.
\end{proof}

\subsection{The structure of the projectivized normal cone }

Note that $F'''_{r_0}|_{W_k\times M_2}$ is the projectivized normal cone of $F_{r_0}\cap (W_k\times\T_0)$ in ${\cF}_r''$. 

We have a commutative diagram 
\begin{eqnarray}\label{diagcone}\xymatrix{\cC_k\ar[r]\ar[d]&F'''_{r_0}|_{W_k\times M_2}\ar[r]^-{\rho_2}\ar[d]&M_2\ar[d]^-{\pi_2}\\
Z_k\ar[rd]_-{\lambda_1}\ar[r]^-{(\lambda_1,\lambda_2)}&F''_{r_0}\cap(W_k\times W^1_4)\ar[r]^-{Pr_2}\ar[d]^-{Pr_1}&W^1_4\\
&W_k}
\end{eqnarray}
where $\cC_k$ is defined by the fibered diagram. 
\begin{proposition}\label{conestructure}
$\cC_k$ is generically a $\bP^2$ bundle over the curve $\lambda_2^{-1}(\cup_i \{g_i, h_i\})\cap Z_k$.
\end{proposition}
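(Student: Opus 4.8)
The plan is to reduce Proposition~\ref{conestructure} to a local computation at a general point of each component of the curve $\Gamma := \lambda_2^{-1}(\cup_i\{g_i,h_i\})\cap Z_k$, and there to identify the fiber of $\cC_k$ as a $2$-plane inside the singular quadric $Q_{3i}^{sing}$. First I would unwind the definition. By \eqref{diagcone}, $\cC_k = Z_k\times_{\Sigma_k}\bigl(F_{r_0}'''|_{W_k\times M_2}\bigr)$, where I write $\Sigma_k:=F_{r_0}''\cap(W_k\times W^1_4)$; as noted just before the proposition, $F_{r_0}'''|_{W_k\times M_2}$ is the projectivized normal cone of $\Sigma_k$ in $\cF_r''$, its structure map to $\Sigma_k$ being induced by the projection $W_k\times M_2\to W_k\times W^1_4$. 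So the fiber of $\cC_k\to Z_k$ over a point $z$ with $(\lambda_1,\lambda_2)(z)=(w,s)\in\Sigma_k$ is the fiber over $(w,s)$ of that projectivized normal cone, a closed subscheme of $\{w\}\times\pi_2^{-1}(s)\subseteq W_k\times M_2$. This already explains why the statement concerns only $\Gamma$: over the remaining ("horizontal") part of $Z_k$ one has $s\notin\{g_i,h_i\}$, so $\pi_2^{-1}(s)=Q_3$ is a smooth quadric threefold, which contains no $2$-plane, and the fiber of $\cC_k$ there is at most a line; hence the $3$-dimensional components of $\cC_k$ are precisely those dominating a component of $\Gamma$. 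By the symmetry of the construction it suffices to treat a general point $z$ of a component $\Gamma_0$ of $\Gamma$ lying over a fixed $g_i$ (the cases of $h_i$ and of $k=2$ being identical), and we may discard the component contracted by $(\lambda_1,\lambda_2)$ flagged in the footnote to Lemma~\ref{lemZ2341}, which is not $3$-dimensional over its image.

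Next I would assemble the smoothness inputs and pass to local coordinates. Fix such a general $z\in\Gamma_0$, with $(w,g_i)=(\lambda_1,\lambda_2)(z)$. Since $\lambda_2(\Gamma_0)=\{g_i\}$, the component $\Sigma_0$ of $\Sigma_k$ through $(w,g_i)$ lies in the slice $W_k\times\{g_i\}$; by Proposition~\ref{propZsmooth} it is generically reduced, hence a smooth curve near $(w,g_i)$, and by Proposition~\ref{propFsmooth} the scheme $F_{r_0k}$ is smooth at $z$. I would then use the local model of Theorem~\ref{thmtendp} and Proposition~\ref{proptheta0}: near $g_i$, $\T=\{xy+zw+st^2=0\}$ in $\mathbb{A}^6$, with $W^1_4=\{x=y=z=w=t=0\}$ and $s$ a local coordinate on $W^1_4$ vanishing at $g_i$; correspondingly $\widetilde{\cG}\times_T\T$ is, in suitable coordinates, the product of $\{xy+zw+st^2=0\}$ with an affine space of parameters on $W_k$, singular along $W_k\times W^1_4$, and the exceptional divisor $W_k\times M_2$ of the blow-up along $W_k\times W^1_4$ has, over $(w,g_i)$, fiber the projectivized tangent cone $Q_{3i}^{sing}=\{XY+ZW=0\}\subseteq\mathbb{P}^4$, the cone with vertex $[0{:}0{:}0{:}0{:}1]$ over $\{XY+ZW=0\}\cong\mathbb{P}^1\times\mathbb{P}^1$. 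In these coordinates $\cF_r''$ is cut out by three further functions whose linearizations at $(w,g_i)$ are computed, in essence, by the infinitesimal analysis of Section~\ref{subsecinfinite} in terms of the multiplication maps $\mu_M$ — in particular using that $\operatorname{Im}\mu_{g_i}$ has codimension $1$ in $H^0(K_C)$. Blowing up and restricting to the exceptional fiber over $(w,g_i)$, the smoothness of $F_{r_0k}$ at $z$ together with the reducedness of $\Sigma_0$ should force the fiber of $F_{r_0}'''|_{W_k\times M_2}$ there to be a $2$-dimensional linear subspace of $\mathbb{P}^4$ contained in $Q_{3i}^{sing}$.

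The conclusion then follows from a fact about rank-$4$ quadric threefolds: every $2$-plane contained in such a quadric passes through its vertex and projects isomorphically to a line of one of the two rulings of the base quadric surface. Hence the fiber of $\cC_k$ over the general point of $\Gamma_0$ is such a plane, $\cC_k|_{\Gamma_0}$ is generically a $\mathbb{P}^2$-bundle over $\Gamma_0$, and the proposition is proved. To prepare the Abel--Jacobi computation of Section~\ref{secAJ1}, one additionally wants the ruling involved to be the one contracted by $p_1$ — equivalently, the ruling whose class is $\tau_1\cdot f$ in Notation~\ref{notPi} — so that the plane is $\mathbb{P}^2_i$; this I would extract by following the $p_1$-direction through the tangent-space description above, using the explicit supports of the components of $\Gamma_0$ listed in Propositions~\ref{Znu1}--\ref{Znu2} and Lemma~\ref{X1p}.

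The \emph{crux} is the local computation in the second step. It is not enough to invoke Section~\ref{subsecinfinite} merely for smoothness and reducedness; one must extract from the tangent-space conditions at $(w,g_i)$ the precise location of the normal cone inside the quadric cone $Q_{3i}^{sing}$ — that is, verify that the functions cutting $Z_k$ out of $F_{r_0k}$ become, on normal directions, exactly the linear equations of one "vertex $+$ ruling" plane, and determine which of the two rulings occurs. The explicit parametrizations of $\Gamma_0$ from the preceding propositions keep this bookkeeping finite, but disentangling the two rulings — and thereby landing on $\mathbb{P}^2_i$ rather than on the plane of the other ruling — is the delicate point of the argument.
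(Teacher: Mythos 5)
Your proposal is correct and follows essentially the same route as the paper's proof: generic smoothness of the blow-up center and of $F_{r_0k}$ (Propositions \ref{propZsmooth} and \ref{propFsmooth}) makes the projectivized normal cone, over a general point of each component of $\lambda_2^{-1}(\cup_i\{g_i,h_i\})\cap Z_k$, a bundle of $2$-dimensional linear subspaces of the fibers $Q_{3i}^{sing}$ of $M_2$, and every $2$-plane in a rank-$4$ quadric threefold passes through the vertex. The extra point you single out as the crux --- determining which ruling the plane belongs to --- is not needed for this proposition (the paper's proof does not address it either); it only matters later, in the sign bookkeeping of Lemma \ref{lemAJ22}.
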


\begin{proof} Since $W_k\times M_2$ is a divisor in the total space $\widetilde{\tilde{\cG}\times_T\tilde{\T}}$,  $F'''_{r_0}|_{W_k\times M_2} = \cF'''_{r} \cap (W_k\times M_2)$ is purely three dimensional. Furthermore, by Propositions \ref{propZsmooth} and \ref{propFsmooth}, at a generic point of any component of $\lambda_2^{-1}(\cup_i \{g_i, h_i\})\cap Z_k$, both $Z_k$ and $F_{r_0k}$ are smooth. Thus there is an open dense subset of $\lambda_2^{-1}(\cup_i \{g_i, h_i\})\cap Z_k$ where the dominant map $\cC_k\ra Z_k$ is a $\bP^2$ bundle. The general fiber of $\cC_k$ is therefore a $2$-dimensional linear subspace of the singular quadric threefold $Q_3^{sing}$ which is the fiber of $M_2$ over one of the $g_i$ or $h_i$. Therefore the general fiber is a $\bP^2$ passing through the vertex of $Q_3^{sing}$. 
\end{proof}

\section{The Abel-Jacobi map}
\label{secAJ1}
We are now ready to prove Propositions \ref{AJ21} to \ref{AJ2global}.

\subsection{Proof of Proposition \ref{AJ21}:} The map $AJ^0_1 : H^2(\tG_0^{[0]})\ra H^4(M_1)$.

We will show that it is enough to compute the restriction of $AJ^0_1$ to the direct summand $H^2(W_1)$ of $H^2(\tG_0^{[0]})$. This map is the correspondence induced by the cycle $[F_{r_0}'''|_{\widetilde{W_1\times M_1}}]\in H^6(\widetilde{W_1\times M_1})$. We use the notation introduced in Section \ref{subsecstrata}.



 There are two reduction steps. First, since we are computing $AJ^0_1$ modulo $\langle j_{1*}f,j_{1*}\tau_1\rangle$ in Proposition \ref{AJ21} (recall that $H^4(M_1)\cong p_1^*H^4(C^{(4)})\oplus \langle j_{1*}f,j_{1*}\tau_1\rangle$), 
it suffices to check that the image of the composition 
$$\xymatrix{H^2(W_1)\ar[r]^-{AJ^0_1}&H^4(M_1)\ar[r]^-{p_{1*}}&H^4(C^{(4)})}$$ 
contains $\eta H^2(Pic^4C)\oplus \eta^2$ modulo $\theta H^2(Pic^4C)$. Recall (see Proposition \ref{propF'''}) that ${F}_{r_0}'''|_{\widetilde {W_1\times M_1}}$ is the proper transform of ${F}_{r_0}''|_{ W_1\times\T_0}$ under
\[
\widetilde{W_1\times M_1}\lra W_1\times M_1\lra W_1\times C^{(4)}\lra W_1\times \T_0.
\]
By the projection formula, $p_{1*}\circ{AJ^{0}_1}$ is induced as a correspondence map by the proper transform $F''_{r_0}|_{W_1\times C^{(4)}}$ of  ${F}_{r_0}''|_{ W_1\times\T_0}$ in the intermediate space $W_1\times C^{(4)}$:
$$\xymatrix{H^2(W_1)\ar[r]&H^2(W_{1}\times C^{(4)})\ar[rr]^-{\cup[F''_{r_0}|_{W_1\times C^{(4)}}]}&&H^8(W_{1}\times C^{(4)})\ar[r]&H^4(C^{(4)}).}$$

Secondly, we will prove that in fact the image by $p_{1*}\circ{AJ^{0}_1}$ of the subspace $(q_1,q_2)^*H^2(C^{(3)}\times C^{(3)})$ of $H^2(W_1)$ contains $\eta H^2(Pic^4C)\oplus \eta^2$ modulo $\theta H^2 (Pic^4 C)$. We therefore compute the composition $\overline{AJ^{0}_1}$ 
\[
\xymatrix{\overline{AJ^{0}_1} : \: H^2(C^{(3)}\times C^{(3)})\ar[r]^-{(q_1,q_2)^*}&H^2(W_1)\ar[r]^-{p_{1*}\circ AJ^0_1}&H^4 (C^{(4)})\ar[r]&\frac{H^4 (C^{(4)})}{\theta H^2(Pic^4C)},}
\]
where
\[
\begin{array}{rcc}
(q_1, q_2) : W_{pq} & \lra &  C^{(3)}\times C^{(3)}\\
L \ \ &\longmapsto &(\Gamma_3,\Gamma_3')
\end{array}
\]
is the embedding used in Section \ref{ssectWCpqsupp}.




\begin{lemma}\label{totalclass}The $H^2(W_1)\otimes H^4(C^{(4)})$ Kunneth component of  $[{F}_{r_0}''|_{ W_1\times C^{(4)}}]\in H^6(W_1\times C^{(4)})$ is the restriction to $W_1\times C^{(4)} \subset C^{(3)}\times C^{(3)}\times C^{(4)}$ of 
\begin{eqnarray}\label{restriclass}(-2\theta_1+4\eta_1+4\eta_2)\eta_3^2+4\delta_{13}^2\eta_3+(\theta_1-\eta_1)\theta_3\eta_3.
\end{eqnarray}
in $H^6(C^{(3)}\times C^{(3)}\times C^{(4)})$
modulo $\theta_3 H^2(Pic^4C)$, where $\delta_{kl}=\sum_{i=1}^5(\xi_{ki}\xi_{li}'+\xi_{li}\xi_{ki}')$.
\end{lemma}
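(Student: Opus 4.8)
The computation is a cohomological bookkeeping exercise on products of symmetric powers, organized by the bidegree stratification of the central fiber $F_0$ from Tables \ref{tableW1} and \ref{tableW2}. First I would observe that, by Proposition \ref{propcycleP}(1), the cycle $F''_{r_0}|_{W_1\times\T_0}$ is the pushforward under $(\lambda_1,\lambda_2)$ of the four pieces $F_{(4,1)(4,1)}$, $F_{(4,1)(2,3)}$, $F_{(2,3)(4,1)}$, $F_{(2,3)(2,3)}$ of $F_{r_01}$ (the bidegrees $(0,5)$ pieces being empty or contracted, cf. the proof of Proposition \ref{propcycleP}). So the class $[F''_{r_0}|_{W_1\times C^{(4)}}]$ is a sum of four pushforwards, and I would compute each one separately in $H^6(C^{(3)}\times C^{(3)}\times C^{(4)})$, then restrict to $W_1\times C^{(4)}$ via the embedding $(q_1,q_2)\times\mathrm{id}$, and finally project onto the $H^2(W_1)\otimes H^4(C^{(4)})$ Künneth component, discarding anything in $\theta_3 H^2(\Pic^4C)$.

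\textbf{Key steps.} For a single bidegree, say $(d_1,d_2)(e_1,e_2)$, the locus $F_{(d_1,d_2)(e_1,e_2)}$ inside $W_1\times (C^{(d_1)}\times C^{(d_2)})\times(C^{(e_1)}\times C^{(e_2)})$ is cut out by the two incidence conditions $D_{d_1}+D_{d_2}\in|L|$ and $D_{e_1}+D_{e_2}\in|L'|$, each of which (after using $L=\cO_C(\Gamma_3)$ via $q_1$, $L'=\cO_C(\Gamma_3')$ via $q_2$, so that $|L|=|K_C-\Gamma_3|$ etc.) is a "difference of symmetric powers" incidence correspondence whose class is a standard polynomial in the $\eta$'s, $\theta$'s and the cross-classes $\delta_{kl}=\sum_{i=1}^5(\xi_{ki}\xi_{li}'+\xi_{li}\xi_{ki}')$ coming from Poincaré's formula / the class of the diagonal-type loci (these are exactly the formulas in Macdonald \cite{macdonald62} and collected in the Appendix). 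I would write down the class of each incidence divisor, multiply the two (they are transverse in general by the genericity assumptions already in force), push forward along the projection forgetting $C^{(d_1)}$ and $C^{(e_1)}$ and then composing with $\lambda$ to land in $C^{(3)}\times C^{(3)}\times C^{(4)}$ — here the map $\psi_{(d_1,d_2)(e_1,e_2)}$ of Section \ref{secF'''} tells me precisely which divisor class on $C^{(4)}$ records the image, namely $\cO_C(D_{d_2}+D'_{e_2}-m(p+q))$, so the pushforward is a Gysin map along an explicit addition/subtraction morphism of symmetric products. Summing the four contributions and extracting the stated Künneth component should produce $(-2\theta_1+4\eta_1+4\eta_2)\eta_3^2+4\delta_{13}^2\eta_3+(\theta_1-\eta_1)\theta_3\eta_3$ modulo $\theta_3H^2(\Pic^4C)$.

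\textbf{Main obstacle.} The genuine difficulty is not any single class computation but (a) keeping the combinatorics of the four bidegree strata straight — in particular correctly identifying, in each stratum, which factors are "free" and get integrated out versus which survive, and making sure the pushforward morphisms (built from the $\psi$'s and the component identifications of $\tC_{pq}^{(5)}$) are the right ones — and (b) verifying transversality/genericity so that the naive product-of-incidence-classes formula actually computes $[F_{(d_1,d_2)(e_1,e_2)}]$ with the correct multiplicity one and no excess-intersection correction. Once transversality is granted, the remaining work is the mechanical manipulation of $\eta_k,\theta_k,\delta_{kl}$ using the relations $\eta^2=\eta\theta-\cdots$ type identities from Macdonald's description of $H^\bullet(C^{(n)})$, together with the restriction formulas for the two embeddings $q_1,q_2$ of $W_{pq}$ into $C^{(3)}$ (which send $\theta$ and $\eta$ on $C^{(3)}$ to explicit combinations on $W_{pq}$, computed from the defining relations $K_C-L$ and $L-p-q$). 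I would carry out the four computations in turn, record the partial classes, add them, and check the coefficients against \eqref{restriclass}; the cross term $4\delta_{13}^2\eta_3$ is the one I expect to require the most care, as it arises only from the pieces where a point on $C_1$ is forced to collide with $r_0$ or with a point constrained by the $g^1_4$-incidence.
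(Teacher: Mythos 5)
Your outline follows the same route as the paper's proof: decompose the cycle by bidegree (Proposition \ref{propcycleP}), compute each stratum by the secant plane formula, push forward to $C^{(3)}\times C^{(3)}\times C^{(4)}$, restrict to $W_1\times C^{(4)}$, and extract the K\"unneth component. But one step of your plan would fail as written. You propose to push each stratum forward ``along an explicit addition/subtraction morphism of symmetric products'' dictated by $\psi_{(d_1,d_2)(e_1,e_2)}$. For the stratum $(2,3)(2,3)$ (the case $m=1$), $\psi$ records the residual sheaf $K_C(-D_2-D_2')$, and there is no subtraction morphism of symmetric products realizing this: the effective divisor of $C^{(4)}$ lying over $K_C(-D_2-D_2')$ is the residual divisor, not a sum of the recorded factors. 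The paper instead pushes forward along the genuine addition map $(\G_3,\G_3',D_2,D_2')\mapsto(\G_3,\G_3',D_2+D_2')$, explicitly noting that this is \emph{not} a lifting of $\lambda_2$, and then corrects with the residuation correspondence $p_{2*}p_1^*$ on $H^4(C^{(4)})$ computed in Lemma \ref{involution} (identity on $H^4(Pic^4C)$, $\eta\omega\mapsto(\theta-\eta)\omega$, $\eta^2\mapsto\tfrac12\theta^2-\theta\eta+\eta^2$). This correction is not cosmetic: without it the $(2,3)(2,3)$ contribution (\ref{class3}) is wrong, and the total acquires the wrong $\theta_3\eta_3$-coefficient and $2\delta_{13}^2\eta_3$ in place of $4\delta_{13}^2\eta_3$. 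Likewise, what you leave to vague ``restriction formulas for $q_1,q_2$'' is, concretely, the identity $q_2^*\xi_i=-q_1^*\xi_i$ (the involution $M\mapsto K_C-p-q-M$ of $Pic^3C$ intertwines $q_1$ and $q_2$ and acts by $-1$ on $H^1$), whence $\delta_{23}|_{W_1\times C^{(4)}}=-\delta_{13}|_{W_1\times C^{(4)}}$; this is exactly what collapses $2\delta_{23}^2+\delta_{13}^2-\delta_{13}\delta_{23}$ into $4\delta_{13}^2$.

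Two further bookkeeping points where your plan diverges from what the paper actually does. First, the classes summed in the Lemma, (\ref{class1a})--(\ref{class3}), come from only the three strata $(4,1)(2,3)$, $(2,3)(4,1)$, $(2,3)(2,3)$, whereas you plan to include $F_{(4,1)(4,1)}$ as a fourth summand. That stratum is supported over $p+q+C^{(2)}\subset C^{(4)}$, so in the $H^2(W_1)\otimes H^4(C^{(4)})$ component it can only contribute a class of the form $\beta\cdot\eta_3^2$ with $\beta\in H^2(W_1)$, and such a term is not absorbed by $\theta_3H^2(Pic^4C)$; if you carry out your computation you must either show this contribution vanishes or reconcile it with the coefficients in \eqref{restriclass}. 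Second, your transversality worry is handled in the paper not by an excess-intersection argument but by the observation that each $F_{(d_1,d_2)(e_1,e_2)}$ projects generically injectively onto a smaller product of factors through which $\lambda$ factors, so only the class of that projection is needed; you would need the same observation to get multiplicity one.
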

\begin{proof}The $H^2(W_1)\otimes H^4(C^{(4)})$ Kunneth component of $[{F}_{r_0}''|_{ W_1\times C^{(4)}}]$ is computed case by case for each bidegree in Appendix \ref{cycleclasses}. It is the sum of the classes in (\ref{class1a}), (\ref{class1b}), (\ref{class1c}), (\ref{class2}), (\ref{class3}), which is equal to the restriction to $W_1\times C^{(4)} \subset C^{(3)}\times C^{(3)}\times C^{(4)}$ of  
\begin{eqnarray}
\label{eqntotalclass}[-2\theta_1+4\eta_1+4\eta_2]\eta_3^2+[2\delta_{23}^2+\delta_{13}^2-\delta_{13}\delta_{23}+(\theta_1-\eta_1)\theta_3]\eta_3.\end{eqnarray}
Consider the commutative diagram
$$\xymatrix{&W_{pq}\ar_-{q_1}[ld]\ar^-{q_2}[rd]&\\
C^{(3)}\ar[d]&&C^{(3)}\ar[d]\\
Pic^3(C)\ar^{\tau}[rr]&&Pic^3(C)}$$
where $\tau$ is the involution sending $M$ to $K_C-p-q-M$. Since $\tau^*(\xi_i)=-\xi_i$, we see immediately that 
\begin{eqnarray}\label{eqnpullback1}&&q_1^*(\xi_i)=-q_2^*(\xi_i),\nonumber\\
\label{eqnpullback2}&&\delta_{13}|_{W_1\times C^{(4)}}=-\delta_{23}|_{W_1\times C^{(4)}}.\nonumber
\end{eqnarray}
Therefore (\ref{eqntotalclass}) simplifies to (\ref{restriclass}).
\end{proof}

Now, using the class (\ref{restriclass}), we obtain, for any $\omega\in H^2(C^{(3)})$,
\begin{eqnarray}
\overline{AJ^{0}_1}(\omega_1)&=&pr_{C^{(4)}*}\left\{\omega_1\left[(-2\theta_1+4\eta_1+4\eta_2)\eta_3^2+4\delta_{13}^2+(\theta_1-\eta_1)\theta_3\eta_3\right]\right\}|_{W_1\times C^{(4)}}.\nonumber
\end{eqnarray}
Expanding $\delta_{13}^2=\sum_{i,j=1}^5\left[2\xi_{1i}\xi_{1j}'\xi_{3i}'\xi_{3j}-\xi_{1i}\xi_{1j}\xi_{3i}'\xi_{3j}'-\xi_{1i}'\xi_{1j}'\xi_{3i}\xi_{3j}\right]$, we obtain
\begin{eqnarray}
\overline{AJ^{0}_1}(\omega_1)&=&  \left[\int_{W_1} \omega_1 (-2\theta_1+4\eta_1+4\eta_2)\right]\eta^2+8\sum_{i,j=1}^5\left[\int_{W_1}\omega_1\xi_{1i}\xi_{1j}'\right]\xi_i'\xi_j\nonumber\\
&&-4\sum_{i,j=1}^5\left[\int_{W_1}\omega_1\xi_{1i}\xi_{1j}\right]\xi_i'\xi_j'-4\sum_{i,j=1}^5\left[\int_{W_1}\omega_1\xi_{1i}'\xi_{1j}'\right]\xi_i\xi_j\nonumber
+\left[\int_{W_1}\omega_1(\theta_1-\eta_1)\right]\theta\eta . \nonumber
\end{eqnarray}
Noting that the class of $W_1$ in $C^{(3)}$ is $\theta-\eta$ under embedding $q_1$ or $q_2$, and $q_{1*}q_2^*\eta=\frac{1}{2}\theta^2-\theta\eta+\eta^2\in H^2(C^{(3)})$, the above formula becomes\footnote{We have a similar formula for $\overline{AJ^{0}_1}$ for $\omega\in H^2(W_2)$.}
\begin{eqnarray}\label{eqnAJ21}\overline{AJ^{0}_1}(\omega_1)&=&  \left[\int_{C^{(3)}} \omega (-2\theta+4\eta)(\theta-\eta)+4\int_{C^{(3)}}\omega(\frac{1}{2}\theta^2-\theta\eta+\eta^2)\right]\eta^2\\
&&+8\sum_{i,j=1}^5\left[\int_{C^{(3)}}\omega\xi_{i}\xi_{j}'(\theta-\eta)\right]\xi_i'\xi_j
-4\sum_{i,j=1}^5\left[\int_{C^{(3)}}\omega\xi_{i}\xi_{j}(\theta-\eta)\right]\xi_i'\xi_j'\nonumber\\
&&-4\sum_{i,j=1}^5\left[\int_{C^{(3)}}\omega\xi_{i}'\xi_{j}'(\theta-\eta)\right]\xi_i\xi_j
+\left[\int_{C^{(3)}}\omega(\theta-\eta)^2\right]\theta\eta . \nonumber
\end{eqnarray}
Now a simple computation using the ring structure of $H^{\bullet}(C^{(3)})$ described in Macdonald \cite{macdonald62}  gives 
\begin{eqnarray}
&&\overline{AJ^{0}_1}(\eta_1) =10\eta^2-11\theta\eta,\nonumber\\
&&\overline{AJ^{0}_1}(\xi_{1i}\xi_{1j})=c_{ij}\xi_{i}\xi_{j}\eta\  \text{for}\ 0\ne c_{ij}\in\bZ, \ j\ne i\pm5,\nonumber\\
&&\overline{AJ^{0}_1}(\sigma_{1k})=8\eta^2-11\theta\eta+16\sigma_{k}\eta,\nonumber
\end{eqnarray}

Thus the image of $\overline{AJ^{0}_1}$ contains $\eta H^2(Pic^4C)\oplus \eta^2$ modulo ${\theta H^2(Pic^4C)}$. \hfill \qed


\subsection{Proof of Proposition \ref{AJ22}:}\label{subsecAJ22} The map $AJ^0_2 : H^2(\widetilde{G}_0^{[0]}) \lra H^4 (M_2)$.

We will work with the restriction of $AJ^0_2$ to the direct summand $H^2(W_1)\oplus H^2(W_2)$ of $H^2(\widetilde{G}_0^{[0]})$:
\[
\xymatrix{H^2(W_{k})\ar[r]^-{\rho_1^*}&H^{2}(W_{k}\times M_2)\ar[rr]^-{\cup [{F}'''_{r_0}|_{W_k\times M_2}]}&&H^{8}(W_{k}\times M_2)\ar[r]^-{\rho_{2*}}&H^4(M_2)}.
\]
The relations between the various spaces involved are summarized in diagram (\ref{diagcone}).
The projection of $F'''_{r_0}|_{W_k\times M_2}$ to $W_k$ is supported on curves. By Section \ref{subsecZ''}, the image curve contains the following special curves in $W_k$
$$C_i :=\{c+g_i\ |\ c\in C\}, C_i' := \iota (C_i), i=1,...5,$$
$$X_{1p}=\{p+g^1_4\ |\ g^1_4\in W^1_4(C)\},$$
$$X_{1q}=\{q+g^1_4\ |\ g^1_4\in W^1_4(C)\}'$$
where $\iota (L) = |K_C +p+q -L|$.
 By Lemma \ref{Mcoho}, $H^4(M_2)$ is generated by $j_{2*}f$, $j_{2*}\tau_1$, $[\bP^2_i]$, $[\bP^2_{i+5}]$ (recall that $f$ is the class of the fiber of $\pi_{12}:M_{12}\lra W^1_4$ and see Lemma \ref{Mcoho} for the definition of $\bP^2_i$).

\begin{lemma}\label{lemAJ22}Put $[C]_{tot} :=[C_1]+...+[C_5]$. For any $(\alpha,\beta)\in H^2(W_1)\oplus H^2(W_2)$, 
$$AJ^0_2(\alpha) = \sum_{i=1}^5\left(\int_{W_{1}} \alpha\cdot [C_i]\right)[{\bP}^2_{i+5}]+\sum_{i=1}^5\left(\int_{W_{1}}\alpha\cdot([C]_{tot}+4[C'_i]+2q_1^*(\theta-\eta))\right)[\bP^2_i]$$
modulo $\langle j_{2*}f\rangle$, and,
$$AJ^0_2(\beta) = -\sum_{i=1}^5\left(\int_{W_2} \beta\cdot(3[C_i]+[C_i'])\right)[\bP_{i+5}^2]+\sum_{i=1}^5\left(\int_{W_2}\beta\cdot([C_i']+q_2^*(\theta-\eta))\right)[\bP^2_i]$$
modulo $\langle j_{2*}f\rangle$. 
\end{lemma}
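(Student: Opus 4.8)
## The plan for Lemma \ref{lemAJ22}

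The strategy is to compute the correspondence $AJ^0_2$ by tracing through the diagram \eqref{diagcone} and using the structure of the projectivized normal cone established in Proposition \ref{conestructure}. The key geometric input is that $F'''_{r_0}|_{W_k\times M_2}$ maps to $W_k$ via a curve supported on the special curves $C_i$, $C_i'$, $X_{1p}$, $X_{1q}$ (by Proposition \ref{Znu1} and the description of $Z_k$ in Section \ref{subsecZ''}), while its image in $M_2$, via $\rho_2$, consists of the planes $\bP^2_i$ (passing through the vertices of the singular quadrics $Q^{sing}_{3i}$). Concretely: for a class $\alpha \in H^2(W_1)$, the composition $\rho_{2*}(\rho_1^*\alpha \cup [F'''_{r_0}|_{W_1\times M_2}])$ is a sum of the classes $[\bP^2_j]$ (and possibly $j_{2*}f$, $j_{2*}\tau_1$), each weighted by an intersection number of $\alpha$ against the corresponding piece of the center $Z_1$. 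Since we only need the answer modulo $\langle j_{2*}f\rangle$, and since the plane classes $[\bP^2_i]$ together with $j_{2*}\tau_1$, $j_{2*}f$ span $H^4(M_2)$ by Lemma \ref{Mcoho}, it suffices to extract the coefficient of each $[\bP^2_j]$.

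First I would identify, for each $i$, which irreducible components of $Z_1$ (resp. $Z_2$) lie over $g_i$ and which over $h_i$, reading off the tables in Proposition \ref{Znu1} (resp. \ref{Znu2}) and Lemma \ref{X1p}. By Proposition \ref{conestructure}, over a generic point of such a component $\cC_1 \to Z_1$ is a $\bP^2$-bundle whose fibers are the planes through the vertex of $Q^{sing}_{3i}$; pushing forward by $\rho_2$ collapses the curve direction and yields a multiple of $[\bP^2_i]$ (the plane over $g_i$) or $[\bP^2_{i+5}]$ (the plane over $h_i$, using our labeling convention $Q^{sing}_{3i}=\pi_2^{-1}(h_{i-5})$ for $i=6,\dots,10$). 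The multiplicity is precisely the degree over $W^1_4$ of the relevant component of $Z_k$, which we also get from the multiplicity of $a$ (or $c$) in the tables — e.g. the ``$4$ choices of $a$'' appearing in Lemma \ref{lemZ4123}. The coefficient of $[\bP^2_j]$ in $AJ^0_2(\alpha)$ is then $\int_{W_k}\alpha\cup\gamma$ where $\gamma$ is the image cycle class in $W_k$ of $\lambda_1$ restricted to that component, computed by summing the curve classes $[C_i]$, $[C_i']$ with the appropriate multiplicities, plus the contribution $q_1^*(\theta-\eta)$ or $q_2^*(\theta-\eta)$ coming from the component $\lambda_2^{-1}(h_i)$ (bidegree $(4,1)(2,3)$ in the $W_1$ case, which sweeps out a divisor-type locus as $a$ ranges over $C$), and from the $X_{1p}, X_{1q}$ components of Lemma \ref{X1p}.

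The bookkeeping is the heart of the matter: I would organize the computation bidegree by bidegree, exactly as the tables are organized, determining for each $(d_1,d_2)(e_1,e_2)$ the image in $W_k$ of the one-dimensional part of $\lambda_2^{-1}(g_i)$ and $\lambda_2^{-1}(h_i)$, with multiplicities. Adding these up gives the total cycle class $[C]_{tot} + 4[C'_i] + 2q_1^*(\theta-\eta)$ for $[\bP^2_i]$ and $[C_i]$ for $[\bP^2_{i+5}]$ in the $W_1$ case, and the corresponding combinations $3[C_i]+[C_i']$ and $[C_i']+q_2^*(\theta-\eta)$ in the $W_2$ case. I expect the main obstacle to be correctly accounting for the multiplicities — distinguishing which fiber contributions are ``length $1$'' and which come with higher multiplicity (the factors $3$, $4$, $2$), and making sure the contracted component of $Z_{(2,3)(4,1)}$ flagged in the footnote to Lemma \ref{lemZ2341} is correctly excluded, since it maps to a point under $\lambda$ and hence contributes nothing. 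A secondary subtlety is checking that the $X_{1p}, X_{1q}$ components, which by Lemma \ref{X1p} map to $W^1_4 \subset \T_0$, contribute to the $[\bP^2_i]$-coefficients via the planes over all ten points uniformly; since $X_{1p}$ and $X_{1q}$ push forward under $q_1$ to classes proportional to $\theta-\eta$ (being $\{p+g^1_4\}$ and $\{q+g^1_4\}$, i.e. translates of $W^1_4(C)$), this is what produces the $q_k^*(\theta-\eta)$ summands. The generic reducedness from Propositions \ref{propZsmooth} and \ref{propFsmooth} guarantees all these intersection-theoretic computations can be done on the generic point of each component without hidden scheme-theoretic multiplicities.
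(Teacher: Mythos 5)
Your overall strategy is the one the paper uses: exploit the cone structure of Proposition \ref{conestructure}, observe that modulo $\langle j_{2*}f\rangle$ only the components of $Z_k$ lying over the ten points $g_i,h_i$ can matter, and read off each $[\bP^2_j]$-coefficient as the intersection of $\alpha$ (resp.\ $\beta$) with the $\lambda_1$-image of the corresponding components of $Z_k$, counted with multiplicities. However, two of your specific claims would derail the bookkeeping. The $2q_1^*(\theta-\eta)$ summand does \emph{not} come from the $\lambda_2^{-1}(h_i)$ component in bidegree $(4,1)(2,3)$, nor from the $X_{1p},X_{1q}$ components of Lemma \ref{X1p}. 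The $(4,1)(2,3)$ component over $h_i$ sweeps out exactly $C_i$ in $W_1$ (it is $L=a+g_i$, $a\in C$) and is precisely what produces the coefficient $\int_{W_1}\alpha\cdot[C_i]$ of $[\bP^2_{i+5}]$; and the components of Lemma \ref{X1p} dominate $W^1_4$, so over a general point of $W^1_4$ the fiber of $M_2$ is a smooth quadric threefold containing no planes, and their contribution is a multiple of the hyperplane-section class of a fiber, i.e.\ it lies in $\langle j_{2*}f\rangle$ and is discarded --- this is exactly why one may restrict to $\lambda_2^{-1}(\cup_i\{g_i,h_i\})\cap Z_k$ in the first place. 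The actual source of $2q_1^*(\theta-\eta)$ is the bidegree $(4,1)(4,1)$ locus over $g_i$ (Lemma \ref{lemZ4141}): it has two components (interchanging $a$ and $a'$), and each projects by $\lambda_1$ to a curve in $W_1$ of class $(\theta_1-\eta_1)|_{W_1}=q_1^*(\theta-\eta)$ by the secant plane formula.

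More seriously, your plan has no mechanism for producing the minus sign in the formula for $AJ^0_2(\beta)$. You tacitly assume that every $\bP^2$-fiber of $\cC_2$ over a component lying above $h_i$ pushes forward to $+[\bP^2_{i+5}]$, but $[\bP^2_i]$ is defined in Notation \ref{notPi} using a \emph{specific} ruling of the singular quadric, and on the relevant components in $Z_{(3,2)(1,4)}$ and $Z_{(1,4)(3,2)}$ the map $\lambda_2$ to $\T_0$ is given by $K_C-(\,\cdot\,)$ rather than $\cO_C(\,\cdot\,)$, so the corresponding planes lie in the \emph{opposite} ruling. Since the union of two planes from opposite rulings is homologous in $M_2$ to a hyperplane section of a fiber, whose class is $j_{2*}f$, a plane in the opposite ruling equals $-[\bP^2_{i+5}]$ modulo $\langle j_{2*}f\rangle$; this ruling bookkeeping is what produces the term $-\sum_i\bigl(\int_{W_2}\beta\cdot(3[C_i]+[C_i'])\bigr)[\bP^2_{i+5}]$, and without it your procedure would output the wrong sign (and could also misassign contributions in the $W_1$ case). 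Apart from these two points, your reduction modulo $j_{2*}f$, the use of generic reducedness from Propositions \ref{propZsmooth} and \ref{propFsmooth}, the multiplicity count (e.g.\ the $4$ choices of $a$ giving $4[C_i']$), and the exclusion of the contracted component flagged in Lemma \ref{lemZ2341} all match the paper's argument.
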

\begin{proof} By Sections \ref{subsecZ''} and \ref{subsecinfinite}, the scheme $F''_{r_0}\cap(W_k\times W^1_4)$ is of pure dimension $1$ and generically reduced on each of its components.

Represent $\alpha$ as the cohomology class of a real $2$-chain in general position. By definition, $AJ^0_2(\alpha)$ is the push-forward to $M_2$ of the pull-back of $\lambda_1^*\alpha\cup[Z_1]$ to $\cC_k$. By Proposition \ref{conestructure}, the fibers of $\cC_k$ over $\lambda_2^{-1}(\cup_i \{g_i, h_i\})\cap Z_k$ are isomorphic to $\bP^2$.

Since we are computing $AJ^0_2$ modulo $\langle j_{2*}f\rangle \in H^4 (M_2)$, we only have to compute the intersection of $\lambda_1^*\alpha$ with $\lambda_2^{-1}(\cup_i \{g_i, h_i\})\cap Z_1$. The components of $\lambda_2^{-1}(\cup_i \{g_i, h_i\}))\cap Z_1$ are described in Proposition \ref{Znu1}. For instance, the curve supported on
$$\{(L,D_4,a,D_4',a') : \ a+a'+p+q\equiv g_i,\ h^0(L-r_0-a)>0,D_4\equiv L-a, D_4'\equiv L'-a'\}$$
has two components since we can switch $a$ and $a'$. Each component projects to a curve in $W_1$ whose class is $(\theta_1-\eta_1) |_{W_1}$ by the secant plane formula (Section \ref{appsecplane}). Thus the contribution of this curve is $\int_{W_1}\alpha\cdot2(\theta_1-\eta_1)[\bP^2_i]$. The formula for $AJ^0_2(\alpha)$ now easily follows.

The computation of $AJ^0_2(\beta)$ is analogous. The minus sign in the formula for $AJ^0_2(\beta)$ comes from the fact that the maps to $\T_0$ on the curves
$$\{(L,D_3,D_2,a',D_4'):L=c+g_i, c\in C,r_0\leq D_3, a' + D_3 \equiv g_i,D_2=a'+c,D_4'\equiv h_i+p+q-c-a'\}\subset Z_{(3,2)(1,4)}$$
$$\{(L,a,D_4,D_3',D_2'):L=h_i+p+q-c, a=r_0, r_0 + D_3'\equiv g_i, c\in C,D_2'=r_0+c\}\subset Z_{(1,4)(3,2)}.$$
are given by $\cO_C(K_C-D_3-a')$ and $\cO_C(K_C-D_3'-a)$ respectively (instead of $\cO_C(D_3+a')$ and $\cO_C(D_3'+a)$). Thus the $\bP^2$ fibers over these curves are in the rulings opposite to those of $\bP^2_{i+5}$. Since we work modulo $j_{2*}f$, the two rulings differ by a minus sign. \end{proof}

We need the following Lemma to study the rank of $AJ^0_2$.
\begin{lemma} \label{lemCiCj}
We have the following intersection numbers in the smooth surface $W_{pq}$
\[
C_i^2=C_i'^2=-2, C_i C_i'= C_i C_j=C_i' C_j'= 0, C_i C_j'=2, \text{ for } i\ne j.
\]
\end{lemma}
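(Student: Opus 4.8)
The plan is to transport everything to intersection theory on $C^{(3)}$ by means of the embedding $q_1 : W_{pq}\hookrightarrow C^{(3)}$, $L\mapsto|K_C-L|$, under which (as recorded in the proof of Proposition \ref{AJ21}) $W_{pq}$ becomes a divisor of class $\theta-\eta$. First note that the involution $\iota$ of Lemma \ref{lemW15bar} is an automorphism of $W_{pq}$ with $\iota(C_i)=C_i'$, so $C_i'^2=C_i^2$ and $C_i'\cdot C_j'=C_i\cdot C_j$; it therefore suffices to treat $C_i^2$, $C_i\cdot C_i'$, and $C_i\cdot C_j$, $C_i\cdot C_j'$ for $i\ne j$. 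A direct computation identifies the images: $q_1(C_i)$ is the curve $R_i:=\{\,|h_i-c|:c\in C\,\}$, while $q_1(C_i')=q_1(\iota(C_i))$ consists of the $|K_C-h_i-p-q+c|=|e_i+c|$, i.e. $q_1(C_i')=e_i+C$, where $e_i$ is the unique effective divisor with $e_i\equiv g_i-p-q$. Both $R_i$ and $e_i+C$ are smooth curves isomorphic to $C$, lying on the surface $q_1(W_{pq})$.

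For the off-diagonal intersection numbers I would argue set-theoretically inside $W_{pq}$. A point of $R_i\cap(e_j+C)$ is an $e_j+c$ with $h^0(h_i-e_j-c)>0$. By Claim \ref{claimhigi} the five $g^1_4$'s containing $e_j$ are $g_j$ and the $h_k$ with $k\ne j$; hence $h^0(h_i-e_j)=0$ when $i=j$ and $h^0(h_i-e_j)=1$ when $i\ne j$. Consequently $C_i\cdot C_i'=0$ (the two curves are disjoint), and for $i\ne j$ the system $|h_i-e_j|$ is a single divisor of degree two, which for general $C,p,q$ is reduced, say $c+c'$, so $R_i\cap(e_j+C)=\{e_j+c,\,e_j+c'\}$ and $C_i\cdot C_j'=2$ (transversality at these two points being a routine infinitesimal check in the spirit of Section \ref{subsecinfinite}). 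Finally, a common point of $R_i$ and $R_j$ would force $h_i-c\equiv h_j-c'$ with $c,c'\in C$, i.e. $c+g_j\equiv c'+g_i$; since $C$ has no $g^1_3$, the two effective divisors would have to be equal, which is impossible because the base loci of $g_i$ and $g_j$ are disjoint (equivalently, $g_i-g_j\notin C-C$ for general $C,p,q$). Thus $C_i\cdot C_j=0$ for $i\ne j$.

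The self-intersection $C_i^2$ I would compute by adjunction for the flag $C_i\subset W_{pq}\subset C^{(3)}$. From the exact sequence $0\to N_{C_i/W_{pq}}\to N_{C_i/C^{(3)}}\to N_{W_{pq}/C^{(3)}}|_{C_i}\to 0$ one gets $C_i^2=\deg\det N_{C_i/C^{(3)}}-(\theta-\eta)\cdot R_i$, and adjunction on $C^{(3)}$ gives $\deg\det N_{C_i/C^{(3)}}=(2g(C)-2)-K_{C^{(3)}}\cdot R_i$. Using Macdonald's formula $K_{C^{(3)}}=\theta+\eta$ (for $g=5$; see \cite{macdonald62}), everything reduces to the two numbers $\theta\cdot R_i$ and $\eta\cdot R_i$. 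The composite $R_i\hookrightarrow C^{(3)}\to\Pic^3 C$ is injective with image a translate of $-C$, and $\theta$ on $C^{(3)}$ is pulled back from $\Pic^3 C$, so $\theta\cdot R_i=g=5$; and $R_i$ meets a divisor $x_0+C^{(2)}$ in class $\eta$ exactly in the three points $|h_i-c|$ with $c$ in the (degree three) divisor of $|h_i-x_0|$, so $\eta\cdot R_i=3$. Hence $\deg\det N_{C_i/C^{(3)}}=8-(5+3)=0$, $(\theta-\eta)\cdot R_i=2$, and $C_i^2=-2$; by the symmetry above $C_i'^2=-2$ as well.

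The main obstacle is this last step: one must keep the two adjunction computations straight and assemble the inputs $K_{C^{(3)}}=\theta+\eta$, $[W_{pq}]=\theta-\eta$, $\theta\cdot R_i=5$ and $\eta\cdot R_i=3$ correctly. A secondary, more technical point is the transversality (multiplicity one) of the two points of $R_i\cap(e_j+C)$ for $i\ne j$, which calls for a short infinitesimal argument of the type carried out in Section \ref{subsecinfinite}; once that is in place, the value $C_i\cdot C_j'=2$ follows and the lemma is complete.
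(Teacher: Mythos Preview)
Your argument is correct and close in spirit to the paper's, but the computation of $C_i^2$ takes a genuinely different route. The paper uses the \emph{other} embedding $q_2:L\mapsto|L-p-q|$, under which $q_2(C_i)=e_i+C$ has the simple class $\eta^2$ in $C^{(3)}$ (it is a complete intersection of two divisors of class $\eta$), so that $c_1(N_{C_i/C^{(3)}})=2\cdot(\eta\cdot\eta^2)=2$ can be read off immediately; then $(\theta-\eta)\cdot\eta^2=4$ gives $C_i^2=2-4=-2$ from the same normal bundle sequence. You instead stay with $q_1$, where $R_i=q_1(C_i)$ has the more complicated class $\tfrac{1}{2}\theta^2-2\theta\eta+3\eta^2$, and bypass this by computing $\theta\cdot R_i=5$ and $\eta\cdot R_i=3$ directly and then invoking adjunction ($K_{C^{(3)}}=\theta+\eta$) to obtain $\deg\det N_{C_i/C^{(3)}}=8-8=0$ and $C_i^2=0-2=-2$. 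Both are valid; the paper's choice of $q_2$ buys a one-line normal bundle computation, while your route avoids needing the class of $R_i$ at the cost of the extra adjunction input.

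For the off-diagonal numbers your argument via Claim \ref{claimhigi} is essentially the paper's. The paper likewise asserts $C_i\cdot C_j=0$ as clear (your phrasing ``the two effective divisors would have to be equal'' should be read as: $c$ and $c'$ are both base points of the pencil $|c+g_i|=|c'+g_j|$, and $c\ne c'$ would produce a $g^1_3$), and also appeals to transversality of $C_i\cap C_j'$ for general $p+q$ without spelling out the infinitesimal check.
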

\begin{proof} Clearly $C_iC_j = C'_i C_j'=0$ for $i\ne j$. To compute $C_i^2$, consider the exact sequence
$$\xymatrix{0\ar[r]&N_{C_i|W_{pq}}\ar[r]&N_{C_i|C^{(3)}}\ar[r]&N_{q_2(W_{pq})|C^{(3)}}|_{C_i}\ar[r]&0}.$$
Under the embedding $q_2:W_{pq}\ra C^{(3)}$ sending $L$ to $|L-p-q|$, $C_i$ is a complete intersection with cohomology class $\eta^2\in H^4(C^{(3)})$. Therefore, $c_1(N_{C_i|C^{(3)}})=2$. We also have $c_1(N_{W_{pq}|C^{(3)}}|_{C_i})=\int_{C^{(3)}}[W_{pq}]\cdot[C_i]=\int_{C^{(3)}}(\theta-\eta)\eta^2=4$. We conclude that $C_i^2=-2$.

Now we compute $C_iC_j'$. Suppose $x+g_i\sim p+q+h_j-y$ for some $x,y\in C$. Then
$$D_{2i}:=g_i-p-q=h_j-x-y.$$
By Claim \ref{claimhigi}, for a fixed $i$, the $g^1_4$s containing $D_{2i}$ are $g_i$ and $h_l$ for $l\ne i$. This implies $C_iC_i'=0$ and $C_iC_j'=2$ (embedding $C^{(3)}$ in $Pic^3 C$, one easily sees that the intersection of $C_i$ and $C_j'$ is transverse for a general choice of $p+q$). 
\end{proof}

Using the formula in Lemma \ref{lemAJ22} and the intersection numbers in Lemma \ref{lemCiCj} we compute
\begin{eqnarray}
AJ^0_2 \: : \: H^2(W_1) \oplus H^2(W_2) \:\: & \lra & H^4 (M_2) / \langle j_{2*}f\rangle \nonumber \\
(12[C_i]-3[C]_{tot}+{2}q_2^*(\eta-\sigma_i),\ 3[C_i]) & \longmapsto & -58[\bP^2_i]+44\sum_{j\ne i,j=1}^5[\bP_j^2]\ \text{mod}\ \langle j_{2*}f\rangle. \label{eqnAJ22}
\end{eqnarray}
It immediately follows that the image of $AJ^0_2 $ contains $\langle[\bP^2_i]: i=1, \ldots , 5\rangle$ modulo $j_{2*}f$. We then compute that  
\begin{eqnarray}\label{eqnAJ22'} AJ^0_2([C_i],[C_i'])=-6\sum_{j\ne i,j=1}^5[\bP^2_{j+5}]
\end{eqnarray}
 modulo $\langle[\bP^2_i],  j_{2*}f|\ i=1,...,5\rangle$.
 Proposition \ref{AJ22} follows immediately.
\hfill\qed 
 \subsection{Proof of Proposition \ref{AJ1E1}:} The map $AJ^1 : H^1 (\tG_0^{[1]}) \lra H^3 (M_{12})$.\label{secGr3}

It follows from Sections \ref{subseccomp} and \ref{subsecAJE1} that the only double loci of the central fiber $\widetilde{\tilde{\cG}\times_T\tilde{\T}}_0$ inducing non-trivial Abel-Jacobi maps are those which map to $X_{kp}$ or $X_{kq}$ under $\rho_1$ and map to $M_{12}$ under $\rho_2$. These are the slanted lines in the picture in Section \ref{subseccomp}. Recall (see Section \ref{rkcentralfiber}) that $H^1(\tG_0^{[1]})=H^1(X_{1p})\oplus H^1(X_{1q})\oplus H^1(X_{2p})\oplus H^1(X_{2p})$ and $H^3 (M_{12}) = \tau_1\cdot\pi_{12}^*H^1(W^1_4)\oplus j_2^*e_2\cdot \pi_{12}^*H^1(W^1_4)$ (see Lemma \ref{Mcoho}). To prove Proposition \ref{AJ1E1}, it is sufficient to prove that the image of the summand $H^1(X_{1q})$ by $AJ^1$ contains $\tau_1\cdot\pi_{12}^*H^1(W^1_4)$.
The map $AJ^1$ on this direct summand is given by 
\begin{eqnarray}\label{doublemap}\xymatrix{H^1(X_{1p})\ar[r]^-{\rho_1^*}&H^1(E_{1p})\ar[r]^-{\cup[F_{r_0}'''|_{E_{1p}}]}&H^7(E_{1p}))\ar[r]^-{\rho_{2*}}&H^3(M_{12})},
\end{eqnarray}
where $E_{1p}$ corresponds to the slanted line labeled $b$ in the picture in Section \ref{subseccomp}. Therefore $E_{1p}$ is a $\bP^1$-bundle over $X_{1p}\times M_{12}$ and fits into the diagram
\[
\xymatrix{E_{1p}\ar[d]&\\
X_{1p}\times M_{12}\ar[r]\ar[d]&M_{12}\\
X_{1p}.}
\]

 By the projection formula, to compute (\ref{doublemap}), it suffices to compute the correspondence induced by the push-forward cycle of $[F_{r_0}'''|_{E_{1p}}]$ to $X_{1p}\times M_{12}$. Denote $Y'$ the projectivized normal cone of $F''_{r_0}\cap(W_{1}\times W^1_4)$ in $F_{r_0}''|_{W_{1}\times\T_0}$. By construction, $Y'$ has dimension $2$ and $Y'=(W_1\times M_{12})\cap F_{r_0}'''|_{W_{1}\times M_2}$. 
 

 The components of $Z_1$ which dominate $X_{1p}$ are described in  Lemma \ref{X1p}. Let $Z_{1p}$ denote the union of these components and let $Y$ be the fiber product $Z_{1p}\times_{F''_{r_0}\cap(W_k\times W^1_4)} Y'$, which is
 generically a $\bP^1$-bundle over $Z_{1p}$ (the $\bP^1$ in the ruling corresponds to $\tau_1$ because the map $\lambda_2$ from $F_{(4,1)(2,3)}$ and $F_{(2,3)(4,1)}$ factors through $C^{(4)}\stackrel{\phi}\ra\T_0$):
\[
\xymatrix{Y\ar[r]\ar[d]&Y'\ar[rr]\ar[d]&&M_{12}\ar[d]^-{\pi_{12}}\\
Z_{1p}\ar[r]^-{(\lambda_1,\lambda_2)}\ar[d]^-{\lambda_1}&F''_{r_0}\cap(W_k\times W^1_4) \ar[d]^-{Pr_1}\ar[rr]^-{Pr_2}&&W^1_4\\
X_{1p}\ar[r]& W_1.}
\]

For a real one cycle $\alpha$ in general position in $X_{1p}$,  the inverse image of $\alpha$ in $Y$ is a $\bP^1$-bundle over $\alpha$. The push-forward of the class of this $\bP^1$-bundle to $M_{12}$ is a class in $H^3(M_{12})$.  As the class of $\alpha$ varies in $H^1(X_{1p}) \cong H^1 (W^1_4)$, the class in $H^3(M_{12})$ spans $\tau_1\cdot\pi_{12}^*H^1(W^1_4)$ because $X_{1p}$ and $W^1_4$ are isomorphic to each other. \hfill \qed 

\subsection{Proof of Proposition \ref{AJ2global}:} Passage to the $E_2$ terms.

Recall that $\tG_0$ has four components and $E_2^{0,2}=Gr_2H^2(\tG_0)$ is the kernel of 
$$\xymatrix{H^2(\tG_0^{[0]})\ar^-{d_1}[r]\ar[d]^-{\cong}&H^2(\tG_0^{[1]})\ar[d]^-{\cong}\\
\oplus_{k=1}^2H^2(W_k)\oplus H^2(P_k)\ar[r]&\oplus_{k=1}^2H^2(X_{kp})\oplus H^2(X_{kq})}$$
Consider the subspace of $Gr_2H^2(\tG_0)$ consisting of $(x_1, x_2, \beta_1, \beta_2)$ with $x_k\in H^2(W_k)$ and $\beta_k\in H^2(P_k)$ such that $\beta_k$ is a multiple of the class of fiber of the $\bP^1$-bundle $P_k$. Since we always have
$$\int_{X_{kp}}\beta_k=\int_{X_{3-k,q}}\beta_k,$$
the compatibility condition defining $Ker(d_1)$ becomes 
\begin{eqnarray}\label{compatible}\int_{X_{kp}}x_k=\int_{X_{{3-k},q}}x_{3-k}.
\end{eqnarray}
Because the cycles $F'''_{r_0}|_{P_k\times M_{1}}$ and $F'''_{r_0}|_{P_k\times M_{2}}$ come from a base change (Proposition \ref{propcycleP}), the maps $AJ^0_1$ and $AJ^0_2$ are trivial on $\beta_k\in H^2(P_k)$. We will therefore write $AJ^0_i (x_1, x_2) := AJ^0_i (x_1, x_2, \beta_1, \beta_2)$.

Now start with $(\gamma_1,\gamma_2)\in (I\oplus H^4(M_2))\cap Gr_4 H^4 (\tT_0)$. The condition $(\gamma_1,\gamma_2)\in Gr_4 H^4 (\tT_0)$ means $j_1^*\gamma_1=j_2^*\gamma_2\in H^4(M_{12})$ by Proposition \ref{propwH4}. By Proposition \ref{AJ22}, we can choose $(x_1,x_2)\in H^2(W_1)\oplus H^2(W_2)$ such that:
 \[
 \gamma_2-AJ^0_2(x_1,x_2)\in \langle j_{2*}f,j_{2*}\tau_1\rangle.
 \]
Furthermore, note that in formula \eqref{eqnAJ22} and \eqref{eqnAJ22'}, we have chosen  $x_1$ and $x_2$ so that 
\[
\int_{X_{1p}}x_1=\int_{X_{2q}}x_2,\quad \int_{X_{1q}}x_1=\int_{X_{2p}}x_2.
\]
Subtracting $(AJ^0_1(x_1,x_2),AJ^0_2(x_1,x_2))$ from  $(\gamma_1,\gamma_2)$, we may assume $\gamma_2\in  \langle j_{2*}f,j_{2*}\tau_1\rangle\subset H^4(M_2)$.
 Now choose $\omega\in H^2(C^{(3)})$ such that for $i=1,...,5$ (see Section \ref{subsecAJ22} for the notation),
\begin{eqnarray}\label{eqncompatible}
\int_{X_{1p}} q_1^* \omega=\int_{X_{1q}} q_1^* \omega =0
\end{eqnarray}
and
\begin{eqnarray}\label{eqnvanish}
\int_{C_i} q_1^* \omega=\int_{W_{1}} q_1^* \omega \cdot([C]_{tot}+4[C'_i]+2q_1^*(\theta-\eta)))=0.
\end{eqnarray}
The equations (\ref{eqncompatible}) imply $(q_1^*\omega,0)\in Gr_2H^2(\tG_0)$. The equations (\ref{eqnvanish}) imply 
\[
AJ^0_2(q^*_1\omega,0)\in \langle j_{2*}f,j_{2*}\tau_1\rangle
\]
by the formula for $AJ^0_2$ in Lemma \ref{lemAJ22}.

 By the secant plane formula,
\begin{eqnarray}
&&q_{1*}[C_i]=\frac{1}{2}\theta^2-\theta\eta+\eta^2,\nonumber\\
&&q_{1*}[C_i']=\eta^2,\nonumber\\
&&q_{1*}[X_{1p}]=q_{1*}[X_{1q}]=\frac{1}{2}\theta^2-\theta\eta.\nonumber
\end{eqnarray}
Therefore the equations (\ref{eqncompatible}) and (\ref{eqnvanish}) together impose two conditions on $\omega$ since
\[
 \langle q_{1*}[C_i],q_{1*}(2[C_i']+q_1^*(\theta-\eta)),q_{1*}[X_{1p}],q_{1*}[X_{1q}]\rangle=\langle\theta^2,\theta\eta,\eta^2\rangle=\langle\theta\eta,\eta^2\rangle \subset H^4 (C^{(3)}).
\]
So, if we choose
\[
\omega \in\langle\xi_i\xi_j,\sigma_{k}-\sigma_{1}|\ i\ne j\pm5, k=2,...,5\rangle=\langle\theta\eta,\eta^2\rangle^{\perp},
\]
by the formula for $AJ^0_2$ in Lemma \ref{lemAJ22},
  $AJ^0_2(q^*_1\omega,0)\in \langle j_{2*}f,j_{2*}\tau_1\rangle$. Similarly,
 we can choose $\omega'\in H^2(C^{(3)})$ such that $q_1^*(\omega')$ satisfies (\ref{eqncompatible}) and
 $$AJ^0_2(0,q_1^*\omega')\in \langle j_{2*}f,j_{2*}\tau_1\rangle.$$
  
  By formula (\ref{eqnAJ21}), if we modify $(\gamma_1,\gamma_2)$ by a linear combination of $(AJ^0_1(q_1^*\omega,0),AJ^0_2((q_1^*\omega,0))$, $(AJ^0_1(0,q_1^*\omega'),AJ^0_2(0,q_1^*\omega'))$ and $(p_1^*(\theta H^2(Pic^4C)),0)$, we have $\gamma_1=-j_{1*}y_1$ and $\gamma_2=j_{2*}y_2$ for $y_1,y_2\in H^2(M_{12})$. But since  $j_1^*\gamma_1=j_2^*\gamma_2\in H^4(M_{12})$, we conclude immediately that $y_1=y_2$, thus $(\gamma_1,\gamma_2)\in Im(-j_{1*},j_{2*})$. \hfill \qed
 
 
 


\section{Appendix}
\label{secAppx}
\subsection{The cohomology of $C^{(k)}$.}\label{sum}

Let $C^k$ be the $k$-fold Cartesian product of a smooth curve $C$ of genus $g$ and $m$ be the natural map from $C^k$ to $C^{(k)}$. We identify the cohomology $H^{\bullet}(C^{(k)})$ with its image under $m^*$, which is the invariant subring of $H^\bullet(C^{k})$ under the action of the Symmetric group $\Sg_k$. 

Macdonald \cite{macdonald62} proved that the cohomology ring $H^{\bullet}(C^{(k)},\bZ)$ is generated by (see Notation and Conventions (2))
$$\xi_i\in H^1(C^{(k)},\bZ)\cong H^1(Pic^k(C),\bZ),\ i=1,...,2g$$
and the class $\eta\in H^2(C^{(k)},\bZ)$ subject to the following relations:
\begin{eqnarray}\label{symrelation}\xi_A\xi'_B(\sigma_C-\eta)\eta^d=0
\end{eqnarray}
where $A$,$B$,$C$ are mutually disjoint subsets of $\{1,...,g\}$ and $|A|+|B|+2|C|+d=k+1$,
$\xi_A=\Pi_{i\in A}\xi_i$, $(\sigma_C-\eta)=\Pi_{i\in C}(\sigma_i-\eta)$, etc.

\subsection{The secant plane formula \cite[p. 342]{ACGH}}\label{appsecplane}

Let $|V|\subset |L|$ be a $g^r_d$. Fix $d\ge k\ge r$, consider the following cycle
$$\{D\in C^{(k)}|\ E-D\ge0 \text{ for some } E\in|V|\}\subset C^{(k)}.$$
The cohomology class of the above cycle is given by
\begin{eqnarray}\label{secant}
\sum_{l=0}^{k-r}{d-g-r\choose l}\frac{\eta^l\theta^{k-r-l}}{(k-r-l)!}
\end{eqnarray}

\subsection{The Gysin maps}
If $\omega\in H^\bullet(C^{k},\bZ)$, the Gysin push-forward for the sum map  
$$m_*: H^{\bullet}(C^k,\bZ)\ra H^{\bullet}(C^{(k)},\bZ)$$
is given by
$$m_*(\omega)=\sum_{\sigma\in \Sg_k}\sigma^*(\omega).$$

If $\omega$ is $\Sg_k$-invariant, then 
$$m_*(\omega)=k! \ \omega$$
reflecting the fact that $m$ is generically $k!$ to 1.

Fix $k_1+k_2=k$, and let $m_1$ and $m_2$ be the symmetrization maps
$$\xymatrix{C^k\ar[r]^-{m_1}&C^{(k_1)}\times C^{(k_2)}\ar[r]^-{m_2}&C^{(k)}.}$$
For a cohomology class $\omega'\in H^\bullet(C^{(k_1)}\times C^{(k_2)})$ we have
$$m_{2*}(\omega')=\frac{1}{deg(m_1)}m_{*}(m_1^*\omega')=\frac{1}{deg(m_1)}\sum_{\sigma\in S_k}\sigma^*(m_1^*\omega').$$
In our case $g_C=5$ and we have the following lemmas (whose proofs are straightforward computations).
 \begin{lemma}\label{pushH2}The Gysin map $m_*: H^2(C^{(2)}\times C^{(2)})\longrightarrow H^2(C^{(4)})$ acts as follows:
\[
\begin{array}{lcll}1\otimes\theta & \longmapsto & \theta+10\eta, & \\
1\otimes\eta & \longmapsto & 3\eta, & \\
\xi_i\otimes\xi_j & \longmapsto & 2\xi_i\xi_j & \text{for}\ j\ne i\pm5,\\
\xi_i\otimes\xi_{i\pm5} & \longmapsto & 2\xi_i\xi_{i\pm5}\mp2\eta, &\\
\xi_i\xi_j\otimes 1 & \longmapsto & \xi_i\xi_j & \text{for}\ j\ne i\pm5.
\end{array}
\]
\end{lemma}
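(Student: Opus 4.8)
The plan is to compute the Gysin map $m_*\colon H^2(C^{(2)}\times C^{(2)})\to H^4(C^{(4)})$ by going through the $4!$-fold cover $m\colon C^4\to C^{(4)}$, exactly as set up in the preceding subsection. Concretely, I would factor $m = m_2\circ m_1$ with $m_1\colon C^4\to C^{(2)}\times C^{(2)}$ symmetrizing the first two and last two factors and $m_2\colon C^{(2)}\times C^{(2)}\to C^{(4)}$ the addition map, so that for $\omega'\in H^2(C^{(2)}\times C^{(2)})$ one has $m_{2*}(\omega') = \tfrac{1}{\deg m_1}\sum_{\sigma\in\Sg_4}\sigma^*(m_1^*\omega')$ with $\deg m_1 = 4$. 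The strategy is therefore: (i) pull each generator back to $C^4$ via $m_1^*$, writing everything in terms of the pullbacks $a_i := \mathrm{pr}_i^*\xi$-classes and the diagonal/$\eta$ classes on the Cartesian product; (ii) symmetrize over $\Sg_4$; (iii) re-express the $\Sg_4$-invariant answer in terms of the Macdonald generators $\xi_i,\eta$ of $H^4(C^{(4)})$ using the relations \eqref{symrelation}; (iv) divide by $4$.

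First I would record the pullbacks of the basic classes. On $C^{(k)}$ the class $\eta$ pulls back under the Cartesian symmetrization to $\sum_i \Delta_{*}$-type terms; the cleanest bookkeeping uses that on $C^{(2)}$, $\eta = \eta^{(2)}$ pulls back to $C^2$ as $\tfrac12(\text{diagonal class}) + (\text{pullbacks of a point})$-combinations, and that $\xi_i$ on $C^{(k)}$ pulls back to $\sum_j \mathrm{pr}_j^*\xi_i$. Then $1\otimes\theta$, $1\otimes\eta$, $\xi_i\otimes\xi_j$, $\xi_i\otimes\xi_{i\pm5}$, $\xi_i\xi_j\otimes 1$ all become explicit symmetric (in the appropriate factors) polynomials in the $\mathrm{pr}_j^*\xi_i$ and the diagonal classes on $C^4$. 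The genus enters only through $g=5$, i.e. $i$ ranges over $1,\dots,10$ and $\sum_{i=1}^{5}\mathrm{pr}_a^*\xi_i\,\mathrm{pr}_a^*\xi_{i+5} = \mathrm{pr}_a^*\theta$; the one genuinely computational input is the standard formula for the class of the diagonal $\Delta\subset C\times C$ in terms of $\theta$ and the $\xi_i$'s, which forces the $\mp 2\eta$ correction terms in the cases $\xi_i\otimes\xi_{i\pm5}$ and $1\otimes\eta\mapsto 3\eta$ (here $3 = 2k-1$ with $k=2$, coming from $m_{2*}$ of the diagonal-type part). After symmetrizing I would match coefficients against $\theta+10\eta$ for $1\otimes\theta$ (note $10 = \binom{g}{1}\cdot 2 = $ the coefficient produced by symmetrizing $\sum_i \mathrm{pr}_a^*\sigma_i$ over the inserted factors), against $3\eta$ for $1\otimes\eta$, and so on, using \eqref{symrelation} to reduce any $\xi_A\xi'_B\sigma_C\eta^d$ with total weight $\geq 5$ that appears.

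The main obstacle I anticipate is the bookkeeping in step (ii)–(iii): correctly tracking the diagonal contributions under $\Sg_4$-symmetrization and then rewriting the resulting $\Sg_4$-invariant classes of degree $4$ on $C^4$ in the Macdonald basis of $H^4(C^{(4)})$, where the relations \eqref{symrelation} must be applied to kill redundant monomials. In particular the appearance of $\eta^2$-type and $\theta\eta$-type terms (which live in the Macdonald subring) versus the $\xi_i\xi_j$ and $\xi_i\xi_j\xi_k\xi_l$ terms requires care, and the signs in $\xi_i\otimes\xi_{i\pm5}\mapsto 2\xi_i\xi_{i\pm5}\mp 2\eta$ come precisely from the skew-symmetry $\xi_{i+5}$ paired with $\xi_i$ inside $\sigma_i$; these I would verify by a direct check on a product of two elliptic curves or, more safely, by intersecting both sides with a basis of complementary-degree classes (e.g. $\eta^{\,\text{top}-2}$, products of $\xi$'s) and comparing the resulting numbers, which reduces the whole lemma to a finite list of integrals on $C^{(4)}$ computed by Macdonald's formulas. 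Since the statement asserts the computation is "straightforward," I would present only the pullback formulas and the key diagonal-class input, then state the resulting table.
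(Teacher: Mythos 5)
Your overall route is the same as the paper's: the lemma is stated immediately after the formula $m_{2*}(\omega')=\tfrac{1}{\deg(m_1)}\sum_{\sigma\in\Sg_4}\sigma^*(m_1^*\omega')$, and the intended (unwritten) proof is exactly the computation you outline — pull back to $C^4$ via $m_1^*$, symmetrize over $\Sg_4$, divide by $\deg m_1=2!\,2!=4$, and rewrite the invariant class in Macdonald's generators. Your fallback of checking the answer against integrals with complementary classes is also a legitimate verification.

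Two technical claims in your sketch are wrong, however, and the first would derail the computation if used as stated. The class $\eta$ on $C^{(2)}$ does \emph{not} pull back to $C^2$ as half the diagonal class plus point classes: the preimage of the divisor $x+C^{(1)}$ is $\{x\}\times C+C\times\{x\}$, so $m^*\eta=\mathrm{pr}_1^*[pt]+\mathrm{pr}_2^*[pt]$ with no diagonal term (the big diagonal is only the ramification locus and does not enter the pullback of a divisor class). Relatedly, the K\"unneth formula for $[\Delta]\subset C\times C$ is not the input that produces the correction terms; what produces them is the on-curve relation $\xi_i\xi_{i+5}=[pt]$, which on $C^4$ gives $\sum_{a\neq b}\mathrm{pr}_a^*\xi_i\,\mathrm{pr}_b^*\xi_{i+5}=m^*(\sigma_i-\eta)$. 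For example, $m_1^*(\xi_i\otimes\xi_{i+5})=(\mathrm{pr}_1^*\xi_i+\mathrm{pr}_2^*\xi_i)(\mathrm{pr}_3^*\xi_{i+5}+\mathrm{pr}_4^*\xi_{i+5})$ symmetrizes, after dividing by $4$, to $2\,m^*(\sigma_i-\eta)$, i.e. $2\xi_i\xi_{i+5}-2\eta$ (and $+2\eta$ in the $i-5$ case by skew-symmetry); $1\otimes\theta$ gives $15\eta+\sum_i(\sigma_i-\eta)=\theta+10\eta$; and the $3$ in $1\otimes\eta\mapsto 3\eta$ is pure counting, $\tfrac{2\cdot 3!}{2!\,2!}=3$ — there is no ``diagonal-type part'' in $m_1^*(1\otimes\eta)$ at all. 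With these corrections (which your proposed numerical cross-check would in any case catch), the plan goes through and reproduces the table.
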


\begin{lemma}\label{pushH4}The Gysin map $m_*: H^4(C^{(2)}\times C^{(2)})\longrightarrow H^4(C^{(4)})$ acts as follows:
\[
\begin{array}{lcll}\eta\otimes(\xi_i\cdot\xi_{i+5}) & \longmapsto & \eta\xi_i\xi_{i+5}+\eta^2, & \\
\eta\otimes\xi_i\xi_j & \longmapsto & \eta\xi_i\xi_j & \text{for} \ j\ne i\pm5,\\
\eta\otimes\eta & \longmapsto &  2\eta^2, &\\
\eta\xi_i\otimes\xi_j & \longmapsto & \eta\xi_i\xi_j & \text{for}\ j\ne i\pm5,\\
\eta\xi_i\otimes\xi_{i\pm5} & \longmapsto & \eta\xi_i\xi_{i\pm5}\mp\eta^2, &\\
\sigma_k\otimes\sigma_k & \longmapsto & 2\sigma_k\eta & \text{for}\ k=1,..5,\\
\sigma_k\otimes\sigma_l & \longmapsto & \sigma_k\sigma_l+\eta^2\ k\ne l, &\\
\sigma_k\otimes\xi_k\xi_j & \longmapsto & \xi_k\xi_j\eta & \text{for}\ j\ne k+5,\\
\sigma_k\otimes\xi_i\xi_j & \longmapsto & \sigma_k\xi_i\xi_j & \text{for}\ i,j\notin\{k,k+5\},\\
\eta^2\otimes1 & \longmapsto &  \eta^2. &
\end{array}
\]
\end{lemma}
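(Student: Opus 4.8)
The plan is to reduce the computation of the Gysin map $m_{*}\colon H^{4}(C^{(2)}\times C^{(2)})\to H^{4}(C^{(4)})$ to an elementary manipulation in the cohomology of the Cartesian power $C^{4}$, using the factorization of the addition map. Write $m_{1}\colon C^{4}\to C^{(2)}\times C^{(2)}$, $(a,b,c,d)\mapsto(a+b,\,c+d)$, and let $m_{2}=m$ be the addition map $C^{(2)}\times C^{(2)}\to C^{(4)}$, so $\pi:=m_{2}\circ m_{1}\colon C^{4}\to C^{(4)}=C^{4}/\Sg_{4}$ is the quotient and $\deg m_{1}=4$. By the formula for iterated symmetrization recalled above, $m_{2*}(\omega)=\tfrac14\sum_{\sigma\in\Sg_{4}}\sigma^{*}(m_{1}^{*}\omega)$, where $m^{*}$ identifies $H^{\bullet}(C^{(4)})$ with $H^{\bullet}(C^{4})^{\Sg_{4}}$. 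Hence it suffices to compute $\sum_{\sigma\in\Sg_{4}}\sigma^{*}(m_{1}^{*}\omega)$ for $\omega$ running over the listed spanning classes and to rewrite the result in Macdonald's standard basis $H^{4}(\mathrm{Pic}^{4}C)\oplus\eta H^{2}(\mathrm{Pic}^{4}C)\oplus\eta^{2}$ of $H^{4}(C^{(4)})$ (see \cite{macdonald62}).

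First one records the pullbacks of the generators. With $\xi_{i}^{(j)}=p_{j}^{*}\xi_{i}$ and $\sigma^{(j)}=p_{j}^{*}[\mathrm{pt}]_{C}$ the pullbacks under the $j$-th projection of $C^{4}$, one has $m_{1}^{*}(\xi_{i}\otimes1)=\xi_{i}^{(1)}+\xi_{i}^{(2)}$, $m_{1}^{*}(1\otimes\xi_{i})=\xi_{i}^{(3)}+\xi_{i}^{(4)}$, and $m_{1}^{*}(\eta\otimes1)=\sigma^{(1)}+\sigma^{(2)}$, $m_{1}^{*}(1\otimes\eta)=\sigma^{(3)}+\sigma^{(4)}$, the last two because $\eta=[x+C]$ pulls back along $C^{2}\to C^{(2)}$ to $[\{x\}\times C]+[C\times\{x\}]$. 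On the target one has the analogous identity $\pi^{*}\eta=\sum_{j=1}^{4}\sigma^{(j)}$, whence $\pi^{*}(\eta^{2})=2\sum_{a<b}\sigma^{(a)}\sigma^{(b)}$ and $\pi^{*}(\eta\gamma)=(\sum_{j}\sigma^{(j)})\pi^{*}\gamma$ for $\gamma\in H^{2}(\mathrm{Pic}^{4}C)$; these let one recognize the symmetrized classes produced below.

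Next, multiplying these pullbacks and using the ring structure of $H^{\bullet}(C)$ --- namely $\xi_{i}\xi_{i+5}=\sigma_{i}=[\mathrm{pt}]_{C}$, $\xi_{i}\xi_{j}=0$ for $j\neq i\pm5$, and $(\sigma^{(j)})^{2}=0$ --- one writes each $m_{1}^{*}\omega$ (for $\omega=\eta\otimes\xi_{i}\xi_{i+5}$, $\sigma_{k}\otimes\sigma_{l}$, $\eta\xi_{i}\otimes\xi_{j}$, $\eta^{2}\otimes1$, and the remaining rows) as an explicit $\Sg_{2}\times\Sg_{2}$-invariant polynomial in the $\xi_{i}^{(j)}$ and $\sigma^{(j)}$. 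One then symmetrizes over $\Sg_{4}$, divides by $4$, and reduces modulo Macdonald's relations \eqref{symrelation} (with $g=5$, $k=4$) to normal form. Two bookkeeping facts keep this tractable: $\sum_{\sigma\in\Sg_{4}}\sigma^{*}(\mu)$ for a monomial $\mu$ in the $\xi_{i}^{(j)},\sigma^{(j)}$ depends only on the ``type'' of $\mu$ (which indices occur and on how many distinct Cartesian factors), and each dual pair $\{i,i+5\}$ sitting on a single factor collapses to a $\sigma^{(j)}$, whose symmetrizations under $\pi_{*}$ are exactly the powers of $\eta$. Carrying this out row by row gives the stated table; the split between $j\neq i\pm5$ and $j=i\pm5$ is precisely the $\xi_{i}\xi_{i+5}=\sigma_{i}$ collapse. (Some rows also follow more quickly from the projection formula $m_{2*}(m_{2}^{*}\gamma\cdot\delta)=\gamma\cdot m_{2*}(\delta)$ once $m_{2*}$ is known on a module basis, or from the secant-plane formula of Section \ref{appsecplane}, but the $\Sg_{4}$-averaging argument treats all cases uniformly.)

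I expect no conceptual obstacle: the only real work is sign- and index-bookkeeping in the anticommuting classes $\xi_{i}^{(j)}$ together with the repeated use of \eqref{symrelation} to normalize the symmetrized output. The ``hard part'' is therefore organizational --- arranging the case analysis (dual versus non-dual index pairs, and which Cartesian factors carry the point classes) so the combinatorics stays manageable --- rather than anything requiring a new idea.
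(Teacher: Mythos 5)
Your proposal is correct and matches the paper's own method: the paper proves these lemmas exactly by the symmetrization formula $m_{2*}(\omega)=\tfrac{1}{\deg m_1}\sum_{\sigma\in\Sg_4}\sigma^*(m_1^*\omega)$ stated just before them, declaring the rest a straightforward computation, which is precisely the $\Sg_4$-averaging and Macdonald-basis bookkeeping you describe. No gap.
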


 \begin{lemma}\label{push} The Gysin map 
$m_*: H^4(C\times C^{(3)})\longrightarrow H^4(C^{(4)})$ acts as follows:

\[
\begin{array}{lcll}
\eta\otimes\xi_i\xi_j & \longmapsto &  \eta\cdot\xi_i\xi_j & \text{for}\ 1\le i,j\le 10,\\
1\otimes\eta\xi_i\xi_j & \longmapsto &  \eta\cdot\xi_i\xi_j & \text{for}\ j\ne i\pm5,\\
1\otimes\eta\sigma_i & \longmapsto &  \eta\cdot\sigma_i+\eta^2, &\\
\xi_i\otimes\xi_j\xi_k\xi_l & \longmapsto &  \xi_i\xi_j\xi_k\xi_l & \text{for}\ j,k,l\ne i\pm5,\\
\xi_i\otimes\xi_{i\pm5}\xi_k\xi_l & \longmapsto &  \xi_i\xi_{i\pm5}\xi_k\xi_l\mp\eta\cdot\xi_k\xi_l & \text{for}\ k,l\ne i\pm5,\\
\xi_i\otimes\eta\xi_j & \longmapsto & \eta\xi_i\xi_j & \text{for}\ j\ne i\pm5,\\
\xi_i\otimes\eta\cdot\xi_{i\pm5} & \longmapsto & \eta\cdot\xi_i\xi_{i\pm5}\mp\eta^2, &\\
\eta\otimes\eta & \longmapsto & \eta^2, &\\
1\otimes\eta^2 & \longmapsto &  2\eta^2. &
\end{array}
\]

\end{lemma}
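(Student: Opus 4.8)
The plan is to reduce the computation to a symmetrization in the cohomology ring of the Cartesian power $C^4$, following the conventions set up at the start of this Appendix. In that notation (with $k_1=1$, $k_2=3$, $k=4$) the map of the lemma is $m_2\colon C\times C^{(3)}\to C^{(4)}$; write $m_1\colon C^4\to C\times C^{(3)}$ for the partial symmetrization $(x_1,x_2,x_3,x_4)\mapsto(x_1,\{x_2,x_3,x_4\})$, which has degree $3!=6$, and $m\colon C^4\to C^{(4)}$ for the full symmetrization. By the formula recalled above, $m_{2*}(\omega')=\tfrac16\sum_{\sigma\in\Sg_4}\sigma^*(m_1^*\omega')$, and the result is automatically $\Sg_4$-invariant, hence lies in $H^\bullet(C^{(4)})=H^\bullet(C^4)^{\Sg_4}$; the problem is to express it in Macdonald's generators $\xi_i,\eta$.

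First I would compute $m_1^*$ of each of the eight input classes. Writing $\pi_a\colon C^4\to C$ for the projections, $\xi_j^{(a)}:=\pi_a^*\xi_j$ and $\eta_a:=\pi_a^*[\mathrm{pt}]$, and using that on each curve factor $\xi_j^{(a)}\xi_{j+5}^{(a)}=\eta_a$ while all other products of two $\xi$'s on one factor vanish, one gets $m_1^*(pr_1^*\xi_i)=\xi_i^{(1)}$, $m_1^*(pr_2^*\xi_j)=\xi_j^{(2)}+\xi_j^{(3)}+\xi_j^{(4)}$, $m_1^*(pr_2^*\eta)=\eta_2+\eta_3+\eta_4$, and $m_1^*(pr_2^*\sigma_j)=(\xi_j^{(2)}+\xi_j^{(3)}+\xi_j^{(4)})(\xi_{j+5}^{(2)}+\xi_{j+5}^{(3)}+\xi_{j+5}^{(4)})$; all remaining inputs ($\eta\otimes\xi_i\xi_j$, $1\otimes\eta\xi_i\xi_j$, $1\otimes\eta\sigma_i$, $\xi_i\otimes\xi_j\xi_k\xi_l$, $\xi_i\otimes\eta\xi_j$, $\eta\otimes\eta$, $1\otimes\eta^2$) are products of these. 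Applying $\tfrac16\sum_{\sigma\in\Sg_4}\sigma^*$ to each is then a mechanical average over the $24$ permutations of the factors of $C^4$.

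The last step is to recognize the resulting symmetric class as an element of $H^\bullet(C^{(4)})$. For this I would use the companion pullbacks under the full symmetrization: $m^*\eta=\eta_1+\eta_2+\eta_3+\eta_4$, $m^*\xi_i=\xi_i^{(1)}+\dots+\xi_i^{(4)}$ (since the $\xi$'s come from $\mathrm{Pic}^\bullet C$), $m^*\eta^2=2\sum_{a<b}\eta_a\eta_b$, $m^*(\eta\xi_i\xi_j)=(\sum_a\eta_a)(\sum_a\xi_i^{(a)})(\sum_a\xi_j^{(a)})$, $m^*\sigma_k=(\sum_a\xi_k^{(a)})(\sum_a\xi_{k+5}^{(a)})$, $m^*(\sigma_k\eta)=m^*\sigma_k\cdot\sum_a\eta_a$, $m^*(\xi_i\xi_j\xi_k\xi_l)=(\sum_a\xi_i^{(a)})(\sum_a\xi_j^{(a)})(\sum_a\xi_k^{(a)})(\sum_a\xi_l^{(a)})$, and so on. Since $m^*$ is injective on $H^\bullet(C^{(4)})$, matching coefficients in the Kunneth decomposition of $H^4(C^4)$ pins down a unique expression; the dependencies among monomials in $\xi_i,\eta$ on $C^{(4)}$ needed to put the answer in the normal form of the table are exactly Macdonald's relations \eqref{symrelation}, and these are what produce the $\pm\eta$ and $\pm\eta^2$ corrections that appear when $j=i\pm5$. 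As a check, the projection formula $m_{2*}(m_2^*\beta\cdot\alpha)=\beta\cdot m_{2*}\alpha$ together with $m_{2*}1=4$ (as $\deg m_2=4$) reduces the work on the lines whose input is pulled back from $C^{(4)}$, and every line can be verified independently by pairing both sides against a basis of $H^4(C^{(4)})$ using the intersection form coming from Macdonald's ring structure.

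I expect the only real difficulty to be bookkeeping: organizing the $24$-term symmetrizations and handling the special indices $j=i\pm5$, where $\sigma$-type and $\eta$-type contributions intertwine. There is no conceptual obstruction here --- this is the ``straightforward computation'' the paper refers to, and the same recipe proves the two preceding lemmas as well --- so the write-up amounts to carrying out the linear algebra over $H^\bullet(C^{(4)})$ carefully.
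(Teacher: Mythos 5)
Your proposal follows essentially the same route as the paper: the paper's entire ``proof'' of this lemma is the symmetrization formula $m_{2*}(\omega')=\frac{1}{\deg m_1}\sum_{\sigma\in\Sg_4}\sigma^*(m_1^*\omega')$ set up just before the lemmas, followed by the ``straightforward computation'' of rewriting the resulting invariant class in Macdonald's generators, which is exactly what you describe (and your pullback formulas and the degree $\deg m_1=6$ are right). One small correction that does not affect the outcome: for $g=5$, $k=4$ the relations \eqref{symrelation} live in degree $\ge 5$, so the degree-$4$ monomials in $\xi_i,\eta$ are a basis of $H^4(C^{(4)})$ and the $\mp\eta$, $\mp\eta^2$ corrections at $j=i\pm5$ come not from those relations but from the diagonal terms $\xi_i^{(a)}\xi_{i+5}^{(a)}=\eta_a$ that appear when expanding $m^*$ of the monomials and matching K\"unneth components --- your coefficient-matching step via the injectivity of $m^*$ produces them automatically.
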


\subsection{The cycle class of $F_{r_0}''|_{W_1\times C^{(4)}}$}\label{cycleclasses} 
\label{ssecAJ2}

We use the secant plane formula (Section \ref{appsecplane}) to compute the cycle class of $F_{r_0}''|_{W_1\times C^{(4)}}$ in each bidegree. For each bidegree $(d_1, d_2) + (e_1, e_2)$, the corresponding cycle $F_{(d_1, d_2)(e_1, e_2)}\subset W_k \times C_1^{(d_1)}\times C_1^{(e_1)} \times C_2^{(d_2)} \times C_2^{(e_2)}$  projects generically injectively to a product of some of the factors. Since the map $\lambda:F_{(d_1, d_2)(e_1, e_2)}\ra W_k\times\T_0$ factors through these projections, we only need the cycle class of the projection of $F_{(d_1, d_2)(e_1, e_2)}$.

\begin{enumerate}
\item (4,1)+(2,3)
We first compute the class of the projection of $F_{(4,1)(2,3)}$ to $C^{(3)}\times C^{(3)}\times C\times C^{(3)}$ (with the identification $C_1=C_2=C$ and the embedding of $W_1$ into $C^{(3)}\times C^{(3)}$ via $(q_1,q_2)$)
The cycles are given by the following conditions
$$(\Gamma_3,\Gamma_3',a,D_3')\in C^{(3)}\times C^{(3)}\times C\times C^{(3)}$$
$$\begin{cases}
h^0(K_C-p-q-\G_3-\G_3')>0\\
  h^0(K_C-\Gamma_3'-D_3')>0\\
 h^0(K_C-\G_3-r_0-a)>0 \\
 \end{cases}$$
 The map $\lambda_2|_{F_{(4,1)(2,3)}}$ factors through $m$ which sends $(\Gamma_3,\Gamma_3',a,D_3')$ to $(\Gamma_3,\Gamma_3',a+D_3')\in C^{(3)}\times C^{(3)}\times C^{(4)}$.

By the secant plane formula (\ref{secant}), the cycle class is given by the pull-back under the sum map from $C^{(3)}\times C^{(3)}$ (the first and fourth factor) to $C^{(6)}$ of the class
$$\frac{1}{2}\theta^2-\eta\theta+\eta^2\in H^4(C^{(6)})$$
cupped with the pull back to $C^{(3)}\times C$ (second and third factor) of 
$$\theta-\eta\in H^2(C^{(4)}),$$
then restriction to $W_1\times C\times C^{(3)}$.
Thus we obtain (c.f. Notation (\ref{notesym}))
$$[\frac{1}{2}(\theta_2+\theta_4+\delta_{24})^2-(\eta_2+\eta_4)(\theta_2+\theta_4+\delta_{24})+(\eta_2+\eta_4)^2]\cdot[(\theta_1+\theta_3+\delta_{13})-(\eta_1+\eta_3)].$$
We only need the $H^2(C^{(3)}\times C^{(3)})\otimes H^4(C\times C^{(3)})$ Kunneth component of this cycle class.  We organize the terms according to the types in the Kunneth decomposition.
\begin{enumerate}
\item Type $(2,0,0,4)$. 
\begin{eqnarray}&&(\frac{1}{2}\theta_4^2-\theta_4\eta_4+\eta_4^2)(\theta_1-\eta_1)\nonumber\\
&=&(\sum_{i<j}[\sigma_{4i}\sigma_{4j}]-\theta_4\eta_4+\eta_4^2)(\theta_1-\eta_1)\nonumber\\
&=&(\sum_{i<j}[(\sigma_{4i}+\sigma_{4j})\cdot\eta_4-\eta_4^2]-\theta_4\eta_4+\eta_4^2)(\theta_1-\eta_1)\nonumber\\
&=&3(\theta_4\eta_4-3\eta_4^2)(\theta_1-\eta_1)\nonumber
\end{eqnarray}
\item Type $(0,2,2,2)$
 \begin{eqnarray}
&&(\frac{1}{2}\delta_{24}^2+\theta_2\theta_4-\theta_2\eta_4-\theta_4\eta_2+2\eta_2\eta_4)(\theta_3-\eta_3)\nonumber\\
&=&(\frac{1}{2}\delta_{24}^2+\theta_2\theta_4-\theta_2\eta_4-\theta_4\eta_2+2\eta_2\eta_4)4\eta_3\nonumber
\end{eqnarray}

\item Type $(1,1,1,3)$
$$(\theta_4\delta_{24}-\eta_4\delta_{24})\delta_{13}$$
 \end{enumerate}
By Lemma \ref{push}, the  push-forwards  of these classes to $C^{(3)}\times C^{(3)}\times C^{(4)}$ are
 \begin{enumerate}
\item \begin{eqnarray}\label{class1a}
&&m_*(3\theta_4\eta_4-9\eta_4^2)(\theta_1-\eta_1)\\
&=&3(\theta_3\eta_3-\eta_3^2)(\theta_1-\eta_1)\nonumber
\end{eqnarray}
\item \begin{eqnarray}\label{class1b}
&&m_*(\frac{1}{2}\delta_{24}^2+\theta_2\theta_4-\theta_2\eta_4-\theta_4\eta_2+2\eta_2\eta_4)4\eta_3\\
&=&m_*(2\eta_3\delta_{24}^2)+4m_*[(\theta_2\theta_4-\theta_2\eta_4-\theta_4\eta_2+2\eta_2\eta_4)\eta_3]\nonumber\\
&=&m_*2\eta_3\sum_{i,j=1}^5[-\xi_{2i}\xi_{2j}\xi_{4i}'\xi_{4j}'+2\xi_{2i}\xi_{2j}'\xi_{4i}'\xi_{4j}-\xi_{2i}'\xi_{2j}'\xi_{4i}\xi_{4j}]+4m_*[(\theta_2\theta_4-\theta_2\eta_4-\theta_4\eta_2+2\eta_2\eta_4)\eta_3]\nonumber\\
&=&2\eta_3\sum_{i,j=1}^5[-\xi_{2i}\xi_{2j}\xi_{3i}'\xi_{3j}'+2\xi_{2i}\xi_{2j}'\xi_{3i}'\xi_{3j}-\xi_{2i}'\xi_{2j}'\xi_{3i}\xi_{3j}]+4[\theta_2\eta_3\theta_3-\theta_2\eta^2_3-\eta_2\eta_3\theta_3+2\eta_2\eta_3^2]\nonumber\\
&=&2\eta_3\delta_{23}^2+4[\theta_2\eta_3\theta_3-\theta_2\eta^2_3-\eta_2\eta_3\theta_3+2\eta_2\eta_3^2] \nonumber
\end{eqnarray}
\item \begin{eqnarray}\label{class1c}
&&m_*(\theta_4-\eta_4)\delta_{13}\delta_{24}\\
&=&m_*(\theta_4-\eta_4)\sum_{i,j=1}^5[-\xi_{1i}\xi_{2j}\xi_{3i}'\xi_{4j}'-\xi_{1i}'\xi_{2j}'\xi_{3i}\xi_{4j}+\xi_{1i}\xi_{2j}'\xi_{3i}'\xi_{4j}+\xi_{1i}'\xi_{2j}\xi_{3i}\xi_{4j}']\nonumber\\
&=&m_*(\theta_4-\eta_4)\sum_{i=1}^5[-\xi_{1i}\xi_{2i}\xi_{3i}'\xi_{4i}'-\xi_{1i}'\xi_{2i}'\xi_{3i}\xi_{4i}+\xi_{1i}\xi_{2i}'\xi_{3i}'\xi_{4i}+\xi_{1i}'\xi_{2i}\xi_{3i}\xi_{4i}']\nonumber\\
&&+m_*(\theta_4-\eta_4)\sum_{i,j=1,i\ne j}^5[-\xi_{1i}\xi_{2j}\xi_{3i}'\xi_{4j}'-\xi_{1i}'\xi_{2j}'\xi_{3i}\xi_{4j}+\xi_{1i}\xi_{2j}'\xi_{3i}'\xi_{4j}+\xi_{1i}'\xi_{2j}\xi_{3i}\xi_{4j}']\nonumber\\
&=&0+\sum_{i=1}^5[\xi_{1i}\xi_{2i}'(-\sigma_{3i}\theta_3+\eta_3\theta_3-\eta_3^2)+\xi_{1i}'\xi_{2i}(\sigma_{3i}\theta_3-\eta_3\theta_3+\eta_3^2)]\nonumber\\
&&+\sum_{i,j=1, i\ne j}^5[-\xi_{1i}\xi_{2j}\xi_{3i}'\xi_{3j}'-\xi_{1i}'\xi_{2j}'\xi_{3i}\xi_{3j}+\xi_{1i}\xi_{2j}'\xi_{3i}'\xi_{3j}+\xi_{1i}'\xi_{2j}\xi_{3i}\xi_{3j}']\theta_3\nonumber\\
&=&\delta_{12}(\eta_3\theta_3-\eta_3^2)+\delta_{13}\delta_{23}\theta_3 \nonumber
\end{eqnarray}
\footnote{Note that for $i\ne j$, 
$m_*\xi_{3i}'\xi_{4j}'\sigma_{4k}=\begin{cases}
0,\ k=j\\
\xi_{3i}'\xi_{3j}'\eta_3,\ k=i\\
\xi_{3i}'\xi_{3j}'\sigma_{3k},\ k\ne i,j
\end{cases}$
and 
$m_*\xi_{3i}'\xi_{4j}'\eta_4=\xi_{3i}'\xi_{3j}'\eta_3$}

\end{enumerate}
\item (2,3)+(4,1)

The cycle is 
$$\{(\Gamma_3,\G_3',a',D_3)\in C^{(3)}\times C^{(3)}\times C\times C^{(3)}\ |\ h^0(\cO_C(K_C-r_0-\Gamma_3-D_3)>0\}.$$
The map $m$ sends $(\Gamma_3,\G_3',a',D_3)$ to $(\Gamma_3,\G_3',a'+D_3)\in C^{(3)}\times C^{(3)}\times C^{(4)}$.

Its class is given by pulling back under the sum map to $H^6(C^{(3)}\times C^{(3)})$ of 

$$\frac{\theta^3}{6}-\frac{\eta\theta^2}{2}+\eta^2\theta-\eta^3\in H^6(C^{(6)}).$$

which is equal to
$$\frac{1}{6}(\theta_1+\theta_4+\delta_{14})^3-\frac{1}{2}(\eta_1+\eta_4)(\theta_1+\theta_4+\delta_{14})^2+(\eta_1+\eta_4)^2(\theta_1+\theta_4+\delta_{14})-(\eta_1+\eta_4)^3.$$

The contributing terms in the Kunneth decomposition have type $(2,0,0,4)$:
\begin{eqnarray}&&\frac{1}{2}(\theta_1\theta_4^2+\theta_4\delta_{14}^2)-\frac{1}{2}(\eta_1\theta_4^2+\eta_4\delta_{14}^2)-\theta_1\eta_4\theta_4+2\eta_1\eta_4\theta_4+\theta_1\eta_4^2-3\eta_1\eta_4^2\nonumber\\
&=&\frac{1}{2}(\theta_1-\eta_1)(8\theta_4\eta_4-20\eta_4^2)+\frac{1}{2}(\theta_4-\eta_4)\delta_{14}^2+(2\eta_1-\theta_1)\eta_4\theta_4+(\theta_1-3\eta_1)\eta_4^2\nonumber\\
&=&2(\theta_1-\eta_1)(2\theta_4\eta_4-5\eta_4^2)+(\eta_4\delta_{14}^2+4\theta_1\eta_4^2-\theta_1\theta_4\eta_4)+(2\eta_1-\theta_1)\eta_4\theta_4+(\theta_1-3\eta_1)\eta_4^2\nonumber\\
&=&\eta_4\delta_{14}^2+2(\theta_1-\eta_1)\eta_4\theta_4+(-5\theta_1+7\eta_1)\eta_4^2\nonumber
\end{eqnarray}

 Pushing forward to $C^{(3)}\times C^{(3)}\times C^{(4)}$ :
\begin{eqnarray}\label{class2}&&m_*[\eta_4\delta_{14}^2+2(\theta_1-\eta_1)\eta_4\theta_4+(-5\theta_1+7\eta_1)\eta_4^2]\\
&=&(\eta_3\delta_{13}^2-2\theta_1\eta_3^2)+2(\theta_1-\eta_1)(\eta_3\theta_3+5\eta_3^2)+2(-5\theta_1+7\eta_1)\eta_3^2\nonumber\\
&=&\eta_3\delta_{13}^2+2(\theta_1-\eta_1)\eta_3\theta_3+2(-\theta_1+2\eta_1)\eta_3^2\nonumber
\end{eqnarray}

\item (2,3)+(2,3)

The cycle consists of $(\Gamma_3,\Gamma_3',D_2,D_2')\in C^{(3)}\times C^{(3)}\times C^{(2)}\times C^{(2)}$  given by the conditions
$$\begin{cases}h^0(K_C-\Gamma_3-D_2)>0,\\
h^0(K_C-\Gamma_3'-D_2')>0,\\
r_0\in D_2 .
\end{cases}$$
The map $m$ sends $(\Gamma_3,\Gamma_3',D_2,D_2')$ to $(\Gamma_3,\Gamma_3',D_2+D_2')\in C^{(3)}\times C^{(3)}\times C^{(4)}$. Note that in this bidegree, $m$ is not a lifting of $\lambda_2|_{F_{(2,3)(2,3)}}$. 
As in the previous case, the cycle class is the restriction to $W_1\times C^{(2)}\times C^{(2)}$ of
$$[(\theta_1+\theta_3+\delta_{13})-(\eta_1+\eta_3)]\cdot[(\theta_2+\theta_4+\delta_{24})-(\eta_2+\eta_4)]\cdot\eta_3 .$$
The contributing terms in the Kunneth decomposition are
\begin{enumerate}
\item Type $(2,0,2,2)$
$$[\theta_1\theta_4+\eta_1\eta_4-\eta_1\theta_4-\theta_1\eta_4]\cdot\eta_3 .$$
\item Type $(2,0,4,0)$
$$(\theta_3-\eta_3)(\theta_2-\eta_2)\eta_3=4(\theta_2-\eta_2)\cdot\eta_3^2 .$$
\item Type $(1,1,3,1)$
$$\delta_{13}\delta_{24}\cdot\eta_3 .$$
\end{enumerate}
Pushing these classes forward to $H^4(C^{(3)}\times C^{(3)}\times C^{(4)})$ by $m_*$, we obtain%
\begin{enumerate}
\item
\begin{eqnarray}&&m_*[(\theta_1-\eta_1)\eta_3\theta_4+(\eta_1-\theta_1)\eta_3\eta_4]\nonumber\\
&=&(\theta_1-\eta_1)(\eta_{3}\theta_{3}+5\eta_{3}^2)+(\eta_1-\theta_1)(2\eta_{3}^2)\nonumber\\
&=&(\theta_1-\eta_1)\eta_{3}\theta_{3}+3(\theta_1-\eta_1)\eta_{3}^2\nonumber
\end{eqnarray}

\item\begin{eqnarray}
&&m_*[4(\theta_2-\eta_2)\cdot\eta_3^2]\nonumber\\
&=&4(\theta_2-\eta_2)\cdot\eta_{3}^2\nonumber
\end{eqnarray}
\item
\begin{eqnarray}&&m_*[(\delta_{13}\delta_{24})\eta_3]\nonumber\\
&=&m_*\sum_{i,j=1}^5[-\xi_{1i}\xi_{2j}\xi_{3i}'\xi_{4j}'+\xi_{1i}\xi_{2j}'\xi_{3i}'\xi_{4j}+\xi_{1i}'\xi_{2j}\xi_{3i}\xi_{4j}'-\xi_{1i}'\xi_{2j}'\xi_{3i}\xi_{4j}]\eta_3\nonumber\\
&=&\sum_{i,j=1}^5[-\xi_{1i}\xi_{2j}\xi_{3i}'\xi_{3j}'+\xi_{1i}\xi_{2j}'\xi_{3i}'\xi_{3j}+\xi_{1i}'\xi_{2j}\xi_{3i}\xi_{3j}'-\xi_{1i}'\xi_{2j}'\xi_{3i}\xi_{3j}]\eta_{3}+\eta_{3}^2\sum_{i=1}^5[\xi_{1i}\xi_{2i}'-\xi_{1i}'\xi_{2i}]\nonumber\\
&=&\delta_{13}\delta_{23}\eta_{3}+\delta_{12}\eta_{3}^2\nonumber
\end{eqnarray}
\end{enumerate}

\end{enumerate}
Finally, since $\lambda_2$ sends $(\Gamma_3,\Gamma_3',D_2,D_2')$ to $K_C(-D_2-D_2')$ (instead of $\cO_C(D_2+D_2')$), we apply the involution $p_{2*}p_1^*$ to the sum of the classes in (a), (b), (c) as in Lemma \ref{involution} to obtain the cycle class  

\begin{eqnarray}\label{class3}&&(\theta_1-\eta_1)\theta_{3}(\theta_3-\eta_{3})
+3(\theta_1-\eta_1)(\frac{1}{2}\theta_3^2-\theta_3\eta_3+\eta_3^2)\\
&&+4(\theta_2-\eta_2)(\frac{1}{2}\theta_3^2-\theta_3\eta_3+\eta_3^2)+\delta_{13}\delta_{23}(\theta_3-\eta_{3})+\delta_{12}(\frac{1}{2}\theta_3^2-\theta_3\eta_3+\eta_3^2).\nonumber
\end{eqnarray}

\begin{lemma}\label{involution} The correspondence 
$$M_1=\{(D_4,B_4)\in C^{(4)}\times C^{(4)}|\ D_4+B_4\equiv K_C\}$$ induces an involution $p_{2*}p_1^*: H^4(C^{(4)})\rightarrow H^4(C^{(4)})$ where $p_1$ and $p_2$ are the two birational projections to $C^{(4)}$. Under the decomposition 
$$H^4(C^{(4)})\cong H^4(Pic^4C)\oplus\eta H^2(Pic^4C)\oplus \bC\cdot\eta^2,$$
$p_{2*} p_1^*$ acts as identity on $H^4(Pic^4C)$, sends $\eta\cdot\omega$ to $(\theta-\eta)\cdot\omega$ for any $\omega\in H^2(Pic^4C)$, and $\eta^2$ to $\frac{\theta^2}{2}-\eta\theta+\eta^2$.
\end{lemma}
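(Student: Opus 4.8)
The plan is to reduce the statement to two explicit cycle computations by using the factorization of $p_1$ and $p_2$ through $Pic^4C$. Let $\phi,\psi\colon C^{(4)}\to\T_0$ be the morphisms of Proposition \ref{propM1}, $\phi(D_4)=\cO_C(D_4)$ and $\psi(B_4)=\omega_C(-B_4)$, and let $\iota\colon Pic^4C\to Pic^4C$ be the involution $L\mapsto\omega_C\otimes L^{-1}$, so that $\psi=\iota\circ\phi$. From the definition of $M_1$ one reads off the morphism identity $\phi\circ p_1=\psi\circ p_2=\iota\circ\phi\circ p_2$. Since $\iota$ differs from the inversion automorphism of the abelian variety $Pic^4C$ by a translation, $\iota^*$ acts on $H^k(Pic^4C)$ as multiplication by $(-1)^k$; in particular $\iota^*=\mathrm{id}$ on $H^2$ and on $H^4$. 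Using that $p_1$ and $p_2$ are birational morphisms between smooth varieties, so $p_{i*}p_i^*=\mathrm{id}$, together with the projection formula, I would then deduce, for $\beta\in H^4(Pic^4C)$ and $\gamma\in H^2(Pic^4C)$,
\[
p_{2*}p_1^*\phi^*\beta=p_{2*}p_2^*\phi^*\iota^*\beta=\phi^*\beta,\qquad p_1^*(\eta\cdot\phi^*\gamma)=p_1^*\eta\cdot p_2^*\phi^*\iota^*\gamma=p_1^*\eta\cdot p_2^*\phi^*\gamma,
\]
whence $p_{2*}p_1^*(\eta\cdot\phi^*\gamma)=(p_{2*}p_1^*\eta)\cdot\phi^*\gamma$ again by the projection formula. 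This settles the assertion on $H^4(Pic^4C)$ and reduces everything else to computing the two classes $p_{2*}p_1^*\eta$ and $p_{2*}p_1^*\eta^2$.

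For these I would argue geometrically. Represent $\eta$ by the divisor $x+C^{(3)}$ and $\eta^2$ by the surface $x+y+C^{(2)}$ for general distinct points $x,y\in C$. By Proposition \ref{propM1}(2) the map $p_1$ is the blow-up of $C^{(4)}$ along $C^1_4$, and since a general $g^1_4$ on the general curve $C$ is base-point free, neither cycle contains the center $C^1_4$; hence $p_1^*$ of each is represented by its total preimage $p_1^{-1}(Z)$ (the proper transform). One checks that $p_1^{-1}(x+C^{(3)})$ consists of the pairs $(x+D_3,B_4)\in M_1$ with $D_3\in C^{(3)}$ and $B_4\in|K_C-x-D_3|$, that this threefold maps birationally to $C^{(3)}$ via $D_3$, and that $p_2$ maps it birationally onto the residuation cycle
\[
\{\,B_4\in C^{(4)} : h^0(K_C-x-B_4)>0\,\}=\{\,B_4 : B_4\le E\text{ for some }E\in|K_C-x|\,\},
\]
the birationality of $p_2$ holding because $h^0(K_C-x-B_4)=1$ at the general point of this cycle. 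Since $|K_C-x|$ is a $g^3_7$ (we have $g=5$), the secant plane formula of Section \ref{appsecplane} evaluates this class as $\theta-\eta$, so $p_{2*}p_1^*\eta=\theta-\eta$ and $p_{2*}p_1^*(\eta\cdot\phi^*\gamma)=(\theta-\eta)\cdot\phi^*\gamma$. Running the identical argument with $x+y+C^{(2)}$ in place of $x+C^{(3)}$ identifies $p_{2*}p_1^*\eta^2$ with the class of $\{B_4:B_4\le E\text{ for some }E\in|K_C-x-y|\}$; as $|K_C-x-y|$ is a $g^2_6$, the secant plane formula returns $\frac{\theta^2}{2}-\eta\theta+\eta^2$, completing the three formulas. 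That the resulting endomorphism is an involution is then automatic, either from the symmetry $(D_4,B_4)\mapsto(B_4,D_4)$ of the correspondence $M_1$ or simply by applying the three formulas twice.

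The reduction in the first paragraph is formal. The step I expect to require the most care is the second one: verifying that each restricted projection $p_2|_{p_1^{-1}(Z)}$ is genuinely birational onto its image — equivalently, that the generic value of $h^0(K_C-x-B_4)$, respectively $h^0(K_C-x-y-B_4)$, along the relevant secant cycle equals $1$ and not more — and that the proper transform of $Z$ under $p_1$ agrees with the scheme-theoretic preimage, so that no contribution of the exceptional divisor is dropped. These are standard consequences of Brill--Noether genericity of $C$ and of the geometry of $C^{(4)}$, but they are precisely where the generic choice of $x,y$ is used, so I would state that genericity explicitly. The rest is bookkeeping with the secant plane formula and Macdonald's presentation of $H^\bullet(C^{(4)})$.
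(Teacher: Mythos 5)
Your proposal is correct and follows essentially the same route as the paper's proof: the paper likewise computes $p_{2*}p_1^*\eta$ and $p_{2*}p_1^*\eta^2$ as the classes of the residual cycles $\{B_4: h^0(K_C-r_0-B_4)>0\}$ and $\{B_4: h^0(K_C-2r_0-B_4)>0\}$ via the secant plane formula, and handles the $H^4(Pic^4C)$ and $\eta\,H^2(Pic^4C)$ summands by the projection formula using the factorization through the involution $\tau(L)=K_C-L$ on $Pic^4C$. Your extra care about the cycles avoiding the blow-up center and the birationality of $p_2$ on the proper transforms only makes explicit what the paper leaves implicit.
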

\begin{proof}First note that the proper transform of the algebraic cycle
$r_0+C^{(3)}$ under the birational map $p_2 p_1^{-1}$ is the cycle 
$$\{B_4\in C^{(4)}|\ h^0(K_C-r_0-B_4)>0)\}$$ 
whose cohomology class is $\theta-\eta$ by the secant plane formula (\ref{secant}). Therefore $p_{2*}p_1^*$ sends $\eta$ to $\theta-\eta$. 
Similarly, the proper transform of $2r_0+C^{(2)}$ is
$$\{B_4\in C^{(4)}|\ h^0(K_C-2r_0-B_4)>0)\}$$
 whose cohomology class is $\frac{\theta^2}{2}-\eta\theta+\eta^2$, i.e.
 $$p_{2*}p_1^*\eta^2=\frac{\theta^2}{2}-\eta\theta+\eta^2.$$

Now let us prove the statement on the summand $\eta H^2(Pic^4C)$. Consider the commutative diagram
$$\xymatrix{&M_1\ar[ld]_-{p_1}\ar[rd]^-{p_2}&\\
C^{(4)}\ar[d]^{\phi}&&C^{(4)}\ar[d]^-{\phi}\\
Pic^4C&&Pic^4C\ar[ll]_-{\tau}}$$ 
where $\tau$  sends $L$ to $K_C-L$. For any $\omega\in H^2(C^{(4)})$,
$$p_1^*(\eta\cdot\phi^*\omega)=p_1^*\eta\cdot\ p_1^*\phi^*\omega\\
=p_1^*\eta\cdot (p_2^*\phi^* \tau^*\omega)$$
By the projection formula, 
$$p_{2*} p_1^*(\eta\cdot\phi^*\omega)=(p_{2*} p_1^*\eta)\cdot(\phi^* \tau^*\omega)=(\theta-\eta)\cdot\phi^*\omega$$
Similarly, the statement about the $H^4(Pic^4C)$ summand is a consequence of the projection formula.
\end{proof}

\subsection{The reducedness of $W^1_5 (C_{ pq })$ and of its
compactification $\oW^1_5 (C_{ pq })$}

\begin{lemma}
\label{lemWflat}

The surface $\oW^1_5 (C_{ pq })$ is reduced and is the flat limit of
the family of $W^1_5 (C_t)$ as $t$ goes to $0$.

\end{lemma}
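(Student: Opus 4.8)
The plan is to compute the Hilbert polynomial of $\oW^1_5(C_{pq})$ directly and compare it with that of $W^1_5(C_t)$ for $t\neq 0$, and then to check that the scheme structure on $\oW^1_5(C_{pq})$ is reduced so that the flat limit (which could a priori be a non-reduced scheme supported on $\oW^1_5(C_{pq})$) coincides with it. The flatness statement is then automatic from constancy of the Hilbert polynomial in a family over the smooth disc $T$, together with the fact that the special fiber has the expected Hilbert polynomial.

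\begin{proof}[Proof idea]
First I would fix a relatively ample line bundle on $\cJ^5\to T$ and compute the Hilbert polynomial $P_t(n) := \chi(W^1_5(X_t),\cO(n))$ for the general fiber; since $W^1_5(X_t)\cong W_{pq}$-type smooth surfaces degenerate into $\oW^1_5(C_{pq})$ via the birational morphism $(\nu^*)^{-1}:W_{pq}\surj\oW^1_5(C_{pq})$ of Lemma \ref{lemW15bar}, and $W_{pq}$ is smooth of the expected dimension $2$ (by the Proposition following Lemma \ref{lemW15bar}), the relevant Euler characteristics are readily expressed in terms of intersection numbers on $W_{pq}$, using the two embeddings $q_1,q_2:W_{pq}\hookrightarrow C^{(3)}$ and the known cohomology ring of $C^{(3)}$ from Macdonald (Appendix \ref{sum}). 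The morphism $(\nu^*)^{-1}$ contracts nothing — it is bijective off $X_p\cup X_q$ and identifies the disjoint curves $X_p=p+W^1_4$, $X_q=q+W^1_4$ — so it is finite and birational; since $\oW^1_5(C_{pq})$ is the image, I would show it is normal (or at least that $(\nu^*)^{-1}$ is an isomorphism) by a local computation at a point $\nu_*g^1_4$, using the explicit description in Section \ref{ssectWCpqsupp} of the torsion-free sheaves and the two exact sequences exhibiting $\nu_*g^1_4$ as a kernel.

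Next I would identify $\oW^1_5(C_{pq})$ scheme-theoretically as a determinantal subscheme of $J^5C_{pq}$: it is the degeneracy locus where a map of vector bundles (coming from pushing forward a Poincaré-type sheaf on $C_{pq}\times J^5C_{pq}$ and the standard two-step resolution) drops rank, exactly as $W^1_5(X_t)$ is in $Pic^5X_t$. Because this determinantal description is the restriction to the special fiber of the relative determinantal description of $\cW^1_5\subset\cJ^5$ over $T$, the expected codimension is $2$ and the actual codimension is $2$ (the support being $2$-dimensional in a $5$-dimensional ambient compactified Jacobian), so by the standard theory of determinantal schemes (Eagon–Northcott) $\oW^1_5(C_{pq})$ is Cohen–Macaulay and, away from the strictly-smaller-rank locus, reduced. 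The smaller-rank locus would correspond to $M\in\oW^1_5(C_{pq})$ with $h^0(M)\geq 3$, which does not occur for $C$ general of genus $5$ by a Brill–Noether count. Hence $\oW^1_5(C_{pq})$ is reduced and Cohen–Macaulay of pure dimension $2$.

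Having established reducedness and the expected dimension, flatness of $\cW^1_5\to T$ is obtained by showing the Hilbert polynomial is constant: over the punctured disc it is constant by properness/smoothness of the $W^1_5(X_t)$, and semicontinuity gives $P_0(n)\geq P_t(n)$ for the flat limit $\cW$, with equality iff the limit is exactly $\oW^1_5(C_{pq})$ with reduced structure rather than an embedded/thickened version. The computation of $P_0(n)=\chi(\oW^1_5(C_{pq}),\cO(n))$ via the finite birational morphism from the smooth surface $W_{pq}$, together with a comparison of $\chi(\cO_{W_{pq}})$ and $\chi(\cO_{\oW^1_5(C_{pq})})$ (which agree since $(\nu^*)^{-1}$ is an isomorphism), yields $P_0=P_t$, so the flat limit is $\oW^1_5(C_{pq})$ with its reduced structure and $\cW^1_5\to T$ is flat. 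The main obstacle I anticipate is the bookkeeping in the local analysis at the non-locally-free points $\nu_*g^1_4$ — showing that the determinantal scheme structure there is genuinely reduced and that $(\nu^*)^{-1}$ is an isomorphism and not merely a bijective finite morphism — since this is where the two sheets $X_p$ and $X_q$ of $W_{pq}$ get glued and a naive tangent-space count could overshoot; I would handle this by writing down explicit versal deformations of the torsion-free rank-$1$ sheaf $\nu_*g^1_4$ on $C_{pq}$ and matching them with first-order deformations inside $\oW^1_5(C_{pq})$.
\end{proof}
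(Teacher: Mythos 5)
Your overall plan (compare Hilbert polynomials and identify the flat limit with the reduced scheme) is indeed the paper's strategy, but there is a genuine error at the heart of your computation on the special fiber. The map $\mu=(\nu^*)^{-1}:W_{pq}\to\oW^1_5(C_{pq})$ is not an isomorphism and $\oW^1_5(C_{pq})$ is not normal: as you yourself note in the same sentence (and as Lemma \ref{lemW15bar} states), $\mu$ glues the two disjoint curves $X_p=p+W^1_4(C)$ and $X_q=q+W^1_4(C)$ to a single copy of $W^1_4(C)$, so it is a finite birational morphism which is $2{:}1$ over that curve, and $W_{pq}$ is the normalization of $\oW^1_5(C_{pq})$; a morphism identifying two disjoint curves cannot be an isomorphism. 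Consequently $\chi(\cO_{\oW^1_5(C_{pq})}(n\T_{pq}))\neq\chi(\cO_{W_{pq}}(n\T_{pq}))$. The paper's proof runs through the exact sequence $0\to\cO_{\oW^1_5(C_{pq})}\to\mu_*\cO_{W_{pq}}\to\cO_{W^1_4(C)}\to 0$, and the correction term $\chi(n\T_C|_{W^1_4(C)})=10n-10$ is precisely what brings $\chi(n\T_{pq}|_{W_{pq}})=30n^2-50n+22$ down to $30n^2-60n+32$, the Hilbert polynomial of the general fiber. With your claimed isomorphism the two polynomials would fail to match and the whole argument would collapse, so this is not a cosmetic slip but a step that would make the comparison come out wrong.

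Two further gaps. First, you give no concrete way to compute $P_t(n)$ for the general smooth fiber; your first paragraph conflates it with the computation on $W_{pq}$. The paper handles it by noting that $W^1_5$ is determinantal of constant dimension over the locus of all smooth genus-$6$ curves, hence one may compute the Hilbert polynomial on a trigonal curve, where $W^1_5(X)$ is the reduced union of two copies of $X^{(2)}$ meeting along the curve $X_2(g^1_4)$ (Teixidor), giving $2(15n^2-24n+10)-(12n-12)=30n^2-60n+32$. Second, your reducedness argument is numerically off and too quick: $J^5C_{pq}$ has dimension $6$ (arithmetic genus $6$), not $5$; the expected codimension of $W^1_5$ is $(r+1)(g-d+r)=4$, not $2$; and Eagon--Northcott gives Cohen--Macaulayness of a determinantal locus of the expected codimension but not generic reducedness, which would need a separate Petri-type tangent-space argument, delicate exactly at the non-locally-free points $\nu_*g^1_4$. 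The paper needs none of this machinery: it works with the reduced structure on $\oW^1_5(C_{pq})$ from the start and deduces that the flat limit coincides with it (in particular is reduced) from the equality of the two Hilbert polynomials.
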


\begin{proof}

  We will prove that $\oW^1_5 (C_{ pq })$ with its
  reduced scheme structure is the flat limit of the family of $W^1_5
  (C_t)$ as $t$ goes to $0$. By \cite{soucaris94} the family of
  theta divisors specializes to the ample Cartier divisor
\[
\T_{ pq } := \{M\in J^5 C_{ pq } : h^0 (M ) >0\}
\]
on $J^5 C_{ pq }$. We will prove that the Hilbert
polynomial of $\oW^1_5 (C_{ pq })$ with its reduced scheme structure
and with respect to $\T_{ pq }$ is equal to the Hilbert polynomial of
$W^1_5 (C_t)$ with respect to $\T_t$ for $t\neq 0$.

To compute the Hilbert polynomial of $\oW^1_5 (C_{ pq })$, we use the
normalization map (see Lemma \ref{lemW15bar})
\[
\mu = (\nu^*)^{-1} : W_{ pq }\lra\oW^1_5 (C_{ pq }).
\]
From this we obtain an exact sequence
\[
0\lra\cO_{ \oW^1_5 (C_{ pq }) }\lra\mu_*\cO_{ W_{ pq }}\lra\cM\lra 0
\]
where $\cM$ is a sheaf supported on the image of $W^1_4 (C)$ in
$\oW^1_5 (C_{ pq })$. It is immediately seen, by restricting the above
sequence to $W^1_4 (C)$, that
\[
\cM\cong\cO_{ W^1_4 (C) }
\]
so that we have the exact sequence
\begin{equation}\label{eqnoWWpq}
0\lra\cO_{ \oW^1_5 (C_{ pq }) }\lra\mu_*\cO_{ W_{ pq }}\lra\cO_{ W^1_4
(C) }\lra 0.
\end{equation}
To compute $\chi (\cO_{ \oW^1_5 (C_{ pq }) } ( n\T_{ pq }))$, we
therefore compute $\chi (\cO_{ W_{ pq }} ( n\T_{ pq }))$ and $\chi
(\cO_{ W^1_4 (C) }( n\T_{ pq }))$.

By \cite{bartonclemens} page 57 the inverse image of the divisor
$\T_{ pq }$ in $\bP Pic^5 C_{ pq }$ is
numerically equivalent to the sum of reduced divisors
\[
\overline{(\nu^*)^{-1}\T_{C,x}} + Pic^5_0
\]
where we use the notation of \ref{sssectcomp}, $\T_{C,x}$ is the image
of $\T_C\subset Pic^4 C$ in $Pic^5 C$ by the addition of the general
point $x\in C$ and $\overline{(\nu^*)^{ -1}\T_{C,x}}$ is the
closure of $(\nu^*)^{ -1}\T_{C,x}\subset Pic^5 C_{ pq }$ in $\bP Pic^5
C_{ pq }$.

Now, we have
\[
(\nu^* )^{ -1 }\T_{C,x} =\{M\in Pic^5 C_{ pq } : h^0
(\nu^*M(-x) ) > 0\}.
\]

The trace of $\overline{(\nu^*)^{ -1}\T_{C,x}}$ on the image of $W_{
pq }$ in $\bP Pic^5 C_{ pq }$ is reduced for a general choice of $x$
and is equal to
\[
\T_{ C,x } |_{ W_{ pq }} =\{ L\in W_{ pq } : h^0 ( L(-x)) > 0\}.
\]
Furthermore, it is immediate that
\[
Pic_0^5 |_{ W_{ pq }} =X_q.
\]
To compute the degree of $\T_{ pq }$ on $W^1_4 (C)$, we use
the isomorphism $W^1_4 (C)\cong X_p$. In this way we immediately see
that the restriction of $Pic^5_0$ to $W^1_4 (C)$ is zero while $\overline{
(\nu^* )^{ -1 }\T_{ C,x }}$ pulls back to $\T_C |_{ W^1_4 (C) }$ via
the natural embedding $W^1_4 (C)\subset Pic^4 C$.
Therefore, summarizing the above, we have
\[
\chi (n\T_{ pq } |_{ W_{ pq }}) = \chi (n\T_{C,x} |_{ W_{ pq }} +X_q )
\]
and,
\[
\chi (n\T_{ pq } |_{ W^1_4 (C) }) = \chi (n\T_C |_{W^1_4 (C)}).
\]

To compute $\chi (n\T_C |_{ W_{ pq }})$, we use the embedding $q_1$ of $W_{
pq }$ in $C^{ (3 ) }$ given by $g^1_5\mapsto |K-g^1_5|$. Via this
embedding $W_{ pq }$ is identified with the reduced surface in $C^{
(3) }$
\[
\{ \G_3 : h^0 (K_C -p-q -\G_3 ) >0\}
\]
whose cohomology class by the secant plane formula (Section \ref{appsecplane}) is $\theta -\eta$. By
Hirzebruch-Riemann-Roch
\[
\chi (n(\T_C |_{ W_{ pq }} +X_q )) = \frac{1}{2} n(\T_C |_{
W_{ pq }} +X_q ) . (c_1 (T_{ W_{ pq }}) + n(\T_C |_{ W_{ pq
}} +X_q )) +\frac{1}{12} ( c_1 (T_{ W_{ pq }})^2 + c_2 (T_{
W_{ pq }})).
\]
By \cite{macdonald62} (14.5) page 332, the total Chern class of $C^{
(3) }$ is
\[
(1+\eta )^{-6 } \prod_{ i=1 }^5 ( 1+\eta +\sigma_i ) = 1-\eta -\theta - 9\eta^2 + 6\eta\theta - 56\eta^3.
\]
So, using the tangent bundle sequence
\[
0\lra T_{ W_{ pq }}\lra T_{ C^{ (3) }} |_{ W_{ pq }}\lra\cO_{ W_{ pq }
} ( W_{ pq })\lra 0,
\]
we compute
\[
c( T_{ W_{ pq }} ) = \left( 1 - 2\theta - 9\eta^2 +
4\eta\cdot\theta + 2 \theta^2 \right) |_{ W_{ pq }}.
\]
Now, since $X_q$ is the restriction of the zero
section of a $\bP^1$-bundle to $W_{ pq }$, we have
\[
X_q^2 = 0.
\]
Furthermore, the degree of $\T_C$ on $X_q$ is $10$ since
this is a Prym-embedded curve in $Pic^4 C$. By the above,
\[
c_1 ( T_{ W_{ pq }} ) = 2\theta |_{ W_{ pq }},
\]
hence the degree of $c_1 ( T_{ W_{ pq }})$ on $X_q$ is
$20$. Putting all this together with the relations in
\cite[(6.3) page 325]{macdonald62}, we obtain
\[
\chi ( n\T_{ pq }|_{ W_{ pq }} ) = 30 n^2 - 50 n + 22.
\]

To compute $\chi (n\T_C |_{ W^1_4 (C) })$, note that $W^1_4$ has genus $11$ and its
cohomology class in $Pic^4 C$ is twice the minimal
class, i.e.,
\[
[W^1_4 (C) ] = 2\frac{ [\T_C ]^4 }{ 4! }.
\]
Therefore, by Riemann-Roch for curves,
\[
\chi (n\T_C |_{ W^1_4 (C) }) = 1 - 11 + degree(n\T_C |_{ W^1_4 (C) })
= 10 n -10.
\]

Finally, by \eqref{eqnoWWpq},
\[
\chi (\cO_{ \oW^1_5 (C_{ pq }) } ( n\T_{ pq })) = \chi (n\T_{ pq } |_{
W_{ pq }}) -\chi (n\T_{ pq } |_{ W^1_4 (C)}) = 30 n^2 - 60 n + 32.
\]

To compute the Hilbert polynomial of $W^1_5 (C_t )$ for $t\neq 0$, we
first note that the variety $W^1_5 (C_t )$ is always defined as a
determinantal variety as the degeneracy locus of a map of
bundles. Therefore the family of $W^1_5 (X)$ for all smooth $X$ of
genus $6$ is flat where its relative dimension is constant. So, to
compute the Hilbert polynomial, we only need to do so for one smooth
curve of genus $6$. If $X$ is trigonal, $W^1_5 (X)$ is the reduced
union of two copies of $X^{ (2) }$ \cite{teixidor85}:
\[
W^1_5 (X) = X^{ (2) } + g^1_3\cup K_X - (X^{ (2) } + g^1_3).
\]
The intersection of these two components is the reduced curve
\[
X_2 (g^1_4) =\{ D_2 : h^0 (g^1_4 - D_2 ) > 0\}\subset X^{ (2) }
\]
where $g^1_4 = |K_X - 2 g^1_3|$.
As in the previous case, we have the normalization exact sequence
\[
0\lra \cO_{W^1_5 (X) }\lra\mu_*\cO_{ X^{ (2) }\coprod X^{ (2) }
}\lra\cO_{ X_2 ( g^1_4 )}\lra 0.
\]
So
\[
\chi ( n\T_X |_{ W^1_5 (X)} ) = 2\chi ( n\T_X |_{ X^{ (2) }} ) -\chi (
n\T_X |_{ X_2 ( g^1_4 )} ).
\]
This time, using similar methods, we compute
\[
\chi (n\T_X |_{ X^{ (2) }}) = 15 n^2 -24 n + 10,
\]
\[
\chi ( n\T_X |_{ X_2 ( g^1_4 )} ) = 12 n - 12
\]
and
\[
\chi ( n\T_X |_{ W^1_5 (X)} ) = 30 n^2 - 60 n + 32.
\]
\end{proof}


\providecommand{\bysame}{\leavevmode\hbox to3em{\hrulefill}\thinspace}
\providecommand{\MR}{\relax\ifhmode\unskip\space\fi MR }
\providecommand{\MRhref}[2]{%
  \href{http://www.ams.org/mathscinet-getitem?mr=#1}{#2}
}
\providecommand{\href}[2]{#2}

\end{document}